\tikzset{->-/.style={decoration={
  markings,
  mark=at position .5 with {\arrow{>}}},postaction={decorate}}}
\newcommand{\red}[1]{{\color{red} #1}}
\theoremstyle{definition}
\newtheorem{theo}{Theorem}[section]
\newenvironment{theorem}[1][]
{\begin{theo}[#1]\begin{leftbar}}
{\end{leftbar}\end{theo}}
\newtheorem{lem}[theo]{Lemma}
\newenvironment{lemma}[1][]
{\begin{lem}[#1]\begin{leftbar}}
{\end{leftbar}\end{lem}}
\newtheorem{prop}[theo]{Proposition}
\newenvironment{proposition}[1][]
{\begin{prop}[#1]\begin{leftbar}}
{\end{leftbar}\end{prop}}
\newtheorem{defi}[theo]{Definition}
\newtheorem{remk}[theo]{Remark}
\newenvironment{remark}[1][]
{\begin{remk}[#1]\begin{leftbar}}
{\end{leftbar}\end{remk}}
\newtheorem{coro}[theo]{Corollary}
\newenvironment{corollary}[1][]
{\begin{coro}[#1]\begin{leftbar}}
{\end{leftbar}\end{coro}}
\newtheorem{conv}[theo]{Convention}
\newtheorem{quest}[theo]{Question}
\newenvironment{question}[1][]
{\begin{quest}[#1]\begin{leftbar}}
{\end{leftbar}\end{quest}}
\newtheorem{warn}[theo]{Warning}
\newtheorem{conj}[theo]{Conjecture}
\newtheorem{exam}[theo]{Example}
\newenvironment{example}[1][]
{\begin{exam}[#1]\begin{leftbar}}
{\end{leftbar}\end{exam}}
\newcommand{\id}{\operatorname{id}}
\newcommand{\W}{\mathcal{W}}
\newcommand{\M}{\mathcal{M}}
\newcommand{\kk}{\mathbf{k}}
\newcommand{\CC}{\mathbb{C}}
\newcommand{\NN}{\mathbb{N}}
\newcommand{\ZZ}{\mathbb{Z}}
\newcommand{\RR}{\mathbb{R}}
\newcommand{\QQ}{\mathbb{Q}}
\newcommand{\pp}{\mathbf{p}}
\newcommand{\qq}{\mathbf{q}}
\newcommand{\rr}{\mathbf{r}}
\newcommand{\SE}{\operatorname{SE}}
\newcommand{\NE}{\operatorname{NE}}
\newcommand{\w}{\operatorname{w}}
\newcommand{\htt}{\operatorname{ht}}
\newcommand{\charr}{\operatorname{char}}
\newcommand{\multiset}{\operatorname{multiset}}
\newcommand{\balsim}{\overset{\operatorname*{bal}}{\sim}}
\newcommand{\irrsim}{\overset{\operatorname*{irr}}{\sim}}
\newcommand{\flipsim}{\overset{\operatorname*{flip}}{\sim}}
\newcommand{\set}[1]{\left\{ #1 \right\}}
\newcommand{\tup}[1]{\left( #1 \right)}
\newcommand{\ive}[1]{\left[ #1 \right]}
\newcommand{\floor}[1]{\left\lfloor #1 \right\rfloor}
\newenvironment{verlong}{}{}
\newenvironment{vershort}{}{}
\newenvironment{noncompile}{}{}
\begin{document}

\title{Monomial identities in the Weyl algebra}
\author{Darij Grinberg\footnote{Drexel University, Philadelphia, PA, USA // \url{mailto:darijgrinberg@gmail.com}}, Tom Roby\footnote{University of Connecticut, Storrs, CA, USA // \url{mailto:tom.roby@uconn.edu}}, Stephan Wagner\footnote{TU Graz, Austria, and Uppsala University, Sweden // \url{mailto:stephan.wagner@tugraz.at} \\ Supported by the Swedish research council (VR), grant 2022-04030.}, Mei Yin\footnote{University of Denver, Denver, CO, USA // \url{mailto:mei.yin@du.edu} \\ Supported by the University of Denver's Professional Research Opportunities for Faculty Fund 80369-145601 and Simons Foundation Grant MPS-TSM-00007227.}}
\date{8 November 2024}
\dedication{\textit{Dedicated to a Hip Trendy Rascal on his 80th birthday}}
\maketitle

\begin{abstract}
\textbf{Abstract.}
Motivated by a question and some enumerative conjectures of Richard Stanley, we explore the
equivalence classes of words in the Weyl algebra, $\mathbf{k} \left< D,U \mid DU - UD = 1 \right>$.  We show that each class is generated by the swapping of adjacent \textit{balanced subwords}, i.e., those
which have the same number of $D$'s as $U$'s, and give several other characterizations, as well as a linear-time algorithm for equivalence checking.

Armed with this, we deduce several enumerative results about such equivalence classes and their sizes.
We extend these results to the class of $c$-Dyck words, where every prefix has at least $c$ times as many $U$'s as $D$'s.  
We also connect these results to previous work on bond
percolation and rook theory, and generalize them to some other algebras. 
\medskip

\textbf{Keywords:}
Weyl algebra, words, lattice paths, rook placements, Ferrers boards, Dyck words, monoid kernel, bond percolation, PBW bases, down-up algebra, noncommutative algebra, rings, combinatorics, finite fields.
\medskip

\textbf{MSC 2020 Classifications:}
12H05, 16S32, 05A15, 68R15.
\end{abstract}

\tableofcontents

\section{Introduction}\label{sec:intro}

In 2023 Richard Stanley proposed the following problem (private communication).  Let 
$\mathcal{M}$ denote the monoid freely generated by the two non-commuting variables $D$ and $U$.
Consider the action of $\M$ on the polynomial ring $\QQ\ive{x}$ in which $D$ acts as differentiation ($\frac{d}{dx}$) and $U$ acts as multiplication by $x$.
This action is not free, as it is known to satisfy the relation
\begin{equation}\label{eq:Weyl}
DU-UD = 1 ,
\end{equation}
which is the defining relation of the \emph{Weyl algebra}
(also known as the \emph{Heisenberg--Weyl algebra} \cite{GoF09} due to its obvious quantum-physical significance).

Consider two words in $\M$ to be \emph{equivalent} if they act equally on $\QQ\ive{x}$ (that is, become equal in the Weyl algebra).
It can be shown that equivalent words have the same number of $U$'s and the same number of $D$'s.
Thus we can ask: \textit{How many distinct equivalence classes
are there} for words with $k$ many $D$'s and $n-k$ many $U$'s?
We call this number $a(n,k)$.  For example, among the six words with two $D$'s
and two $U$'s, there is one equivalence: $DUUD = UDDU$, so that $a(4,2)=5$.

We prove explicit formulas for $a(n,k)$ and for $\sum_k a(n,k)$ originally conjectured by Stanley (Section~\ref{sec:enum}).
Along the way, we study the equivalence from several directions and give several equivalent descriptions of it.
Call a word in $\M$ \textit{balanced} if it has the same number of $U$'s
as $D$'s.
We show (in Theorem~\ref{thm.DU-eqs2}, another conjecture of Stanley) that two words $v$ and $w$ are equivalent if and only if one can be obtained from
another by a series of \textit{balanced commutations}, i.e., by a sequence of swaps of
adjacent balanced factors.  In the example above, the transposition of $DU$ with $UD$ gives
the equivalence.
This and several further criteria serve as the linchpin for the enumerative results. 

In much earlier work, Stanley identified a particular class of posets, including Young's
lattice of integer partitions, which he called \emph{differential
posets}~\cite{Stan88DiffPos}.  Standard and semi-standard Young tableaux are in bijection
with certain chains in Young's lattice, $\mathbb{Y}$, which can be studied using the
standard Down and Up operators in $\mathbb{Y}$. For example, applying $U^{n}$ to the empty
shape $\varnothing$ gives a formal sum of all partitions $\lambda$ of $n$, each weighted by
the number of standard tableaux of that shape $f^{\lambda}$.  Applying $D^{n}U^{n}$ to
$\varnothing$ yields $\left(\sum_{\lambda \vdash n} (f^{\lambda})^{2}\right) \varnothing$, and a
simple inductive argument shows that $D^nU^n\varnothing = n!\varnothing$, recovering the basic
enumerative identity shown by the Robinson--Schensted correspondence. The key insight is
that these operators satisfy the fundamental relation of the Weyl algebra,
Equation~(\ref{eq:Weyl}), allowing counting problems to be expressed in terms of these operators.
Many enumerative identities, expressed as generating functions in these operators, can be
proved by solving certain elementary partial differential equations. This study motivated
the current problem, though it is natural enough on its own.  

The outline of the paper is as follows.
In Section~\ref{sec:defs} we formally define the monoid $\M$, the above monoid, the Weyl algebra $\W$, and many further related combinatorial objects.
Then we state our main result (Theorem~\ref{thm.DU-eqs2}), which gives several different criteria for words
in the monoid $\M$ to be equivalent (i.e., to represent the same operator in $\W$).
\footnote{We note that one of our criteria in Theorem~\ref{thm.DU-eqs2} gives rise to an efficient (linear time) algorithm for the word problem in the monoid generated by $D$ and $U$ in $\W$, in contrast to the naive ``expand and compare coefficients'' approach (which requires quadratic time at best). See Remark~\ref{rmk.algorithm} for details.}
A second main result (Theorem~\ref{thm.phiomega}) says that each balanced word $u$ is equivalent to its ``reverse toggle-image'' (i.e., to the word obtained from $u$ by reversing the order of the letters and also replacing each $D$ by $U$ and vice versa).
The proofs of these two results occupy the next three sections.

Section~\ref{sec:basicform} provides some basic formulas for products of $D$'s and $U$'s in the Weyl algebra.  In Section~\ref{sec:words} we define a normal form for
our words and start working our way towards the proof of our main result, which we finish
in Section~\ref{sec:mainproofs}. 

Enumerative results -- including the formulas for $a(n,k)$ and for $\sum_k a(n,k)$ -- are then obtained in Section~\ref{sec:enum}.
The formula for $\sum_k a(n,k)$ (Corollary~\ref{cor:total_number}) is surprisingly intricate, despite involving nothing more complicated than the Fibonacci sequence.
Then we turn our attention to \textit{$c$-Dyck words}, where every prefix has at least $c$ times as many $U$'s as $D$'s,
finding analogous results for this situation, and exploring some interesting special cases.
Finally, we give an explicit formula for the \textit{size} of the Weyl-equivalence class of
a word $w$.

An intriguing digression is pursued in Section~\ref{sec:bondperc}. Indeed, a search for the sequence of the numbers $\sum_k a(n,k)$ in the OEIS reveals a sequence  \cite[A006727]{OEIS} originating in statistical physics (bond percolation on the directed square lattice).
This sequence, however, agrees with ours only for $n \leq 11$, and in fact contains negative terms later on.
We briefly introduce the physical context and explain this seeming coincidence.

Section~\ref{sec:rook} continues the study of equivalence of words and relates it to the part of combinatorics known as rook theory.
The equivalence of two words $u$ and $v$ is revealed to be a  stronger version of the \emph{rook equivalence} of two Ferrers boards $B_u$ and $B_v$ induced by these words.
This leads to two further ``main theorems'' (Theorem~\ref{thm.DU-eqs3} and~\ref{thm.DU-eqs4}) that provide further equivalent conditions for two words to be equivalent.
Their proofs piggyback on work by Navon, Haglund, Cotardo, Gruica and Ravagnani.
We note that our equivalent criteria in Theorem~\ref{thm.DU-eqs2} can thus also be seen as equivalent criteria for rook-equivalence of Ferrers boards, although some care is needed to ensure that the correspondence really is one-to-one (see Remark~\ref{rmk.rook.as-crit}).

In Section~\ref{sec:irredbal}, we take a closer look at our balanced commutations, and show that a subset of these commutations actually suffices to connect any two equivalent words.
It is not hard to see that we can generate Weyl equivalence by transpositions
only of \textit{irreducible} balanced words, i.e., those which themselves cannot be factored into two or
more balanced words.
Even better, we show that we only need to swap irreducible balanced words starting with a $U$ with ones starting with a $D$ (Theorem~\ref{thm.irr=bal}).

In the final Section~\ref{sec:otheralg}, we discuss other algebras that allow for the same or similar questions to be asked instead of the Weyl algebra.
We generalize our results to multivariate Weyl algebras, and give a partial result for the down-up algebras of Benkart and Roby~\cite{BenRob98}.
The case of Weyl algebras in positive characteristic appears to be more intricate, and we offer several open questions for exploration.

\bigskip

\textbf{Remark.} A more detailed version of this work, with some proofs expanded, is available as an ancillary file to this preprint on the arXiv.

\section{Definitions and main results}\label{sec:defs}

In this section, we introduce the main notions and notations involved in our main results, which will be stated at the end of the section.

\subsection{\label{subsec:defs.action}The monoid $\M$ and the Weyl algebra $\W$}

Let $\M$ be the free (noncommutative) monoid generated by two symbols
$D$ and $U$. Its elements are the words with letters $D$ and $U$, such as
$DUUDDUDD$.

Let $\kk$ be a field of characteristic $0$, and let $\W$ be the Weyl algebra over $\kk$ with generators $D$
and $U$ and relation $DU-UD=1$. This algebra $\W$ acts on the
univariate polynomial ring $\kk\left[  x\right]  $ in a standard way:
$D$ acts as the derivative operator $\dfrac{\partial}{\partial x}$, whereas $U$ acts as multiplication
by $x$. It is known that this action is faithful, and $\W$ has a
basis\footnote{Here and in the following, $\NN$ denotes the set $\set{0,1,2,\ldots}$.}
$\left(  D^{i}U^{j}\right)  _{i,j\in\NN}$ as well as a basis
$\left(  U^{j}D^{i}\right)  _{i,j\in\NN}$.
See \cite{ManSch16}, \cite{vanOys13} and several exercises in \cite{Lorenz18} for much
there is to know about $\W$ and then some.
(The Weyl algebra $\W$ is often denoted by $A_1\tup{\kk}$ or $\mathbb{A}_1\tup{\kk}$.)

Let $\phi : \M \to \W$ be the canonical monoid
morphism\footnote{``Morphism'' means ``homomorphism'' throughout this work.} from $\M$ to the monoid $\left(  \W,\cdot,1\right)  $ that
sends $D$ to $D$ and $U$ to $U$.
Thus, $\phi$ sends any product of $D$'s and $U$'s to the same product of $D$'s and $U$'s, but now computed in $\W$ instead of $\M$.
This morphism $\phi$ is \textbf{not}
injective, since (for example) $DUUD=UDDU$ in $\W$ (but not in
$\M$).
Thus, one may naturally wonder what pairs of words $u, v \in \M$ have equal images under $\phi$.
In the parlance of monoid theory, this is asking about the \emph{kernel} of the monoid morphism $\phi$ -- that is, the equivalence relation ``$\phi\left(u\right) = \phi\left(v\right)$'' on $\M$.

We will give several descriptions of this equivalence relation in terms of different objects, and subsequently study its enumerative properties (such as the number of equivalence classes of a given word length).
We now define some of these objects.

\subsection{Words}

\begin{itemize}
\item The word ``word'' will always mean an element of $\M$ (that is, a word built of $D$'s and $U$'s), unless we say otherwise.

\item A word $v\in\M$ is said to be a \emph{factor} of a word
$w\in\M$ if there exist words $u,u^{\prime}\in\M$ (possibly
empty) such that $w=uvu^{\prime}$.

\item A word $v\in\M$ is said to be a \emph{prefix} of a word
$w\in\M$ if there exists a word $u^{\prime}\in\M$ (possibly
empty) such that $w=vu^{\prime}$.

\item A word $v\in\M$ is said to be a \emph{suffix} of a word
$w\in\M$ if there exists a word
$u\in\M$ (possibly
empty) such that $w=uv$.
\end{itemize}

For example, the word $DUD$ is a prefix of $DUDUDD$ and is a suffix of $UDDUD$.
Furthermore, the word $DUD$ is a factor of $UDUDDD$, but neither a prefix nor a suffix.
The word $DUD$ is not a factor of $DUUD$ (since a factor must appear contiguously).
Note that each word $w$ is a factor, a prefix and a suffix of itself.

\subsection{Diagonal paths}

A notion closely related to words are \emph{diagonal paths}, which we will now introduce along with their various features:

\begin{itemize}

\item The \emph{diagonal lattice} means the digraph (i.e., directed graph) with vertex set
$\ZZ^2$ and arcs $\left(  i,j\right)  \to \left(
i+1,j+1\right)  $ (called \emph{NE-arcs}) and $\left(  i,j\right)
\to \left(  i+1,j-1\right)  $ (called \emph{SE-arcs}). Given two
vertices $u$ and $v$ of the diagonal lattice, we write \textquotedblleft%
$u\nearrow v$\textquotedblright\ for \textquotedblleft$u \to v$ is an
NE-arc\textquotedblright, and we  write \textquotedblleft$u\searrow
v$\textquotedblright\ for \textquotedblleft$u \to v$ is an
SE-arc\textquotedblright.

We imagine the diagonal lattice as being drawn in the Cartesian plane, but its arcs are not parallel to the axes but rather parallel to the two diagonals ($x=y$ and $x=-y$).
Thus, NE-arcs and SE-arcs look like the arrows $\nearrow$ and $\searrow$, respectively (whence our notations for them).

\item
A \emph{diagonal path} means a walk on the diagonal lattice.
Since the diagonal lattice is acyclic (i.e., has no directed cycles), any such walk is a path.

\item If $\pp=\left(  p_{0},p_{1},\ldots,p_{k}\right)  $ is a diagonal path, then

\begin{itemize}
\item the \emph{vertices} of $\pp$ are $p_{0},p_{1},\ldots,p_{k}$;

\item the \emph{NE-steps} of $\pp$ are the vertices $p_{i}$ of
$\pp$ for which $i < k$ and $p_{i}\nearrow p_{i+1}$;

\item the \emph{SE-steps} of $\pp$ are the vertices $p_{i}$ of
$\pp$ for which $i < k$ and $p_{i}\searrow p_{i+1}$.
\end{itemize}

For example, if $\pp = \tup{p_0, p_1, p_2, p_3}$ is a diagonal path with $p_0 \nearrow p_1 \searrow p_2 \searrow p_3$, then its vertices are $p_0, p_1, p_2, p_3$; its only NE-step is $p_0$; and its SE-steps are $p_1$ and $p_2$.

\item The \emph{height} $\htt\left(  i,j\right)  $ of a vertex
$\left(  i,j\right)  $ of $\ZZ^2$ is its y-coordinate $j$.

\item If $\pp = \tup{p_0, p_1, \ldots, p_k}$ is a diagonal path, then the \emph{initial height} of $\pp$ is the height $\htt\tup{p_0}$ of its initial vertex, whereas the \emph{final height} of $\pp$ is the height $\htt\tup{p_k}$ of its final vertex.
We say that the path $\pp$ \emph{starts at height $\htt\tup{p_0}$} and \emph{ends at height $\htt\tup{p_k}$}.

\item If $\pp = (p_0,p_1,\ldots,p_k)$ is any diagonal path, then we associate three Laurent polynomials (in the indeterminate $z$) to $\pp$:
    \begin{itemize}
    \item the \emph{height polynomial} $H(\pp,z) = \sum\limits_{i=0}^k z^{\htt(p_i)}$;
    \item the \emph{NE-height polynomial} $H_{\NE}(\pp,z) = \sum\limits_{p_{i}\text{ is an NE-step of }\pp} z^{\htt(p_i)}$;
    \item and the \emph{SE-height polynomial} $H_{\SE}(\pp,z) = \sum\limits_{p_{i}\text{ is an SE-step of }\pp} z^{\htt(p_i)}$.
    \end{itemize}

\item The \emph{reading word} $\w\left(  \pp\right)  $ of a diagonal
path $\pp=\left(  p_{0},p_{1},\ldots,p_{k}\right)  $ is defined to be
the word $w_{0}w_{1}\cdots w_{k-1}\in\M$, where%
\[
w_{i}=%
\begin{cases}
U, & \text{if }p_{i}\nearrow p_{i+1};\\
D, & \text{if }p_{i}\searrow p_{i+1}.
\end{cases}
\]
For instance, if $\pp = \tup{p_0, p_1, p_2, p_3, p_4}$ with $p_0 \nearrow p_1 \searrow p_2 \searrow p_3 \searrow p_4$, then $\w\tup{\pp} = UDDD$.
\end{itemize}

For example, if $\pp$ is the diagonal path shown in Figure~\ref{fig.exa.wp.1}, then
the heights of all vertices of $\pp$ are shown on Figure~\ref{fig.exa.heights}; in particular,
its initial height is $0$, its final height is $-1$, its height polynomial is $H\tup{\pp, z} = z^{-1} + 3z^0 + 3z^1 + z^2$,
its NE-height polynomial is $H_{\NE}\tup{\pp, z} = 2z^0 + z^1$,
its SE-height polynomial is $H_{\SE}\tup{\pp, z} = z^0 + 2z^1 + z^2$,
and its reading word is $\w\tup{\pp} = UDUUDDD$.

\begin{figure}[htbp]
\begin{center}
\begin{tikzpicture}
\draw[help lines] (-0.2, -1.2) grid (7.2, 2.2);
\draw
[line width=2pt,-stealth] (0, 0) edge (1, 1) (1, 1) edge (2, 0) (2, 0) edge (3, 1) (3, 1) edge (4, 2) (4, 2) edge (5, 1) (5, 1) edge (6, 0) -- (7, -1);
\draw[fill=blue] (0, 0) circle (0.1) node[left] {\textcolor{blue}{$\left(0, 0\right)\ $}};
\draw[fill=blue] (7, -1) circle (0.1) node[right] {\textcolor{blue}{$\ \left(7, -1\right)$}};
\end{tikzpicture}
\end{center}
\caption{A diagonal path.}
\label{fig.exa.wp.1}
\end{figure}

\begin{figure}[htbp]
\begin{center}
\begin{tabular}[c]{c}
heights of SE-steps\\
\begin{tikzpicture}
\draw[help lines] (-0.2, -1.2) grid (7.2, 2.2);
\draw
[line width=2pt,-stealth] (0, 0) edge (1, 1) (1, 1) edge (2, 0) (2, 0) edge (3, 1) (3, 1) edge (4, 2) (4, 2) edge (5, 1) (5, 1) edge (6, 0) -- (7, -1);
\draw[fill=blue] (0, 0) circle (0.1) node[left] {\textcolor{blue}{$\left(0, 0\right)\ $}};
\draw[fill=blue] (7, -1) circle (0.1) node[below] {\textcolor{blue}{$\ \left(7, -1\right)$}};
\draw (0,0) -- (0, -3);
\node at (0,-3) [below] {$0$};
\draw (2,0) -- (2, -3);
\node at (2,-3) [below] {$0$};
\draw (3,1) -- (3, -3);
\node at (3,-3) [below] {$1$};
\draw (1,1) -- (1, 4);
\node at (1,4) [above] {$1$};
\draw (4,2) -- (4, 4);
\node at (4,4) [above] {$2$};
\draw (5,1) -- (5, 4);
\node at (5,4) [above] {$1$};
\draw (6,0) -- (6, 4);
\node at (6,4) [above] {$0$};
\draw (7,-1) -- (9,-1);
\node at (9,-1) [right] {$-1$};
\end{tikzpicture}\\
heights of NE-steps
\end{tabular}
\end{center}
\caption{The heights of the vertices of $\pp$. The heights of the NE-steps are written at the bottom, while those of the SE-steps are written at the top. The last vertex of $\pp$ counts neither as an SE-step nor as an NE-step.}
\label{fig.exa.heights}
\end{figure}

We note that if $\pp$ is any diagonal path, then
\begin{align}
    & \tup{\text{final height of } \pp} - \tup{\text{initial height of } \pp} \nonumber \\
    = &
    \tup{\text{\# of $U$'s in } \w\tup{\pp}}
    - \tup{\text{\# of $D$'s in } \w\tup{\pp}} .
    \label{eq.U-D}
\end{align}
\begin{verlong}
This is because, as we walk the path $\pp$ from its beginning to its end, our height increases by $1$ with each NE-step (which corresponds to a $U$ in $\w\tup{\pp}$) and decreases by $1$ with each SE-step (which corresponds to a $D$ in $\w\tup{\pp}$).
\end{verlong}

Note that a diagonal path $\pp$ is uniquely determined by its initial vertex $p_0$ and its reading word $\w\tup{\pp}$.
\begin{verlong}
(Knowing $p_0$ and $\w\tup{\pp}$, we can reconstruct $\pp$ by starting at $p_0$ and walking in the directions provided by $\w\tup{\pp}$: namely, we make an NE-step for each $U$ in $w$ and an SE-step for each $D$ in $w$.)
\end{verlong}
In particular, for any word $w \in \M$, there is a unique diagonal path $\pp$ that starts at $\tup{0, 0}$ and has reading word $\w\tup{\pp} = w$.
We will call this path $\pp$ the \emph{standard path} of $w$.
For example, the path shown in Figure~\ref{fig.exa.wp.1} is the standard path of $UDUUDDD$.

Given a word $w \in \M$, we define the Laurent polynomials
\[
H(w,z) = H(\pp,z)
\quad \text{ and } \quad
H_{\NE}(w,z) = H_{\NE}(\pp,z)
\quad \text{ and } \quad
H_{\SE}(w,z) = H_{\SE}(\pp,z) ,
\]
where $\pp$ is the standard path of $w$.
We call $H(w,z)$ the \emph{height polynomial} of $w$.
Furthermore, we define the \emph{final height} of our word $w$ to be the final height of its standard path $\pp$.
Since its initial height is $0$ (because it starts at $\tup{0,0}$), and since it satisfies $\w\tup{\pp} = w$, we obtain from \eqref{eq.U-D} the equality
\begin{align}
    & \tup{\text{final height of } w} \nonumber \\
    = &
    \tup{\text{\# of $U$'s in } w}
    - \tup{\text{\# of $D$'s in } w} .
    \label{eq.U-D0}
\end{align}

\subsection{The $\omega$ maps}

We furthermore define two useful maps, both of which we call $\omega$:

\begin{itemize}
\item Let $\omega : \M \to \M$ be the monoid
anti-morphism%
\footnote{A \emph{monoid anti-morphism} is a map $f : M \to N$ between two monoids such that $f\tup{1_M} = 1_N$ and $f\tup{ab} = f\tup{b}f\tup{a}$ for all $a, b \in M$. In other words, it is a monoid morphism from $M$ to the opposite monoid of $N$.}
that sends $U$ and $D$ to $D$ and $U$.
Thus, acting on a
word $w\in\M$, it reverses the word and toggles\footnote{To \emph{toggle} a letter means to replace it by a $D$ if it is a $U$, and to replace it by a $U$ if it is a $D$.} every letter.

For example, $\omega\tup{UDD} = UUD$.

\begin{vershort}
This map $\omega$ has a simple geometric meaning:
It simply reflects the standard path of the word across a vertical axis (see Figure~\ref{fig.reflect}).

\begin{figure}[htbp]
\begin{center}
\begin{tikzpicture}[scale=0.5]
\draw
[line width=2pt] (5,1) -- (6,2) -- (7,3) -- (8,4) -- (9,3) -- (10,2) -- (11,3) -- (12,2) -- (13,1) -- (14,0);
\draw[fill=blue] (5, 1) circle (0.1) node[left] {};
\draw[fill=blue] (14, 0) circle (0.1) node[right] {};
\node (p) at (12, 3) {$\pp$};
\end{tikzpicture}
\qquad
\begin{tikzpicture}
\draw (0,0.5mm) -- (0,-0.5mm);
\newlength\mylength
\setlength{\mylength}{\widthof{reflection}}
\draw[->] (0,0) -- (1.2\mylength,0) node[above,midway] {reflection};
\end{tikzpicture}
\qquad
\begin{tikzpicture}[xscale=-1, scale=0.5]
\draw
[line width=2pt] (5,1) -- (6,2) -- (7,3) -- (8,4) -- (9,3) -- (10,2) -- (11,3) -- (12,2) -- (13,1) -- (14,0);
\draw[fill=blue] (5, 1) circle (0.1) node[left] {};
\draw[fill=blue] (14, 0) circle (0.1) node[right] {};
\node (q) at (6, 3) {$\qq$};
\end{tikzpicture}
\end{center}
\caption{A path $\pp$ (left) and its reflection $\qq$ across a vertical axis (right).}
\label{fig.reflect}
\end{figure}
\end{vershort}

\item
Let $\omega : \W \to \W$ be the $\kk$-algebra
anti-morphism%
\footnote{A \emph{$\kk$-algebra anti-morphism} is a map $f : A \to B$ between two $\kk$-algebras such that $f$ is a morphism of additive groups and a monoid anti-morphism of multiplicative monoids. In other words, it is a $\kk$-algebra morphism from $A$ to the opposite algebra of $B$.}
that sends $U$ and $D$ to $D$ and $U$. (This is well-defined,
since the operation of swapping $U$ with $D$ transforms the defining relation $DU-UD=1$ of $\W$ into the relation $UD-DU = 1$, which holds in the opposite algebra of $\W$.)

For example, $\omega\tup{UDD} = UUD$ (but now in $\W$).
\end{itemize}

Both maps $\omega$ are involutions, i.e., satisfy
$\omega \circ \omega = \id$.
Moreover, the two $\omega$'s commute with $\phi$:
That is, we have $\omega\circ\phi =\phi\circ\omega$.
In other words, the diagram
\begin{align}
    \xymatrix{
    \M \ar[r]^{\phi} \ar[d]_{\omega} & \W \ar[d]^{\omega} \\
    \M \ar[r]_{\phi} & \W
    }
    \label{eq.omega.comm}
\end{align}
commutes.
This justifies us calling the two $\omega$'s by the same letter.

\subsection{Balanced words, commutations and flips}

Finally, we define the concept of a \emph{balanced word} and two
equivalence relations on words:

\begin{itemize}
\item A word $w\in\M$ is said to be \emph{balanced} if it has the
same number of $D$'s and $U$'s. For example, $DUUDDU$ is balanced, whereas
$DUUUD$ is not.

\item Given two words $v,w\in\M$, we say that $v$ is obtained from
$w$ by a \emph{balanced commutation} if and only if we can write $v$ and $w$
as $v=pxyq$ and $w=pyxq$, where $p,q\in\M$ are two words and where
$x,y\in\M$ are two balanced words. Roughly speaking, this means that
$v$ can be obtained from $w$ by swapping two balanced factors that abut each
other in $w$.

For instance, from $DUDDUUDUUD$ we can obtain $DDUUDUDUUD$ by a balanced
commutation (swapping the prefix $DU$ with the infix $DDUU$, both of which are
balanced). By a further balanced commutation, we can turn $DDUUDUDUUD$ into
$DDUUDUUDDU$ (swapping the infix $DU$ with the suffix $UD$, both of which are balanced).

We define an equivalence relation $\balsim$ on
$\M$ by stipulating that two words $w,v\in\M$ satisfy
$w \balsim v$ if and only if $v$ can be obtained
from $w$ by a sequence (possibly empty) of balanced commutations. Thus, our above 
examples show that $DUDDUUDUUD \balsim DDUUDUUDDU$.

\item Given two words $v,w\in\M$, we say that $v$ is obtained from
$w$ by a \emph{balanced flip} if and only if we can write $v$ and $w$
as $v=pxq$ and $w=p\omega\tup{x}q$, where $p,q\in\M$ are two words
and where $x \in \M$ is a balanced word. Roughly speaking, this means
that $v$ can be obtained from $w$ by picking a balanced factor and applying
the involution $\omega$ to it.

For instance, from $DUDDUU$ we can obtain $DDUUDU$ by a balanced
flip (applying $\omega$ to the balanced factor $UDDU$).

We define an equivalence relation $\flipsim$ on
$\M$ by stipulating that two words $w,v\in\M$ satisfy
$w \flipsim v$ if and only if $v$ can be obtained
from $w$ by a sequence (possibly empty) of balanced flips. Thus, our above 
example shows that $DUDDUU \flipsim DDUUDU$.

\end{itemize}

Another example for a balanced transformation and a balanced flip can be seen on Figure~\ref{fig.balanced-transformations}.

\begin{figure}[htbp]
\begin{center}
\begin{tabular}[c]{ccc}
\textbf{word} &  & \textbf{standard path}\\\hline
& & \\
$%
\begin{array}[c]{c}%
U\underline{UUUDDUDD}\overline{DDUU}DD\\
\vphantom{\int}\\
\vphantom{\int}
\end{array}
$ &  & $
\begin{tikzpicture}[scale=0.5]
\draw
[fill=blue] (1,1) -- (2,2) -- (3,3) -- (4,4) -- (5,3) -- (6,2) -- (7,3) -- (8,2) -- (9,1) -- (1,1);
\draw[fill=yellow] (9,1) -- (10,0) -- (11,-1) -- (12,0) -- (13,1) -- (9,1);
\draw[help lines] (-0.2, -1.2) grid (15.2, 4.2);
\draw[line width=2pt] (0, 0) -- (1,1);
\draw
[line width=2pt] (1,1) -- (2,2) -- (3,3) -- (4,4) -- (5,3) -- (6,2) -- (7,3) -- (8,2) -- (9,1);
\draw[line width=2pt] (9,1) -- (10,0) -- (11,-1) -- (12,0) -- (13,1);
\draw[line width=2pt] (13,1) -- (14,0) -- (15,-1);
\draw[fill=blue] (0, 0) circle (0.1) node[left] {};
\draw[fill=blue] (15, -1) circle (0.1) node[right] {};
\end{tikzpicture}
$\\
balanced commutation & $\downarrow$ & balanced commutation\\
$%
\begin{array}
[c]{c}%
UDDUU\underline{UUUDDUDD}DD\\
\vphantom{\int}\\
\vphantom{\int}
\end{array}
$ &  & $
\begin{tikzpicture}[scale=0.5]
\draw
[fill=blue] (5,1) -- (6,2) -- (7,3) -- (8,4) -- (9,3) -- (10,2) -- (11,3) -- (12,2) -- (13,1) -- (5,1);
\draw[fill=yellow] (1,1) -- (2,0) -- (3,-1) -- (4,0) -- (5,1) -- (1,1);
\draw[help lines] (-0.2, -1.2) grid (15.2, 4.2);
\draw[line width=2pt] (0, 0) -- (1,1);
\draw
[line width=2pt] (5,1) -- (6,2) -- (7,3) -- (8,4) -- (9,3) -- (10,2) -- (11,3) -- (12,2) -- (13,1);
\draw[line width=2pt] (1,1) -- (2,0) -- (3,-1) -- (4,0) -- (5,1);
\draw[line width=2pt] (13,1) -- (14,0) -- (15,-1);
\draw[fill=blue] (0, 0) circle (0.1) node[left] {};
\draw[fill=blue] (15, -1) circle (0.1) node[right] {};
\end{tikzpicture}
$\\
balanced flip & $\downarrow$ & balanced flip\\
$%
\begin{array}
[c]{c}%
UDDUUUUUDDUDDDD\\
\vphantom{\int}\\
\vphantom{\int}
\end{array}
$ &  & $%
\begin{tikzpicture}[scale=0.5]
\draw
[fill=blue] (5,1) -- (6,2) -- (7,3) -- (8,2) -- (9,3) -- (10,4) -- (11,3) -- (12,2) -- (13,1) -- (5,1);
\draw[help lines] (-0.2, -1.2) grid (15.2, 4.2);
\draw[line width=2pt] (0, 0) -- (1,1);
\draw
[line width=2pt] (5,1) -- (6,2) -- (7,3) -- (8,2) -- (9,3) -- (10,4) -- (11,3) -- (12,2) -- (13,1) -- (13,1);
\draw[line width=2pt] (1,1) -- (2,0) -- (3,-1) -- (4,0) -- (5,1);
\draw[line width=2pt] (13,1) -- (14,0) -- (15,-1);
\draw[fill=blue] (0, 0) circle (0.1) node[left] {};
\draw[fill=blue] (15, -1) circle (0.1) node[right] {};
\end{tikzpicture}
$
\end{tabular}
\end{center}
\caption{A word undergoing a balanced commutation followed by a balanced flip (left); the corresponding standard paths (right). The factors that are swapped or transformed are marked by underlines and overlines.}
\label{fig.balanced-transformations}
\end{figure}

\subsection{Main result: Equivalent descriptions of Weyl equivalence}

Everything is now in place to state our main result, which gives several (necessary and sufficient) criteria for when two words $u, v \in \M$ have the same image under $\phi$.

\begin{theorem}
\label{thm.DU-eqs2}
Let $u$ and $v$ be two words in $\M$.
Then, the following seven statements are equivalent:

\begin{itemize}
\item $\mathcal{S}_1$: We have $\phi\tup{u} = \phi\tup{v}$.

\item $\mathcal{S}_2$: The elements $\phi\tup{u}$ and
$\phi\tup{v}$ act equally on the polynomial ring $\kk\ive{x}$.
(That is, we have
$\tup{\phi\tup{u}} \tup{p} = \tup{\phi\tup{v}} \tup{p}$
for each polynomial $p \in \kk\ive{x}$.
Note that the action of $\W$ on $\kk\left[x\right]$
was defined in Subsection~\ref{subsec:defs.action}.)

\item $\mathcal{S}_3$: The words $u$ and $v$ have the same final
height and satisfy $H_{\NE}\tup{u, z} = H_{\NE}\tup{v, z}$.

\item $\mathcal{S}'_3$: The words $u$ and $v$ have the same final
height and satisfy $H_{\SE}\tup{u, z} = H_{\SE}\tup{v, z}$.

\item $\mathcal{S}_4$: The words $u$ and $v$ have the same final
height and satisfy $H\tup{u, z} = H\tup{v, z}$.

\item $\mathcal{S}_5$: We have $u \balsim v$.

\item $\mathcal{S}_6$: We have $u \flipsim v$.
\end{itemize}
\end{theorem}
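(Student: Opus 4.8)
The plan is to establish the cheap equivalences first, then anchor the whole theorem to one explicit computation of $\phi\tup{w}$ on monomials, and finally close the remaining implications through a normal form. The equivalence $\mathcal{S}_1\iff\mathcal{S}_2$ is just the faithfulness of the action of $\W$ on $\kk\ive{x}$, which is given. The three ``invariant'' statements $\mathcal{S}_3,\mathcal{S}'_3,\mathcal{S}_4$ I would handle by elementary Laurent-polynomial algebra: for every word $w$ with final height $f$ one has the two identities
\[
H\tup{w,z}=H_{\NE}\tup{w,z}+H_{\SE}\tup{w,z}+z^{f}
\qquad\text{and}\qquad
z\,H_{\NE}\tup{w,z}+z^{-1}H_{\SE}\tup{w,z}=H\tup{w,z}-1,
\]
the first because every vertex is an NE-step, an SE-step, or the final vertex, and the second because every non-initial vertex is reached from its predecessor by a height change of $\pm1$ (and the initial height is $0$). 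Since $\mathcal{S}_3,\mathcal{S}'_3,\mathcal{S}_4$ all presuppose a common final height $f$, these two linear relations let me solve for any one of $H,H_{\NE},H_{\SE}$ in terms of any other, so the three conditions coincide.

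The analytic heart is $\mathcal{S}_1\iff\mathcal{S}'_3$. I would apply $\phi\tup{w}$ to a monomial $x^n$ and read off the answer from the standard path $\pp=\tup{p_0,\dots,p_k}$ of $w=s_1\cdots s_k$. Processing the letters from right to left, the current exponent tracks the height along $\pp$, each $U$ leaves the scalar unchanged, and each $D$ contributes a linear factor equal to the exponent at that moment; a short bookkeeping then gives
\[
\tup{\phi\tup{w}}\tup{x^n}=c_w\tup{n}\,x^{n+f},
\qquad
c_w\tup{n}=\prod_{p_i\text{ an SE-step at height }h}\tup{n+f-h+1}.
\]
By faithfulness, $\phi\tup{u}=\phi\tup{v}$ holds iff $u$ and $v$ share the final height $f$ and satisfy $c_u=c_v$ as polynomials. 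Since $c_w$ is monic with root multiset $\set{h-f-1}$ ranging over the SE-steps of $\pp$, the equality $c_u=c_v$ (with $f$ fixed) is exactly $H_{\SE}\tup{u,z}=H_{\SE}\tup{v,z}$. This yields $\mathcal{S}_1\iff\mathcal{S}'_3$, and combined with the first paragraph it pins down $\set{\mathcal{S}_1,\mathcal{S}_2,\mathcal{S}_3,\mathcal{S}'_3,\mathcal{S}_4}$ as a single block.

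It remains to fold in $\mathcal{S}_5$ and $\mathcal{S}_6$, whose ``easy'' directions I would dispatch as follows. A balanced word has net height $0$, so its image under $\phi$ is a degree-preserving operator, hence acts diagonally in the basis $\tup{x^n}_{n\in\NN}$; consequently any two balanced operators commute, which shows $\mathcal{S}_5\Rightarrow\mathcal{S}_1$ (a balanced commutation merely swaps two commuting factors inside $\phi\tup{w}$). For $\mathcal{S}_6\Rightarrow\mathcal{S}_1$ I would use $\omega\circ\phi=\phi\circ\omega$ from \eqref{eq.omega.comm} together with the product formula: for a balanced word the up-crossings and down-crossings of each half-integer level balance, so $H_{\SE}\tup{\omega\tup{x},z}=H_{\SE}\tup{x,z}$, whence $\phi\tup{\omega\tup{x}}=\phi\tup{x}$ and any single balanced flip leaves $\phi\tup{w}$ unchanged.

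The genuinely hard direction is the converse, $\mathcal{S}_4\Rightarrow\mathcal{S}_5$ and $\mathcal{S}_4\Rightarrow\mathcal{S}_6$, and this is where I expect the real work to lie. Here I would introduce the normal form for words developed in Section~\ref{sec:words}, prove that every word can be driven to its normal form using only balanced commutations — and, separately, using only balanced flips — and then verify that this normal form is uniquely determined by the pair $\tup{f,\,H_{\SE}}$. The obstacle is \emph{completeness}: I must argue that balanced commutations suffice to sort a word into canonical shape (an exchange/bubble-sort argument on the balanced building blocks of its path) and that the final height together with the multiset of SE-heights is the only surviving invariant, with the analogous statement for flips. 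Granting this, two words satisfying $\mathcal{S}_4$ have the same invariants, hence the same normal form, hence $u\balsim v$ and $u\flipsim v$; this supplies the last edges and closes the cycle through all seven statements.
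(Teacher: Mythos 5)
Most of your proposal is sound and runs parallel to the paper's proof, with a few genuinely nicer shortcuts: your two linear identities relating $H$, $H_{\NE}$, $H_{\SE}$ (whose coefficient matrix has the non-zero-divisor determinant $z^{-1}-z$) give the block $\mathcal{S}_3\Leftrightarrow\mathcal{S}'_3\Leftrightarrow\mathcal{S}_4$ more cleanly than the paper's Lemma~\ref{lem:laurent}; your direct derivation of $\mathcal{S}_1\Leftrightarrow\mathcal{S}'_3$ from the product formula (the paper's Proposition~\ref{prop.UD-act}, read off via unique factorization of the monic polynomial $c_w$) is correct; and your crossing-number argument that $H_{\SE}\tup{\omega\tup{x},z}=zH_{\NE}\tup{x,z}=H_{\SE}\tup{x,z}$ for balanced $x$ gives $\phi\tup{x}=\phi\tup{\omega\tup{x}}$ (the operative half of Theorem~\ref{thm.phiomega}) without the normal-form machinery the paper uses there. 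The diagonal-action argument for $\mathcal{S}_5\Rightarrow\mathcal{S}_1$ is exactly the paper's Lemmas~\ref{lem.act-diag}--\ref{lem.bal.weyl}.

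The genuine gap is the implication $\mathcal{S}_4\Rightarrow\mathcal{S}_5$, which you explicitly defer (``Granting this\dots''). This is where all the real work of the theorem lives, and none of your earlier machinery supplies it. Concretely, three things must be proved. First, \emph{reachability}: every rising word that is not up-normal admits a factorization $w=upqv$ with $p,q$ adjacent balanced factors starting with $U$ and $D$ respectively, so that the commutation $upqv\mapsto uqpv$ strictly decreases the word lexicographically; this is the paper's Lemmas~\ref{lem.balanced.0} and~\ref{lem.upnorm.1}, whose proof requires locating a minimal balanced ``copair-subpath'' around a down-zig --- it is not a routine bubble-sort, because the balanced blocks one must swap are not given in advance and need not be the irreducible factors of the word. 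Second, \emph{uniqueness}: an up-normal word is determined by its final height and height polynomial (Lemma~\ref{lem:unique}), which requires the explicit shape $D^a(UD)^{r_1}U\cdots(UD)^{r_h}UD^b$ and a division by $1+z$. Third, the \emph{falling case} must be reduced to the rising case by reflecting through $\omega$ and checking that $\omega$ respects $\balsim$ (Proposition~\ref{prop.bal-symm}). Without these, your cycle through the seven statements does not close. A smaller remark: your plan to run a second, flip-based normal-form argument for $\mathcal{S}_4\Rightarrow\mathcal{S}_6$ is unnecessary --- once $\mathcal{S}_4\Rightarrow\mathcal{S}_5$ is in hand, a single balanced commutation $pxyq\mapsto pyxq$ is realized by three balanced flips (flip $x$, flip $y$, then flip the factor $\omega\tup{x}\omega\tup{y}$, whose $\omega$-image is $yx$), which is the paper's Lemma~\ref{lem.5to6} and disposes of $\mathcal{S}_5\Rightarrow\mathcal{S}_6$ in three lines.
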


In particular, this shows that the two relations $\balsim$ and $\flipsim$ are the same. 

\begin{remark}
\label{rmk.algorithm}
Criterion $\mathcal{S}_3$ (or $\mathcal{S}'_3$) can also be turned into an efficient algorithm to decide whether or not $\phi\tup{u} = \phi\tup{v}$, which requires linear time and space in the length of $u$ and $v$: 

We use an array $A$ with both positive and negative indices. 
Start at $i=0$ and read the word $u$. When a value of $i$ is reached for the first time, $A[i]$ is initialized as $0$. In particular, $A[0]$ is initialized as $0$ right at the beginning. 
Each time the letter $U$ is read, increase $A[i]$ by $1$ and increase $i$ by $1$; else decrease $i$ by $1$ (and leave the values of the array unchanged). The final value of $i$ is the final height of $u$, whereas the final entries of the array are the coefficients of $H_{\NE}\tup{u, z}$. 
Once $u$ has been read completely, do the same with $v$ (starting at $i=0$ again), but decrease $A[i]$ whenever the letter $U$ occurs. Condition $\mathcal{S}_3$ is clearly only satisfied if the final heights of $u$ and $v$ are the same and the array at the end consists entirely of zeros. If a value of $i$ is reached in the second part that was not reached in the first part, or if an entry of the array becomes negative during the second stage of the algorithm, one can stop immediately: $\phi\tup{u} \neq \phi\tup{v}$ in this case.
The length of the array is thus bounded by the length of $u$. Even more, since the sum of the entries of the array is bounded by the length of word $u$, a linear total number of bits is sufficient for all entries of the array.
\end{remark}

In Section~\ref{sec:rook}, we will add some more equivalent statements to the list in Theorem~\ref{thm.DU-eqs2}, albeit under the additional assumption that $u$ and $v$ have the same number of $U$'s and the same number of $D$'s.

The following result is a curious consequence of Theorem~\ref{thm.DU-eqs2} (although we will prove it first and then use it in the proof of Theorem~\ref{thm.DU-eqs2}).

\begin{theorem}
\label{thm.phiomega}
Let $u \in \M$ be a balanced word. Then,
$\phi\tup{u} = \phi\tup{\omega\tup{u}}$ and
$u \balsim \omega\tup{u}$.
\end{theorem}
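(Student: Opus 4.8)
The statement splits into two essentially independent assertions, which I would prove by quite different means: the algebraic identity $\phi\tup{u} = \phi\tup{\omega\tup{u}}$ via a grading argument, and the combinatorial assertion $u \balsim \omega\tup{u}$ via a structural induction. Crucially, neither argument invokes Theorem~\ref{thm.DU-eqs2}, so the result is safe to use later in its proof.

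For $\phi\tup{u} = \phi\tup{\omega\tup{u}}$, I would use the commuting diagram \eqref{eq.omega.comm}: since $\omega\circ\phi = \phi\circ\omega$, we have $\phi\tup{\omega\tup{u}} = \omega\tup{\phi\tup{u}}$, so it suffices to show that $\phi\tup{u}$ is a fixed point of the anti-morphism $\omega : \W \to \W$. To this end I equip $\W$ with the $\ZZ$-grading determined by $\deg U = 1$ and $\deg D = -1$; this is well-defined because the defining relation $DU - UD = 1$ is homogeneous of degree $0$, and in the basis $\tup{D^i U^j}_{i,j\in\NN}$ the homogeneous component of degree $d$ is spanned by the $D^i U^j$ with $j - i = d$. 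As $u$ is balanced, $\phi\tup{u}$ is homogeneous of degree $0$, whence $\phi\tup{u} = \sum_{i\in\NN} c_i D^i U^i$ for scalars $c_i \in \kk$. A one-line computation using the anti-morphism property gives $\omega\tup{D^i U^i} = \omega\tup{U^i}\,\omega\tup{D^i} = D^i U^i$, so every basis element occurring is $\omega$-fixed and therefore $\omega\tup{\phi\tup{u}} = \phi\tup{u}$. (Equivalently, one may note that $\phi\tup{\omega\tup{u}}$ is the adjoint of $\phi\tup{u}$ for the pairing on $\kk\ive{x}$ making $D$ and $U$ mutually adjoint, and that $\phi\tup{u}$ acts diagonally in the monomial basis.)

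For $u \balsim \omega\tup{u}$, I would argue by strong induction on the length $\abs{u}$, repeatedly using that $\balsim$ is a congruence: a balanced commutation performed inside a factor $y$ of $ayb$ is still a balanced commutation of $ayb$, so $y \balsim y'$ forces $ayb \balsim ay'b$. The base case is the empty word. If $u$ is reducible, write $u = pq$ with $p,q$ nonempty balanced words; a single balanced commutation gives $u = pq \balsim qp$, while the inductive hypothesis yields $p \balsim \omega\tup{p}$ and $q \balsim \omega\tup{q}$, so $qp \balsim \omega\tup{q}\,\omega\tup{p} = \omega\tup{pq} = \omega\tup{u}$. If $u$ is irreducible and nonempty, then its first letter $a$ forces the last letter to be its toggle: writing $h_i$ for the final height of the length-$i$ prefix, irreducibility means $h_i \neq 0$ for $0 < i < \abs{u}$, so the $h_i$ keep the sign of $h_1$ throughout the interior, and the final step must return to $0$ from $\pm 1$ using the opposite letter. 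Hence $u = a y \overline{a}$ with $y$ balanced and strictly shorter (where $\overline{a}$ is the toggle of $a$), the inductive hypothesis gives $y \balsim \omega\tup{y}$, and since $\omega\tup{u} = \omega\tup{\overline{a}}\,\omega\tup{y}\,\omega\tup{a} = a\,\omega\tup{y}\,\overline{a}$, the congruence property yields $u = a y \overline{a} \balsim a\,\omega\tup{y}\,\overline{a} = \omega\tup{u}$.

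I expect the only genuinely delicate point to be the irreducible case: the identity $\omega\tup{u} = a\,\omega\tup{y}\,\overline{a}$ rests entirely on the observation that an irreducible balanced word beginning with $U$ must end with $D$ (and symmetrically), which is precisely what lets $\omega$ commute with peeling off the two outer letters. Once this is in place, the congruence property of $\balsim$ and the reducible-case splitting are routine, and the induction closes.
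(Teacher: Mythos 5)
Your proposal is correct, but it reaches the theorem by a genuinely different route than the paper does. The paper proves $u \balsim \omega\tup{u}$ first, by observing that the standard paths of $u$ and $\omega\tup{u}$ are reflections of each other (hence share initial height, final height, and height polynomial) and then invoking Lemma~\ref{lem.4to5}, which rests on the whole up-normal-form machinery (Proposition~\ref{prop.upnorm.nf}, Lemmas~\ref{lem:unique} and~\ref{lem:invariant}); the identity $\phi\tup{u} = \phi\tup{\omega\tup{u}}$ is then deduced from $u \balsim \omega\tup{u}$ via Lemma~\ref{lem.bal.weyl}. You instead prove the two claims independently. Your grading argument for $\phi\tup{u} = \omega\tup{\phi\tup{u}} = \phi\tup{\omega\tup{u}}$ is sound (it needs only the PBW basis $\tup{D^i U^j}_{i,j}$, the homogeneity of the defining relation, and the computation $\omega\tup{D^iU^i} = D^iU^i$) and is close in spirit to the paper's second, Dixmier-style proof of Lemma~\ref{lem.bal.weyl0}; the paper itself sketches yet another route to this identity via rook equivalence of $B_u$ and $B_{\omega\tup{u}}$ in Section~\ref{sec:rook}. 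Your induction for $u \balsim \omega\tup{u}$ is also correct: the congruence property of $\balsim$ is immediate, the reducible case is a single commutation $pq \balsim qp$ plus the inductive hypothesis, and the key fact in the irreducible case -- that an irreducible balanced word starting with $U$ must end with $D$ -- is exactly the paper's Lemma~\ref{lem.irr-omega}~\textbf{(a)}, which you prove correctly by the sign argument on prefix heights. What each approach buys: the paper's proof recycles machinery it needs anyway for Theorem~\ref{thm.DU-eqs2}, so it costs almost nothing in context; yours is more self-contained and elementary (no normal forms, no height polynomials), and cleanly decouples the algebraic statement from the combinatorial one, which makes the non-circularity with Theorem~\ref{thm.DU-eqs2} manifest.
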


\section{Basic formulas for the Weyl algebra action}\label{sec:basicform}

The proof of our main result requires significant build-up and preparation.
We begin with a closer look at the Weyl algebra $\W$ and its action on the polynomial ring $\kk\left[  x\right]  $.

\begin{lemma}
\label{lem.weyl.faith}
The action of the Weyl algebra $\W$ on the
polynomial ring $\kk\left[  x\right]  $ is faithful: That is, if two
elements $a, b\in\W$ satisfy $a\left(  p\right)  = b\left(p\right)$ for all
$p\in\kk\left[  x\right]  $, then $a = b$.
\end{lemma}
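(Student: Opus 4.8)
The plan is to reduce faithfulness to the statement that the only element of $\W$ acting as the zero operator is $0$ itself, and then to pin down the coordinates of such an element in the PBW basis $\tup{U^j D^i}_{i,j\in\NN}$ by testing it against the monomials $x^n$.

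First I would use linearity. The action of $\W$ on $\kk\ive{x}$ is a $\kk$-algebra morphism $\W \to \operatorname{End}_{\kk}\tup{\kk\ive{x}}$, hence in particular $\kk$-linear in the algebra element. So, replacing $a$ by $a-b$, it suffices to show that if $a \in \W$ satisfies $a\tup{p} = 0$ for all $p \in \kk\ive{x}$, then $a = 0$. Since the monomials $x^n$ (with $n \in \NN$) span $\kk\ive{x}$ and $a$ acts linearly on $p$, this is in turn equivalent to the implication: $a\tup{x^n} = 0$ for all $n \in \NN$ implies $a = 0$.

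Next, write $a$ in the basis $\tup{U^j D^i}_{i,j}$, say $a = \sum_{i,j} c_{ij} U^j D^i$ with only finitely many nonzero $c_{ij} \in \kk$. Using $D^i\tup{x^n} = n^{\underline{i}}\, x^{n-i}$, where $n^{\underline{i}} = n\tup{n-1}\cdots\tup{n-i+1}$ is the falling factorial (which vanishes automatically when $i > n$), together with $U^j\tup{x^m} = x^{m+j}$, I compute $a\tup{x^n} = \sum_{i,j} c_{ij}\, n^{\underline{i}}\, x^{n-i+j}$. Grouping the terms by the shift $s = j - i$, the coefficient of $x^{n+s}$ in $a\tup{x^n}$ equals $P_s\tup{n} := \sum_{i \ge \max\tup{0,-s}} c_{i,\,i+s}\, n^{\underline{i}}$, a polynomial in $n$. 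The hypothesis $a\tup{x^n} = 0$ forces $P_s\tup{n} = 0$ for all $n \ge -s$; since $\kk$ has characteristic $0$ and is therefore infinite, a polynomial with infinitely many roots vanishes identically, so $P_s \equiv 0$ for each $s$.

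Finally, I would invoke the linear independence of the falling factorials $\tup{n^{\underline{i}}}_{i\in\NN}$ in $\kk\ive{n}$ — they have pairwise distinct degrees, so they form a basis — to conclude from $P_s \equiv 0$ that every coefficient $c_{i,\,i+s}$ vanishes. Letting $s$ range over $\ZZ$ then yields $c_{ij} = 0$ for all $i, j$, whence $a = 0$. The only genuinely delicate point is the bookkeeping in the two displayed sums: one must check that the terms with $i > n$ drop out on their own (because $n^{\underline{i}} = 0$ there), and that distinct shifts $s$ land in distinct powers of $x$, so that the vanishing of the single polynomial $a\tup{x^n}$ really does force each $P_s\tup{n}$ to vanish separately. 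Everything past that is routine linear algebra over $\kk$.
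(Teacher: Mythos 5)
Your proof is correct, but it takes a genuinely different route from the paper. The paper disposes of this lemma by citation: it invokes \cite[Theorem 5.11]{Milici17}, which identifies $\W$ with the algebra of differential operators on $\kk\left[x\right]$, and reads off faithfulness from the injectivity of the defining morphism. You instead give a direct, self-contained computation: expand $a$ in the family $\left(U^j D^i\right)_{i,j\in\NN}$ (for which spanning alone would already suffice, via the rewriting rule $DU \mapsto UD + 1$), evaluate on the monomials $x^n$ to get $a\tup{x^n} = \sum_{i,j} c_{ij}\, n^{\underline{i}}\, x^{n-i+j}$, separate the contributions by the shift $s = j-i$ (legitimate, since distinct shifts produce distinct powers of $x$), and kill each resulting polynomial $P_s$ by the infinitude of its integer roots together with the linear independence of the falling factorials. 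The bookkeeping you flag as delicate does check out: terms with $i>n$ carry the coefficient $n^{\underline{i}}=0$, which also guarantees that no genuinely negative power of $x$ ever appears. Your argument has the advantage of being elementary and of making visible exactly where characteristic $0$ enters (the evaluation map $\kk\left[n\right] \to \kk^{\NN}$ is injective only over an infinite field, and indeed in characteristic $p$ the element $D^p$ acts as zero, so the lemma fails there); the paper's approach is shorter on the page but outsources the entire content to the reference. Your route is essentially the one the authors sketch but do not complete in a commented-out draft of this proof.
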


\begin{proof}
This is a folklore result, and can be easily derived from facts in the literature.
For instance, \cite[Theorem 5.11]{Milici17} (applied to $n = 1$) shows that the $\kk$-algebra $D\left(1\right)$ of differential operators on the polynomial ring $\kk\left[x\right]$ is isomorphic to the Weyl algebra $\kk\left<D,U \mid DU - UD = 1\right> = \W$.
The actual proof of \cite[Theorem 5.11]{Milici17} shows that the $\kk$-algebra morphism $\W \to D\left(1\right)$ that sends $D$ and $U$ to $\dfrac{\partial}{\partial x}$ and the multiplication by $x$ is injective.
But this morphism is precisely the action of $\W$ on $\kk\left[x\right]$ (except that its target has been restricted to $D\left(1\right)$).
Thus, its injectivity means that the action is faithful.
This proves Lemma~\ref{lem.weyl.faith}.
\end{proof}

\begin{remark}
The Weyl algebra $\W$ acts not only on the polynomial ring $\kk\left[x\right]$, but also on the rings of Laurent polynomials $\kk\left[x,x^{-1}\right]$, of formal power series $\kk\left[\left[x\right]\right]$, of formal Laurent series $\kk\left(\left(x\right)\right)$, and (if $\kk = \RR$ or $\kk = \CC$) of infinitely differentiable functions $\mathcal{C}^\infty\left(\kk\right)$.
In each case, the action is faithful (since the polynomial ring $\kk\left[x\right]$ embeds into each of these rings),
and thus all our results still apply.
\end{remark}

Each word $u\in\M$ is mapped by $\phi$ to an element of the Weyl
algebra $\W$, which in turn acts on the polynomial ring
$\kk\left[  x\right]  $. Our next goal is to give an explicit formula
for this action in terms of a diagonal path $\pp$ that has $u$ as its
reading word. For the sake of simplicity, we extend the action of
$\W$ from the polynomial ring $\kk\left[  x\right]  $ to the
Laurent polynomial ring $\kk\left[  x,x^{-1}\right]  $ (so that we
don't have to worry about possible negative exponents on a power of $x$).

\begin{proposition}
\label{prop.UD-act} Let $\pp=\left(  p_{0},p_{1},\ldots,p_{k}\right)  $
be a diagonal path. Let $h_{i}:=\htt\left(  p_{i}\right)  $ for
each $i\in\left\{  0,1,\ldots,k\right\}  $. Then, for each $s\in\ZZ$,
we have%
\begin{equation}
 \left(  \phi\left(  \w\left(  \pp\right)  \right)
\right)  \left(  x^{s}\right)
= \left(  \prod_{p_{i}\text{ is an SE-step of }\pp}\left(
s+h_{k}-h_{i+1}\right)  \right)  \cdot x^{s+h_{k}-h_{0}}.
\label{eq.prop.UD-act.1}%
\end{equation}

\end{proposition}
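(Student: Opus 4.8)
\textbf{The plan.} The approach is to induct on $k$, the number of arcs in $\pp$ (equivalently, the length of the reading word $\w\tup{\pp}$), peeling off the \emph{first} letter at each step. Recall that $U$ acts on $\kk\ive{x,x^{-1}}$ as multiplication by $x$ (so $U\tup{x^s} = x^{s+1}$) and $D$ acts as $\tfrac{\partial}{\partial x}$ (so $D\tup{x^s} = s\,x^{s-1}$), and that in a product of operators the rightmost factor acts first. For the base case $k=0$, the path is a single vertex, its reading word is empty, so $\phi\tup{\w\tup{\pp}} = 1$ acts as the identity; the product on the right-hand side of \eqref{eq.prop.UD-act.1} is empty (hence $1$) and $h_k = h_0$, so both sides equal $x^s$.

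For the inductive step, I would set $\pp'' = \tup{p_1, p_2, \ldots, p_k}$, which has the same final vertex $p_k$ (hence the same final height $h_k$) but initial vertex $p_1$. Writing $\w\tup{\pp} = w_0 \cdot \w\tup{\pp''}$ and using that operators compose right-to-left, I get
\[
\tup{\phi\tup{\w\tup{\pp}}}\tup{x^s} = \tup{\phi\tup{w_0}}\Bigl(\tup{\phi\tup{\w\tup{\pp''}}}\tup{x^s}\Bigr),
\]
and the inductive hypothesis supplies $\tup{\phi\tup{\w\tup{\pp''}}}\tup{x^s}$ as a product over the SE-steps $p_i$ of $\pp''$ (i.e.\ those with $i \geq 1$) times $x^{s + h_k - h_1}$. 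It then remains to apply $\phi\tup{w_0}$ and split into two cases according to the first arc.

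If $w_0 = U$ (an NE-step, so $h_1 = h_0 + 1$), then $\phi\tup{w_0}$ multiplies by $x$, leaving the product untouched (since $p_0$ is not an SE-step) and sending the exponent $s + h_k - h_1$ to $s + h_k - h_1 + 1 = s + h_k - h_0$, exactly as required. If instead $w_0 = D$ (an SE-step, so $h_1 = h_0 - 1$), then $\phi\tup{w_0}$ differentiates $x^{s + h_k - h_1}$, pulling out the factor $s + h_k - h_1 = s + h_k - h_{0+1}$ and lowering the exponent to $s + h_k - h_1 - 1 = s + h_k - h_0$. Here the new factor is precisely the contribution of the new SE-step $p_0$ (the $i=0$ term) in the product, so the formula is reproduced.

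The only real obstacle is the index bookkeeping: one must check that in the $D$-case the factor extracted by differentiation is $s + h_k - h_{i+1}$ for $i=0$, which hinges on recognizing that the exponent just before differentiation is $s + h_k - h_1 = s + h_k - h_{0+1}$, and that the exponent shifts align in both cases via $h_1 = h_0 \pm 1$. Once these are tracked carefully the induction closes immediately.
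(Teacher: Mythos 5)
Your proof is correct and follows essentially the same route as the paper's: induction on the length of the path, with a case split on whether the removed step is an NE-step or an SE-step. The only difference is that you peel off the \emph{first} step (so the inductive hypothesis is applied to the tail path $\pp''=(p_1,\ldots,p_k)$ at the same exponent $s$), whereas the paper removes the \emph{last} step and applies the hypothesis at $s\pm 1$; both versions of the bookkeeping close correctly.
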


\begin{example}
If the last steps of $\pp$ are
$\cdots p_{k-4}\searrow p_{k-3}\nearrow p_{k-2}\nearrow p_{k-1}\searrow p_{k}$,
then $\w\left(
\pp\right)  =\cdots DUUD$ and thus any $s\in\NN$ satisfies
\begin{align*}
\left(  \phi\left(  \w\left(  \pp\right)  \right)
\right)  \left(  x^{s}\right)   &  =\cdots DUUDx^{s}=\cdots\dfrac{\partial
}{\partial x}xx\underbrace{\dfrac{\partial}{\partial x}x^{s}}_{=sx^{s-1}%
}=s\cdots\dfrac{\partial}{\partial x}x\underbrace{xx^{s-1}}_{=x^{s}}\\
&  =s\cdots\dfrac{\partial}{\partial x}\underbrace{xx^{s}}_{=x^{s+1}}%
=s\cdots\underbrace{\dfrac{\partial}{\partial x}x^{s+1}}_{=\left(  s+1\right)
x^{s}}=s\left(  s+1\right)  \cdots x^{s}.
\end{align*}
The two factors $s$ and $s+1$ that we have found correspond precisely to the
two SE-steps $p_{k-1}$ and $p_{k-4}$ of $\pp$. Of course, further
SE-steps in $\pp$ will contribute more such factors.
\end{example}

The above example illustrates where Proposition \ref{prop.UD-act} comes from:
In general, we can compute $\left(  \phi\left(  \w\left(
\pp\right)  \right)  \right)  \left(  x^{s}\right)  $ in the same way,
decomposing $\phi\left(  \w\left(  \pp\right)  \right)
$ into a product of $D$'s and $U$'s (which correspond, respectively, to
SE-steps and NE-steps of $\w\left(  \pp\right)  $), and
letting each of these $D$'s and $U$'s act on the monomial $x^{s}$ sequentially
(starting with the last one). The letter $U$ acts as multiplication by $x$ and
thus sends $x^{k}$ to $x^{k+1}$, whereas the letter $D$ acts as $\dfrac
{\partial}{\partial x}$ and thus sends $x^{k}$ to $kx^{k-1}$. Thus, in total,
the exponent on our monomial $x^{s}$ is incremented once for each $U$ (that
is, for each NE-step of $\w\left(  \pp\right)  $) and
decremented once for each $D$ (that is, for each SE-step of $\w\left(  \pp\right)  $), so that it
becomes $s+h_{k}-h_{0}$ at the end (an easy consequence of \eqref{eq.U-D}).
The factors accumulating in front of the monomial are precisely the $k$'s
coming from the $D$'s, and thus are precisely the $s+h_{k}-h_{i+1}$
corresponding to the SE-steps $p_{i}$ of $\pp$, since it is these
SE-steps that turn into letters $D$ in $\phi\left(  \w\tup{\pp}\right)  $ (and the
current degree of the monomial at the time when $D$ is applied is exactly
$s+h_{k}-h_{i+1}$). The final result is precisely the right hand side of
(\ref{eq.prop.UD-act.1}).

\begin{vershort}
This argument can be easily translated into a rigorous proof of Proposition~\ref{prop.UD-act}, which can be found in the detailed version of this paper.
\end{vershort}

\begin{verlong}
This argument can be translated into the following rigorous proof of
Proposition~\ref{prop.UD-act}:

\begin{proof}
[Proof of Proposition~\ref{prop.UD-act}.]We induct on $k$.

The \textit{base case} ($k=0$) is obvious (here, $h_{k}=h_{0}$, whereas
$\w\left(  \pp\right)  $ is the empty word, and thus
$\phi\left(  \w\left(  \pp\right)  \right)  $ is the
unity of $\W$, and the product over the SE-steps of $\pp$ is empty).

For the \textit{induction step}, we fix a positive integer $k$, and assume
that the proposition is already proved for $k-1$ instead of $k$. Let
$s\in\ZZ$ be arbitrary. Our goal is to prove (\ref{eq.prop.UD-act.1}).

The definition of the action of $\W$ on $\kk\left[
x,x^{-1}\right]  $ yields%
\[
Ux^{s}=xx^{s}=x^{s+1}\ \ \ \ \ \ \ \ \ \ \text{and}\ \ \ \ \ \ \ \ \ \ Dx^{s}%
=\dfrac{\partial}{\partial x}x^{s}=sx^{s-1}.
\]

By removing the last step from our path $\pp=\left(  p_{0},p_{1}%
,\ldots,p_{k}\right)  $, we obtain the shorter diagonal path $\qq%
=\left(  p_{0},p_{1},\ldots,p_{k-1}\right)  $. We are in one of the following
two cases:

\textit{Case 1:} We have $p_{k-1}\nearrow p_{k}$.

\textit{Case 2:} We have $p_{k-1}\searrow p_{k}$.

Let us first consider Case 1. In this case, we have $p_{k-1}\nearrow p_{k}$.
Thus, the diagonal path $\pp$ is $\qq$ followed by the NE-step
$p_{k-1}\nearrow p_{k}$. Hence, the definition of a reading word yields
$w\left(  \pp\right)  =w\left(  \qq\right)  U$. Therefore,
\begin{align}
\phi\left(  \w\left(  \pp\right)  \right)   &
=\phi\left(  \w\left(  \qq\right)  U\right)
=\phi\left(  \w\left(  \qq\right)  \right)
\underbrace{\phi\left(  U\right)  }_{=U}\ \ \ \ \ \ \ \ \ \ \left(
\text{since }\phi\text{ is a monoid morphism}\right)  \nonumber\\
&  =\phi\left(  \w\left(  \qq\right)  \right)
U.\label{pf.prop.UD-act.2}%
\end{align}
Therefore,%
\begin{align}
\left(  \phi\left(  \w\left(  \pp\right)  \right)
\right)  \left(  x^{s}\right)   &  =\left(  \phi\left(  \w%
\left(  \qq\right)  \right)  U\right)  \left(  x^{s}\right)  =\left(
\phi\left(  \w\left(  \qq\right)  \right)  \right)
\underbrace{\left(  Ux^{s}\right)  }_{=x^{s+1}}\nonumber\\
&  =\left(  \phi\left(  \w\left(  \qq\right)  \right)
\right)  \left(  x^{s+1}\right)  .\label{pf.prop.UD-act.3}%
\end{align}

Recall that the path $\pp$ is $\qq$ followed by the NE-step
$p_{k-1}\nearrow p_{k}$. Hence, the SE-steps of $\pp$ are exactly the
SE-steps of $\qq$.

Since $p_{k-1}\nearrow p_{k}$, we have $h_{k-1}=h_{k}-1$ (since $h_{k-1}%
=\htt\left(  p_{k-1}\right)  $ and $h_{k}=\htt%
\left(  p_{k}\right)  $). Thus, $1+h_{k-1}=h_{k}$.

However, $\qq$ is a diagonal path of length $k-1$. Hence, the induction
hypothesis allows us to apply Proposition~\ref{prop.UD-act} to $k-1$,
$\qq$ and $s+1$ instead of $k$, $\pp$ and $s$. We thus obtain%
\begin{align*}
&  \left(  \phi\left(  \w\left(  \qq\right)  \right)
\right)  \left(  x^{s+1}\right) \\
&  =\left(  \prod_{p_{i}\text{ is an SE-step of }\qq}\left(
s+1+h_{k-1}-h_{i+1}\right)  \right)  \cdot x^{s+1+h_{k-1}-h_{0}}\\
&  =\left(  \prod_{p_{i}\text{ is an SE-step of }\qq}\left(
s+h_{k}-h_{i+1}\right)  \right)  \cdot x^{s+h_{k}-h_{0}}%
\ \ \ \ \ \ \ \ \ \ \left(  \text{since }1+h_{k-1}=h_{k}\right) \\
&  =\left(  \prod_{p_{i}\text{ is an SE-step of }\pp}\left(
s+h_{k}-h_{i+1}\right)  \right)  \cdot x^{s+h_{k}-h_{0}}%
\end{align*}
(here, we have replaced $\qq$ by $\pp$ under the product sign,
since the SE-steps of $\pp$ are exactly the SE-steps of $\qq$).
In view of (\ref{pf.prop.UD-act.3}), we can rewrite this as
\[
\left(  \phi\left(  \w\left(  \pp\right)  \right)
\right)  \left(  x^{s}\right)
= \left(  \prod_{p_{i}\text{ is an SE-step of
}\pp}\left(  s+h_{k}-h_{i+1}\right)  \right)  \cdot x^{s+h_{k}-h_{0}}.
\]
Thus, (\ref{eq.prop.UD-act.1}) is proved in Case 1.

Let us now consider Case 2. In this case, we have $p_{k-1}\searrow p_{k}$.
Thus, the diagonal path $\pp$ is $\qq$ followed by the SE-step
$p_{k-1}\searrow p_{k}$. Hence, just like we proved (\ref{pf.prop.UD-act.2})
in Case 1, we can now show that%
\[
\phi\left(  \w\left(  \pp\right)  \right)  =\phi\left(
\w\left(  \qq\right)  \right)  D.
\]
Therefore,%
\begin{align}
\left(  \phi\left(  \w\left(  \pp\right)  \right)
\right)  \left(  x^{s}\right)   &  =\left(  \phi\left(  \w%
\left(  \qq\right)  \right)  D\right)  \left(  x^{s}\right)  =\left(
\phi\left(  \w\left(  \qq\right)  \right)  \right)
\underbrace{\left(  Dx^{s}\right)  }_{\substack{=sx^{s-1}}}\nonumber\\
&  =s\cdot\left(  \phi\left(  \w\left(  \qq\right)
\right)  \right)  \left(  x^{s-1}\right)  .\label{pf.prop.UD-act.6}%
\end{align}

Recall that the path $\pp$ is $\qq$ followed by the SE-step
$p_{k-1}\searrow p_{k}$. Hence, the SE-steps of $\pp$ are exactly the
SE-steps of $\qq$ as well as the additional SE-step $p_{k-1}$.
Therefore,%
\begin{align}
&  \prod_{p_{i}\text{ is an SE-step of }\pp}\left(  s+h_{k}%
-h_{i+1}\right) \nonumber\\
&  =\underbrace{\left(  s+h_{k}-h_{\left(  k-1\right)  +1}\right)  }%
_{=s+h_{k}-h_{k}=s}\cdot\left(  \prod_{p_{i}\text{ is an SE-step of
}\qq}\left(  s+h_{k}-h_{i+1}\right)  \right) \nonumber\\
&  =s\cdot\left(  \prod_{p_{i}\text{ is an SE-step of }\qq}\left(
s+h_{k}-h_{i+1}\right)  \right)  . \label{pf.prop.UD-act.7}%
\end{align}

Since $p_{k-1}\searrow p_{k}$, we have $h_{k-1}=h_{k}+1$ (since $h_{k-1}%
=\htt\left(  p_{k-1}\right)  $ and $h_{k}=\htt%
\left(  p_{k}\right)  $). Thus, $-1+h_{k-1}=h_{k}$.

However, $\qq$ is a diagonal path of length $k-1$. Hence, the induction
hypothesis allows us to apply Proposition~\ref{prop.UD-act} to $k-1$,
$\qq$ and $s-1$ instead of $k$, $\pp$ and $s$. We thus obtain%
\begin{align*}
&  \left(  \phi\left(  \w\left(  \qq\right)  \right)
\right)  \left(  x^{s-1}\right)  \\
&  =\left(  \prod_{p_{i}\text{ is an SE-step of }\qq}\left(
s-1+h_{k-1}-h_{i+1}\right)  \right)  \cdot x^{s-1+h_{k-1}-h_{0}}\\
&  =\left(  \prod_{p_{i}\text{ is an SE-step of }\qq}\left(
s+h_{k}-h_{i+1}\right)  \right)  \cdot x^{s+h_{k}-h_{0}}%
\ \ \ \ \ \ \ \ \ \ \left(  \text{since }-1+h_{k-1}=h_{k}\right)  .
\end{align*}
Thus, (\ref{pf.prop.UD-act.6}) can be rewritten as%
\begin{align*}
\left(  \phi\left(  \w\left(  \pp\right)  \right)
\right)  \left(  x^{s}\right)   &  =\underbrace{s\cdot\left(  \prod
_{p_{i}\text{ is an SE-step of }\qq}\left(  s+h_{k}-h_{i+1}\right)
\right)  }_{\substack{=\prod\limits_{p_{i}\text{ is an SE-step of }\pp}\left(
s+h_{k}-h_{i+1}\right)  \\\text{(by (\ref{pf.prop.UD-act.7}))}}}\cdot
\,x^{s+h_{k}-h_{0}}\\
&  =\left(  \prod_{p_{i}\text{ is an SE-step of }\pp}\left(
s+h_{k}-h_{i+1}\right)  \right)  \cdot x^{s+h_{k}-h_{0}}.
\end{align*}
Thus, (\ref{eq.prop.UD-act.1}) is proved in Case 2.

We have now proved (\ref{eq.prop.UD-act.1}) in both Cases 1 and 2. Thus, the
induction step is complete. This completes the proof of
Proposition~\ref{prop.UD-act}.
\end{proof}
\end{verlong}

The following is a version of Proposition~\ref{prop.UD-act} in which the
reading word $\w\left(  \pp\right)  $ additionally
undergoes the ``toggle-and-reverse''\ anti-automorphism $\omega$:

\begin{proposition}
\label{prop.DU-act}
Let $\pp=\left(  p_{0},p_{1},\ldots,p_{k}\right)  $
be a diagonal path. Let $h_{i}:=\htt\left(  p_{i}\right)  $ for
each $i\in\left\{  0,1,\ldots,k\right\}  $. Then, for each $s\in\ZZ$,
we have%
\[
\left(  \omega\left(  \phi\left(  \w\left(  \pp\right)
\right)  \right)  \right)  \left(  x^{s}\right)  =\left(  \prod_{p_{i}\text{
is an NE-step of }\pp}\left(  s+h_{0}-h_{i}\right)  \right)  \cdot
x^{s+h_{0}-h_{k}}.
\]

\end{proposition}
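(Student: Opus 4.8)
The plan is to reduce this statement to Proposition~\ref{prop.UD-act}, which we already have in hand, by exploiting two things: that the two $\omega$'s commute with $\phi$, and that applying the monoid anti-morphism $\omega$ to a reading word amounts to reflecting the underlying path. First I would invoke the commutativity of the diagram~\eqref{eq.omega.comm}, which gives $\omega\tup{\phi\tup{\w\tup{\pp}}} = \phi\tup{\omega\tup{\w\tup{\pp}}}$. Thus it suffices to understand how $\phi$ acts on the word $\omega\tup{\w\tup{\pp}}$, and for this I want to realize $\omega\tup{\w\tup{\pp}}$ as the reading word of an explicit diagonal path, so that Proposition~\ref{prop.UD-act} can be applied directly.

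The key step is to introduce the reflected path $\qq = \tup{q_0, q_1, \ldots, q_k}$ defined by $\htt\tup{q_j} = h_{k-j}$ for all $j$. A short check shows that $\qq$ is a genuine diagonal path: the step from $q_j$ to $q_{j+1}$ goes up precisely when $p_{k-j-1} \searrow p_{k-j}$, and goes down precisely when $p_{k-j-1} \nearrow p_{k-j}$, so each step of $\pp$ becomes a toggled step of $\qq$ read in reverse order. Comparing letter by letter, this is exactly the assertion that $\w\tup{\qq} = \omega\tup{\w\tup{\pp}}$. In particular, $q_j$ is an SE-step of $\qq$ if and only if $p_{k-1-j}$ is an NE-step of $\pp$.

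With $\qq$ in hand, I would apply Proposition~\ref{prop.UD-act} to $\qq$ in place of $\pp$, writing $g_j := \htt\tup{q_j} = h_{k-j}$. Since $g_0 = h_k$ and $g_k = h_0$, the exponent $s + g_k - g_0$ becomes $s + h_0 - h_k$, matching the target. For the product, I substitute $i = k - 1 - j$: as $j$ runs over the SE-steps of $\qq$, the index $i$ runs over the NE-steps of $\pp$, and the factor $s + g_k - g_{j+1} = s + h_0 - h_{k-j-1} = s + h_0 - h_i$. This turns the right-hand side of Proposition~\ref{prop.UD-act} applied to $\qq$ into exactly $\tup{\prod_{p_i \text{ is an NE-step of } \pp} \tup{s + h_0 - h_i}} \cdot x^{s + h_0 - h_k}$, and combined with the reduction of the first paragraph, $\phi\tup{\w\tup{\qq}} = \phi\tup{\omega\tup{\w\tup{\pp}}} = \omega\tup{\phi\tup{\w\tup{\pp}}}$, this is precisely the claim.

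The only delicate part is the index bookkeeping hidden in the reflection: one must keep straight that reversing the path turns the ``tail-height'' $h_{i+1}$ appearing in Proposition~\ref{prop.UD-act} into the ``head-height'' $h_i$ appearing here, and that toggling interchanges the roles of SE- and NE-steps. Once the substitution $i = k-1-j$ is fixed and the height relation $g_j = h_{k-j}$ is checked against the step directions, everything else is routine. (An alternative route would be a direct induction on $k$ mirroring the proof of Proposition~\ref{prop.UD-act}, using $\omega\tup{\phi\tup{\w\tup{\pp}}} = \phi\tup{U}\,\omega\tup{\phi\tup{\w\tup{\qq}}}$ or $\phi\tup{D}\,\omega\tup{\phi\tup{\w\tup{\qq}}}$ when the first step of $\pp$ is removed, but the reflection argument is shorter and reuses rather than re-derives the earlier result.)
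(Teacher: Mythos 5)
Your proposal is correct and matches the paper's own argument (given in the detailed version): reflect $\pp$ across a vertical axis to get $\qq$ with $g_j = h_{k-j}$, use $\omega\circ\phi=\phi\circ\omega$ to identify $\omega\tup{\phi\tup{\w\tup{\pp}}}$ with $\phi\tup{\w\tup{\qq}}$, apply Proposition~\ref{prop.UD-act} to $\qq$, and reindex via $i = k-1-j$. The index bookkeeping you carry out is exactly the paper's.
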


\begin{vershort}
The proof of this is similar to that of Proposition~\ref{prop.UD-act}; again, we refer to the detailed version for the details.
\end{vershort}

\begin{verlong}
\begin{proof}
This can be proved similarly to Proposition~\ref{prop.UD-act}. Alternatively,
this can be derived from Proposition~\ref{prop.UD-act} as follows:

\begin{figure}[htbp]
\begin{center}
\begin{tikzpicture}[scale=0.5]
\draw
[line width=2pt] (5,1) -- (6,2) -- (7,3) -- (8,4) -- (9,3) -- (10,2) -- (11,3) -- (12,2) -- (13,1) -- (14,0);
\draw[fill=blue] (5, 1) circle (0.1) node[left] {};
\draw[fill=blue] (14, 0) circle (0.1) node[right] {};
\node (p) at (12, 3) {$\pp$};
\end{tikzpicture}
\qquad
\begin{tikzpicture}
\draw (0,0.5mm) -- (0,-0.5mm);
\newlength\mylength
\setlength{\mylength}{\widthof{reflection}}
\draw[->] (0,0) -- (1.2\mylength,0) node[above,midway] {reflection};
\end{tikzpicture}
\qquad
\begin{tikzpicture}[xscale=-1, scale=0.5]
\draw
[line width=2pt] (5,1) -- (6,2) -- (7,3) -- (8,4) -- (9,3) -- (10,2) -- (11,3) -- (12,2) -- (13,1) -- (14,0);
\draw[fill=blue] (5, 1) circle (0.1) node[left] {};
\draw[fill=blue] (14, 0) circle (0.1) node[right] {};
\node (q) at (6, 3) {$\qq$};
\end{tikzpicture}
\end{center}
\caption{A path $\pp$ (left) and its reflection $\qq$ across a vertical axis (right).}
\label{fig.reflect}
\end{figure}

Let $s\in\ZZ$. Let $\qq$ be a diagonal path obtained by
reflecting $\pp$ across a vertical axis. Then, each NE-step of
$\pp$ becomes an SE-step in $\qq$ and vice versa, and moreover,
the order of all steps is reversed
(see Figure~\ref{fig.reflect}). Hence, $\w\left(
\qq\right)  =\omega\left(  \w\left(  \pp\right)
\right)  $ and thus
\[
\phi\left(  \w\left(  \qq\right)  \right)  =\phi\left(
\omega\left(  \w\left(  \pp\right)  \right)  \right)  =\omega\left(
\phi\left(  \w\left(  \pp\right)  \right)  \right)
\ \ \ \ \ \ \ \ \ \ \left(  \text{since }\omega\circ\phi=\phi\circ
\omega\right)  .
\]
Let us write the path $\qq$ as $\qq=\left(  q_{0},q_{1}%
,\ldots,q_{k}\right)  $ (we can do this, since it clearly has the same length
as $\pp=\left(  p_{0},p_{1},\ldots,p_{k}\right)  $). Let $g_{i}%
:=\htt\left(  q_{i}\right)  $ for each $i\in\left\{
0,1,\ldots,k\right\}  $. Since $\qq$ is the reflection of $\pp$
across a vertical axis, we thus have
\begin{equation}
g_{j}=h_{k-j}\ \ \ \ \ \ \ \ \ \ \text{for each }j\in\left\{  0,1,\ldots
,k\right\}  \label{pf.prop.DU-act.g=h}%
\end{equation}
(since the height of a point does not change when we reflect it across a
vertical axis). In particular, $g_{0}=h_{k}$ and $g_k=h_{0}$. However,
Proposition~\ref{prop.UD-act} (applied to $\qq$, $q_{i}$ and $g_{i}$
instead of $\pp$, $p_{i}$ and $h_{i}$) yields%
\begin{align}
\left(  \phi\left(  \w\left(  \qq\right)  \right)
\right)  \left(  x^{s}\right)   &  =\left(  \prod_{q_{i}\text{ is an SE-step
of }\qq}\left(  s+g_k-g_{i+1}\right)  \right)  \cdot x^{s+g_k-g_{0}}\nonumber\\
&  =\left(  \prod_{q_{i}\text{ is an SE-step of }\qq}\left(
s+h_{0}-h_{k-i-1}\right)  \right)  \cdot x^{s+h_{0}-h_{k}}%
\label{pf.prop.DU-act.2}%
\end{align}
(since (\ref{pf.prop.DU-act.g=h}) yields $g_{i+1}=h_{k-\left(  i+1\right)
}=h_{k-1-i}$ and $g_k=h_{0}$ and $g_{0}=h_{k}$). But the SE-steps of
$\qq$ correspond to the NE-steps of $\pp$; more specifically, a
given vertex $q_{i}$ of $\qq$ is an  SE-step of $\qq$ if and
only if $p_{k-1-i}$ is an NE-step of $\pp$. Hence,
\begin{align*}
& \prod_{q_{i}\text{ is an SE-step of }\qq}\left(  s+h_{0}%
-h_{k-i-1}\right)  \\
& =\prod_{p_{k-1-i}\text{ is an NE-step of }\pp}\left(  s+h_{0}%
-h_{k-i-1}\right)  \\
& =\prod_{p_{i}\text{ is an NE-step of }\pp}\left(  s+h_{0}%
-h_{i}\right)  \ \ \ \ \ \ \ \ \ \ \left(
\begin{array}
[c]{c}%
\text{here, we have substituted }i\\
\text{for }k-i-1\text{ in the product}%
\end{array}
\right)  .
\end{align*}
Thus, we can rewrite (\ref{pf.prop.DU-act.2}) as%
\[
\left(  \phi\left(  \w\left(  \qq\right)  \right)
\right)  \left(  x^{s}\right)  =\left(  \prod_{p_{i}\text{ is an NE-step of
}\pp}\left(  s+h_{0}-h_{i}\right)  \right)  \cdot x^{s+h_{0}-h_{k}}.
\]

In view of $\phi\left(  \w\left(  \qq\right)  \right)
=\omega\left(  \phi\left(  \w\left(  \pp\right)
\right)  \right)  $, we can rewrite this further as%
\[
\left(  \omega\left(  \phi\left(  \w\left(  \pp\right)
\right)  \right)  \right)  \left(  x^{s}\right)  =\left(  \prod_{p_{i}\text{
is an NE-step of }\pp}\left(  s+h_{0}-h_{i}\right)  \right)  \cdot
x^{s+h_{0}-h_{k}}.
\]
This proves Proposition~\ref{prop.DU-act}.
\end{proof}
\end{verlong}

As a consequence of Proposition~\ref{prop.DU-act}, we can easily obtain the following:

\begin{proposition}
\label{prop.DU-eqlett} Let $\pp=\left(  p_{0},p_{1},\ldots
,p_{k}\right)  $ and $\qq=\left(  q_{0},q_{1},\ldots,q_{m}\right)  $ be
two diagonal paths with the same initial height that satisfy $\phi\left(
\w\left(  \pp\right)  \right)  =\phi\left(
\w\left(  \qq\right)  \right)  $. Then, the final heights
of $\pp$ and $\qq$ are equal, and we have
\begin{align}
&  \left\{  \htt\left(  p_{i}\right)  \ \mid\ p_{i}\text{ is an
NE-step of }\pp\right\}  _{\multiset} \nonumber \\
&  =\left\{  \htt\left(  q_{i}\right)  \ \mid\ q_{i}\text{ is an
NE-step of }\qq\right\}  _{\multiset}.\label{eq:equality_step_heights}
\end{align}

\end{proposition}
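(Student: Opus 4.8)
The plan is to reduce everything to Proposition~\ref{prop.DU-act}, which already expresses the action of $\omega\tup{\phi\tup{\w\tup{\pp}}}$ on a monomial $x^s$ in terms of the heights of the NE-steps of $\pp$. Write $h_i := \htt\tup{p_i}$ and $g_i := \htt\tup{q_i}$, so that the hypothesis of equal initial heights reads $h_0 = g_0$. Since $\phi\tup{\w\tup{\pp}} = \phi\tup{\w\tup{\qq}}$ in $\W$, applying the well-defined anti-morphism $\omega : \W \to \W$ to both sides gives $\omega\tup{\phi\tup{\w\tup{\pp}}} = \omega\tup{\phi\tup{\w\tup{\qq}}}$. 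Equal elements of $\W$ act identically on $\kk\ive{x, x^{-1}}$, so for every $s \in \ZZ$ we get
\[
\tup{\omega\tup{\phi\tup{\w\tup{\pp}}}}\tup{x^s} = \tup{\omega\tup{\phi\tup{\w\tup{\qq}}}}\tup{x^s}.
\]
Note that faithfulness (Lemma~\ref{lem.weyl.faith}) is not needed here: we only use the trivial direction that equal operators produce equal outputs.

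Next I would expand both sides via Proposition~\ref{prop.DU-act}. Using $g_0 = h_0$, this yields, for all $s \in \ZZ$,
\[
\left( \prod_{p_i \text{ is an NE-step of } \pp} \tup{s + h_0 - h_i} \right) x^{s + h_0 - h_k} = \left( \prod_{q_i \text{ is an NE-step of } \qq} \tup{s + h_0 - g_i} \right) x^{s + h_0 - g_m}.
\]
Each coefficient is a polynomial in $s$, hence vanishes at only finitely many $s$; thus I can pick infinitely many integers $s$ at which the left-hand coefficient is nonzero. For such an $s$, the left side is a nonzero scalar multiple of $x^{s + h_0 - h_k}$, and matching it against the right side forces the two exponents to coincide, i.e.\ $h_k = g_m$. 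This is precisely the assertion that $\pp$ and $\qq$ have equal final heights.

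With $h_k = g_m$ in hand, the powers of $x$ agree for every $s$, so I may cancel them and conclude that the two coefficient polynomials in $s$ take equal values at all integers. Since $\kk$ has characteristic $0$, two polynomials agreeing at infinitely many points are identical, whence
\[
\prod_{p_i \text{ is an NE-step of } \pp} \tup{s + h_0 - h_i} = \prod_{q_i \text{ is an NE-step of } \qq} \tup{s + h_0 - g_i}
\]
as elements of $\kk\ive{s}$. Both sides are monic products of linear factors, so by unique factorization in $\kk\ive{s}$ their multisets of roots coincide. The root contributed by an NE-step $p_i$ is $h_i - h_0$, and similarly on the right; translating these root multisets by $+h_0$ (using $h_0 = g_0$) produces exactly \eqref{eq:equality_step_heights}.

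The only genuinely delicate point, and the one I would be most careful about, is the passage from ``the two displayed expressions agree for all integers $s$'' to ``the two coefficient polynomials are equal in $\kk\ive{s}$''. This is where the characteristic-$0$ hypothesis is essential: it both guarantees that a nonzero coefficient polynomial has only finitely many integer roots (giving the exponent-matching step enough room), and ensures that agreement on all integers upgrades to a polynomial identity, after which unique factorization immediately delivers the desired multiset equality.
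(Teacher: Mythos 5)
Your proof is correct and follows essentially the same route as the paper's: apply $\omega$ and Proposition~\ref{prop.DU-act} to both paths, compare exponents at a suitable integer $s$ to get equal final heights, then upgrade the coefficient identity to a polynomial identity in $\kk\ive{x}$ and invoke unique factorization to match the multisets of roots. The only cosmetic difference is that the paper chooses $s$ large enough to make both products visibly nonzero, whereas you observe that the coefficient polynomials vanish at only finitely many integers; both arguments are valid.
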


\begin{vershort}
\begin{proof}
Let $h_{i}:=\htt\left(  p_{i}\right)  $ for each $i\in\left\{
0,1,\ldots,k\right\}  $. Let $g_{i}:=\htt\left(  q_{i}\right)  $
for each $i\in\left\{  0,1,\ldots,m\right\}  $. Note that $h_{0}=g_{0}$ (since
$\pp$ and $\qq$ have the same initial height).

Let $s\in\NN$ be high enough to be larger than all numbers $h_{i}%
-h_{0}$ for all NE-steps $p_{i}$ of $\pp$ and also larger than all
numbers $g_{i}-g_{0}$ for all NE-steps $q_{i}$ of $\qq$.

Then, Proposition \ref{prop.DU-act} yields
\[
\left(  \omega\left(  \phi\left(  \w\left(  \pp\right)
\right)  \right)  \right)  \left(  x^{s}\right)  =\left(  \prod_{p_{i}\text{
is an NE-step of }\pp}\left(  s+h_{0}-h_{i}\right)  \right)  \cdot
x^{s+h_{0}-h_{k}}%
\]
and similarly%
\[
\left(  \omega\left(  \phi\left(  \w\left(  \qq\right)
\right)  \right)  \right)  \left(  x^{s}\right)  =\left(  \prod_{q_{i}\text{
is an NE-step of }\qq}\left(  s+g_{0}-g_{i}\right)  \right)  \cdot
x^{s+g_{0}-g_{m}}.
\]
But the left hand sides of these two equalities are equal, since $\phi\left(
\w\left(  \pp\right)  \right)  =\phi\left(
\w\left(  \qq\right)  \right)  $. Thus, the right hand
sides are also equal. In other words, we have%
\begin{align}
& \left(  \prod_{p_{i}\text{ is an NE-step of }\pp}\left(
s+h_{0}-h_{i}\right)  \right)  \cdot x^{s+h_{0}-h_{k}}\nonumber\\
& =\left(  \prod_{q_{i}\text{ is an NE-step of }\qq}\left(
s+g_{0}-g_{i}\right)  \right)  \cdot x^{s+g_{0}-g_{m}}%
.\label{pf.prop.DU-eqlett.1}%
\end{align}
The products on both sides of this equality are nonzero (since $s$ is larger
than all numbers $h_{i}-h_{0}$ for all NE-steps $p_{i}$ of $\pp$ and
also larger than all numbers $g_{i}-g_{0}$ for all NE-steps $q_{i}$ of
$\qq$). Thus,~\eqref{pf.prop.DU-eqlett.1} entails that the exponents $s+h_{0}-h_{k}$ and
$s+g_{0}-g_{m}$ are equal, and therefore we conclude that $h_{0}-h_{k}%
=g_{0}-g_{m}$. Since $h_{0}=g_{0}$, this entails $h_{k}=g_{m}$. In other
words, the final heights of $\pp$ and $\qq$ are equal.

Since the two exponents $s+h_{0}-h_{k}$ and $s+g_{0}-g_{m}$ in
(\ref{pf.prop.DU-eqlett.1}) are equal, we obtain%
\begin{equation}\label{eq:NE-step-identity}
\prod_{p_{i}\text{ is an NE-step of }\pp}\left(  s+h_{0}-h_{i}\right)
=\prod_{q_{i}\text{ is an NE-step of }\qq}\left(  s+g_{0}-g_{i}\right)
\end{equation}
by comparing coefficients in (\ref{pf.prop.DU-eqlett.1}). Now forget that we fixed $s$. 
We just proved the equality~\eqref{eq:NE-step-identity}
for each sufficiently high $s\in\NN$. But this equality is a polynomial
identity in $s$, and thus must hold formally (since it holds for each
sufficiently high $s\in\NN$). In other words, it must hold
in the polynomial ring $\kk\left[  x\right]  $ if we replace $s$ by~$x$.

Now recall that $h_{0} =g_{0}$. So~\eqref{eq:NE-step-identity} still holds
as a polynomial identity if we substitute $x$ for $s + h_0 = s + g_0$
on both sides to transform the identity into%
\[
\prod_{p_{i}\text{ is an NE-step of }\pp}\left(  x-h_{i}\right)
=\prod_{q_{i}\text{ is an NE-step of }\qq}\left(  x-g_{i}\right)  .
\]

Since $\kk\left[  x\right]  $ is a unique factorization domain, this
yields that%
\begin{align*}
&  \left\{  h_{i}\ \mid\ p_{i}\text{ is an NE-step of }\pp\right\}
_{\multiset}\\
&  =\left\{  g_{i}\ \mid\ q_{i}\text{ is an NE-step of }\qq\right\}
_{\multiset}.
\end{align*}
This is exactly~\eqref{eq:equality_step_heights} since $h_{i}=\htt\left(  p_{i}\right)  $ and $g_{i}%
=\htt\left(  q_{i}\right)  $ for all respective $i$. This
completes the proof of Proposition \ref{prop.DU-eqlett}, since we have already
shown that the final heights of $\pp$ and $\qq$ are equal.
\end{proof}
\end{vershort}

\begin{verlong}
\begin{proof}
Let $h_{i}:=\htt\left(  p_{i}\right)  $ for each $i\in\left\{
0,1,\ldots,k\right\}  $. Let $g_{i}:=\htt\left(  q_{i}\right)  $
for each $i\in\left\{  0,1,\ldots,m\right\}  $. Note that $h_{0}=g_{0}$ (since
$\pp$ and $\qq$ have the same initial height).

Let $s\in\NN$ be high enough to be larger than all numbers $h_{i}%
-h_{0}$ for all NE-steps $p_{i}$ of $\pp$ and also larger than all
numbers $g_{i}-g_{0}$ for all NE-steps $q_{i}$ of $\qq$.

Then, Proposition \ref{prop.DU-act} yields
\[
\left(  \omega\left(  \phi\left(  \w\left(  \pp\right)
\right)  \right)  \right)  \left(  x^{s}\right)  =\left(  \prod_{p_{i}\text{
is an NE-step of }\pp}\left(  s+h_{0}-h_{i}\right)  \right)  \cdot
x^{s+h_{0}-h_{k}}%
\]
and similarly%
\[
\left(  \omega\left(  \phi\left(  \w\left(  \qq\right)
\right)  \right)  \right)  \left(  x^{s}\right)  =\left(  \prod_{q_{i}\text{
is an NE-step of }\qq}\left(  s+g_{0}-g_{i}\right)  \right)  \cdot
x^{s+g_{0}-g_{m}}.
\]
But the left hand sides of these two equalities are equal, since $\phi\left(
\w\left(  \pp\right)  \right)  =\phi\left(
\w\left(  \qq\right)  \right)  $. Thus, the right hand
sides are also equal. In other words, we have%
\begin{align}
& \left(  \prod_{p_{i}\text{ is an NE-step of }\pp}\left(
s+h_{0}-h_{i}\right)  \right)  \cdot x^{s+h_{0}-h_{k}}\nonumber\\
& =\left(  \prod_{q_{i}\text{ is an NE-step of }\qq}\left(
s+g_{0}-g_{i}\right)  \right)  \cdot x^{s+g_{0}-g_{m}}%
.\label{pf.prop.DU-eqlett.1}%
\end{align}
The products on both sides of this equality are nonzero (since $s$ is larger
than all numbers $h_{i}-h_{0}$ for all NE-steps $p_{i}$ of $\pp$ and
also larger than all numbers $g_{i}-g_{0}$ for all NE-steps $q_{i}$ of
$\qq$). Thus,~\eqref{pf.prop.DU-eqlett.1} entails that the exponents $s+h_{0}-h_{k}$ and
$s+g_{0}-g_{m}$ are equal, and therefore we conclude that $h_{0}-h_{k}%
=g_{0}-g_{m}$. Since $h_{0}=g_{0}$, this entails $h_{k}=g_{m}$. In other
words, the final heights of $\pp$ and $\qq$ are equal.

Since the two exponents $s+h_{0}-h_{k}$ and $s+g_{0}-g_{m}$ in
(\ref{pf.prop.DU-eqlett.1}) are equal, we obtain%
\[
\prod_{p_{i}\text{ is an NE-step of }\pp}\left(  s+h_{0}-h_{i}\right)
=\prod_{q_{i}\text{ is an NE-step of }\qq}\left(  s+g_{0}-g_{i}\right)
\]
by comparing coefficients in (\ref{pf.prop.DU-eqlett.1}).

Forget that we fixed $s$. We just proved the equality
\[
\prod_{p_{i}\text{ is an NE-step of }\pp}\left(  s+h_{0}-h_{i}\right)
=\prod_{q_{i}\text{ is an NE-step of }\qq}\left(  s+g_{0}-g_{i}\right)
\]
for each sufficiently high $s\in\NN$. But this equality is a polynomial
identity in $s$, and thus must hold formally (since it holds for each
sufficiently high $s\in\NN$). In other words, we have%
\[
\prod_{p_{i}\text{ is an NE-step of }\pp}\left(  x+h_{0}-h_{i}\right)
=\prod_{q_{i}\text{ is an NE-step of }\qq}\left(  x+g_{0}-g_{i}\right)
\]
in the polynomial ring $\kk\left[  x\right]  $. In view of $h_{0}%
=g_{0}$, we can rewrite this equality as%
\[
\prod_{p_{i}\text{ is an NE-step of }\pp}\left(  x+g_{0}-h_{i}\right)
=\prod_{q_{i}\text{ is an NE-step of }\qq}\left(  x+g_{0}-g_{i}\right)
.
\]
Substituting $x$ for $x+g_{0}$ on both sides of this equality, we transform it
into%
\[
\prod_{p_{i}\text{ is an NE-step of }\pp}\left(  x-h_{i}\right)
=\prod_{q_{i}\text{ is an NE-step of }\qq}\left(  x-g_{i}\right)  .
\]

Since $\kk\left[  x\right]  $ is a unique factorization domain, this
yields that%
\begin{align*}
&  \left\{  h_{i}\ \mid\ p_{i}\text{ is an NE-step of }\pp\right\}
_{\multiset}\\
&  =\left\{  g_{i}\ \mid\ q_{i}\text{ is an NE-step of }\qq\right\}
_{\multiset}.
\end{align*}
In other words,%
\begin{align*}
&  \left\{  \htt\left(  p_{i}\right)  \ \mid\ p_{i}\text{ is an
NE-step of }\pp\right\}  _{\multiset}\\
&  =\left\{  \htt\left(  q_{i}\right)  \ \mid\ q_{i}\text{ is an
NE-step of }\qq\right\}  _{\multiset}%
\end{align*}
(since $h_{i}=\htt\left(  p_{i}\right)  $ and $g_{i}%
=\htt\left(  q_{i}\right)  $ for all respective $i$). This
completes the proof of Proposition \ref{prop.DU-eqlett} (since we have already
shown that the final heights of $\pp$ and $\qq$ are equal).
\end{proof}
\end{verlong}

\begin{proposition}
\label{prop.DU-eqlett2}
Let $u$ and $v$ be two words in $\M$ such
that $\phi\left(  u\right)  =\phi\left(  v\right)  $. Then, $u$ and $v$
contain the same number of $D$'s and the same number of $U$'s.
\end{proposition}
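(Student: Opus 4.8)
The plan is to derive this directly from Proposition~\ref{prop.DU-eqlett}, which already does most of the work; the only thing left is to extract the individual $U$- and $D$-counts from its two conclusions. First I would let $\pp$ and $\qq$ be the standard paths of $u$ and $v$, respectively. By definition these both start at $\tup{0,0}$, so they have the same initial height $0$, and they satisfy $\w\tup{\pp} = u$ and $\w\tup{\qq} = v$. The hypothesis $\phi\tup{u} = \phi\tup{v}$ then reads $\phi\tup{\w\tup{\pp}} = \phi\tup{\w\tup{\qq}}$, so Proposition~\ref{prop.DU-eqlett} applies.

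From that proposition I obtain two facts: the final heights of $\pp$ and $\qq$ are equal, and the multiset of NE-step heights of $\pp$ equals that of $\qq$. The key observation is that these two facts pin down the two counts separately. Indeed, the cardinality of the NE-step height multiset (counted with multiplicity) is exactly the number of NE-steps of the path, which in turn equals the number of $U$'s in its reading word. Since the two multisets are equal, they have equal cardinality, and therefore $u$ and $v$ contain the same number of $U$'s.

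To get the $D$-counts I would invoke \eqref{eq.U-D0}, which states that the final height of a word equals its number of $U$'s minus its number of $D$'s. Since $u$ and $v$ have the same final height and, by the previous paragraph, the same number of $U$'s, equation~\eqref{eq.U-D0} forces them to have the same number of $D$'s as well. This completes the argument.

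I do not anticipate a genuine obstacle here, since the content is essentially packaged in Proposition~\ref{prop.DU-eqlett}; the only point requiring a moment's care is the bookkeeping observation that it is the \emph{cardinality} of the NE-step multiset (not merely its underlying set) that records the number of $U$'s, so that the multiset equality—and not just the equal final heights—is what is needed to separate the $U$-count from the $D$-count.
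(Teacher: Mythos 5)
Your proposal is correct and follows essentially the same route as the paper's own proof: both pass to the standard paths, invoke Proposition~\ref{prop.DU-eqlett} to equate the final heights and the multisets of NE-step heights, read off the $U$-count from the cardinality of that multiset, and then recover the $D$-count from the final-height relation (the paper uses \eqref{eq.U-D} where you use its specialization \eqref{eq.U-D0}, an immaterial difference).
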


\begin{proof}
Let $\pp=\left(  p_{0},p_{1},\ldots,p_{k}\right)  $ be the standard path of $u$, and let $\qq=\left(  q_{0},q_{1},\ldots
,q_{m}\right)  $ be the standard path of $v$.
The paths $\pp$ and $\qq$ both start at $\tup{0,0}$ (by the definition of a standard path), and thus have the same initial height (namely, $0$).
Moreover, their reading words are $\w\tup{\pp} = u$ and $\w\tup{\qq} = v$ (since $\pp$ and $\qq$ are the standard paths of $u$ and $v$).
Thus, from $\phi\left(  u\right)  =\phi\left(  v\right)  $, we obtain
$\phi\left(  \w\left(  \pp\right)  \right)  =\phi\left(
\w\left(  \qq\right)  \right)  $.
Hence, Proposition \ref{prop.DU-eqlett} yields
that the final heights of $\pp$ and $\qq$ are equal, and that
\begin{align*}
&  \left\{  \htt\left(  p_{i}\right)  \ \mid\ p_{i}\text{ is an
NE-step of }\pp\right\}  _{\multiset}\\
&  =\left\{  \htt\left(  q_{i}\right)  \ \mid\ q_{i}\text{ is an
NE-step of }\qq\right\}  _{\multiset}.
\end{align*}
The latter equality yields (in particular) that the number of NE-steps of
$\pp$ equals the number of NE-steps of $\qq$. Since the NE-steps
of $\pp$ are in bijection with the $U$'s in $\w\left(
\pp\right)  =u$, and similarly for $\qq$, we can rewrite this as
follows: The number of $U$'s in $u$ equals the number of $U$'s in $v$. In
other words, $u$ and $v$ contain the same number of $U$'s.

It remains to see that the words $u$ and $v$ contain the same number of $D$'s.
But \eqref{eq.U-D} shows that%
\[
\left(  \text{final height of }\pp\right)  -\left(  \text{initial height of }\pp\right)
=\left(  \text{\# of $U$'s in }\w\left(  \pp\right)  \right)
-\left(  \text{\# of $D$'s in }\w\left(  \pp\right)  \right)
\]
and
\[
\left(  \text{final height of }\qq\right)
-\left(  \text{initial height of }\qq\right)
=\left(  \text{\# of $U$'s in }\w\left(  \qq\right)  \right)
-\left(  \text{\# of $D$'s in }\w\left(  \qq\right)  \right)  .
\]
The left hand sides of these two equalities are equal (since the paths
$\pp$ and $\qq$ have the same initial height and the same final
height). Thus, so are the right hand sides:
\[
\left(  \text{\# of $U$'s in }\w\left(  \pp\right)
\right)  -\left(  \text{\# of $D$'s in }\w\left(  \pp%
\right)  \right)  =\left(  \text{\# of $U$'s in }\w\left(
\qq\right)  \right)  -\left(  \text{\# of $D$'s in }\w%
\left(  \qq\right)  \right)  .
\]
In other words,%
\[
\left(  \text{\# of $U$'s in }u\right)  -\left(  \text{\# of $D$'s in
}u\right)  =\left(  \text{\# of $U$'s in }v\right)  -\left(  \text{\# of $D$'s
in }v\right)
\]
(since $u=\w\left(  \pp\right)  $ and
$v=\w\left(  \qq\right)  $). Since $\left(  \text{\# of
$U$'s in }u\right)  =\left(  \text{\# of $U$'s in }v\right)  $ (because $u$
and $v$ contain the same number of $U$'s), we thus conclude that $\left(
\text{\# of $D$'s in }u\right)  =\left(  \text{\# of $D$'s in }v\right)  $. In
other words, $u$ and $v$ contain the same number of $D$'s. Proposition
\ref{prop.DU-eqlett2} is thus fully proved.
\end{proof}

\begin{lemma}
\label{lem.act-diag}
Let $u\in\M$ be a balanced word. Let
$s\in\ZZ$. Then, $\left(  \phi\left(  u\right)  \right)  \left(
x^{s}\right)  =\lambda_{u,s}x^{s}$ for some $\lambda_{u,s}\in\ZZ$.
\end{lemma}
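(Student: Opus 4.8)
The plan is to read off the claim directly from Proposition~\ref{prop.UD-act}, using balancedness to pin down the two endpoint heights. Concretely, I would let $\pp = \tup{p_0, p_1, \ldots, p_k}$ be the standard path of $u$, and set $h_i := \htt\tup{p_i}$ for each $i$. By the definition of the standard path, $\pp$ starts at $\tup{0,0}$, so its initial height is $h_0 = 0$, and its reading word is $\w\tup{\pp} = u$. Since $u$ is balanced, it has equally many $U$'s as $D$'s, so \eqref{eq.U-D0} forces the final height of $u$ to be $0$; that is, $h_k = 0$.

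With both $h_0 = 0$ and $h_k = 0$ in hand, I would simply invoke Proposition~\ref{prop.UD-act} for this path $\pp$ and the given $s \in \ZZ$. The formula \eqref{eq.prop.UD-act.1} then collapses, because the exponent $s + h_k - h_0$ becomes $s$ and each factor $s + h_k - h_{i+1}$ becomes $s - h_{i+1}$. Thus
\[
\tup{\phi\tup{u}}\tup{x^s} = \left( \prod_{p_i \text{ is an SE-step of } \pp} \tup{s - h_{i+1}} \right) \cdot x^s ,
\]
so the desired scalar is $\lambda_{u,s} = \prod_{p_i \text{ is an SE-step of } \pp} \tup{s - h_{i+1}}$.

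There is no real obstacle here; the only thing left to check is that $\lambda_{u,s}$ is an integer, and this is immediate: $s \in \ZZ$ by assumption, and every height $h_{i+1}$ is a $y$-coordinate of a vertex of $\ZZ^2$, hence an integer, so each factor $s - h_{i+1}$ lies in $\ZZ$ and their product does too. This completes the argument; the balancedness hypothesis is used only to guarantee $h_0 = h_k = 0$ (equivalently, that the exponent of $x$ on the right-hand side of \eqref{eq.prop.UD-act.1} is exactly $s$), which is precisely what makes $\phi\tup{u}$ act diagonally on the monomial basis.
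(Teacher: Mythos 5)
Your proposal is correct and matches the paper's own proof essentially verbatim: both take the standard path of $u$, use balancedness (via \eqref{eq.U-D0}) to conclude $h_0 = h_k = 0$, and then read off the claim from Proposition~\ref{prop.UD-act} with $\lambda_{u,s} = \prod_{p_i \text{ is an SE-step of } \pp} \tup{s - h_{i+1}}$. Nothing to add.
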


\begin{vershort}
\begin{proof}
Let $\pp = \tup{p_0, p_1, \ldots, p_k}$ be the standard path of $u$.
Both the initial height and the final height of $\pp$ are $0$ (since $u$ is balanced).
Thus, the claim follows easily from Proposition \ref{prop.UD-act}.
\end{proof}
\end{vershort}

\begin{verlong}
\begin{proof}
Let $\pp=\left(  p_{0},p_{1},\ldots,p_{k}\right)  $
be the standard path of $u$.
This path $\pp$ starts at $\tup{0,0}$ and thus has initial height $0$.
Thus, the final height of $\pp$ is also $0$ (by
\eqref{eq.U-D}, since the word $\w\left(  \pp\right)  =u$
is balanced).

Let $h_{i}:=\htt\left(  p_{i}\right)  $ for each $i\in\left\{
0,1,\ldots,k\right\}  $. Thus, $h_{k}=0$ (since the final height of
$\pp$ is $0$) and $h_{0}=0$ (since the initial height of $\pp$
is $0$). Now, Proposition \ref{prop.UD-act} yields%
\[
\left(  \phi\left(  \w\left(  \pp\right)  \right)
\right)  \left(  x^{s}\right)  =\left(  \prod_{p_{i}\text{ is an SE-step of
}\pp}\left(  s+h_{k}-h_{i+1}\right)  \right)  \cdot x^{s+h_{k}-h_{0}}.
\]
Since $\w\left(  \pp\right)  =u$ and
$s+\underbrace{h_{k}}_{=0}-\underbrace{h_{0}}_{=0}=s$, we can rewrite this as%
\[
\left(  \phi\left(  u\right)  \right)  \left(  x^{s}\right)  =\left(
\prod_{p_{i}\text{ is an SE-step of }\pp}\left(  s+h_{k}-h_{i+1}%
\right)  \right)  \cdot x^{s}.
\]
In other words, $\left(  \phi\left(  u\right)  \right)  \left(  x^{s}\right)
=\lambda_{u,s}x^{s}$ for some $\lambda_{u,s}\in\ZZ$ (namely, for \newline
$\lambda_{u,s}=\prod\limits_{p_{i}\text{ is an SE-step of }\pp}\left(
s+h_{k}-h_{i+1}\right)  $). This proves Lemma \ref{lem.act-diag}.
\end{proof}
\end{verlong}

\begin{lemma}
\label{lem.bal.weyl0}
Let $a$ and $b$ be two balanced words in $\M$.
Then, $\phi\tup{a} \phi\tup{b} = \phi\tup{b} \phi\tup{a}$.
\end{lemma}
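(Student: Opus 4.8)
The plan is to exploit the fact that balanced words act \emph{diagonally} on the monomial basis of $\kk\ive{x}$, and that diagonal operators automatically commute. First I would invoke the faithfulness of the action (Lemma~\ref{lem.weyl.faith}): in order to prove the equality $\phi\tup{a}\phi\tup{b} = \phi\tup{b}\phi\tup{a}$ of elements of $\W$, it suffices to show that these two elements act identically on $\kk\ive{x}$. Since the monomials $x^s$ (for $s\in\NN$) span $\kk\ive{x}$ as a $\kk$-vector space, and since the action is $\kk$-linear, it is enough to check that the two elements agree on each monomial $x^s$.

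The key step is that both operators scale each monomial. By Lemma~\ref{lem.act-diag} (applicable because $a$ is balanced), there is a scalar $\lambda_{a,s}\in\ZZ$ with $\tup{\phi\tup{a}}\tup{x^s} = \lambda_{a,s}\,x^s$; likewise, since $b$ is balanced, there is a scalar $\lambda_{b,s}\in\ZZ$ with $\tup{\phi\tup{b}}\tup{x^s} = \lambda_{b,s}\,x^s$. Hence both $\phi\tup{a}$ and $\phi\tup{b}$ send $x^s$ to a scalar multiple of itself, and computing the two composites directly gives
\[
\tup{\phi\tup{a}\phi\tup{b}}\tup{x^s} = \lambda_{b,s}\tup{\phi\tup{a}}\tup{x^s} = \lambda_{a,s}\lambda_{b,s}\,x^s = \lambda_{a,s}\tup{\phi\tup{b}}\tup{x^s} = \tup{\phi\tup{b}\phi\tup{a}}\tup{x^s},
\]
where the middle equality is just the commutativity of multiplication of scalars in $\kk$.

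Since this holds for every $s\in\NN$, the two operators $\phi\tup{a}\phi\tup{b}$ and $\phi\tup{b}\phi\tup{a}$ act identically on all of $\kk\ive{x}$, and faithfulness (Lemma~\ref{lem.weyl.faith}) then yields the desired equality in $\W$. There is essentially no serious obstacle: the one point requiring care is that Lemma~\ref{lem.act-diag} must be applied to \emph{both} $a$ and $b$, which is exactly justified by the hypothesis that both words are balanced; once both act as scalars on each $x^s$, commutativity is immediate.
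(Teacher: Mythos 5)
Your proposal is correct and is essentially identical to the paper's first proof of this lemma: both apply Lemma~\ref{lem.act-diag} to see that $\phi\tup{a}$ and $\phi\tup{b}$ act diagonally on the monomial basis, observe that diagonal operators commute, and conclude via faithfulness (Lemma~\ref{lem.weyl.faith}). No gaps.
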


\begin{vershort}
\begin{proof}[First proof.]
Lemma \ref{lem.act-diag} shows that the elements $\phi\tup{a}$ and $\phi\tup{b}$ of $\W$ act on the $\W$-module $\kk\ive{x}$ as diagonal matrices.
Since diagonal matrices commute, we thus conclude that the actions of $\phi\tup{a}$ and $\phi\tup{b}$ on $\kk\ive{x}$ commute.
Hence, the elements $\phi\tup{a}$ and $\phi\tup{b}$ themselves commute (since the $\W$-module $\kk\ive{x}$ is faithful).
\end{proof}
\end{vershort}

\begin{verlong}
\begin{proof}[First proof.]
Let $s\in\NN$. Then,
Lemma \ref{lem.act-diag} (applied to $u=a$) shows that
$\left(  \phi\left(  a\right)  \right)  \left(  x^{s}\right)  =\lambda
_{a,s}x^{s}$ for some $\lambda_{a,s}\in\ZZ$. Similarly, $\left(
\phi\left(  b\right)  \right)  \left(  x^{s}\right)  =\lambda_{b,s}x^{s}$ for
some $\lambda_{b,s}\in\ZZ$. Consider these $\lambda_{a,s}$ and
$\lambda_{b,s}$.
Now,
\begin{align*}
\left(  \phi\tup{a} \phi\tup{b} \right)  \left(  x^{s}\right) 
&=\left(
\phi\left(  a\right)  \right)  \underbrace{\left(  \phi\left(  b\right)
\right)  \left(  x^{s}\right)  }_{=\lambda_{b,s}x^{s}}=\left(  \phi\left(
a\right)  \right)  \left(  \lambda_{b,s}x^{s}\right)  =\lambda_{b,s}%
\underbrace{\left(  \phi\left(  a\right)  \right)  \left(  x^{s}\right)
}_{=\lambda_{a,s}x^{s}} \\
&=\lambda_{b,s}\lambda_{a,s}x^{s}.
\end{align*}
Similarly,
\[
\left(  \phi\tup{b} \phi\tup{a}  \right)  \left(  x^{s}\right)  =\lambda
_{a,s}\lambda_{b,s}x^{s}.
\]
The right hand sides of these two equalities are equal (since $\lambda
_{b,s}\lambda_{a,s}=\lambda_{a,s}\lambda_{b,s}$). Hence, so are their left
hand sides. In other words, $\left( \phi\tup{a} \phi\tup{b} \right)  \left(
x^{s}\right)  =\left(  \phi\tup{b} \phi\tup{a} \right)  \left(  x^{s}\right)  $.

Forget that we fixed $s$. We thus have shown that
$\left( \phi\tup{a} \phi\tup{b} \right)  \left(
x^{s}\right)  =\left(  \phi\tup{b} \phi\tup{a} \right)  \left(  x^{s}\right)  $
for each $s\in\NN$. By linearity, this
entails that $\left( \phi\tup{a} \phi\tup{b} \right)  \left(  p\right)
=\left( \phi\tup{b} \phi\tup{a} \right)  \left(  p\right)  $ for any
polynomial $p\in\kk\left[  x\right]  $. Since the action of
$\W$ on $\kk\left[  x\right]  $ is faithful, this shows that
$\phi\tup{a} \phi\tup{b} = \phi\tup{b} \phi\tup{a}$ in $\W$.
This proves Lemma \ref{lem.bal.weyl0}.
\end{proof}
\end{verlong}

\begin{proof}[Second proof.]
The following alternative proof has been suggested to us by J\"orgen Backelin.
It shows that Lemma~\ref{lem.bal.weyl0} is a disguised form of a classical result known already to Dixmier \cite{Dixmie68}.

We equip the Weyl algebra
$\W$ with a $\ZZ$-grading by deciding that its generators $U$
and $D$ be homogeneous of degrees $1$ and $-1$, respectively. The $0$-th
graded component $\W_{0}$ of $\W$ is then spanned by the
images of the balanced words under $\phi$.
In particular, both $\phi\tup{a}$ and $\phi\tup{b}$ belong to $\W_0$ (since $a$ and $b$ are balanced words).

However, a result of Dixmier (\cite[last equation in \S 3.2]{Dixmie68}) says that the $\kk$-algebra
$\W_{0}$ is generated by the single element $\phi\left(  DU\right)$.
The proof of this result in \cite{Dixmie68} is fairly easy: We abbreviate
$\phi\left(  D\right)  $ and $\phi\left(  U\right)  $ as $\overline{D}$ and
$\overline{U}$. First it is shown that $\W$ is spanned by the
elements of the form $\overline{D}^{i}\overline{U}^{j}$ with $i,j\in
\NN$, which are homogeneous of respective degrees $j-i$. Therefore the
$0$-th graded component $\W_{0}$ is spanned by the elements of the
form $\overline{D}^{i}\overline{U}^{i}$ with $i\in\NN$. But each of the
latter elements can be rewritten as
\[
\overline{D}^{i}\overline{U}^{i}=\left(  \overline{D}\overline{U}\right)
\left(  \overline{D}\overline{U}+1\right)  \left(  \overline{D}\overline
{U}+2\right)  \cdots\left(  \overline{D}\overline{U}+i-1\right)
\]
(as can be proved by induction on $i$), which is clearly a polynomial in
$\overline{D}\overline{U}=\phi\left(  DU\right)  $. Thus the algebra
$\W_0$ is generated by the single element $\phi\left(  DU\right)$.

Now, the algebra $\W_0$ is commutative (since we have just shown that it is generated by a single element).
Therefore, any two of its elements commute.
In particular, $\phi\tup{a}$ and $\phi\tup{b}$ commute
(since both $\phi\tup{a}$ and $\phi\tup{b}$ belong to $\W_0$).
In other words, $\phi\tup{a} \phi\tup{b} = \phi\tup{b} \phi\tup{a}$.
This proves Lemma~\ref{lem.bal.weyl0} again.
\end{proof}

\begin{lemma}
\label{lem.bal.weyl}
Let $u,v\in\M$ be two words such that
$u \balsim v$. Then, $\phi\left(  u\right)
=\phi\left(  v\right)  $.
\end{lemma}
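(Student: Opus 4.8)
The plan is to reduce the statement to the single-commutation case and then invoke Lemma~\ref{lem.bal.weyl0}. The relation $u \balsim v$ means, by definition, that $v$ is obtained from $u$ by a finite sequence of balanced commutations. Since $\phi$ takes values in the multiplicative monoid of $\W$, it suffices to show that a single balanced commutation preserves the image under $\phi$; the general case then follows by chaining these equalities along the sequence (a trivial induction on the number of commutations, with the empty sequence giving $\phi\tup{u} = \phi\tup{u}$ as the base case).

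So the crux is the following: suppose $v$ is obtained from $u$ by one balanced commutation, meaning we can write $u = pxyq$ and $v = pyxq$, where $p, q \in \M$ are arbitrary words and $x, y \in \M$ are \emph{balanced}. I must show $\phi\tup{u} = \phi\tup{v}$. First I would apply the fact that $\phi$ is a monoid morphism to factor both images as products:
\[
\phi\tup{u} = \phi\tup{p}\,\phi\tup{x}\,\phi\tup{y}\,\phi\tup{q}
\qquad\text{and}\qquad
\phi\tup{v} = \phi\tup{p}\,\phi\tup{y}\,\phi\tup{x}\,\phi\tup{q}.
\]
Since $x$ and $y$ are balanced words, Lemma~\ref{lem.bal.weyl0} gives $\phi\tup{x}\,\phi\tup{y} = \phi\tup{y}\,\phi\tup{x}$. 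Substituting this commutation into the expression for $\phi\tup{u}$ turns it into the expression for $\phi\tup{v}$, so the two images are equal.

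There is no real obstacle here: the entire content sits in Lemma~\ref{lem.bal.weyl0} (that the images of balanced words commute), which has already been established. The only things to be careful about are purely bookkeeping matters -- that $\phi$ being a monoid morphism lets me split off the surrounding factors $\phi\tup{p}$ and $\phi\tup{q}$ cleanly, and that the induction over the sequence of commutations is set up correctly (it is the transitive closure of single commutations, so equality propagates). I would write the proof as a short induction on the length of the commutation sequence, handling the one-step case exactly as above, so that the argument is self-contained modulo the two cited facts.
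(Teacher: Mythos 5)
Your proposal is correct and matches the paper's own proof essentially verbatim: reduce to a single balanced commutation by induction on the length of the sequence, factor $\phi\tup{pxyq}$ and $\phi\tup{pyxq}$ using that $\phi$ is a monoid morphism, and apply Lemma~\ref{lem.bal.weyl0} to swap $\phi\tup{x}\phi\tup{y}=\phi\tup{y}\phi\tup{x}$. No gaps.
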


\begin{vershort}
\begin{proof}
It suffices to show that if $p, q \in \M$ are two words, and if $x, y \in \M$ are two balanced words, then $\phi\tup{pxyq} = \phi\tup{pyxq}$.
But this follows from Lemma~\ref{lem.bal.weyl0} (which yields $\phi\tup{x} \phi\tup{y} = \phi\tup{y} \phi\tup{x}$), since $\phi$ is a monoid morphism.
\end{proof}
\end{vershort}

\begin{verlong}
\begin{proof}
We have $u \balsim v$. Thus, by the definition of
the relation $\balsim$, the word $u$ can be
transformed into $v$ by a sequence of balanced commutations. Thus, we must
show that the image $\phi\left(  w\right)  $ of a word $w\in\M$ is
preserved whenever we apply a balanced commutation to $w$.
In other words, we must show that if a word $v$ is obtained from a word $w$ by a balanced commutation, then $\phi\tup{v} = \phi\tup{w}$.

So let $v$ and $w$ be two words such that $v$ is obtained from $w$ by a balanced commutation.
Thus, we can write $v$ and $w$ as $v = pxyq$ and $w = pyxq$, where $p, q \in \M$ are two words
and where $x, y \in \M$ are two balanced words.
Consider these $p, q, x, y$.
Since $x$ and $y$ are balanced, we have $\phi\tup{x} \phi\tup{y} = \phi\tup{y} \phi\tup{x}$ by Lemma~\ref{lem.bal.weyl0}.
However, from $v = pxyq$, we obtain
\[
\phi\tup{v} = \phi\tup{pxyq} = \phi\tup{p} \phi\tup{x} \phi\tup{y} \phi\tup{q} \ \ \ \ \ \ \ \ \ \ \ \left(\text{since $\phi$ is a monoid morphism}\right) .
\]
Similarly,
\[
\phi\tup{w} = \phi\tup{p} \phi\tup{y} \phi\tup{x} \phi\tup{q}
\ \ \ \ \ \ \ \ \ \ \ \left(\text{since $w = pyxq$}\right).
\]
The right hand sides of these two equalities are equal (since $\phi\tup{x} \phi\tup{y} = \phi\tup{y} \phi\tup{x}$).
Thus, so are the left hand sides.
In other words, $\phi\tup{v} = \phi\tup{w}$.
This completes our proof 
of Lemma \ref{lem.bal.weyl}.
\end{proof}
\end{verlong}

\section{Some words on words} \label{sec:words}

Next, we take a closer look at some properties of the monoid $\M$ of words.
We introduce some more terminology:

\begin{itemize}
\item A word $w\in\M$ is said to be \emph{rising} if it has at least
as many $U$'s as it has $D$'s.

\item A word $w\in\M$ is said to be \emph{falling} if it has at least
as many $D$'s as it has $U$'s.
\end{itemize}

Thus, each word $w\in\M$ is rising or falling or both. Moreover, the
balanced words $w\in\M$ are exactly the words $w\in\M$ that
are both rising and falling.

\begin{itemize}

\item A \emph{down-zig} means a word of the form $UD^{k}U$ for some $k\geq2$.

\item A rising word $w\in\M$ is said to be \emph{up-normal} if it
contains no down-zig as a factor.

\end{itemize}

For example, the rising word $UUDUDUDD$ is up-normal, whereas the rising word
$UUDDUU$ is not (since it has the down-zig $UDDU=UD^{2}U$ as a factor).

We could similarly define ``up-zigs'' and ``down-normal words'' (by
toggling each letter in down-zigs and up-normal words, respectively), but we
will have no need for them.
\medskip

For what follows, we need a simple property of products in a monoid:

\begin{lemma}
\label{lem.arb}
Let $a$ and $b$ be two elements of a monoid $M$.
Let $w \in M$ be any product of $a$'s and $b$'s ending with a $b$.
Then, $w$ can be written in the form $a^{r_1}b\, a^{r_2}b\, \cdots a^{r_h}b$
for some nonnegative integers $h$ and $r_1, r_2, \ldots, r_h$. (Note that these integers are allowed to be $0$.)
\end{lemma}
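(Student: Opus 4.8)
The plan is to induct on the number of $b$'s occurring in the given product $w$. Since $w$ ends with a $b$, this number is at least $1$, so the induction starts at $1$ rather than $0$, and in particular the resulting $h$ will satisfy $h \geq 1$. For the base case, suppose $w$ contains exactly one $b$; since $w$ ends in this $b$, every letter preceding it must be an $a$, so $w = a^{r_1} b$ where $r_1 \geq 0$ is the number of leading $a$'s, and the claim holds with $h = 1$.

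For the induction step, suppose $w$ contains $m \geq 2$ many $b$'s. I would let $a^{r_1}$ denote the maximal initial block of $a$'s in $w$ (allowing $r_1 = 0$), peel off the first $b$ that follows it, and write $w = a^{r_1} b\, w'$. The suffix $w'$ is again a product of $a$'s and $b$'s, it contains exactly $m - 1 \geq 1$ many $b$'s, and it still ends with a $b$, because the trailing $b$ of $w$ is a letter of the nonempty word $w'$. Hence the induction hypothesis applies to $w'$ and yields $w' = a^{r_2} b\, a^{r_3} b \cdots a^{r_h} b$; prepending $a^{r_1} b$ then gives $w$ in the desired form.

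The argument is essentially bookkeeping, and the only point requiring a moment's care is the verification that after peeling off the first block $a^{r_1} b$, the remaining suffix $w'$ is still nonempty and still ends in $b$, which is exactly what keeps the induction running and what the hypothesis "ending with a $b$" is there to guarantee. I expect no real obstacle beyond this. Indeed, one could avoid the induction entirely: letting $i_1 < \cdots < i_h$ be the positions of the $b$'s among the letters of the product (with $i_h$ the last position, since $w$ ends in $b$), one sets $r_1 = i_1 - 1$ and $r_j = i_j - i_{j-1} - 1$ for $j \geq 2$, observes that all letters strictly between consecutive $b$'s are $a$'s, and reads off the claimed factorization $w = a^{r_1} b\, a^{r_2} b \cdots a^{r_h} b$ directly.
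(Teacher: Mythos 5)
Your proof is correct, and the direct argument you sketch at the end (locating the positions of the $b$'s and reading off the blocks of $a$'s between consecutive ones) is exactly the paper's own proof; the induction on the number of $b$'s is just the same bookkeeping packaged recursively. No issues.
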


\begin{vershort}
\begin{proof}
Almost immediate. (See the detailed version of this paper.)
\end{proof}
\end{vershort}

\begin{verlong}
\begin{proof}
We assumed that $w$ is a product of $a$'s and $b$'s ending with a $b$.
Locate all the $b$ factors in this product. Between any two consecutive $b$ factors lies a (possibly empty) product of $a$'s.
Rewrite this product as $a^r$ for some nonnegative integer $r$. Do the same for the product of $a$'s that lies to the left of the first $b$.
Thus, the total product becomes $a^{r_1}b\, a^{r_2}b\, \cdots a^{r_h}b$, where $h$ is the number of all $b$ factors and where the $r_i$ are the exponents $r$ obtained from the rewriting process.
This proves Lemma~\ref{lem.arb}.
\end{proof}
\end{verlong}

\begin{proposition}
\label{rem.shape_normal}
Every up-normal word has the form
\[
D^a\, (UD)^{r_1}U\, (UD)^{r_2}U\, \cdots\, (UD)^{r_h}U\, D^b
\]
for some nonnegative integers $a,b,h$ and $r_1,r_2,\ldots,r_h$
(that is, a power of $D$, followed by a product of several factors of the form $(UD)^r U$, followed by a further power of $D$, where all powers are allowed to be empty).
\end{proposition}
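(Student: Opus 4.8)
The plan is to start from an arbitrary up-normal word $w$ and analyze it purely combinatorially, using the defining property that $w$ is rising (at least as many $U$'s as $D$'s) and contains no down-zig $UD^kU$ with $k \geq 2$ as a factor. First I would isolate the leading and trailing runs of $D$'s: write $w = D^a\, w'\, D^b$ where $a$ is the length of the maximal prefix consisting of $D$'s, $b$ is the length of the maximal suffix consisting of $D$'s, and $w'$ is the remaining middle portion. By maximality, if $w'$ is nonempty then it both starts and ends with a $U$ (unless $w$ is a pure power of $D$, which is handled by the degenerate case $w' = \varnothing$, $h = 0$). Since $w$ is rising, this stripping is consistent with the claimed shape.

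Next I would focus on the middle word $w'$, which starts and ends with $U$ and inherits the no-down-zig property from $w$. The key observation is that inside $w'$, every maximal run of consecutive $D$'s that is flanked by $U$'s on both sides must have length exactly $1$: if such a run had length $k \geq 2$, then $w'$ would contain $UD^kU$ as a factor, which is precisely a down-zig, contradicting up-normality. Hence every interior block of $D$'s in $w'$ is a single $D$. This means $w'$ is built entirely from the letters $U$ and isolated single $D$'s, each such $D$ sitting strictly between two $U$'s.

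To convert this into the asserted factorization, I would apply Lemma \ref{lem.arb} with the roles $a := UD$ and $b := U$ (or an equivalent direct grouping). Since $w'$ ends in $U$ and, by the previous step, can be read as an alternating arrangement in which every $D$ is immediately preceded by a $U$ and immediately followed by a $U$, I can parse $w'$ from left to right by greedily grouping each isolated $D$ together with its preceding $U$ into a factor $UD$, and then attaching a trailing $U$. This yields $w' = (UD)^{r_1} U\, (UD)^{r_2} U \cdots (UD)^{r_h} U$ for suitable $h \geq 1$ and $r_1, r_2, \ldots, r_h \geq 0$: the $U$'s that are \emph{not} immediately followed by an interior $D$ serve as the separating $U$ factors, while consecutive $UD$ blocks accumulate into the powers $(UD)^{r_i}$. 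Combining with the stripped prefix and suffix gives $w = D^a (UD)^{r_1} U \cdots (UD)^{r_h} U\, D^b$, as desired.

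The main obstacle I anticipate is the bookkeeping at the boundaries: ensuring that the greedy grouping is well-defined and that the degenerate cases (a pure power $D^a$, or a middle word with no interior $D$'s at all, giving $h = 1$ and $r_1 = 0$) are captured by allowing the exponents and $h$ to vanish. One must be careful that after stripping the maximal trailing block $D^b$, the remaining word genuinely ends in $U$ so that Lemma \ref{lem.arb} applies with $b = U$; this is exactly where the definition of $b$ as the \emph{maximal} trailing run of $D$'s is needed. Once the boundary cases are correctly absorbed into the allowance of empty powers, the argument is a routine parsing and requires no use of the Weyl algebra structure at all.
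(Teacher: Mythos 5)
Your proposal is correct and follows essentially the same route as the paper's own proof: strip the maximal initial and final runs of $D$'s, observe that the remaining word (which starts and ends with $U$) can contain no two consecutive $D$'s without creating a down-zig, pair each interior $D$ with the $U$ preceding it, and invoke Lemma~\ref{lem.arb} with $a = UD$ and $b = U$. The boundary cases you flag (pure power of $D$, empty exponents) are handled the same way in the paper, so no changes are needed.
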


\begin{proof}
If the word consists entirely of $D$'s, then this is trivial.
Otherwise, we first remove the initial and the final run of $D$'s (of length $a$ and $b$ respectively, both of which can also be $0$) from our word. The remaining word is still up-normal, but starts and ends with a $U$.

This remaining word therefore has no two consecutive $D$s, since any run of $D$s longer than a single $D$ would create a down-zig factor (when combined with the last $U$ before the run and the first $U$ after it).
Thus, each $D$ in this remaining word has to be preceded
by a $U$ (since the word starts with $U$).
This allows us to decompose this word into a product of $UD$'s and $U$'s (for instance, by reading it from right to left, and pairing each $D$ with the $U$ that necessarily precedes it); this product ends with a $U$ (since our word ends with a $U$).
Thus, our word can be written as
\[
(UD)^{r_1}U\, (UD)^{r_2}U\, \cdots\, (UD)^{r_h}U
\]
for some nonnegative integers $h$ and $r_1, r_2, \ldots, r_h$ (by Lemma~\ref{lem.arb}, applied to $a = UD$ and $b = U$).
This proves Proposition~\ref{rem.shape_normal}.
\end{proof}

The converse of Proposition~\ref{rem.shape_normal} also holds: Each rising word of the form shown in Proposition~\ref{rem.shape_normal} is up-normal. The proof is nearly trivial, but we will not use this fact in the following.
\medskip

Next, we observe a near-trivial symmetry of balanced commutations:

\begin{proposition}
    \label{prop.bal-symm}
    Let $u$ and $v$ be two words in $\M$.
    Then, $u \balsim v$ if and only if $\omega\tup{u} \balsim \omega\tup{v}$.
\end{proposition}

\begin{proof}
    The map $\omega$ transforms a word by reversing it and toggling each letter\footnote{We recall: To \emph{toggle} a letter means to replace it by the opposite letter (i.e., to replace a $U$ by a $D$ or a $D$ by a $U$).}.
    Clearly, both of these operations turn balanced factors of our word into balanced factors of the resulting word.
    Thus, if a word $a$ is obtained from a word $b$ by a balanced commutation, then $\omega\tup{a}$ is obtained from $\omega\tup{b}$ by a balanced commutation as well.
    The same must therefore hold for multiple balanced commutations applied in sequence.
    In other words, if $u \balsim v$, then $\omega\tup{u} \balsim \omega\tup{v}$.
    The converse holds for similar reasons (or can also be obtained by applying the preceding sentence to $\omega\tup{u}$ and $\omega\tup{v}$ instead of $u$ and $v$, since $\omega\circ\omega = \id$).
    Thus, Proposition~\ref{prop.bal-symm} is proved.
\end{proof}

Our main goal in this section is to prove the following proposition:

\begin{proposition}
\label{prop.upnorm.nf}
Let $w\in\M$ be a rising word. Then, there
exists a unique up-normal word $t\in\M$ such that
$t \balsim w$.
\end{proposition}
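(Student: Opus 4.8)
The plan is to prove existence and uniqueness separately, both resting on a single structural observation: a balanced commutation $pxyq \to pyxq$ merely reorders two sub-paths of the standard path that are \emph{both anchored at the same height} (since $x$ and $y$ are balanced, each returns to the height at which it began). Consequently, a balanced commutation leaves the entire multiset of vertex heights of the standard path unchanged; in particular it preserves the \textbf{NE-height multiset} $\set{\htt(p_i) \mid p_i \text{ is an NE-step}}_{\multiset}$ and the final height. Thus these two data are invariants of the relation $\balsim$. (Alternatively, this invariance follows from Lemma~\ref{lem.bal.weyl} together with Proposition~\ref{prop.DU-eqlett}.)

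For \emph{uniqueness}, I would compute these invariants for an up-normal word written in the form of Proposition~\ref{rem.shape_normal}, say $t = D^a (UD)^{r_1} U (UD)^{r_2} U \cdots (UD)^{r_h} U D^b$. Tracing the standard path, the initial $D^a$ brings us to height $-a$, and the $i$-th block $(UD)^{r_i}U$ contributes exactly $r_i + 1$ NE-steps, all at height $-a + i - 1$, before raising the height by $1$. Hence the NE-height multiset of $t$ is the set of consecutive integers $-a, -a+1, \ldots, -a+h-1$ with respective multiplicities $r_1+1, r_2+1, \ldots, r_h+1$, and the final height is $h - a - b$. From the NE-height multiset one recovers $a$ (as minus its minimum), $h$ (as the number of distinct heights it contains, which always form a gapless interval), and every $r_i$ (from the multiplicities); the final height then determines $b$. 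Therefore an up-normal word is \emph{uniquely} determined by its NE-height multiset together with its final height. Since any two up-normal words $t_1 \balsim t_2$ share these invariants, they must be equal; applied to two up-normal words both $\balsim w$, transitivity of $\balsim$ yields uniqueness. (The degenerate case of a rising word with no $U$'s is the empty word, which is trivially up-normal.)

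For \emph{existence}, I would introduce the weight $\Psi(w) = \sum_{i \,:\, w_i = U} i^2$, the sum of the squared positions of the $U$'s, and show that $w$ is $\balsim$-equivalent to an up-normal word by repeatedly applying $\Psi$-decreasing balanced commutations. Since every balanced commutation preserves the number of $U$'s and $D$'s, it keeps the word rising, and since $\Psi$ is a non-negative integer that strictly decreases at each step, the process must terminate. The crux is then the following claim: if a rising word $w$ is not up-normal, then some balanced commutation strictly decreases $\Psi$. Granting it, iterating must halt at a word admitting no such move, which by the contrapositive of the claim is up-normal. The underlying intuition is that a down-zig signals $U$'s that can be shifted left (equivalently, $D$'s pushed toward a boundary) by swapping suitable adjacent balanced factors, thereby lowering $\Psi$.

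The main obstacle is precisely the construction and verification of this $\Psi$-decreasing move. Two features make it delicate. First, a general down-zig $UD^kU$ with $k \ge 3$ is \emph{not} itself a product of two adjacent balanced factors, so the reducing commutation cannot be purely local to the down-zig; it must recruit neighboring letters (and risingness guarantees that enough $U$'s are available) in order to assemble the two balanced factors being swapped. Second, one cannot hope to drive the reduction by any of the obvious statistics — the numbers of $U$'s and $D$'s, the length, the number of inversions, or indeed any \emph{linear} functional of the letter positions — because the symmetry of swapping two balanced factors makes all of these invariant under $\balsim$; this is exactly why a genuinely convex weight such as $\Psi$ is required. I would organize the construction by first peeling off any leading or trailing run of $D$'s (prepending or appending a $D$ to an up-normal word keeps it up-normal, and this strictly shortens the word), thereby reducing to words that both begin and end with $U$, and then exhibiting on such words an explicit balanced commutation at the leftmost down-zig that provably lowers $\Psi$.
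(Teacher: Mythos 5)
Your uniqueness argument is sound and essentially the one in the paper: you show that the final height and the NE-height multiset are invariants of $\balsim$ (the paper phrases this via the height polynomial, Lemmas~\ref{lem:invariant} and~\ref{lem:laurent-w}, but your direct observation that a balanced commutation re-anchors two balanced sub-paths at the same height is correct), and your reconstruction of $a,h,r_1,\ldots,r_h,b$ from these invariants is exactly the computation in Lemma~\ref{lem:unique}.

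The existence half, however, has a genuine gap, and it sits precisely where you say the ``main obstacle'' is. The entire content of the existence proof is the combinatorial lemma that a rising, non-up-normal word admits a balanced commutation that makes progress; you assert this but do not prove it. In the paper this is Lemma~\ref{lem.upnorm.1} (resting on Lemma~\ref{lem.balanced.0}): one produces a factorization $w=upqv$ with $p,q$ balanced, $p$ starting with $U$ and $q$ starting with $D$, by growing a minimal rising subpath around the down-zig. Worse, your chosen potential $\Psi(w)=\sum_{i:\,w_i=U}i^2$ does not strictly decrease under the natural move even when that factorization is available: take $p=UDDU$ and $q=DUUD$, so $w=UDDUDUUD$ has its $U$'s at positions $1,4,6,7$ and $\Psi(w)=1+16+36+49=102$, while the swapped word $DUUDUDDU$ has its $U$'s at positions $2,3,5,8$ and $\Psi=4+9+25+64=102$ as well. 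So even granting the decomposition, your termination argument needs a different (and not obviously existing) $\Psi$-decreasing move. The paper sidesteps this entirely: since $p$ starts with $U$ and $q$ with $D$, the swap $upqv\mapsto uqpv$ is automatically a strict decrease in the lexicographic order with $D<U$ on words of fixed length, which terminates for free. I would recommend replacing $\Psi$ by the lexicographic order and then supplying a proof of the decomposition lemma (your instinct to reduce to words beginning and ending with $U$ is the right first step, and is how Lemma~\ref{lem.balanced.0} enters, but the ``recruiting of neighboring letters'' around a down-zig of a merely rising word is the part that genuinely requires an argument).
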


This up-normal word $t$ will be called the \emph{up-normal form} of $w$.

In order to prove Proposition \ref{prop.upnorm.nf}, we need some lemmas.
First, we show some simple identities that allow us to convert between the three height polynomials (height, NE-height and SE-height) of a diagonal path:

\begin{lemma}\label{lem:laurent}
Let $\pp$ be any diagonal path starting at height $a$ and ending at height $b$. We have
\begin{align}
    H(\pp,z) = (1+z)H_{\NE}(\pp,z) + \sum_{j \geq b} z^j - \sum_{j \geq a+1} z^j
    \label{eq.lem.laurent.NE}
\end{align}
and
\begin{align}
    H(\pp,z) = (1+z^{-1})H_{\SE}(\pp,z) + \sum_{j \geq a} z^j - \sum_{j \geq b+1} z^j.
    \label{eq.lem.laurent.SE}
\end{align}
(The infinite sums are formal Laurent series, but only finitely many addends survive the cancellation.)
\end{lemma}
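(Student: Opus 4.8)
The plan is to prove the first identity \eqref{eq.lem.laurent.NE} by induction on the length $k$ of the path $\pp = \tup{p_0, p_1, \ldots, p_k}$, and then to deduce the second identity \eqref{eq.lem.laurent.SE} from it by the same reflection trick that was used in the proof of Proposition~\ref{prop.DU-act}.

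For the base case $k = 0$, the path is a single vertex at height $a = b$, so $H(\pp, z) = z^a$ and $H_{\NE}(\pp, z) = 0$, while the right-hand side collapses to $\sum_{j \geq a} z^j - \sum_{j \geq a+1} z^j = z^a$. For the induction step I would remove the last step to obtain a shorter path $\qq = \tup{p_0, \ldots, p_{k-1}}$ starting at height $a$ and ending at some height $b'$, then split into two cases. If the last step is an NE-step, then $b = b'+1$, the vertex $p_{k-1}$ becomes a new NE-step at height $b-1$, and the new vertex $p_k$ contributes $z^b$ to $H(\pp,z)$; comparing $H(\pp,z) = H(\qq,z) + z^b$ against $(1+z)H_{\NE}(\pp,z) = (1+z)H_{\NE}(\qq,z) + z^{b-1} + z^b$ and using $\sum_{j \geq b-1} z^j = z^{b-1} + \sum_{j \geq b} z^j$ makes the tails telescope correctly. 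If instead the last step is an SE-step, then $b = b'-1$ and $H_{\NE}$ is unchanged, so the verification reduces to the identity $\sum_{j \geq b+1} z^j + z^b = \sum_{j \geq b} z^j$. In both cases the claim follows by invoking the induction hypothesis for $\qq$.

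To obtain \eqref{eq.lem.laurent.SE}, I would reflect $\pp$ across a vertical axis to get a path $\qq$ starting at height $b$ and ending at height $a$ (heights are preserved by this reflection). Since $\qq$ visits exactly the same vertices as $\pp$, we have $H(\qq, z) = H(\pp, z)$. Moreover, each SE-step of $\pp$ at height $h$ becomes an NE-step of $\qq$ at height $h-1$, and this correspondence is a bijection, so $H_{\NE}(\qq, z) = z^{-1} H_{\SE}(\pp, z)$. Applying the already-proven identity \eqref{eq.lem.laurent.NE} to $\qq$ — whose start and end heights are $b$ and $a$ — yields $H(\pp, z) = (1+z)z^{-1}H_{\SE}(\pp, z) + \sum_{j \geq a} z^j - \sum_{j \geq b+1} z^j$, which is exactly \eqref{eq.lem.laurent.SE}.

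The main obstacle is purely bookkeeping: tracking the formal geometric-series tails $\sum_{j \geq m} z^j$ and their single-index shifts as $b$ changes by $\pm 1$ in the induction step, and pinning down the precise height shift $z^{-1}$ in the relation $H_{\NE}(\qq, z) = z^{-1} H_{\SE}(\pp, z)$; once these are handled, the argument is routine. Alternatively, both identities can be proven directly by comparing the coefficient of $z^j$ on each side, using the crossing relation $n_j - s_{j+1} = \ive{b > j} - \ive{a > j}$ between the number $n_j$ of NE-steps and the number $s_{j+1}$ of SE-steps at the relevant levels, together with the vertex count $c_j = n_{j-1} + s_{j+1} + \ive{a = j}$.
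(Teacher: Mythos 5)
Your induction for \eqref{eq.lem.laurent.NE} is correct and is essentially the paper's own argument: same base case, same removal of the last vertex, same two cases with the identity $z^{b-1}+z^b=(1+z)z^{b-1}$ doing the work in the NE case and the tail absorption $z^b+\sum_{j\geq b+1}z^j=\sum_{j\geq b}z^j$ in the SE case. Where you genuinely diverge is \eqref{eq.lem.laurent.SE}: the paper simply declares its proof ``completely analogous'' (i.e., a second induction with SE-steps in place of NE-steps), whereas you deduce it from \eqref{eq.lem.laurent.NE} by reflecting $\pp$ across a vertical axis. Your bookkeeping there is right --- the reflection preserves vertex heights (so $H(\qq,z)=H(\pp,z)$), swaps start and end heights, and sends each SE-step of $\pp$ at height $h$ to an NE-step of $\qq$ at height $h-1$, giving $H_{\NE}(\qq,z)=z^{-1}H_{\SE}(\pp,z)$ and hence the factor $(1+z)z^{-1}=1+z^{-1}$. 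This buys you a single induction instead of two, at the cost of introducing the reflection machinery; the paper's remark following the lemma offers a third, even shorter route to the same deduction via the identity $H(\pp,z)=H_{\NE}(\pp,z)+H_{\SE}(\pp,z)+z^b$. Your closing sketch of a direct coefficient-comparison proof (using the level-crossing relation $n_j-s_{j+1}=\ive{b>j}-\ive{a>j}$ and the arrival count $c_j=n_{j-1}+s_{j+1}+\ive{a=j}$) also checks out and would avoid induction altogether.
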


\begin{proof}
We only prove \eqref{eq.lem.laurent.NE}, since the proof of \eqref{eq.lem.laurent.SE} is completely analogous.
If the path $\pp$ has length $0$, then $a = b$ as well as $H(\pp,z) = z^a = z^b$ and $H_{\NE}(\pp,z) = H_{\SE}(\pp,z) = 0$.
The identity clearly holds in this case.
Now proceed by induction, and let $\pp'$ be the diagonal path obtained by removing the last vertex from $\pp$.
Then $H(\pp,z) = H(\pp',z) + z^b$, and the final height of $\pp'$ is $b-1$ if the last step of $\pp$ is an NE-step, and $b+1$ otherwise.
In the former case, we have $H_{\NE}(\pp,z) = H_{\NE}(\pp',z) + z^{b-1}$, and the induction hypothesis gives us
\[
H(\pp',z) = (1+z)H_{\NE}(\pp',z) + \sum_{j \geq b-1} z^j - \sum_{j \geq a+1} z^j,
\]
from which the desired statement follows by adding $z^b$ on both sides (since $z^{b-1} + z^b = \tup{1+z} z^{b-1}$).
In the latter case, we have $H_{\NE}(\pp,z) = H_{\NE}(\pp',z)$, and the induction hypothesis gives us
\[
H(\pp',z) = (1+z)H_{\NE}(\pp',z) + \sum_{j \geq b+1} z^j - \sum_{j \geq a+1} z^j.
\]
Again, we add $z^b$ on both sides to obtain the desired identity. This completes the induction and thus the proof.
\end{proof}

\begin{remark}
Let $\pp$ and $b$ be as in Lemma~\ref{lem:laurent}.
By definition, we have
\[H(\pp,z) = H_{\NE}(\pp,z) + H_{\SE}(\pp,z) + z^b,\]
since $H_{\NE}$ covers all NE-steps, $H_{\SE}$ covers all SE-steps, and $z^b$ the final vertex. This can also be used to derive~\eqref{eq.lem.laurent.SE} from~\eqref{eq.lem.laurent.NE}.
\end{remark}

Recall that the height polynomial $H\tup{w, z}$ of a word $w$ was defined as the height polynomial $H\tup{\pp, z}$ of its standard path $\pp$.
Thus, Lemma~\ref{lem:laurent} can be applied to words:

\begin{lemma}\label{lem:laurent-w}
Let $w \in \M$ be a word with final height $b$. Then,
\begin{align}
    H(w,z) = (1+z)H_{\NE}(w,z) + \sum_{j \geq b} z^j - \sum_{j \geq 1} z^j
    \label{eq.lem.laurent-w.NE}
\end{align}
and
\begin{align}
    H(w,z) = (1+z^{-1})H_{\SE}(w,z) + \sum_{j \geq 0} z^j - \sum_{j \geq b+1} z^j.
    \label{eq.lem.laurent-w.SE}
\end{align}
\end{lemma}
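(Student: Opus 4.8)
The plan is to recognize this as nothing more than the specialization of Lemma~\ref{lem:laurent} to the standard path of $w$, with the initial height pinned to $0$. First I would recall the definitions: the quantities $H(w,z)$, $H_{\NE}(w,z)$ and $H_{\SE}(w,z)$ were defined precisely as $H(\pp,z)$, $H_{\NE}(\pp,z)$ and $H_{\SE}(\pp,z)$, where $\pp$ is the standard path of $w$. So the statement about words is literally a statement about $\pp$.

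Next I would pin down the two heights needed to invoke Lemma~\ref{lem:laurent}. The standard path $\pp$ starts at $\tup{0,0}$ by definition, so its initial height is $a = 0$. Its final height is $b$ by the hypothesis of the lemma (and by the definition of the final height of a word as the final height of its standard path). Thus $\pp$ is a diagonal path starting at height $0$ and ending at height $b$, exactly the situation covered by Lemma~\ref{lem:laurent}.

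Then the whole thing is a substitution. Applying \eqref{eq.lem.laurent.NE} with $a = 0$ turns the correction term $\sum_{j \geq a+1} z^j$ into $\sum_{j \geq 1} z^j$, which is precisely \eqref{eq.lem.laurent-w.NE}; and applying \eqref{eq.lem.laurent.SE} with $a = 0$ turns $\sum_{j \geq a} z^j$ into $\sum_{j \geq 0} z^j$, which is precisely \eqref{eq.lem.laurent-w.SE}. No induction or new computation is required, since all the work was already done in Lemma~\ref{lem:laurent}.

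I do not expect any genuine obstacle here: the only thing to be careful about is bookkeeping, namely confirming that the initial height of the standard path really is $0$ (so that $a+1 = 1$ and $a = 0$ come out correctly in the two summation bounds) and that the $b$ appearing in Lemma~\ref{lem:laurent} matches the final height of $w$. Once those identifications are made explicit, the proof is a single application of the prior lemma.
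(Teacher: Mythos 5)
Your proposal is correct and matches the paper's proof exactly: both simply apply Lemma~\ref{lem:laurent} to the standard path of $w$ with $a = 0$, after noting that $H(w,z)$, $H_{\NE}(w,z)$, $H_{\SE}(w,z)$ and the final height of $w$ are by definition the corresponding quantities for that path. Nothing further is needed.
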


\begin{proof}
Let $\pp$ be the standard path of $w$.
Then, the initial height of $\pp$ is $0$ (since $\pp$ starts at $\tup{0,0}$), whereas the final height of $\pp$ is the final height of $w$ (by the definition of the latter), and we have
\[
H(w,z) = H(\pp,z)
\quad \text{ and } \quad
H_{\NE}(w,z) = H_{\NE}(\pp,z)
\quad \text{ and } \quad
H_{\SE}(w,z) = H_{\SE}(\pp,z)
\]
(again by the definitions of the respective left hand sides).
Thus, Lemma~\ref{lem:laurent-w} follows from Lemma~\ref{lem:laurent} (applied to $a = 0$).
\end{proof}

As we said, height polynomials of words are a particular case of height polynomials of diagonal paths.
But the general case can easily be reduced to this particular case:

\begin{lemma}
\label{lem.H.shift}
Let $\rr$ be a diagonal path. Let $j$ be its initial
height, and let $w=\w\left(  \rr\right)  $ be its
reading word. Then, $H\left(  \rr,z\right)  =z^{j}H\left(  w,z\right)
$.
\end{lemma}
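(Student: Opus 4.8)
The plan is to relate the standard path of the reading word $w = \w(\rr)$ to the path $\rr$ itself by a vertical translation. Concretely, let $\pp$ be the standard path of $w$, so $\pp$ starts at $(0,0)$ and satisfies $\w(\pp) = w = \w(\rr)$. Since $\rr$ has initial height $j$, its initial vertex is at height $j$. A diagonal path is uniquely determined by its initial vertex and its reading word (as noted in the excerpt), and both $\pp$ and $\rr$ have the same reading word $w$. Therefore $\rr$ is obtained from $\pp$ by translating every vertex upward by $j$ (i.e., adding $j$ to each $y$-coordinate), since the horizontal positions and the step directions match and only the starting height differs.

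The key step is then to track what this vertical translation does to heights. If $\pp = (p_0, p_1, \ldots, p_k)$ with $p_i = (x_i, y_i)$, then $\rr = (p_0', p_1', \ldots, p_k')$ where $p_i' = (x_i, y_i + j)$, so $\htt(p_i') = \htt(p_i) + j$ for each $i$. The definition of the height polynomial gives
\[
H(\rr, z) = \sum_{i=0}^k z^{\htt(p_i')} = \sum_{i=0}^k z^{\htt(p_i) + j} = z^j \sum_{i=0}^k z^{\htt(p_i)} = z^j H(\pp, z).
\]
Finally, since $\pp$ is the standard path of $w$, we have $H(\pp, z) = H(w, z)$ by the definition of $H(w,z)$, and the result $H(\rr, z) = z^j H(w, z)$ follows immediately.

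I do not anticipate any genuine obstacle here: the statement is a direct consequence of the fact that a vertical shift by $j$ multiplies every monomial $z^{\htt}$ by $z^j$. The only point requiring a modicum of care is justifying that $\rr$ really is the vertical translate of $\pp$, which rests on the uniqueness of a diagonal path given its initial vertex and reading word (stated in the excerpt). One could alternatively bypass the standard path entirely and prove the claim by a short induction on the length $k$ of $\rr$, peeling off the last step exactly as in the proof of Lemma~\ref{lem:laurent}; but the translation argument is cleaner and makes the geometric content transparent.
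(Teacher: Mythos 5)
Your proof is correct and follows essentially the same route as the paper: identify $\rr$ as a translate of the standard path $\pp$ of $w$ and observe that the translation shifts every height by $j$, multiplying the height polynomial by $z^j$. The only tiny imprecision is your claim that the horizontal positions of $\rr$ and $\pp$ match (the translation may also have a nonzero horizontal component, since $\rr$ need not start at $x$-coordinate $0$), but this is immaterial because only the vertical component $j$ affects the heights.
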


\begin{vershort}
\begin{proof}
Let $\pp$ be the standard path of $w$.
Then, $\rr$ is the image of $\pp$ under a parallel translation with vertical component $j$.
Thus, $H\tup{\rr, z} = z^j H\tup{\pp, z}$.
Since $H\tup{w, z}$ is defined as $H\tup{\pp, z}$, we can rewrite this as $H\tup{\rr, z} = z^j H\tup{w, z}$.
\end{proof}
\end{vershort}

\begin{verlong}
\begin{proof}
Let $\pp=\left(  p_{0},p_{1},\ldots,p_{k}\right)  $ be the standard path of $w$. Then, $H\left(  w,z\right)
=H\left(  \pp,z\right)  $ (by the definition of $H\left(  w,z\right)  $).

However, the path $\pp$ has reading word $w$ (by the definition of a standard path). Thus,
the path $\rr$ has the same reading word as $\pp$
(since both paths have reading word $w$). Thus, it is an image of $\pp$
under the parallel translation by some vector $\left(  a,b\right)  $. Consider
this $\left(  a,b\right)  $. Clearly, $b$ must be the difference between the
initial height of $\rr$ and the initial height of $\pp$. Since
the initial height of $\rr$ is $j$, while the initial height of
$\pp$ is $0$ (since any standard path has initial height $0$), we thus obtain $b=j-0=j$.

On the other hand, $\rr$ is the image of the path $\pp=\left(
p_{0},p_{1},\ldots,p_{k}\right)  $ under the translation by the vector
$\left(  a,b\right)  $. Hence, $\rr=\left(  r_{0},r_{1},\ldots
,r_{k}\right)  $, where each $r_{i}$ is the image of the corresponding $p_{i}$
under this translation. Thus, for each $i\in\left\{  0,1,\ldots,k\right\}  $,
we have
\begin{equation}
\htt\left(  r_{i}\right)  =\htt\left(
p_{i}\right)  +b.
\label{pf.lem.H.shift.2}
\end{equation}

Now, the definition of $H\left(  \rr,z\right)  $ yields%
\begin{align*}
H(\rr,z)  & =\sum_{i=0}^{k}z^{\htt\left(  r_{i}\right)
}=\sum_{i=0}^{k}\underbrace{z^{\htt\left(  p_{i}\right)  +b}%
}_{=z^{b}z^{\htt\left(  p_{i}\right)  }}%
\ \ \ \ \ \ \ \ \ \ \left(  \text{by (\ref{pf.lem.H.shift.2})}\right)  \\
& =\underbrace{z^{b}}_{\substack{=z^{j}\\\text{(since }b=j\text{)}%
}}\ \ \underbrace{\sum_{i=0}^{k}z^{\htt\left(  p_{i}\right)  }%
}_{=H\left(  \pp,z\right)  =H\left(  w,z\right)  }=z^{j}H\left(
w,z\right)  .
\end{align*}
This proves Lemma \ref{lem.H.shift}.
\end{proof}
\end{verlong}

\begin{lemma} \label{lem.H.prod}
Let $u$ and $v$ be two words. Let
\[
k = \tup{\text{\# of $U$'s in $u$}} - \tup{\text{\# of $D$'s in $u$}}.
\]
Then,
\begin{align}
    H\tup{uv, z} = H\tup{u,z} + z^k \tup{H\tup{v,z}-1}; \label{eq.lem.H.prod.H} \\
    H_{\NE}\tup{uv, z} = H_{\NE}\tup{u,z} + z^k H_{\NE}\tup{v,z}; \label{eq.lem.H.prod.HNE} \\
    H_{\SE}\tup{uv, z} = H_{\SE}\tup{u,z} + z^k H_{\SE}\tup{v,z}.
    \label{eq.lem.H.prod.HSE}
\end{align}
\end{lemma}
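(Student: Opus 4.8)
The plan is to reduce everything to the geometry of standard paths and the behavior of heights under concatenation. Let $\pp$ be the standard path of $u$ and let $\rr$ be the standard path of the full word $uv$. The key observation is that $\rr$ consists of $\pp$ (which starts at $\tup{0,0}$ and ends at some vertex of height $k$, by \eqref{eq.U-D0}), followed by a translated copy of the standard path of $v$. More precisely, if $\qq$ is the standard path of $v$ (starting at $\tup{0,0}$), then the portion of $\rr$ tracing out the letters of $v$ is exactly the image of $\qq$ under a parallel translation whose vertical component is $k$ (since the height at which we begin reading $v$ inside $uv$ is precisely the final height of $u$, which equals $k$). I would state this decomposition carefully, noting that the shared vertex (the final vertex of the $\pp$-part, which is the initial vertex of the translated $\qq$-part) is counted once in $\rr$ but would be counted in both pieces if we naively summed.

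First I would handle \eqref{eq.lem.H.prod.HNE} and \eqref{eq.lem.H.prod.HSE}, as these are the cleanest. The NE-steps of $\rr$ split into two disjoint groups: those lying in the $\pp$-part and those lying in the translated $\qq$-part. The former contribute exactly $H_{\NE}\tup{u,z}$. For the latter, each NE-step of the translated copy has height equal to its height in $\qq$ plus $k$, so by the same shifting argument as in Lemma~\ref{lem.H.shift}, these contribute $z^k H_{\NE}\tup{v,z}$. Crucially, the shared vertex is neither an NE-step nor an SE-step issue in a problematic way: the final vertex of the $\pp$-part is the initial vertex of the $\qq$-part, and whether it counts as an NE-step is determined by the step leaving it, which belongs to the $\qq$-part; so there is no double counting among steps. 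This yields \eqref{eq.lem.H.prod.HNE} immediately, and \eqref{eq.lem.H.prod.HSE} follows by the identical argument with SE-steps.

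For \eqref{eq.lem.H.prod.H}, the same decomposition applies but now we are summing $z^{\htt}$ over \emph{all} vertices, so the shared vertex does get double-counted if we add the two height polynomials naively. The vertices of $\rr$ are the vertices of the $\pp$-part together with the vertices of the translated $\qq$-part, but with the junction vertex counted only once. The $\pp$-part contributes $H\tup{u,z}$. The translated $\qq$-part contributes $z^k H\tup{v,z}$ by Lemma~\ref{lem.H.shift}. Their overlap is exactly the junction vertex, which is the final vertex of $\pp$ (height $k$, contributing $z^k$) and simultaneously the initial vertex of the translated $\qq$-part (height $k$, i.e. the initial height $0$ of $\qq$ shifted by $k$, contributing $z^k \cdot z^0 = z^k$). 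Hence $H\tup{uv,z} = H\tup{u,z} + z^k H\tup{v,z} - z^k = H\tup{u,z} + z^k\tup{H\tup{v,z}-1}$, which is precisely \eqref{eq.lem.H.prod.H}.

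The main obstacle, and the only place requiring real care, is the bookkeeping at the junction vertex: making sure it is counted the correct number of times in each of the three identities and correctly identifying its height as $k$ via \eqref{eq.U-D0}. The step-polynomial identities avoid this subtlety entirely (steps partition cleanly), which is why the $-1$ correction term appears only in \eqref{eq.lem.H.prod.H}, arising from the single over-counted junction vertex of height $k$. Everything else is routine translation-invariance of heights, already packaged in Lemma~\ref{lem.H.shift}.
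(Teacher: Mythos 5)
Your proposal is correct and follows essentially the same route as the paper's own proof: splice the standard path of $uv$ from the standard path of $u$ and a vertically translated copy of the standard path of $v$, observe that steps partition cleanly (giving the two step-polynomial identities with no correction term), and account for the single shared junction vertex of height $k$ in the vertex-sum, which produces the $-z^k$ correction in \eqref{eq.lem.H.prod.H}. The only cosmetic difference is that the paper phrases the correction as omitting the initial vertex of the translated copy, i.e.\ $z^k\tup{H\tup{v,z}-1}$, whereas you add both contributions and subtract the overlap; these are the same computation.
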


\begin{vershort}
\begin{proof}
The standard path of $uv$ can be obtained by splicing the standard path of $u$ together with a translated copy of the standard path of $v$.
The translation increases the heights of all vertices by $k$ (since the standard path of $u$ ends at height $k$).
Thus, all three equalities follow.
(See the detailed version for more details.)
\end{proof}
\end{vershort}

\begin{verlong}
\begin{proof}
    Let $\pp$, $\qq$ and $\rr$ be the standard paths of the words $u$, $v$ and $uv$, respectively.
    Then, the path $\pp$ has initial height $0$ (like any standard path) and reading word $\w\tup{\pp} = u$. Hence, the path
    $\pp$ has final height $k$ (by the definition of $k$, and by \eqref{eq.U-D}).
    Let $\tup{\ell, k}$ be the final vertex of $\pp$.
    
    The reading words of the paths $\pp$, $\qq$ and $\rr$ are $u$, $v$ and $uv$ (since $\pp$, $\qq$ and $\rr$ are the standard paths of $u$, $v$ and $uv$).
    In particular,
    the reading word of the path $\rr$ is $uv$, which is the concatenation of the reading words of $\pp$ and $\qq$. Thus, the path $\rr$ is obtained by gluing together $\pp$ with a copy of $\qq$ that has been translated by the vector $\tup{\ell, k}$
    (since the final vertex of $\pp$ is $\tup{\ell, k}$).
    Hence, the vertices of $\rr$ are the vertices of $\pp$ as well as the non-initial\footnote{``Non-initial'' means that we exclude the initial vertex.} vertices of $\qq$ translated by $\tup{\ell, k}$.
    Clearly, the translation increases the height of each of the latter vertices by $k$.
    Thus, the sum that defines $H\tup{\rr, z}$ can be split into two sub-sums, one of which collects all the vertices of $\pp$ and thus amounts to $H\tup{\pp, z}$, while the other collects all the non-initial vertices of $\qq$ (translated by $\tup{\ell, k}$) and thus amounts to $z^k \tup{H\tup{\qq, z} - 1}$ (the $z^k$ factor comes from the translation, whereas the ``$-1$'' stems from removing the initial vertex). Thus, we obtain
    \begin{align}
    H\tup{\rr, z}
    = H\tup{\pp, z} + z^k \tup{H\tup{\qq, z} - 1} .
    \label{pf.lem.H.prod.4}
    \end{align}
    However, $\pp$, $\qq$ and $\rr$ are the standard paths of the words $u$, $v$ and $uv$. Hence, $H\tup{\pp, z} = H\tup{u, z}$ and $H\tup{\qq, z} = H\tup{v, z}$ and $H\tup{\rr, z} = H\tup{uv, z}$.
    Thus, we can rewrite \eqref{pf.lem.H.prod.4} as
    \[
    H\tup{uv, z} = H\tup{u,z} + z^k \tup{H\tup{v,z}-1}.
    \]
    This proves \eqref{eq.lem.H.prod.H}.
    The other two equalities can be proved in similar ways (but we no longer need to remove the initial vertex of $\qq$, since the last vertex of a diagonal path counts neither as an NE-step nor as an SE-step).
\end{proof}
\end{verlong}

\begin{lemma} \label{lem:unique}
An up-normal word $w$ is uniquely determined by its final height (i.e., the difference \# of $U$'s $-$ \# of $D$'s) and the height polynomial $H(w,z)$.
\end{lemma}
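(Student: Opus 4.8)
The plan is to pass from the height polynomial $H(w,z)$ to the NE-height polynomial $H_{\NE}(w,z)$, which is far more transparent for up-normal words, and then to read the word off the latter.

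First I would observe that $H_{\NE}(w,z)$ is itself recoverable from $H(w,z)$ and the final height $b$. Indeed, \eqref{eq.lem.laurent-w.NE} rearranges to
\[
(1+z)\, H_{\NE}(w,z) = H(w,z) - \sum_{j \geq b} z^j + \sum_{j \geq 1} z^j ,
\]
whose right-hand side depends only on $H(w,z)$ and $b$. Since $\kk[z, z^{-1}]$ is an integral domain, multiplication by the nonzero element $1+z$ is injective, so there is at most one Laurent polynomial satisfying this equation; as $H_{\NE}(w,z)$ is one such polynomial, it is determined by $H(w,z)$ and $b$. (This is essentially Lemma~\ref{lem:laurent-w} read backwards.) It therefore suffices to show that an up-normal word is determined by its final height together with $H_{\NE}(w,z)$.

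Next I would invoke Proposition~\ref{rem.shape_normal} to write $w$ in its canonical form
\[
w = D^a\, (UD)^{r_1} U\, (UD)^{r_2} U\, \cdots\, (UD)^{r_h} U\, D^b
\]
and compute $H_{\NE}(w,z)$ directly from this shape. Tracking the standard path, the letters $U$ occur precisely at the bottoms of the successive oscillation levels: at level $j$ the block $(UD)^{r_j}$ contributes $r_j$ many $U$'s starting at height $-a+j-1$, and the following separating $U$ contributes one more at the same height. This yields the clean formula
\[
H_{\NE}(w,z) = \sum_{j=1}^{h} (r_j + 1)\, z^{-a+j-1} .
\]
The crucial feature is that the support of this polynomial is the contiguous block of integers $\{-a, -a+1, \ldots, -a+h-1\}$ and that every coefficient $r_j + 1$ is strictly positive. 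Consequently $a = -\min(\text{support})$, the integer $h$ is the number of terms, and $r_j = [z^{-a+j-1}]\, H_{\NE}(w,z) - 1$; thus $a, h, r_1, \ldots, r_h$ are read off unambiguously from $H_{\NE}(w,z)$. Finally $b$ is recovered from the final height via $b = h - a - (\text{final height})$, using \eqref{eq.U-D0}. Since the canonical form is completely specified by these parameters, $w$ is determined, completing the argument.

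The routine parts are the path-tracking computation of $H_{\NE}(w,z)$ and the degenerate cases (for instance $h = 0$, where the final-height constraint forces $a = b = 0$ and $w$ is empty). The only point demanding genuine care is the positivity-and-contiguity observation about the support of $H_{\NE}(w,z)$: this is exactly what prevents two distinct up-normal words from sharing the same NE-height polynomial, and hence is the real engine of the uniqueness statement.
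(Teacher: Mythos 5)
Your proof is correct and follows essentially the same route as the paper's: recover $H_{\NE}(w,z)$ from $H(w,z)$ and the final height via Lemma~\ref{lem:laurent-w}, then read off the parameters $a,h,r_1,\ldots,r_h$ of the canonical form of Proposition~\ref{rem.shape_normal} from the (contiguous, positive-coefficient) support of $H_{\NE}(w,z)$, and finally recover $b$ from the final height. Your explicit handling of the degenerate case $h=0$ (where risingness forces the word to be empty) is a small point the paper leaves implicit, but otherwise the two arguments coincide.
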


\begin{proof}
Let $f$ be the final height of $w$. We shall recover $w$ from $f$ and $H\tup{w,z}$.

From \eqref{eq.lem.laurent-w.NE} (applied to $b=f$), we obtain
\[
H(w,z) = (1+z)H_{\NE}(w,z) + \sum_{j \geq f} z^j - \sum_{j \geq 1} z^j .
\]
All terms in this equality other than $H_{\NE}(w,z)$ are determined by $H\tup{w,z}$ and $f$. 
Thus, we can use this equality to determine $H_{\NE}(w,z)$ from $H\tup{w,z}$ and $f$ (using polynomial division by $1+z$).

Now, let $\pp$ be the standard path of $w$. Thus, the path $\pp$ starts at $\tup{0,0}$ and has reading word $\w\tup{\pp} = w$.
Moreover, the definition of $H_{\NE}(w,z)$ yields $H_{\NE}(w,z) = H_{\NE}(\pp,z)$.
Thus, $H_{\NE}(\pp,z)$ can be determined from $H\tup{w,z}$ and $f$ (since we can determine $H_{\NE}(w,z)$ from these inputs).

By Proposition~\ref{rem.shape_normal}, we can write $w$ in the form
\begin{align}
w = D^a\, (UD)^{r_1}U\, (UD)^{r_2}U\,  \cdots\,  (UD)^{r_h}U\, D^b .
\label{eq.lem.unique.w=}
\end{align}
Each $U$ here corresponds to an NE-step of the diagonal path $\pp$ (since $w = \w\tup{\pp}$).
Thus, $\pp$ has $r_1+1$ NE-steps of height $-a$ (corresponding to the $r_1+1$ many $U$'s in the $(UD)^{r_1} U$ factor),
followed by $r_2+1$ NE-steps of height $-a+1$ (corresponding to the $r_2+1$ many $U$'s in the $(UD)^{r_2} U$ factor),
and so on.
Altogether, we thus obtain
\[
H_{\NE}\tup{\pp,z} = \sum_{i=1}^h (r_i+1)z^{-a+i-1}.
\]
\begin{verlong}
Hence, the Laurent polynomial $H_{\NE}\tup{\pp,z}$ has the nonzero coefficients $r_1+1, r_2+1, \ldots, r_h+1$ in front of the respective monomials $z^{-a}, z^{-a+1}, \ldots, z^{-a+h-1}$, and zero coefficients in front of all other monomials.
In particular, the number $-a$ is the smallest exponent that appears with nonzero coefficient in the Laurent polynomial $H_{\NE}\tup{\pp,z}$, whereas $h$ is the total number of nonzero coefficients of this Laurent polynomial, and furthermore, the numbers $r_1+1, r_2+1, \ldots, r_h+1$ are the coefficients of the monomials $z^{-a}, z^{-a+1}, \ldots, z^{-a+h-1}$ in this Laurent polynomials.
\end{verlong}

Thus, the numbers $a$, $h$, and $r_1,r_2,\ldots,r_h$ can be determined from $H_{\NE}(\pp,z)$.
Since $H_{\NE}(\pp,z)$ can be determined from $H\tup{w,z}$ and $f$,
we thus conclude that the numbers $a$, $h$, and $r_1,r_2,\ldots,r_h$ can be determined from $H\tup{w,z}$ and $f$.
Knowing these numbers, we can now determine $b$ from $f$ (since $f$ is the final height of $w$, that is, the \# of $U$'s in $w$ minus the \# of $D$'s in $w$).
Knowing $a, b, h, r_1, r_2, \ldots, r_h$, we can now reconstruct $w$ using \eqref{eq.lem.unique.w=}.
\end{proof}

\begin{remark}
The statement of Lemma~\ref{lem:unique} would be false without the assumption that the final height is known. For example, the up-normal words $UDUU$ and $UUDD$ both have the height polynomial $2+2z+z^2$.
\end{remark}

An analogue of Lemma~\ref{lem:unique} is true for down-normal words (with a similar proof).

\begin{lemma} \label{lem:invariant}
The height polynomial of a word is invariant under balanced commutations.
In other words: If two words $v$ and $w$ satisfy $v \balsim w$, then $H(v,z) = H(w,z)$.
\end{lemma}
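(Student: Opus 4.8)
The plan is to reduce to the case of a \emph{single} balanced commutation and then verify directly that the height polynomial is preserved, using the product formula of Lemma~\ref{lem.H.prod}. Since the relation $\balsim$ is by definition the transitive closure of single balanced commutations, it suffices to show the following: if $v$ is obtained from $w$ by one balanced commutation, then $H(v,z) = H(w,z)$. So I would write $w = pyxq$ and $v = pxyq$, where $p, q \in \M$ are arbitrary words and $x, y \in \M$ are balanced, and prove $H(pxyq,z) = H(pyxq,z)$.

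Next I would expand both height polynomials by applying \eqref{eq.lem.H.prod.H} three times, associating the products as $(((p)x)y)q$ and $(((p)y)x)q$, respectively. The crucial observation is that $x$ and $y$ are balanced, so they have final height $0$; hence appending $x$ or $y$ to any word does not change its final height. Writing $k$ for the final height of $p$ (that is, the number of $U$'s minus the number of $D$'s in $p$), this means that each of the intermediate words $px$, $py$, $pxy$, $pyx$ again has final height $k$. Consequently, every application of \eqref{eq.lem.H.prod.H} along the way carries the very same shift factor $z^k$.

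Carrying this out, I obtain
\begin{align*}
H(pxyq, z) &= H(p, z) + z^k\tup{H(x,z) - 1} + z^k\tup{H(y,z) - 1} + z^k\tup{H(q,z) - 1}, \\
H(pyxq, z) &= H(p, z) + z^k\tup{H(y,z) - 1} + z^k\tup{H(x,z) - 1} + z^k\tup{H(q,z) - 1}.
\end{align*}
The two right-hand sides differ only in the order of their two middle summands, and hence are equal. Therefore $H(v,z) = H(pxyq,z) = H(pyxq,z) = H(w,z)$, as desired.

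The argument is essentially a routine computation, so I do not expect a genuine obstacle. The one point that must not be overlooked --- and which is exactly where balancedness is used --- is that the exponents of the shift factors $z^k$ produced by Lemma~\ref{lem.H.prod} all coincide; this is what lets the contributions of $x$ and $y$ be interchanged freely. Had either $x$ or $y$ failed to be balanced, the shift exponent would differ between the two orderings and the height polynomial would in general fail to be preserved.
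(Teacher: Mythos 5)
Your proposal is correct and follows essentially the same route as the paper: reduce to a single balanced commutation, then expand $H(pxyq,z)$ and $H(pyxq,z)$ by repeated application of \eqref{eq.lem.H.prod.H}, using the fact that the balanced factors $x$ and $y$ have final height $0$ so that all shift factors equal $z^k$ (the paper calls this exponent $a$). The resulting expansions match the paper's verbatim up to notation.
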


\begin{proof}
It suffices to prove that $H(v, z) = H(w, z)$ whenever $v$ can be obtained from $w$ by a single balanced commutation.
(The general case will then follow by induction.)

So let $v$ be obtained from $w$ by a single balanced commutation.
Thus $v=pxyq$ and $w=pyxq$, where $p,q\in\M$ are two words and where $x,y\in\M$ are two balanced words.
Consider these words $p,q,x,y$.
Let $a$ be the final height of $p$. Since $x$ and $y$ are balanced words, their final heights are $0$, and thus the final heights of the four words $px$, $py$, $pxy$ and $pyx$ equal $a$ as well.
Thus, by repeated application of \eqref{eq.lem.H.prod.H}, we find
\begin{vershort}
\begin{align*}
    H(pxyq, z)
    &= H(pxy, z) + z^a (H(q,z) - 1)
	\\
    &= H(px, z) + z^a (H(y, z) - 1) + z^a (H(q,z) - 1)
	\\
    &= H(p,z) + z^a(H(x,z)-1) + z^a(H(y,z)-1) + z^a(H(q,z)-1)
\end{align*}
\end{vershort}
\begin{verlong}
\begin{align*}
    H(pxyq, z)
    &= H(pxy, z) + z^a (H(q,z) - 1)
	\ \ \ \ \ \ \ \ \ \ \left(\text{by \eqref{eq.lem.H.prod.H}}\right)
	\\
    &= H(px, z) + z^a (H(y, z) - 1) + z^a (H(q,z) - 1)
	\ \ \ \ \ \ \ \ \ \ \left(\text{by \eqref{eq.lem.H.prod.H}}\right)
	\\
    &= H(p,z) + z^a(H(x,z)-1) + z^a(H(y,z)-1) + z^a(H(q,z)-1)
\end{align*}
(by \eqref{eq.lem.H.prod.H})
\end{verlong}
and similarly
\[
H(pyxq,z) = H(p,z) + z^a(H(y,z)-1) + z^a(H(x,z)-1) + z^a(H(q,z)-1).
\]
The right hand sides of these two equalities are visibly equal.
Hence, so are their left hand sides: $H(pxyq,z) = H(pyxq,z)$.
In other words, $H(v,z) = H(w,z)$ (since $v=pxyq$ and $w=pyxq$), and our proof is complete.
\end{proof}

The above lemmas will be used in showing the uniqueness part of Proposition~\ref{prop.upnorm.nf}.
Let us now present some lemmas for the existence part.

\begin{vershort}
\begin{lemma}
\label{lem.balanced.0}
Let $w\in\M$ be a balanced word that starts
with a $U$ and ends with a $U$.
Then, we can write $w$ as a concatenation $w=pq$, where $p$ is a balanced word starting with a $U$, and where $q$ is a balanced word starting with a $D$.
\end{lemma}

\begin{proof}
This readily follows from the ``discrete intermediate value theorem''.
Indeed, the standard path of $w$ starts and ends on the x-axis (since $w$ is balanced), but it starts with an NE-step (since $w$ starts with a $U$) and ends with an NE-step as well (likewise).
Hence, it must cross the x-axis at some intermediate point, and the last such intermediate point must be followed by an SE-step (since the path must fall below the x-axis in order to return to it via an NE-step).
Splitting the path at this point, we obtain two smaller paths corresponding to the desired factors $p$ and $q$.
\end{proof}

\begin{lemma}
\label{lem.upnorm.1}Let $w\in\M$ be a rising word that is not
up-normal. Then, we can write $w$ in the form $w=upqv$, where $u$ and $v$ are
two words, where $p$ is a balanced word starting with a $U$, and where $q$ is
a balanced word starting with a $D$.
\end{lemma}

\begin{figure}[htbp]
\begin{center}
\begin{tikzpicture}[scale=0.7]
\draw[help lines] (-0.2, -3.2) grid (16.2, 2.2);
\draw [line width=2pt] (-0.2, 0) -- (16.2, 0);
\draw
[line width=2pt,-stealth] (6, 0) edge (7, -1) (7, -1) edge (8, -2) (8, -2) edge (9, -3) (9, -3) edge (10, -2) (10, -2) edge (11, -1) (11, -1) edge (12, 0) (12, 0) edge (13, 1) (13, 1) edge (14, 0) (14, 0) edge (15, -1) (15, -1) -- (16, 0);
\draw
[line width=2pt,-stealth, red] (1, 1) edge (2, 2) (2, 2) edge (3, 1) (3, 1) edge (4, 0) (4, 0) edge (5, -1) (5, -1) -- (6, 0);
\draw
[line width=2pt,-stealth, blue] (0, 0) -- (1, 1);
\node at (8, -4) {$u=\varnothing, \quad p=UUDD, \quad q=DU, \quad v=DDDUUUUDDU$};
\end{tikzpicture}
\vskip.1truein
\begin{tikzpicture}[scale=0.7]
\draw[help lines] (-0.2, -3.2) grid (16.2, 2.2);
\draw [line width=2pt] (-0.2, 0) -- (16.2, 0);
\draw
[line width=2pt,-stealth] (0, 0) edge (1, 1) (1, 1) edge (2, 2) (2, 2) edge (3, 1) (3, 1) edge (4, 0) (4, 0) edge (5, -1) (7, -1) edge (8, -2) (8, -2) edge (9, -3) (9, -3) edge (10, -2) (11, -1) edge (12, 0) (12, 0) edge (13, 1) (13, 1) edge (14, 0) (14, 0) edge (15, -1) (15, -1) -- (16, 0);
\draw
[line width=2pt,-stealth, red] (5, -1) edge (6, 0) (6, 0) edge (7, -1) (7, -1) edge (8, -2) (8, -2) edge (9, -3) (9, -3) -- (10, -2);
\draw
[line width=2pt,-stealth, blue] (10, -2) -- (11, -1);
\node at (8, -4) {$u=UUDDD, \quad p=UD, \quad q=DDUU, \quad v=UUDDU$};
\end{tikzpicture}
\vskip.1truein
\begin{tikzpicture}[scale=0.7]
\draw[help lines] (-0.2, -3.2) grid (16.2, 2.2);
\draw [line width=2pt] (-0.2, 0) -- (16.2, 0);
\draw
[line width=2pt,-stealth] (0, 0) edge (1, 1) (1, 1) edge (2, 2) (2, 2) edge (3, 1) (3, 1) edge (4, 0) (4, 0) edge (5, -1) (5, -1) edge (6, 0) (6, 0) edge (7, -1) (7, -1) edge (8, -2) (8, -2) edge (9, -3) (9, -3) edge (10, -2) (10, -2) edge (11, -1) (11, -1) -- (12, 0);
\draw
[line width=2pt,-stealth, red] (12, 0) edge (13, 1) (13, 1) edge (14, 0) (14, 0) edge (15, -1) (15, -1) -- (16, 0);
\node at (8, -4) {$u=UUDDDUDDDUUU, \quad p=UD, \quad q=DU, \quad v=\varnothing$};
\end{tikzpicture}
\end{center}
\caption{Possible decompositions of $w=upqv$ for different choices of the down-zig $w_i w_{i+1} \ldots w_j$. The (arbitrarily chosen) down-zig is indicated by red lines. The minimum-length copair-subpath $\qq$ contains all these red steps as well as some extra steps, which are indicated by blue lines.}
\label{fig.convert.standard}
\end{figure}

\begin{proof}
Our word $w = w_1 w_2 \ldots w_\ell$ (like any word in $\M$) corresponds to a unique standard path $\pp = \tup{p_0, p_1, \dots, p_\ell}$ that starts at $p_0=\tup{0, 0}$ and has reading word $\w\tup{\pp} = w$. Since $w$ is not up-normal, it contains at least one down-zig as a
factor. We arbitrarily pick one such down-zig, which we denote by $w_{i}w_{i+1}\cdots w_{j}$.
By the definition of a down-zig, we have $w_{i}=U$ and
$w_{i+1}=w_{i+2}=\cdots=w_{j-1}=D$ and $w_{j}=U$ and
$i<j-2$ and $p_{i}\geq p_{j+1}$ (since a
down-zig must always have length $\geq4$).
This shows, in particular, that the subpath $\tup{p_i, p_{i+1}, \ldots, p_{j+1}}$ of $\pp$ is falling.
See Figure \ref{fig.convert.standard} for an illustration.

A \emph{copair-subpath} is a rising%
\footnote{We call a path \emph{rising} if its reading word is rising (i.e., if the path ends at the same height as it starts or higher).
Likewise we define \emph{balanced} and \emph{falling} paths.}
subpath of $\pp$ that starts at $p_{i}$ or earlier and ends at $p_{j+1}$ or later. Note that a copair-subpath always exists, since $w$ itself is rising.
We pick a copair-subpath $\qq$ of minimum length.

Then, by minimality, $\qq$ cannot be shortened without breaking either the ``starts at $p_{i}$ or earlier'' condition or the ``ends at $p_{j+1}$ or later'' condition or the ``rising'' condition.
Hence, if $\qq$ starts before $p_i$, then $\qq$ must be balanced (since we could otherwise remove the first step from $\qq$ without breaking any of the conditions).
Likewise, if $\qq$ ends after $p_{j+1}$, then $\qq$ must be balanced (since we could otherwise remove the last step from $\qq$).
In the remaining case (i.e., if $\qq$ starts at $p_i$ and ends at $p_{j+1}$) it is also clear that $\qq$ must be balanced:
since $\qq$ is rising by definition, but the subpath $\tup{p_i, p_{i+1}, \ldots, p_{j+1}}$ of $\pp$ is falling, $\qq$ must be both rising and falling, i.e., balanced.
Thus, we have shown that $\qq$ is always balanced.

Next, let us show that our minimum-length copair-subpath $\qq$ must start with an NE-step. Indeed, if it started with an SE-step, then we could shorten it by removing this step and obtain an even shorter copair-subpath (it will still start at $p_i$ or earlier, since the step from $p_{i}$ to $p_{i+1}$ is an NE-step and thus not the first step of $\qq$); but this would contradict the minimum-length property of $\qq$.
Thus, $\qq$ must start with an NE-step.
For similar reasons, $\qq$ must end with an NE-step.

Now, consider the factor $w'$ of $w$ corresponding to the subpath $\qq$ of $\pp$.
This factor $w'$ is balanced (since $\qq$ is balanced) and starts and ends with a $U$ (since our subpath $\qq$ starts with and ends with NE-steps). Thus we can apply Lemma \ref{lem.balanced.0} to this factor $w'$, and factor it as $w' = pq$, where $p$ is a balanced word starting with a $U$, and where $q$ is a balanced word starting with a $D$.
Hence, the full word $w$ factors as $w = upqv$, where $u$ is the prefix of $w$ coming before this factor $w'$, whereas $v$ is the suffix of $w$ coming after $w'$.
\end{proof}

\begin{remark}
The decomposition of the word $w$ in Lemma \ref{lem.upnorm.1} may not be unique. See Figure \ref{fig.convert.standard}.
\end{remark}
\end{vershort}

\begin{verlong}
\begin{lemma}
\label{lem.balanced.0}
Let $w\in\M$ be a balanced word that starts
with a $U$ and ends with a $U$.
Then, we can write $w$ as a concatenation $w=pq$, where $p$ is a balanced word starting with a $U$, and where $q$ is a balanced word starting with a $D$.
\end{lemma}

\begin{proof}
[Proof of Lemma \ref{lem.balanced.0}.] Write $w$ as $w=w_{1}w_{2}\cdots
w_{\ell}$, where $w_{1},w_{2},\ldots,w_{\ell}\in\left\{  U,D\right\}  $ are
the letters of $w$. For each $k\in\left\{  0,1,\ldots,\ell\right\}  $, define
$w_{:k}:=w_{1}w_{2}\cdots w_{k}$ to be the factor of $w$ consisting of the
first $k$ letters of $w$, and define the number%
\[
h_{k}:=\left(  \text{\# of }U\text{'s in }w_{:k}\right)  -\left(  \text{\# of
}D\text{'s in }w_{:k}\right)  .
\]
(Note that if $w$ is the reading word $\w\left(  \pp\right)$ of a
diagonal path $\pp = \tup{p_0, p_1, \ldots, p_\ell}$ that starts on the x-axis, then $h_{k}$ is the
height of $p_k$. Thus the notation $h_{k}$.
Note also that $h_0 = 0$, since $w_{:0}$ is an empty word.)

It is clear that each $k\in\left\{  1,2,\ldots,\ell\right\}  $ satisfies%
\begin{equation}
h_{k}-h_{k-1}=%
\begin{cases}
1, & \text{if }w_{k}=U;\\
-1, & \text{if }w_{k}=D.
\end{cases}
\label{pf.lem.upnorm.1.h-h}%
\end{equation}
More generally, for any factor $w_{i}w_{i+1}\cdots w_{j}$ of $w$, we have%
\begin{equation}
h_{j}-h_{i-1}=\left(  \text{\# of }U\text{'s in }w_{i}w_{i+1}\cdots
w_{j}\right)  -\left(  \text{\# of }D\text{'s in }w_{i}w_{i+1}\cdots
w_{j}\right)  .\label{pf.lem.upnorm.1.h-hgen}%
\end{equation}
In particular, a factor $w_{i}w_{i+1}\cdots w_{j}$ of $w$ is balanced if and
only if $h_{i-1}=h_{j}$. Thus, $h_{0}=h_{\ell}$ (since $w=w_{1}w_{2}\cdots
w_{\ell}$ is balanced).

The word $w$ ends with a $U$. In other words, $w_{\ell}=U$. Hence,
(\ref{pf.lem.upnorm.1.h-h}) shows that $h_{\ell}-h_{\ell-1}=1$, so that
$h_\ell - 1 = h_{\ell-1}$.

Pick the largest $c\in\left\{  0,1,\ldots,\ell-1\right\}  $ such that
$h_{c}\geq h_{0}$. (Such a $c$ exists, since $h_{c}\geq h_{0}$ holds
for $c=0$.) Then, $c\neq\ell-1$ (since $h_c \geq h_0 = h_\ell > h_\ell - 1 = h_{\ell-1}$ and thus $h_c \neq h_{\ell - 1}$), so that
$c\in\left\{  0,1,\ldots,\ell-2\right\}  $ and therefore $c+1\in\left\{
0,1,\ldots,\ell-1\right\}  $. Therefore, we cannot have $h_{c+1}\geq h_{0}$
(since this would contradict the maximality of $c$). In other words, we have
$h_{c+1}<h_{0}$. Therefore, $h_{c+1}\leq h_{0}-1$ (since $h_{c+1}$ and $h_{0}$
are integers).
But (\ref{pf.lem.upnorm.1.h-h}) (applied to $k = c+1$) shows that $h_{c+1}-h_{c}%
=\pm1\geq-1$, so that $h_{c+1}\geq h_{c}-1$ and therefore $h_{c}-1\leq
h_{c+1}\leq h_{0}-1$. Thus, $h_{c}\leq h_{0}$. Combined with $h_{c}\geq h_{0}%
$, this yields $h_{c}=h_{0}$. By (\ref{pf.lem.upnorm.1.h-hgen}), this shows
that the factor $w_{1}w_{2}\cdots w_{c}$ of $w$ is balanced. Moreover,
combining $h_{c}=h_{0}$ with $h_{0}=h_{\ell}$, we obtain $h_{c}=h_{\ell}$, and
this shows that the factor $w_{c+1}w_{c+2}\cdots w_{\ell}$ of $w$ is balanced
(again by (\ref{pf.lem.upnorm.1.h-hgen})).

If we had $w_{c+1}=U$, then we would have $h_{c+1}-h_{c}=1$ (by
(\ref{pf.lem.upnorm.1.h-h})), whence we would obtain $h_{c+1}=h_{c}%
+1>h_{c}=h_{0}$, which would contradict the fact that we cannot have
$h_{c+1}\geq h_{0}$. Thus, we cannot have $w_{c+1}=U$. Hence, $w_{c+1}=D$. In
other words, the word $w_{c+1}w_{c+2}\cdots w_{\ell}$ starts with a $D$.

But the word $w$ starts with a $U$. Thus, $w_{1}=U\neq D=w_{c+1}$. Therefore,
$1\neq c+1$, so that $c\neq0$. The word $w_{1}w_{2}\cdots w_{c}$ is thus
nonempty. Moreover, this word starts with a $U$ (since $w_{1}=U$).

Now, we have
\[
w=w_{1}w_{2}\cdots w_{\ell}=\underbrace{\left(  w_{1}w_{2}\cdots w_{c}\right)
}_{\substack{\text{a balanced word}\\\text{starting with a }U}%
}\underbrace{\left(  w_{c+1}w_{c+2}\cdots w_{\ell}\right)  }%
_{\substack{\text{a balanced word}\\\text{starting with a }D}}
\]

Hence, we can write $w$ as a concatenation $w=pq$, where $p$ is a balanced
word starting with a $U$, and where $q$ is a balanced word starting with a $D$
(namely, $p=w_{1}w_{2}\cdots w_{c}$ and $q=w_{c+1}w_{c+2}\cdots w_{\ell}$).
This proves Lemma \ref{lem.balanced.0}.
\end{proof}

\begin{lemma}
\label{lem.upnorm.1}Let $w\in\M$ be a rising word that is not
up-normal. Then, we can write $w$ in the form $w=upqv$, where $u$ and $v$ are
two words, where $p$ is a balanced word starting with a $U$, and where $q$ is
a balanced word starting with a $D$.
\end{lemma}

\begin{proof}
[Proof of Lemma \ref{lem.upnorm.1}.]Write $w$ as $w=w_{1}w_{2}\cdots w_{\ell}%
$, where $w_{1},w_{2},\ldots,w_{\ell}\in\left\{  U,D\right\}  $ are the
letters of $w$. For each $k\in\left\{  0,1,\ldots,\ell\right\}  $, define
$w_{:k}:=w_{1}w_{2}\cdots w_{k}$ to be the factor of $w$ consisting of the
first $k$ letters of $w$, and define the number%
\[
h_{k}:=\left(  \text{\# of }U\text{'s in }w_{:k}\right)  -\left(  \text{\# of
}D\text{'s in }w_{:k}\right)  .
\]
Clearly, the equalities (\ref{pf.lem.upnorm.1.h-h}) and
(\ref{pf.lem.upnorm.1.h-hgen}) hold, just as in the proof of Lemma
\ref{lem.balanced.0}. In particular, from (\ref{pf.lem.upnorm.1.h-hgen}), we
obtain $h_{0}\leq h_{\ell}$, since $w$ is rising.

Moreover, the word $w$ is not up-normal, so that $w$ contains a down-zig as a
factor. Let $w_{i}w_{i+1}\cdots w_{j}$ be this factor.
Thus, by the definition of a down-zig, we have $i<j-2$ (since a
down-zig must always have length $\geq4$) and $w_{i}=U$ and $w_{i+1}%
=w_{i+2}=\cdots=w_{j-1}=D$ and $w_{j}=U$.
From $w_{i}=U$, we obtain
$h_{i}-h_{i-1}=1$ (by (\ref{pf.lem.upnorm.1.h-h})). From $w_{j}=U$, we obtain
$h_{j}-h_{j-1}=1$ (by (\ref{pf.lem.upnorm.1.h-h})). Moreover, the factor
$w_{i}w_{i+1}\cdots w_{j}$ is a down-zig and thus contains at least as many
$D$'s as it contains $U$'s; therefore, $h_{j}-h_{i-1}\leq0$ (by
(\ref{pf.lem.upnorm.1.h-hgen})), so that $h_{i-1}\geq h_{j}$.

A \emph{copair} means a pair $\left(  a,b\right)  $ of elements of
$\left\{  0,1,\ldots,\ell\right\}  $ satisfying%
\[
a\leq i-1\text{ and }j\leq b\text{ and }h_{a}\leq h_{b}.
\]
The \emph{span} of a copair $\left(  a,b\right)  $ will mean the difference
$b-a$. Note that $\left(  0,\ell\right)  $ is a copair (since $h_{0}\leq
h_{\ell}$), so that there exists at least one copair.

Pick a copair $\left(  a,b\right)  $ with minimum span. Thus, $a,b\in\left\{
0,1,\ldots,\ell\right\}  $ and $a\leq i-1$ and $j\leq b$ and $h_{a}\leq h_{b}$.

If we had $w_{a+1}=D$, then we would have $a+1\leq i-1$ (since $w_{a+1}=D\neq
U=w_{i}$ would entail $a+1\neq i$, so that $a\neq i-1$, and therefore the
inequality $a\leq i-1$ could be improved to $a<i-1$, so that $a \leq \tup{i-1}-1$ and thus $a+1\leq i-1$)
and $h_{a+1}\leq h_{b}$ (since (\ref{pf.lem.upnorm.1.h-h}) would show that
$h_{a+1}-h_{a}=-1$ (because of $w_{a+1}=D$), thus $h_{a+1}=h_{a}-1<h_{a}\leq
h_{b}$). Therefore, $\left(  a+1,b\right)  $ would again be a copair. This
copair $\left(  a+1,b\right)  $ would have a smaller span than $\left(
a,b\right)  $ (since $b-\tup{a+1} < b-a$), which is impossible since $\left(  a,b\right)  $ was chosen to
have minimum span. Thus, we cannot have $w_{a+1}=D$. Hence, $w_{a+1}=U$.
Therefore, (\ref{pf.lem.upnorm.1.h-h}) yields $h_{a+1}-h_{a}=1$.

If we had $w_{b}=D$, then we would have $j\leq b-1$ (since $w_{b}=D\neq
U=w_{j}$ would entail $b\neq j$, so that the inequality $j\leq b$ could be
improved to $j<b$, and this would entail $j\leq b-1$) and $h_{a}\leq h_{b-1}$
(since (\ref{pf.lem.upnorm.1.h-h}) would show that $h_{b}-h_{b-1}=-1$ (because
of $w_{b}=D$), thus $h_b = h_{b-1} - 1 < h_{b-1}$ and therefore $h_{a}\leq
h_{b}<h_{b-1}$). Therefore, $\left(  a,b-1\right)  $ would again be a copair.
This copair $\left(  a,b-1\right)  $ would have a smaller span than $\left(
a,b\right)  $ (since $\tup{b-1}-a < b-a$), which is impossible since $\left(  a,b\right)  $ was chosen to
have minimum span. Thus, we cannot have $w_{b}=D$. Hence, $w_{b}=U$.
Therefore, (\ref{pf.lem.upnorm.1.h-h}) yields $h_{b}-h_{b-1}=1$.

Now, we show that $h_{a}=h_{b}$. To prove this, we assume the contrary.
Thus, $h_{a}\neq h_{b}$, so that $h_{a}<h_{b}$ (since $h_{a}\leq h_{b}$).
Therefore, $h_{a}\leq h_{b}-1$ (since $h_{a}$ and $h_{b}$ are integers). If we
had $a=i-1$ and $b=j$, then we could rewrite $h_{a}<h_{b}$ as $h_{i-1}<h_{j}$,
which would contradict $h_{i-1}\geq h_{j}$. Thus, we cannot have $a=i-1$ and
$b=j$. Hence, we are in one (or both) of the following two cases:

\textit{Case 1:} We have $a\neq i-1$.

\textit{Case 2:} We have $b\neq j$.

Let us first consider Case 1. In this case, we have $a\neq i-1$. Hence,
$a<i-1$ (since $a\leq i-1$). Therefore, $a+1\leq i-1$. Moreover,
(\ref{pf.lem.upnorm.1.h-h}) yields $h_{a+1}-h_{a}\leq1$, so that $h_{a+1}\leq
h_{a}+1\leq h_{b}$ (since $h_{a}\leq h_{b}-1$). Consequently, $\left(
a+1,b\right)  $ is a copair (since $a+1\leq i-1$ and $j\leq b$). This copair
$\left(  a+1,b\right)  $ has smaller span than $\left(  a,b\right)  $, but
this contradicts the fact that $\left(  a,b\right)  $ was chosen to have
minimum span. Thus, we have found a contradiction in Case 1.

Let us next consider Case 2. In this case, we have $b\neq j$. Hence, $j<b$
(since $j\leq b$). Therefore, $j\leq b-1$. Moreover,
(\ref{pf.lem.upnorm.1.h-h}) yields $h_{b}-h_{b-1}\leq1$, so that $h_{b}-1\leq
h_{b-1}$. Now, $h_{a}\leq h_{b}-1\leq h_{b-1}$. Consequently, $\left(
a,b-1\right)  $ is a copair (since $a\leq i-1$ and $j\leq b-1$). This copair
$\left(  a,b-1\right)  $ has smaller span than $\left(  a,b\right)  $, but
this contradicts the fact that $\left(  a,b\right)  $ was chosen to have
minimum span. Thus, we have found a contradiction in Case 2.

We have now found contradictions in both Cases 1 and 2. Hence, our assumption
was false, and $h_{a}=h_{b}$ is proved.

Because of (\ref{pf.lem.upnorm.1.h-hgen}), this equality $h_{a}=h_{b}$ shows
that the word $w_{a+1}w_{a+2}\cdots w_{b}$ is balanced.
This balanced word is
furthermore nonempty (since $a \leq i-1 < i < j-2 < j \leq b$)
and starts with a $U$ (since $w_{a+1}=U$) and ends with a $U$ (since
$w_{b}=U$). Thus, by Lemma \ref{lem.balanced.0} (applied to $w_{a+1}%
w_{a+2}\cdots w_{b}$ instead of $w$), we can write $w_{a+1}w_{a+2}\cdots
w_{b}$ as a concatenation $w_{a+1}w_{a+2}\cdots w_{b}=pq$, where $p$ is a
balanced word starting with a $U$, and where $q$ is a balanced word starting
with a $D$. Consider these $p$ and $q$.

Let us furthermore set $u:=w_{1}w_{2}\cdots w_{a}$ and $v:=w_{b+1}%
w_{b+2}\cdots w_{\ell}$. Then,%
\[
w = w_1 w_2 \cdots w_\ell = \underbrace{\left(  w_{1}w_{2}\cdots w_{a}\right)  }_{=u}\underbrace{\left(
w_{a+1}w_{a+2}\cdots w_{b}\right)  }_{=pq}\underbrace{\left(  w_{b+1}%
w_{b+2}\cdots w_{\ell}\right)  }_{=v}=upqv.
\]
Hence, we have written $w$ in the form $w=upqv$, where $u$ and $v$ are two
words, where $p$ is a balanced word starting with a $U$, and where $q$ is a
balanced word starting with a $D$. This proves Lemma \ref{lem.upnorm.1}.
\end{proof}
\end{verlong}

\begin{proof}
[Proof of Proposition \ref{prop.upnorm.nf}.]
We equip the set $\M$ with the lexicographic order, where $D<U$.
If the word $w$ is not yet up-normal, then by Lemma~\ref{lem.upnorm.1} we can write it as $w = upqv$, where $p$ and $q$ are balanced, $p$ starts with $U$, and $q$ starts with $D$.
We then perform a balanced commutation to obtain the word $w' = uqpv$, which is lexicographically smaller than $w$ (since the first letter of $p$, which was $U$, has been replaced by the first letter of $q$, which is $D$).
We have $w' \balsim w$ by construction.
This procedure can be iterated until we end up with an up-normal word $t$ that satisfies $t \balsim w$. Indeed, the procedure cannot go on forever, since each balanced commutation makes our word lexicographically smaller while preserving its length.
Moreover, the word remains rising throughout this procedure, since a balanced commutation does not change the total numbers of $U$'s and $D$'s in the word.

Thus, we have proved the existence of an up-normal word $t \in \M$ such that $t \balsim w$.
It remains to prove its uniqueness.

The condition $t \balsim w$ ensures that the words $t$ and $w$ have the same final height (since balanced commutations do not change the numbers of $U$'s and $D$'s, and thus -- by \eqref{eq.U-D0} -- leave the final height unchanged as well) and the same height polynomial (since Lemma~\ref{lem:invariant} shows that balanced commutations do not change the height polynomial).
By Lemma~\ref{lem:unique}, the up-normal word $t$ is thus uniquely determined.
\end{proof}

\section{Proofs of the main results}\label{sec:mainproofs}

\begin{vershort}
\subsection{Proof of Theorem \ref{thm.phiomega}}
We first establish a lemma that combines some results of the previous sections.

\begin{lemma}
\label{lem.4to5}
Let $\pp$ and $\qq$ be two diagonal paths with
the same initial height and the same final height. Assume that $H\left(
\pp,z\right)  =H\left(  \qq,z\right)  $. Then,
$\w\tup{\pp} \balsim \w\tup{\qq}$.
\end{lemma}

\begin{proof}
Set $u=\w\left(  \pp\right)  $ and $v=\w%
\left(  \qq\right)  $.

By assumption, the paths $\pp$ and $\qq$ have the same initial
height and the same final height. Call these two heights $i$ and $f$.
Then, their two reading words $u$ and $v$ have the same final height
(namely, $f-i$).
Moreover, Lemma \ref{lem.H.shift} yields
$H\tup{\pp, z} = z^i H\tup{u, z}$ and $H\tup{\qq, z} = z^i H\tup{v, z}$.
Thus, from our assumption $H\tup{\pp, z} = H\tup{\qq, z}$,
we obtain $H\tup{u, z} = H\tup{v, z}$.

Without loss of generality, we assume $f\geq i$.
(Indeed, the case $f<i$ can be either treated analogously, or
reduced to the $f \geq i$ case by reflecting both paths $\pp$ and $\qq$ across
a vertical line and observing that the words $u$ and $v$ are thus transformed by
the map $\omega$.
The latter argument uses Proposition \ref{prop.bal-symm}.)

Hence, the words $u$ and $v$ are rising.
Thus, Proposition~\ref{prop.upnorm.nf} shows that there exist unique up-normal words
$t_{u}$ and $t_{v}$ such that $t_{u} \balsim u$ and
$t_{v} \balsim v$. Lemma \ref{lem:invariant} then shows that $H\left(  t_{u},z\right)
=H\left(  u,z\right)  $
and $H\left(  t_{v},z\right)  =H\left(  v,z\right)  $.
Thus, $H\tup{t_u, z} = H\tup{t_v, z}$
(since $H\tup{u, z} = H\tup{v, z}$).
In other words, the two words $t_u$ and $t_v$ have the same
height polynomial.

Recall that balanced commutations do not change the \# of
$U$'s and the \# of $D$'s in a word. Thus, they do not change
its final height either. Hence, the words $t_u$ and $t_v$
have the same final height as the words $u$ and $v$, which as we know is $f-i$.

Now, we know that the two up-normal words $t_{u}$ and $t_{v}$ have the same
final height and the same height polynomial.
Hence, Lemma \ref{lem:unique} shows that they
must be equal. That is, $t_{u}=t_{v}$.
From $t_{u} \balsim u$ and $t_{v} \balsim v$, we thus obtain
$u \balsim t_{u} = t_{v} \balsim v$.
In other words, $\w\left(  \pp\right)
\balsim \w\left(  \qq\right)
$.
\end{proof}

\begin{proof}[Proof of Theorem~\ref{thm.phiomega}.]
Let $v=\omega\left(  u\right)  $.
Then, the word $v$ is balanced (since $u$ is balanced).
Let $\pp$ and $\qq$ be the standard paths of $u$ and $v$.  
The paths $\pp$ and $\qq$ both start and end at height $0$,
so they have the same initial height and the same final height.

The $k$-th letter of $v$
from the left is the toggle-image\footnote{The \emph{toggle-image} of a letter is
defined as follows: The toggle-image of $U$ is $D$; the toggle-image of $D$ is $U$.} of
the $k$-th letter of $u$ from the right. Therefore, the $k$-th step of
$\qq$ from the left is the toggle-image
of the $k$-th step of
$\pp$ from the right (since $\w\left(  \pp\right)  =u$ and
$\w\left(  \qq\right)  =v$). Hence, the path $\qq$ is the
reflection of the path $\pp$ across a vertical axis (since both paths
start and end at height $0$).
Thus, the paths $\pp$ and $\qq$
have the same multiset of heights of vertices (although the order in which
these heights appear in $\qq$ is opposite from the order in
$\pp$).
Hence, the paths $\pp$ and $\qq$ have the same height polynomial (since the height polynomial of a diagonal path encodes the heights of its vertices).
In other words, $H\tup{\pp, z} = H\tup{\qq, z}$.
We can thus apply Lemma~\ref{lem.4to5} and conclude that $\w\tup{\pp} \balsim \w\tup{\qq}$.
In other words, $u \balsim v$.
Therefore, Lemma~\ref{lem.bal.weyl} yields
$\phi\tup{u} = \phi\tup{v}$.
Since $v = \omega\tup{u}$, these two relations
yield the claims of Theorem~\ref{thm.phiomega}.
\end{proof}
\end{vershort}

\begin{verlong}
\subsection{A lemma}

We are getting close to the proofs of the main results (Theorems~\ref{thm.DU-eqs2} and~\ref{thm.phiomega}).
First, we show a lemma that combines some results of the previous sections:

\begin{lemma}
\label{lem.4to5}
Let $\pp$ and $\qq$ be two diagonal paths with
the same initial height and the same final height. Assume that $H\left(
\pp,z\right)  =H\left(  \qq,z\right)  $. Then,
$\w\tup{\pp} \balsim \w\tup{\qq}$.
\end{lemma}

\begin{proof}
Set $u=\w\left(  \pp\right)  $ and $v=\w\left(  \qq\right)  $.

By assumption, the paths $\pp$ and $\qq$ have the same initial
height and the same final height. Call these two heights $i$ and $f$. We are
in one of the following two cases:

\textit{Case 1:} We have $f\geq i$.

\textit{Case 2:} We have $f<i$.

Consider Case 1 first. In this case, $f\geq i$. Hence, the reading words
$\w\left(  \pp\right)  $ and $\w\left(
\qq\right)  $ are rising (by (\ref{eq.U-D})). In other words, the words
$u$ and $v$ are rising (since $u=\w\left(  \pp\right)  $
and $v=\w\left(  \qq\right)  $). Thus,
Proposition~\ref{prop.upnorm.nf} shows that there exist unique up-normal words
$t_{u}$ and $t_{v}$ such that $t_{u} \balsim u$ and
$t_{v} \balsim v$. Consider these $t_{u}$ and
$t_{v}$. Lemma \ref{lem:invariant} shows that $H\left(  t_{u},z\right)
=H\left(  u,z\right)  $ (since $t_{u} \balsim u$)
and $H\left(  t_{v},z\right)  =H\left(  v,z\right)  $ (since $t_{v} \balsim v$).
Moreover, the relation $t_{u} \balsim u$ shows that the words $t_{u}$ and $u$
have the same \# of $U$'s (since balanced commutations do not change the \# of
$U$'s). In other words, $\left(  \text{\# of }U\text{'s in }t_{u}\right)
=\left(  \text{\# of }U\text{'s in }u\right)  $. Similarly, $\left(  \text{\#
of }D\text{'s in }t_{u}\right)  =\left(  \text{\# of }D\text{'s in }u\right)
$.

However, the path $\pp$ has initial height $i$ and reading word
$\w\left(  \pp\right)  =u$. Thus, Lemma
\ref{lem.H.shift} (applied to $\rr=\pp$ and $j=i$ and $w=u$)
shows that $H\left(  \pp,z\right)  =z^{i}H\left(  u,z\right)  $.
Similarly, $H\left(  \qq,z\right)  =z^{i}H\left(  v,z\right)  $. Hence,
$z^{i}H\left(  u,z\right)  =H\left(  \pp,z\right)  =H\left(
\qq,z\right)  =z^{i}H\left(  v,z\right)  $. Cancelling $z^{i}$, we
obtain $H\left(  u,z\right)  =H\left(  v,z\right)  $. In view of $H\left(
t_{u},z\right)  =H\left(  u,z\right)  $ and $H\left(  t_{v},z\right)
=H\left(  v,z\right)  $, we can rewrite this as $H\left(  t_{u},z\right)
=H\left(  t_{v},z\right)  $. Furthermore, (\ref{eq.U-D0}) yields
\begin{align*}
\left(  \text{final height of }t_{u}\right)    & =\underbrace{\left(  \text{\#
of }U\text{'s in }t_{u}\right)  }_{=\left(  \text{\# of }U\text{'s in
}u\right)  }-\underbrace{\left(  \text{\# of }D\text{'s in }t_{u}\right)
}_{=\left(  \text{\# of }D\text{'s in }u\right)  }\\
& =\left(  \text{\# of }U\text{'s in }u\right)  -\left(  \text{\# of
}D\text{'s in }u\right)  \\
& =\left(  \text{\# of }U\text{'s in }\w\left(  \pp%
\right)  \right)  -\left(  \text{\# of }D\text{'s in }\w\left(
\pp\right)  \right)  \ \ \ \ \ \ \ \ \ \ \left(  \text{since
}u=\w\left(  \pp\right)  \right)  \\
& =\underbrace{\left(  \text{final height of }\pp\right)  }%
_{=f}-\underbrace{\left(  \text{initial height of }\pp\right)  }%
_{=i}\ \ \ \ \ \ \ \ \ \ \left(  \text{by (\ref{eq.U-D})}\right)  \\
& =f-i
\end{align*}
and similarly $\left(  \text{final height of }t_{v}\right)  =f-i$. Comparing
these two equalities, we find $\left(  \text{final height of }t_{u}\right)
=\left(  \text{final height of }t_{v}\right)  $. In other words, the two words
$t_{u}$ and $t_{v}$ have the same final height.

Now, we know that the two up-normal words $t_{u}$ and $t_{v}$ have the same
final height and the same height polynomial (since $H\left(  t_{u},z\right)
=H\left(  t_{v},z\right)  $). Hence, Lemma \ref{lem:unique} shows that they
must be equal. That is, $t_{u}=t_{v}$.
From $t_{u} \balsim u$ and $t_{v} \balsim v$, we thus obtain
$u \balsim t_{u} = t_{v} \balsim v$.
In other words, $\w\left(  \pp\right)
\balsim \w\left(  \qq\right)
$ (since $u=\w\left(  \pp\right)  $ and
$v=\w\left(  \qq\right)  $). Thus, Lemma \ref{lem.4to5}
is proved in Case 1.

Let us now consider Case 2. In this case, $f<i$.

Let $\pp^{\prime}$ and $\qq^{\prime}$ be the reflections of the
diagonal paths $\pp$ and $\qq$ across a vertical line. Then, the
initial heights of $\pp^{\prime}$ and $\qq^{\prime}$ are the
final heights of $\pp$ and $\qq$, which (as we know) are $f$.
Likewise, the final heights of $\pp^{\prime}$ and $\qq^{\prime}$
are $i$. Obviously, from $f<i$, we obtain $i>f$, thus $i\geq f$.

Now, we claim that $\w\left(  \pp^{\prime}\right)
=\omega\left(  \w\left(  \pp\right)  \right)  $. Indeed,
the path $\pp^{\prime}$ is the reflection of $\pp$ across a
vertical line; thus, its steps are the toggle-images\footnote{The \emph{toggle-image} of
an edge of the diagonal lattice is defined as follows: The toggle-image of an
NE-step is an SE-step; the toggle-image of an SE-step is an NE-step.} of the steps
of $\pp$ read in the reverse order. But this means precisely that
$\w\left(  \pp^{\prime}\right)  =\omega\left(
\w\left(  \pp\right)  \right)  $ (since the
anti-automorphism $\omega$ of $\M$ sends $U$ to $D$ and $D$ to $U$
and reverses the order of letters in a word). In view of $\w%
\left(  \pp\right)  =u$, we can rewrite this as $\w%
\left(  \pp^{\prime}\right)  =\omega\left(  u\right)  $. Similarly,
$\w\left(  \qq^{\prime}\right)  =\omega\left(  v\right)
$.

Furthermore, when we reflect a diagonal path across a vertical line, the
heights of its vertices are preserved, and thus its height polynomial remains
unchanged. Hence, $H\left(  \pp^{\prime},z\right)  =H\left(
\pp,z\right)  $ and $H\left(  \qq^{\prime},z\right)  =H\left(
\qq,z\right)  $. Thus, $H\left(  \pp,z\right)  =H\left(
\qq,z\right)  $ rewrites as $H\left(  \pp^{\prime},z\right)
=H\left(  \qq^{\prime},z\right)  $.

Now, we know that $\pp^{\prime}$ and $\qq^{\prime}$ are two
diagonal paths with the same initial height $f$ and the same final height $i$,
and that $H\left(  \pp^{\prime},z\right)  =H\left(  \qq^{\prime
},z\right)  $. Moreover, $i\geq f$. Hence, the claim of Lemma \ref{lem.4to5}
in Case 1 (which we have already proved above) can be applied to
$\pp^{\prime}$, $\qq^{\prime}$, $i$ and $f$ instead of
$\pp$, $\qq$, $f$ and $i$. As a result, we obtain
$\w\left(  \pp^{\prime}\right)
\balsim \w\left(  \qq%
^{\prime}\right)  $. In other words, $\omega\left(  u\right)
\balsim \omega\left(  v\right)  $ (since
$\w\left(  \pp^{\prime}\right)  =\omega\left(  u\right)
$ and $\w\left(  \qq^{\prime}\right)  =\omega\left(
v\right)  $). By Proposition \ref{prop.bal-symm}, this entails
$u \balsim v$. In other words, $\w%
\left(  \pp\right)  \balsim
\w\left(  \qq\right)  $ (since $u=\w%
\left(  \pp\right)  $ and $v=\w\left(  \qq%
\right)  $). Thus, Lemma \ref{lem.4to5} is proved in Case 2.

Now, Lemma \ref{lem.4to5} is proved in both cases.
\end{proof}

\subsection{Proof of Theorem \ref{thm.phiomega}}

Now we can prove the second of our main results:

\begin{proof}[Proof of Theorem~\ref{thm.phiomega}.]
Let $v=\omega\left(  u\right)  $.
Then, the word $v$ is balanced (since $u$ is balanced).

Let $\pp$ and $\qq$ be the standard paths of $u$ and $v$.
Thus, $\pp$ and $\qq$ are diagonal paths starting at $\left(
0,0\right)  $ whose reading words are $\w\tup{\pp} = u$ and
$\w\tup{\qq} = v$. The path $\pp$ ends at the same
height as it starts (since its reading word $\w\left(  \pp\right)  =u$
is balanced), and thus ends at height $0$ (since it starts at height $0$).
Similarly, the same holds for $\qq$. Thus, the paths $\pp$ and
$\qq$ both have final height $0$. Of course, they also
have initial height $0$.

But recall that $v=\omega\left(  u\right)  $. Hence, the $k$-th letter of $v$
from the left is the toggle-image\footnote{The \emph{toggle-image} of a letter is
defined as follows: The toggle-image of $U$ is $D$; the toggle-image of $D$ is $U$.} of
the $k$-th letter of $u$ from the right. Therefore, the $k$-th step of
$\qq$ from the left is the toggle-image
of the $k$-th step of
$\pp$ from the right (since $\w\left(  \pp\right)  =u$ and
$\w\left(  \qq\right)  =v$). Hence, the path $\qq$ is the
reflection of the path $\pp$ across a vertical axis (since both paths
start and end at height $0$). Clearly, reflecting a point across a vertical
axis does not change the height of this point.
Thus, the paths $\pp$ and $\qq$
have the same multiset of heights of vertices (although the order in which
these heights appear in $\qq$ is opposite from the order in
$\pp$).
Hence, the paths $\pp$ and $\qq$ have the same height polynomial (since the height polynomial of a diagonal path encodes the heights of its vertices).
In other words, $H\tup{\pp, z} = H\tup{\qq, z}$.
Since the paths $\pp$ and $\qq$ have the same initial height (namely, $0$) and the same final height (namely, $0$),
we can thus apply Lemma~\ref{lem.4to5} and conclude that $\w\tup{\pp} \balsim \w\tup{\qq}$.
In other words, $u \balsim \omega\tup{u}$ (since $\w\tup{\pp} = u$ and $\w\tup{\qq} = v = \omega\tup{u}$).
Hence, Lemma~\ref{lem.bal.weyl} (applied to $\omega\tup{u}$ instead of $v$) yields
$\phi\tup{u} = \phi\tup{\omega\tup{u}}$.
This completes the proof of Theorem~\ref{thm.phiomega}.
%
\end{proof}
\end{verlong}

\subsection{Two more lemmas}

For the proof of Theorem \ref{thm.DU-eqs2}, we need two more lemmas:

\begin{lemma}
\label{lem.5to6}
If two words $u,v \in \M$ satisfy $u \balsim v$,
then $u \flipsim v$.
\end{lemma}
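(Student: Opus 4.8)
The plan is to show that balanced flips can simulate balanced commutations, so that every single balanced commutation step is itself a short sequence of balanced flips. Since the relation $\balsim$ is generated by balanced commutations and $\flipsim$ is an equivalence relation, it suffices to handle one balanced commutation at a time: if I can show that $pyxq \flipsim pxyq$ whenever $p, q \in \M$ are words and $x, y \in \M$ are balanced, then chaining these relations through the transitivity of $\flipsim$ yields $u \flipsim v$ for any $u$ and $v$ with $u \balsim v$.

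The core observation is that a swap of two adjacent balanced factors can be achieved with exactly three balanced flips, using the facts that $\omega$ is an involution and a monoid anti-morphism. Concretely, starting from $pyxq$, I would first flip the balanced factor $y$ to obtain $p\,\omega\tup{y}\,x\,q$; then flip the balanced factor $x$ to obtain $p\,\omega\tup{y}\,\omega\tup{x}\,q$. Now, since $\omega$ is an anti-morphism, we have $\omega\tup{y}\,\omega\tup{x} = \omega\tup{xy}$, and this factor is balanced because $xy$ is balanced. A third flip applied to this factor replaces $\omega\tup{xy}$ by $\omega\tup{\omega\tup{xy}} = xy$ (using that $\omega$ is an involution), producing $p\,x\,y\,q = pxyq$. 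Each of the three steps is a legitimate balanced flip, since it applies $\omega$ to a contiguous balanced factor while leaving the surrounding letters untouched; hence $pyxq \flipsim pxyq$.

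There is no serious obstacle here; the only points requiring care are the bookkeeping that each factor being flipped really is a contiguous balanced factor of the \emph{current} word (this is immediate, as $y$, then $x$, then $\omega\tup{xy}$ each sit between the unchanged prefix $p$ and suffix $q$), and that the anti-morphism identity $\omega\tup{xy} = \omega\tup{y}\,\omega\tup{x}$ is applied in the correct order. With the single-commutation case established, the general statement follows by induction on the number of balanced commutations needed to transform $u$ into $v$.
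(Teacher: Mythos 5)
Your proof is correct and follows essentially the same route as the paper's: reduce to a single balanced commutation via transitivity, then realize the swap of adjacent balanced factors as three balanced flips (flip each factor separately, then flip their concatenation), using that $\omega$ is an involutive anti-morphism preserving balancedness. The paper performs the identical three-flip sequence, merely starting from $pxyq$ rather than $pyxq$.
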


\begin{vershort}
\begin{proof}
Recall that $\flipsim$ is an equivalence relation.
Hence, it suffices to show that if a word $u$ is obtained from a word $v$ by
a balanced commutation, then $u \flipsim v$.
So let us show this.

Let a word $u$ be obtained from a word $v$ by a balanced commutation. Thus, we
can write $u$ and $v$ as $u=pxyq$ and $v=pyxq$, where $p,q\in\M$ are
two words and where $x,y\in\M$ are two balanced words (by the
definition of a balanced commutation). Consider these $p,q,x,y$.

Since a concatenation of balanced words is still balanced, the concatenation
$yx$ of $y$ and $x$ is balanced. Thus, the word $\omega\left(  yx\right)  $ is also balanced (since
applying $\omega$ to a balanced word yields a balanced word).

Moreover, since $\omega$ is a monoid anti-morphism, we have $\omega\left(
yx\right)  =\omega\left(  x\right)  \omega\left(  y\right)  $. Thus, the word
$\omega\left(  x\right)  \omega\left(  y\right)  $ is balanced. Furthermore, since $\omega
\circ\omega = \id$, we have $\omega\left(  \omega\left(
yx\right)  \right)  =yx$. In other words, $\omega\left(  \omega\left(
x\right)  \omega\left(  y\right)  \right)  =yx$ (since $\omega\left(
yx\right)  =\omega\left(  x\right)  \omega\left(  y\right)  $).

Now, we can apply a balanced flip to the word $u=pxyq$, in which we apply
$\omega$ to the balanced factor $x$. Thus we obtain a new word $u^{\prime
}=p\omega\left(  x\right)  yq$. To this new word $u^{\prime}
=p\omega\left(  x\right)  yq$, we then
apply a further balanced flip, in which we
apply $\omega$ to the balanced factor $y$. Thus we obtain a new word
$u^{\prime\prime}=p\omega\left(  x\right)  \omega\left(  y\right)  q$.
Finally, we apply one last balanced flip to this new word $u^{\prime\prime
}=p\omega\left(  x\right)  \omega\left(  y\right)  q$, in which we apply
$\omega$ to the balanced factor $\omega\left(  x\right)  \omega\left(
y\right)  $. This produces the new word $u^{\prime\prime\prime}%
=p\underbrace{\omega\left(  \omega\left(  x\right)  \omega\left(  y\right)
\right)  }_{=yx}q=pyxq=v$.

Thus we have obtained $v$ from $u$ by a sequence of three balanced flips (via
$u^{\prime}$ and $u^{\prime\prime}$). Hence, $u \flipsim v$. As we said, this proves Lemma \ref{lem.5to6}.
\end{proof}
\end{vershort}

\begin{verlong}
\begin{proof}
Recall that $\flipsim$ is an equivalence relation.
Hence, it suffices to show that if a word $u$ is obtained from a word $v$ by
a balanced commutation, then $u \flipsim v$.
So let us show this.

Let a word $u$ be obtained from a word $v$ by a balanced commutation. Thus, we
can write $u$ and $v$ as $u=pxyq$ and $v=pyxq$, where $p,q\in\M$ are
two words and where $x,y\in\M$ are two balanced words (by the
definition of a balanced commutation). Consider these $p,q,x,y$.

The words $y$ and $x$ are balanced. Hence, their concatenation $yx$ is
balanced as well  (since a concatenation of balanced words is always
balanced). Thus, the word $\omega\left(  yx\right)  $ is also balanced (since
applying $\omega$ to a balanced word yields a balanced word).

Moreover, since $\omega$ is a monoid anti-morphism, we have $\omega\left(
yx\right)  =\omega\left(  x\right)  \omega\left(  y\right)  $. Thus, the word
$\omega\left(  x\right)  \omega\left(  y\right)  $ is balanced (since
$\omega\left(  yx\right)  $ is balanced). Furthermore, since $\omega
\circ\omega = \id$, we have $\omega\left(  \omega\left(
yx\right)  \right)  =yx$. In other words, $\omega\left(  \omega\left(
x\right)  \omega\left(  y\right)  \right)  =yx$ (since $\omega\left(
yx\right)  =\omega\left(  x\right)  \omega\left(  y\right)  $).

Now, we can apply a balanced flip to the word $u=pxyq$, in which we apply
$\omega$ to the balanced factor $x$. Thus we obtain a new word $u^{\prime
}=p\omega\left(  x\right)  yq$. To this new word $u^{\prime}
=p\omega\left(  x\right)  yq$, we then
apply a further balanced flip, in which we
apply $\omega$ to the balanced factor $y$. Thus we obtain a new word
$u^{\prime\prime}=p\omega\left(  x\right)  \omega\left(  y\right)  q$.
Finally, we apply one last balanced flip to this new word $u^{\prime\prime
}=p\omega\left(  x\right)  \omega\left(  y\right)  q$, in which we apply
$\omega$ to the balanced factor $\omega\left(  x\right)  \omega\left(
y\right)  $. This produces the new word $u^{\prime\prime\prime}%
=p\underbrace{\omega\left(  \omega\left(  x\right)  \omega\left(  y\right)
\right)  }_{=yx}q=pyxq=v$.

Thus we have obtained $v$ from $u$ by a sequence of three balanced flips (via
$u^{\prime}$ and $u^{\prime\prime}$). Hence, $u \flipsim v$. As we said, this proves Lemma \ref{lem.5to6}.
\end{proof}
\end{verlong}

\begin{lemma}
\label{lem.6to1}
If two words $u,v \in \M$ satisfy $u \flipsim v$, then
$\phi\tup{u} = \phi\tup{v}$.
\end{lemma}

\begin{proof}
It clearly suffices to show that if a word $u$ is obtained from a word $v$ by
a balanced flip, then $\phi\left(  u\right)  =\phi\left(  v\right)  $.

So let us prove this. Let a word $u$ be obtained from a word $v$ by a balanced
flip. Thus, we can write $u$ and $v$ as $u=pxq$ and $v=p\omega\left(
x\right)  q$, where $p,q\in\M$ are two words and where $x\in
\M$ is a balanced word. Consider these $p,q,x$.

Theorem \ref{thm.phiomega} (applied to $x$ instead of $u$) yields $\phi\left(
x\right)  =\phi\left(  \omega\left(  x\right)  \right)  $ and
$x \balsim \omega\left(  x\right)  $.

Since $\phi$ is a monoid morphism, we have 
\[
\phi\left(  pxq\right)
=\phi\left(  p\right)  \phi\left(  x\right)  \phi\left(  q\right)
\qquad \text{ and } \qquad
\phi\left(  p\omega\left(  x\right)  q\right)  =\phi\left(  p\right)
\phi\left(  \omega\left(  x\right)  \right)  \phi\left(  q\right).
\]
The
right hand sides of these two equalities are equal (since $\phi\left(
x\right)  =\phi\left(  \omega\left(  x\right)  \right)  $). Hence, so are
their left hand sides. In other words, $\phi\left(  pxq\right)  =\phi\left(
p\omega\left(  x\right)  q\right)  $. But this can be rewritten as
$\phi\left(  u\right)  =\phi\left(  v\right)  $ (since $u=pxq$ and
$v=p\omega\left(  x\right)  q$). This proves Lemma \ref{lem.6to1}.
\end{proof}

\subsection{Proof of Theorem \ref{thm.DU-eqs2}}

We are now ready to prove Theorem \ref{thm.DU-eqs2}:

\begin{proof}[Proof of Theorem \ref{thm.DU-eqs2}.]
Let $\pp$ and $\qq$ be the standard paths of $u$ and $v$.
Then, $\pp$ and $\qq$ are diagonal paths starting in $\tup{0,0}$ and having reading words $\w\tup{\pp} = u$ and $\w\tup{\qq} = v$.
In particular, their initial heights are $0$.
Furthermore,
the final heights of the words $u$ and $v$ are (by their definitions)
the final heights of the paths $\pp$ and $\qq$.
Moreover, the height polynomials $H\tup{u,z}$ and $H\tup{v,z}$ are (by
their definitions) the height polynomials $H\left(  \pp,z\right)  $ and
$H\left(  \qq,z\right)  $, and likewise the NE-height polynomials
$H_{\NE}\left(  u,z\right)  $ and $H_{\NE}\left(  v,z\right)  $ are
the NE-height polynomials $H_{\NE}\left(  \pp,z\right)  $ and $H_{\NE}\left(  \qq,z\right)  $.

Having said this, let us now prove the equivalences. It suffices to show that $\mathcal{S}_1 \Longrightarrow \mathcal{S}_2$ and $\mathcal{S}_2 \Longrightarrow \mathcal{S}_1$ and $\mathcal{S}_1 \Longrightarrow \mathcal{S}_3 \Longrightarrow \mathcal{S}_4 \Longrightarrow \mathcal{S}_5 \Longrightarrow \mathcal{S}_6 \Longrightarrow \mathcal{S}_1$ and $\mathcal{S}_4 \Longleftrightarrow \mathcal{S}'_3$.
\medskip

$\mathcal{S}_{1}\Longrightarrow\mathcal{S}_{2}$: Trivial.
\medskip

$\mathcal{S}_{2}\Longrightarrow\mathcal{S}_{1}$: True since the action of
$\W$ on $\kk\left[  x\right]  $ is faithful.
\medskip

$\mathcal{S}_{1}\Longrightarrow\mathcal{S}_{3}$: Assume that $\mathcal{S}_1$ holds. Thus, $\phi\left(u\right) = \phi\left(v\right)$. In other words, $\phi\tup{\w\tup{\pp}} = \phi\tup{\w\tup{\qq}}$ (since $u = \w\tup{\pp}$ and $v = \w\tup{\qq}$).
Moreover, the paths $\pp$ and $\qq$ have the same initial height (since they both start at $\tup{0,0}$).
Thus, Proposition \ref{prop.DU-eqlett} yields that the final heights of $\pp$ and $\qq$ are equal, and that we have
\begin{align*}
&  \left\{  \htt\left(  p_{i}\right)  \ \mid\ p_{i}\text{ is an
NE-step of }\pp\right\}  _{\multiset}\\
&  =\left\{  \htt\left(  q_{i}\right)  \ \mid\ q_{i}\text{ is an
NE-step of }\qq\right\}  _{\multiset}.
\end{align*}
The latter equality says that the paths $\pp$ and $\qq$ have the same multiset of heights of NE-steps.
Equivalently, $H_{\NE}\tup{\pp, z} = H_{\NE}\tup{\qq, z}$ (since the NE-height polynomial of a diagonal path contains the same information as its multiset of heights of NE-steps).
In other words, $H_{\NE}\tup{u, z} = H_{\NE}\tup{v, z}$
(since the NE-height polynomials
$H_{\NE}\left(  u,z\right)  $ and $H_{\NE}\left(  v,z\right)  $ are
the NE-height polynomials $H_{\NE}\left(  \pp,z\right)  $ and $H_{\NE}\left(  \qq,z\right)  $).
Moreover, the final heights of the words $u$ and $v$ are
the final heights of the paths $\pp$ and $\qq$, and thus are equal (since the final heights of $\pp$ and $\qq$ are equal).
Thus, statement $\mathcal{S}_3$ holds.
We have now proved the implication $\mathcal{S}_1 \Longrightarrow \mathcal{S}_3$.
\medskip

$\mathcal{S}_{3}\Longrightarrow\mathcal{S}_{4}$: Assume that $\mathcal{S}_3$ holds.
That is, the words $u$ and $v$ have the same final
height and satisfy $H_{\NE}\left(  u,z\right)
=H_{\NE}\left(  v,z\right)  $. Let $b$ be the final height of $u$ and $v$.
The equality \eqref{eq.lem.laurent-w.NE} from Lemma~\ref{lem:laurent-w} yields
\[
H(u,z) = (1+z)H_{\NE}(u,z) + \sum_{j \geq b} z^j - \sum_{j \geq 1} z^j
\]
and
\[
H(v,z) = (1+z)H_{\NE}(v,z) + \sum_{j \geq b} z^j - \sum_{j \geq 1} z^j .
\]
The right hand sides of these two equalities are equal (since $H_{\NE}\left(  u,z\right)
=H_{\NE}\left(  v,z\right)  $). Hence, so are the left hand sides.
In other words, $H\tup{u, z} = H\tup{v, z}$.
Since we also know that the words $u$ and $v$ have the same final
height, we thus conclude that statement $\mathcal{S}_4$ holds.
Thus we have proved $\mathcal{S}_{3}\Longrightarrow\mathcal{S}_{4}$.
\medskip

$\mathcal{S}_{4}\Longrightarrow\mathcal{S}_{5}$:
Assume that statement $\mathcal{S}_4$ holds.
In other words, the words $u$ and $v$ have the same final
height and satisfy $H\tup{u, z} = H\tup{v, z}$.

The final heights of the words $u$ and $v$ are the final heights of their standard paths $\pp$ and $\qq$ (by definition).
Thus, the final heights of $\pp$ and $\qq$ are equal (since the final heights of $u$ and $v$ are equal).

Recall that the height polynomials $H\tup{u,z}$ and $H\tup{v,z}$ are
the height polynomials $H\left(  \pp,z\right)  $ and
$H\left(  \qq,z\right)  $.
Hence, $H\tup{\pp, z} = H\tup{\qq, z}$
(since $H\tup{u, z} = H\tup{v, z}$).
By Lemma \ref{lem.4to5}, this entails $\w\tup{\pp} \balsim \w\tup{\qq}$
(since the paths $\pp$ and $\qq$ have the same initial height and the same final height).
This can be rewritten as $u \balsim v$ (since $u = \w\tup{\pp}$ and $v = \w\tup{\qq}$).
But this is exactly $\mathcal{S}_5$.
Thus, the implication $\mathcal{S}_4 \Longrightarrow \mathcal{S}_5$ is proved.
\medskip


$\mathcal{S}_{5}\Longrightarrow \mathcal{S}_{6}$: This is Lemma~\ref{lem.5to6}.
\medskip

$\mathcal{S}_{6}\Longrightarrow \mathcal{S}_{1}$: This is Lemma~\ref{lem.6to1}.
\medskip

$\mathcal{S}_4 \Longleftrightarrow \mathcal{S}'_3$:
Next, we show the equivalence
$\mathcal{S}_4 \Longleftrightarrow \mathcal{S}'_3$.
This is tantamount to showing the equivalence of the two
equalities $H\tup{u, z} = H\tup{v, z}$
and $H_{\SE} \tup{u, z} = H_{\SE} \tup{v, z}$
under the assumption that the words $u$ and $v$ have the same final height.

So let us assume that the words $u$ and $v$ have the same final height.
Let $b$ be this final height.
The equality \eqref{eq.lem.laurent-w.SE} from Lemma~\ref{lem:laurent-w} yields
\[
H(u,z) = (1+z^{-1})H_{\SE}(u,z) + \sum_{j \geq 0} z^j - \sum_{j \geq b+1} z^j.
\]
and
\[
H(v,z) = (1+z^{-1})H_{\SE}(v,z) + \sum_{j \geq 0} z^j - \sum_{j \geq b+1} z^j.
\]
Clearly, the left hand sides of these two equalities are equal
if and only if $H\tup{u, z} = H\tup{v, z}$, whereas the right
hand sides are equal if and only if
$H_{\SE} \tup{u, z} = H_{\SE} \tup{v, z}$ (because the Laurent
polynomial $1 + z^{-1}$ is not a zero-divisor and thus can be cancelled).
Thus, the equalities $H\tup{u, z} = H\tup{v, z}$
and $H_{\SE} \tup{u, z} = H_{\SE} \tup{v, z}$ are equivalent.
As we said, this proves the equivalence
$\mathcal{S}_4 \Longleftrightarrow \mathcal{S}'_3$.
\end{proof}

\section{Enumeration}\label{sec:enum}

Two words $u, v \in \M$ will be called \emph{Weyl-equivalent} if $\phi\tup{u} = \phi\tup{v}$. Obviously, Weyl equivalence is an equivalence relation.
Theorem~\ref{thm.DU-eqs2} (and, later, Theorem~\ref{thm.DU-eqs3}) provides some necessary and sufficient criteria for Weyl equivalence.
In particular, the $\mathcal{S}_1 \Longleftrightarrow \mathcal{S}_5$ part of Theorem~\ref{thm.DU-eqs2} shows that Weyl equivalence is precisely the relation $\balsim$.

In this section, we prove several enumerative results regarding the equivalence classes of Weyl equivalence (henceforth just called ``equivalence classes'').

\subsection{Counting equivalence classes by numbers of $D$'s and $U$'s}

First, we consider equivalence classes of words with a given number of $D$'s and $U$'s. For $0 \leq k \leq n$, let $a(n,k)$ be the number of equivalence classes of words with $k$ many $D$'s and $n-k$ many $U$'s.
One of the simplest properties of these numbers is the following symmetry:

\begin{proposition}
    \label{prop.a.sym}
    We have $a(n, k) = a(n, n-k)$ for any integers $0 \leq k \leq n$.
\end{proposition}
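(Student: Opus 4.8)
The plan is to use the involution $\omega : \M \to \M$ as a bijection between the two relevant families of equivalence classes. First I would observe that $\omega$ reverses a word and toggles each letter; hence it sends any word with $k$ many $D$'s and $n-k$ many $U$'s to a word with $n-k$ many $D$'s and $k$ many $U$'s, since reversing preserves letter counts while toggling swaps the roles of $D$ and $U$. Thus $\omega$ restricts to a bijection from the set of words with $k$ many $D$'s (and $n-k$ many $U$'s) onto the set of words with $n-k$ many $D$'s (and $k$ many $U$'s); the inverse bijection is again $\omega$, since $\omega \circ \omega = \id$.

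Next I would use the fact that the diagram \eqref{eq.omega.comm} commutes, i.e., $\omega \circ \phi = \phi \circ \omega$, where the right-hand $\omega$ is the algebra anti-morphism of $\W$. Because this anti-morphism is itself an involution and hence a bijection of $\W$, the map $\omega$ respects Weyl equivalence in both directions: for any two words $u, v \in \M$ we have $\phi\tup{u} = \phi\tup{v}$ if and only if $\phi\tup{\omega\tup{u}} = \phi\tup{\omega\tup{v}}$. Indeed, applying $\omega$ to both sides of $\phi\tup{u} = \phi\tup{v}$ and invoking $\omega \circ \phi = \phi \circ \omega$ gives $\phi\tup{\omega\tup{u}} = \phi\tup{\omega\tup{v}}$, and the converse follows by applying $\omega$ once more. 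Consequently $\omega$ descends to a well-defined involution on the set of Weyl-equivalence classes.

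Combining the two observations, this induced involution restricts to a bijection between the equivalence classes of words with $k$ many $D$'s and the equivalence classes of words with $n-k$ many $D$'s. Counting both sides yields $a(n,k) = a(n,n-k)$. There is essentially no serious obstacle here: the only point requiring (minor) care is verifying that $\omega$ is compatible with the equivalence relation in \emph{both} directions, but this is immediate from the commuting square and the fact that $\omega$ is an involution on $\M$ and on $\W$ alike. (One could equally phrase the argument through criterion $\mathcal{S}_5$ and Proposition~\ref{prop.bal-symm}, replacing $\phi$ by the relation $\balsim$, but the route through $\phi$ is the most direct.)
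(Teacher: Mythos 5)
Your proof is correct and takes essentially the same approach as the paper: both exploit the involution $\omega$, which swaps the counts of $D$'s and $U$'s and descends to a bijection on Weyl-equivalence classes. The only (cosmetic) difference is that you certify compatibility with the equivalence via the commuting square $\omega\circ\phi=\phi\circ\omega$, whereas the paper invokes Proposition~\ref{prop.bal-symm} together with the identification of Weyl equivalence with $\balsim$ — an alternative you yourself note at the end.
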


\begin{proof}
There are many easy ways to see this.
For instance, Proposition~\ref{prop.bal-symm} shows that the monoid anti-automorphism $\omega : \M \to \M$ sends equivalence classes to equivalence classes (since Weyl equivalence is the relation $\balsim$).
But $\omega$ turns $D$'s into $U$'s and vice versa.
Thus, the proposition follows.
\end{proof}

In particular, $a(n,0) = a(n,n) = 1$ (the only words in these cases are $UU\cdots U$ and $DD\cdots D$ respectively).
\medskip

Our first real result about the $a(n,k)$ is the following recursion.

\begin{lemma}
\label{lemma:recursion}
For $n > 2k \geq 0$, we have
\[a(n, k) = a(n-1, k) + a(n-2, k-1).\]
Here, $a(n-2,-1)$ is interpreted as $0$ when $k=0$.
\end{lemma}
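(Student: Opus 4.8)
The plan is to translate the count $a(n,k)$ into a count of multisets of integers by way of the equivalence-class description in Theorem~\ref{thm.DU-eqs2}, and then to prove the recursion by an explicit bijection that deletes a single copy of the largest element of such a multiset. Since $n > 2k$, every word with $k$ many $D$'s and $n-k$ many $U$'s is rising, and the same holds for the $(n-1,k)$-words (here strictness of $n>2k$ is used) and the $(n-2,k-1)$-words. By the equivalence $\mathcal{S}_1 \Longleftrightarrow \mathcal{S}_3$ of Theorem~\ref{thm.DU-eqs2}, two words with the same numbers of $D$'s and $U$'s (hence the same final height) are Weyl-equivalent if and only if they have the same NE-height polynomial $H_{\NE}$, i.e.\ the same multiset of heights $\htt(p_i)$ of NE-steps. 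Thus $a(n,k)$ equals the number of such multisets realizable by $(n,k)$-words. Using the up-normal representative of each class (Proposition~\ref{prop.upnorm.nf}) together with the shape from Proposition~\ref{rem.shape_normal}, where (as computed in the proof of Lemma~\ref{lem:unique}) one has $H_{\NE}(\pp,z) = \sum_{i=1}^h (r_i+1) z^{-a+i-1}$, I would identify the realizable multisets precisely: they are the multisets $M$ of integers of size $n-k$ whose support is a contiguous interval $[c,d]$ with $c \leq 0$ and $d \geq n-2k-1$. The interval condition and the two endpoint bounds correspond exactly to $a = -c \geq 0$, to the positive multiplicities $r_i + 1$, and to $b = d + 1 - (n-2k) \geq 0$.

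For the recursion I would send each such multiset $M$ (of size $n-k$) to the multiset obtained by deleting one copy of its maximum $d$, distinguishing according to the multiplicity $\mu_d$ of $d$ in $M$. If $\mu_d \geq 2$, the support stays $[c,d]$, and the image is a size-$(n-k-1)$ multiset with support $[c,d]$, $c \leq 0$, $d \geq n-2k-1$, which is exactly the data enumerated by $a(n-2,k-1)$ (since $(n-2)-2(k-1)-1 = n-2k-1$). If $\mu_d = 1$, the support shrinks to $[c,d-1]$, and the image is a size-$(n-k-1)$ multiset with $c \leq 0$ and new maximum $d-1 \geq n-2k-2$, which is exactly the data enumerated by $a(n-1,k)$ (since $(n-1)-2k-1 = n-2k-2$).

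To see this is a bijection onto the disjoint union, I would exhibit the inverse directly. An $(n-2,k-1)$-multiset $M'$ with support $[c,d]$ is hit only by $M' \cup \{d\}$, which automatically has $\mu_d \geq 2$; an $(n-1,k)$-multiset $M'$ with support $[c,d']$ is hit only by $M' \cup \{d'+1\}$, whose maximum $d'+1 \geq n-2k-1$ occurs with multiplicity $1$. Since every $(n,k)$-multiset falls into exactly one of the two cases $\mu_d = 1$ or $\mu_d \geq 2$, these inverses are well-defined and mutually exclusive, yielding $a(n,k) = a(n-1,k) + a(n-2,k-1)$. When $k = 0$ the unique realizable multiset is $\{0,1,\ldots,n-1\}$, whose maximum has multiplicity $1$, so the $\mu_d \geq 2$ case is vacuous; this recovers the stated convention $a(n-2,-1) = 0$.

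The main obstacle I anticipate is the threshold bookkeeping: the defining bounds $d \geq n-2k-1$ and $d \geq n-2k-2$ of the two target families are nested, so one cannot partition the image merely by the value of the maximum. Tagging by the multiplicity $\mu_d$ is precisely what makes the two cases genuinely disjoint and the inverse single-valued, and getting this split right is the delicate part. A secondary point requiring care is verifying, for $k \geq 1$, that $n-k \geq 2$, so that in the $\mu_d = 1$ case (which then forces $c < d$) deleting the maximum never produces the empty multiset.
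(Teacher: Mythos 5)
Your proof is correct and is essentially the paper's proof transported to the multiset of NE-step heights: the paper's Type~1/Type~2 split of up-normal words according to whether $r_h=0$ or $r_h>0$ is exactly your split by the multiplicity $\mu_d$ of the maximum height ($\mu_d = r_h+1$), and removing the final $U$ resp.\ the final $DU$ from the up-normal representative is precisely deleting one copy of the maximum from the multiset. Your characterization of the realizable multisets (size $n-k$, contiguous support $[c,d]$ with $c\leq 0$ and $d\geq n-2k-1$) checks out against the up-normal form, so the argument goes through.
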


\begin{proof}
Recall that $a(n, k)$ counts the equivalence classes of words with $k$ many $D$'s and $n-k$ many $U$'s.
Such words always have more $U$'s than $D$'s (since $n > 2k$), and thus (in particular) are rising.
Thus, any such equivalence class has a unique up-normal representative (by Proposition~\ref{prop.upnorm.nf}).
Therefore, $a(n, k)$ counts the up-normal words $w$ with $k$ many $D$'s and $n-k$ many $U$'s. Recall that every up-normal word has the form
\begin{equation}\label{eq:unw_shape}
D^a\, (UD)^{r_1}U\, (UD)^{r_2}U\, \cdots\, (UD)^{r_h}U\, D^b
\end{equation}
for nonnegative integers $a,b,r_1,\ldots,r_h$ (see Proposition~\ref{rem.shape_normal}). 
An up-normal rising word $w$ can be of the following two types:
\begin{enumerate}[itemindent=.7cm,label=\textbf{Type \arabic*:}]
\item The standard path corresponding to $w$ has only one vertex of maximum height. Equivalently, $r_h = 0$ in~\eqref{eq:unw_shape}. In this case, we can 
remove the last $U$ from $w$ to obtain the word
\[D^a\, (UD)^{r_1}U\, (UD)^{r_2}U\, \cdots\, (UD)^{r_{h-1}}U\, D^b\]
of length $n-1$ consisting of $k$ many $D$'s and $n-k-1$ $U$'s. This word is still up-normal. Since $n-1 \geq 2k$, it is also still rising. It is clear that one can reverse the procedure: given a rising up-normal word of length $n-1$ with $k$ many $D$'s and $n-k-1$ many $U$'s, insert a $U$ just before the final run of $D$'s if its last letter is $D$, and at the end otherwise. This gives us a bijection between equivalence classes counted by $a(n-1,k)$ and the equivalence classes of the first type. See Figure~\ref{fig.bij.type.1} for an illustration.
\item The standard path corresponding to $w$ has at least two distinct vertices of maximum height. Equivalently, $r_h > 0$ in~\eqref{eq:unw_shape}. In this case, we can remove the last $U$ and the $D$ right before it to obtain the word
\[D^a\, (UD)^{r_1}U\, (UD)^{r_2}U\, \cdots\, (UD)^{r_h-1}U\, D^b,\]
which is still up-normal and rising. Its length is $n-2$, and it has $k-1$ many $D$'s. 
Again, the process is easily reversed by inserting $DU$ either before the final run of $D$'s if the last letter is $D$, or at the end.
This gives us a bijection between equivalence classes counted by $a(n-2,k-1)$ and the equivalence classes of the second type. See Figure~\ref{fig.bij.type.2} for an illustration.
\end{enumerate}
Combining the two types, we obtain the desired recursion.
\end{proof}

\begin{figure}[htbp]
\begin{center}
\begin{tikzpicture}[scale=0.7]
\draw[help lines] (-0.2, -3.2) grid (17.2, 3.2);
\draw [line width=2pt] (-0.2, 0) -- (17.2, 0);
\draw
[line width=2pt,-stealth] (0, 0) edge (1, -1) (1, -1) edge (2, -2) (2, -2) edge (3, -3) (3, -3) edge (4, -2) (4, -2) edge (5, -3) (5, -3) edge (6, -2) (6, -2) edge (7, -1) (7, -1) edge (8, 0) (8, 0) edge (9, 1) (9, 1) edge (10,0) (10,0) edge (11,1) (11,1) edge (12,2) (12,2) edge (13,1) (13,1) -- (14, 2);
\draw
[line width=2pt,-stealth,dashed,red] (14, 2) -- (15, 3);
\draw
[line width=1pt,dotted,blue,-stealth] (15, 3) edge (16, 2) (16, 2) -- (17, 1);
\draw
[line width=1pt,dotted,blue,-stealth] (14, 2) edge (15, 1) (15, 1) -- (16, 0);
\node at (8.5, -4) {$DDDUDUUUUDUUDU\red{U}DD \leftrightarrow DDDUDUUUUDUUDUDD$};
\end{tikzpicture}
\end{center}
\caption{The bijection for words of type 1. The arc that is removed/inserted is indicated by a dashed red line. Arcs that are shifted by the procedure are indicated by dotted blue lines.}
\label{fig.bij.type.1}
\end{figure}

\begin{figure}[htbp]
\begin{center}
\begin{tikzpicture}[scale=0.7]
\draw[help lines] (-0.2, -3.2) grid (17.2, 3.2);
\draw [line width=2pt] (-0.2, 0) -- (17.2, 0);
\draw
[line width=2pt,-stealth] (0, 0) edge (1, -1) (1, -1) edge (2, -2) (2, -2) edge (3, -3) (3, -3) edge (4, -2) (4, -2) edge (5, -3) (5, -3) edge (6, -2) (6, -2) edge (7, -1) (7, -1) edge (8, 0) (8, 0) edge (9, 1) (9, 1) edge (10,0) (10,0) edge (11,1) (11,1) edge (12,2) (12,2) edge (13,3);
\draw
[line width=2pt,-stealth,dashed,red] (13,3) edge (14,2) (14, 2) -- (15, 3);
\draw
[line width=1pt,dotted,blue,-stealth] (15, 3) edge (16, 2) (16, 2) -- (17, 1);
\draw
[line width=1pt,dotted,blue,-stealth] (13, 3) edge (14, 2) (14, 2) -- (15, 1);
\node at (8.5, -4) {$DDDUDUUUUDUUU\red{DU}DD \leftrightarrow DDDUDUUUUDUUUDD$};
\end{tikzpicture}
\end{center}
\caption{The bijection for words of type 2. The arcs that are removed/inserted are indicated by dashed red lines. Arcs that are shifted by the procedure are indicated by dotted blue lines.}
\label{fig.bij.type.2}
\end{figure}

Second, we prove explicit formulas in two special cases, which together with the previous lemma characterize the numbers $a(n,k)$ for all $n$ and $k$.

\begin{lemma}
\label{lemma:explicit}
For $k > 0$, we have
\[ a(2k, k) = (k + 3)2^{k-2} \quad\text{and}\quad a(2k + 1, k) = (k + 2)2^{k-1}.\]
\end{lemma}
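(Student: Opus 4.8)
The plan is to count the up-normal words with the prescribed letter counts and then evaluate two binomial sums. In both cases the relevant words are rising: the words counted by $a(2k,k)$ (with $k$ many $D$'s and $k$ many $U$'s) are balanced, hence rising, while those counted by $a(2k+1,k)$ (with $k$ many $D$'s and $k+1$ many $U$'s) have more $U$'s than $D$'s and so are rising as well. By Proposition~\ref{prop.upnorm.nf} every equivalence class of a rising word has a unique up-normal representative, and since balanced commutations preserve the numbers of $U$'s and $D$'s, it follows that $a(2k,k)$ (resp.\ $a(2k+1,k)$) equals the number of up-normal words with $k$ many $D$'s and $k$ (resp.\ $k+1$) many $U$'s.

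Next I would invoke Proposition~\ref{rem.shape_normal} to write each such up-normal word in the form $D^a\,(UD)^{r_1}U\,(UD)^{r_2}U\cdots(UD)^{r_h}U\,D^b$ with $a,b,h\geq 0$ and all $r_i\geq 0$, and note that, whenever $h\geq 1$, these parameters are uniquely recoverable from the word: $a$ and $b$ are the lengths of the initial and final runs of $D$'s (each terminated by a $U$), while in the remaining middle word a $U$ starts a new block precisely when it is immediately followed by another $U$ or is the last letter, which pins down $h$ and $r_1,\dots,r_h$. Counting letters in this form yields $(\text{\# of $U$'s}) = h + R$ and $(\text{\# of $D$'s}) = a + b + R$, where $R := r_1 + \cdots + r_h$.

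Writing $R = j$, I would then convert the letter-count conditions into a single sum in each case. For $a(2k+1,k)$ the equations $h+R=k+1$ and $a+b+R=k$ give $h=k+1-j$ with $0\leq j\leq k$; for fixed $j$ there are $\binom{j+h-1}{h-1}=\binom{k}{j}$ choices of $(r_1,\dots,r_h)$ summing to $j$ and $k-j+1$ choices of $(a,b)$ with $a+b=k-j$, so that $a(2k+1,k)=\sum_{j=0}^{k}\binom{k}{j}(k-j+1)$. For $a(2k,k)$ the equations $h+R=k=a+b+R$ force $h=k-j$, which together with $h\geq 1$ restricts $j$ to $0\leq j\leq k-1$, and an identical count gives $a(2k,k)=\sum_{j=0}^{k-1}\binom{k-1}{j}(k-j+1)$.

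Finally I would evaluate both sums by writing $k-j+1=(k-j)+1$ and applying the standard identities $\sum_{j=0}^{m}\binom{m}{j}=2^m$ and $\sum_{j=0}^{m}j\binom{m}{j}=m\,2^{m-1}$, which yield $(k+2)2^{k-1}$ and $(k+3)2^{k-2}$ respectively. The arithmetic here is routine; the only delicate point is the bookkeeping in the second paragraph --- ensuring that $(a,b,h,r_1,\dots,r_h)$ really is a bijective encoding of up-normal words and that the constraint $h\geq 1$ (i.e.\ the presence of at least one $U$) is imposed correctly --- so that the two sums count exactly the intended objects.
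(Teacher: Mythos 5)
Your proof is correct and follows essentially the same route as the paper's: both reduce the count to up-normal representatives, parametrize them by a composition together with a placement choice, and evaluate the same binomial sum (the paper indexes by the multiset of NE-step heights and handles the odd case via the recursion, you index by the normal-form exponents in both cases, but Remark~\ref{rem:comb_int} shows these are the same data). The one small point your bijection needs that the paper states but deliberately avoids using is the converse of Proposition~\ref{rem.shape_normal} (every rising word of that shape is up-normal); it is indeed immediate, since in such a word two consecutive $D$'s can occur only in the initial or final run, neither of which is flanked by $U$'s on both sides.
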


\begin{proof}
We start with the first formula.
Recall (from Theorem~\ref{thm.DU-eqs2}, equivalence $\mathcal{S}_1 \Longleftrightarrow \mathcal{S}_3$) that the equivalence class of a word (with given numbers of $U$'s and $D$'s) is uniquely determined by the multiset of heights of NE-steps in its standard path.
Let the minimum and maximum heights of NE-steps be $-s$ and $t$ respectively (with $s \geq 0$ and $t \geq -1$, where the case $t = -1$ means that the path has no NE-steps).
Let $h_j$ be the number of NE-steps of height $j$ (for each $-s \leq j \leq t$).
Since our words are balanced (they have $k$ many $D$'s and $k$ many $U$'s), the equivalence class has an up-normal representative (by Proposition~\ref{prop.bal-symm}), and therefore all $h_j$'s need to be strictly positive.
Thus, $\tup{h_{-s}, h_{-s+1}, \ldots, h_t}$ is a composition of $k$ into $s+t+1$ positive integers. For each composition of $k$ of length $\ell$ (of which there are $\binom{k-1}{\ell-1}$), there are $\ell+1$ possibilities for the pair $(s,t)$ ($s$ can be any integer from $0$ to $\ell$, and $t = \ell-s-1$). This gives us a total of
\[
\sum_{\ell=1}^k \binom{k-1}{\ell-1} (\ell+1) = (k+3)2^{k-2}
\]
possibilities.

For the second formula, we can use induction combined with the recursion in Lemma~\ref{lemma:recursion}, which gives us
\[a(2k+1,k) = a(2k,k) + a(2k-1,k-1),\]
or we can apply a similar combinatorial argument with compositions of $k+1$ (the only difference is the fact that a composition of length $\ell$ only gives rise to $\ell$ possibilities, since the maximum height $t$ of an NE-step can no longer be $-1$).
\end{proof}

The combination of these two lemmas yields an explicit formula for the bivariate generating function of $a(n,k)$.

\begin{theorem}
\label{thm:generatingfunction}
We have
\begin{equation}\label{eq:firstgf}
\sum_{n \geq 0} \sum_{0 \leq k \leq n} a(n,k)t^k x^n = \frac{(1-3tx^2+t^2x^4)(1-tx^2)^2}{(1-tx-tx^2)(1-x-tx^2)(1-2tx^2)^2}
\end{equation}
or equivalently
\begin{equation}\label{eq:secondgf}
\sum_{n \geq 0} \sum_{0 \leq k \leq n/2} a(n,k)t^k x^n = \frac{(1-tx^2)^3}{(1-x-tx^2)(1-2tx^2)^2}.
\end{equation}
\end{theorem}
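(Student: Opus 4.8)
The two displayed identities carry the same information, since the symmetry $a(n,k)=a(n,n-k)$ of Proposition~\ref{prop.a.sym} lets one reconstruct the whole triangle from the part with $k \leq n/2$. Accordingly, the plan is to prove the ``half'' identity \eqref{eq:secondgf} directly from the recursion and the diagonal values, and then deduce the full identity \eqref{eq:firstgf} from it by exploiting the symmetry.

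The first move is to reindex by the excess of $U$'s over $D$'s. Writing $b_{k,m} := a\tup{2k+m,\,k}$, so that $m \geq 0$ is the final height of the words being counted, the recursion of Lemma~\ref{lemma:recursion} becomes
\[
b_{k,m} = b_{k,m-1} + b_{k-1,m} \qquad \text{for } m \geq 1,
\]
with the convention $b_{-1,m}=0$. Setting $s := tx^2$, the substitution $t^k x^{2k+m} = s^k x^m$ turns the left-hand side of \eqref{eq:secondgf} into $F(t,x) = \sum_{k,m \geq 0} b_{k,m}\, s^k x^m$. Multiplying the recursion by $s^k x^m$ and summing over $k\geq 0$ and $m \geq 1$, the three resulting sums evaluate to $F - D(s)$, $xF$, and $s\tup{F - D(s)}$ respectively, where $D(s) := \sum_{k\geq 0} b_{k,0}\, s^k = \sum_{k \geq 0} a\tup{2k,k}\, s^k$ is the diagonal generating function. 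This gives the functional equation $\tup{1 - x - s}\,F = \tup{1-s}\,D(s)$.

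It then remains to compute $D(s)$. Using $a\tup{0,0}=1$ and the explicit value $a\tup{2k,k} = \tup{k+3}2^{k-2}$ from Lemma~\ref{lemma:explicit}, one sums $\sum_{k\geq 1}\tup{k+3}2^{k-2}s^k$ by splitting it (via $\sum k u^k$ and $\sum u^k$ with $u = 2s$) and finds, after simplification, $D(s) = \dfrac{\tup{1-s}^2}{\tup{1-2s}^2}$. Substituting this into the functional equation yields $F = \dfrac{\tup{1-s}^3}{\tup{1-2s}^2\tup{1-x-s}}$, which is precisely \eqref{eq:secondgf} once $s = tx^2$ is restored. (Note that only the first formula of Lemma~\ref{lemma:explicit} is needed here; the second is recovered as a by-product.)

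For the passage to \eqref{eq:firstgf}, the key observation is that $t^k x^n \mapsto t^{n-k} x^n$ under the substitution $\tup{t,x}\mapsto\tup{1/t,\,tx}$, so the symmetry $a(n,k)=a(n,n-k)$ identifies the strict upper half $k>n/2$ of the full generating function $P$ with the image of the strict lower half $F - \Delta$ under that substitution, where $\Delta := D\tup{tx^2} = \dfrac{\tup{1-tx^2}^2}{\tup{1-2tx^2}^2}$ is the diagonal. Since $tx^2$ and $2tx^2$ are invariant under the substitution while $1-x-tx^2 \mapsto 1-tx-tx^2$, one computes the upper half and adds it to $F$; collecting over a common denominator and simplifying the numerator bracket $\tup{1-tx^2}\tup{1-tx-tx^2} + tx\tup{1-x-tx^2}$ down to $1 - 3tx^2 + t^2x^4$ produces \eqref{eq:firstgf}. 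The only genuinely error-prone step is this final numerator collapse, where a single sign slip would spoil the identity, so I would carry out that cancellation with care; everything else is bookkeeping once the reindexing and the symmetry substitution are set up.
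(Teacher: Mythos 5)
Your proposal is correct and follows essentially the same route as the paper's proof: derive the functional equation $(1-x-tx^2)A = (1-tx^2)\sum_k a(2k,k)t^kx^{2k}$ from the recursion of Lemma~\ref{lemma:recursion}, evaluate the diagonal sum as $(1-tx^2)^2/(1-2tx^2)^2$ via Lemma~\ref{lemma:explicit}, and then obtain \eqref{eq:firstgf} from \eqref{eq:secondgf} by the symmetry $a(n,k)=a(n,n-k)$ under the substitution $(t,x)\mapsto(1/t,tx)$ (the paper writes this as $A(tx,1/t)$). The reindexing by the final height $m=n-2k$ and the variable $s=tx^2$ is only a cosmetic repackaging of the same computation, and your final numerator identity $(1-tx^2)(1-tx-tx^2)+tx(1-x-tx^2)=1-3tx^2+t^2x^4$ checks out.
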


\begin{proof}
We start with the second identity~\eqref{eq:secondgf}. Let us write $A(x,t)$ for the bivariate generating function on the left. Multiplying the recursion in Lemma~\ref{lemma:recursion} by $t^k x^n$ and summing over all $n$ and $k$ with $n > 2k \geq 0$, we obtain
\[
\sum_{n \geq 1} \sum_{0 \leq k < n/2} a(n,k)t^k x^n = \sum_{n \geq 1} \sum_{0 \leq k < n/2} a(n-1,k)t^k x^n + \sum_{n \geq 2} \sum_{0 \leq k < n/2} a(n-2,k-1)t^k x^n.
\]
In terms of the generating function $A(x,t)$, this becomes
\[
A(x,t) - \sum_{k \geq 0} a(2k,k) t^k x^{2k} = x A(x,t) + tx^2 A(x,t) - \sum_{k \geq 0} a(2k,k) t^{k+1} x^{2k+2}.
\]
Solving for $A(x,t)$ now yields
\begin{equation}\label{eq:Axt-intermediate}
A(x,t) = \frac{1-tx^2}{1-x-tx^2} \sum_{k \geq 0} a(2k,k) t^k x^{2k}.
\end{equation}
In view of Lemma~\ref{lemma:explicit} (noting also that $a(0,0) = 1$), the sum evaluates to 
\begin{equation}\label{eq:central_terms}
    \sum_{k \geq 0} a(2k,k)t^kx^{2k} = \frac{(1-tx^2)^2}{(1-2tx^2)^2},
\end{equation}
which completes the proof of~\eqref{eq:secondgf}. To prove the first identity, we write
\begin{align*}
\sum_{n \geq 0} \sum_{0 \leq k \leq n} a(n,k)t^k x^n &=
\sum_{n \geq 0} \sum_{0 \leq k \leq n/2} a(n,k)t^k x^n + \sum_{n \geq 0} \sum_{0 \leq k \leq n/2} a(n,n-k)t^{n-k} x^n \\
&\qquad\quad - \sum_{k \geq 0} a(2k,k) t^k x^{2k}.
\end{align*}
By the symmetry property $a(n,n-k) = a(n,k)$, the first two terms are $A(x,t)$ and $A(tx,1/t)$ respectively. The final term is precisely~\eqref{eq:central_terms} again. Now identity~\eqref{eq:firstgf} follows upon simplification.
\end{proof}

\begin{corollary}\label{cor:sumformula}
For all $n$ and $k$ with $0 \leq k \leq n/2$, we have
\[
a(n,k) = \sum_{j=0}^k (k-j+1) \binom{n-k-1}{j}.
\]
\end{corollary}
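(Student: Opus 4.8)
The plan is to prove the formula by strong induction on $n$, leveraging the recursion of Lemma~\ref{lemma:recursion} for the inductive step and the explicit central values of Lemma~\ref{lemma:explicit} for the base case. For each fixed $n$ I would establish the claim for every $k$ with $0 \leq k \leq n/2$ simultaneously, splitting into the ``diagonal'' case $n = 2k$ and the ``off-diagonal'' case $n > 2k$.

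For the base case $n = 2k$, I would show directly that the right-hand side reproduces $a(2k,k)$. Since $\binom{k-1}{k} = 0$, the sum $\sum_{j=0}^{k}(k-j+1)\binom{k-1}{j}$ truncates at $j = k-1$; writing $k-j+1 = (k+1) - j$ and splitting it as $(k+1)\sum_{j}\binom{k-1}{j} - \sum_{j} j\binom{k-1}{j} = (k+1)2^{k-1} - (k-1)2^{k-2} = (k+3)2^{k-2}$ matches Lemma~\ref{lemma:explicit} for $k \geq 1$; the case $k = 0$ gives $1 = a(0,0)$ directly.

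For the inductive step $n > 2k$, I would invoke Lemma~\ref{lemma:recursion} to write $a(n,k) = a(n-1,k) + a(n-2,k-1)$, apply the induction hypothesis to both summands (whose indices lie in the allowed range, since $n > 2k$ forces $n-1 \geq 2k$ and $n-2 \geq 2(k-1)$), and then verify the resulting binomial identity
\[
\sum_{j=0}^{k}(k-j+1)\binom{n-k-2}{j} + \sum_{j=0}^{k-1}(k-j)\binom{n-k-2}{j} = \sum_{j=0}^{k}(k-j+1)\binom{n-k-1}{j}.
\]
This follows from Pascal's rule $\binom{n-k-1}{j} = \binom{n-k-2}{j} + \binom{n-k-2}{j-1}$: the first summand on the right matches the first sum on the left, and the second summand, after reindexing $j \mapsto j+1$, matches the second sum on the left (the stray $j = -1$ term vanishing because $\binom{n-k-2}{-1} = 0$). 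The degenerate case $k = 0$ is immediate, since then $a(n,0) = 1$ and the right-hand side equals $\binom{n-1}{0} = 1$.

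The argument is essentially routine; the only points demanding care are the bookkeeping of index ranges (ensuring both the recursion and the induction hypothesis apply exactly where needed) and the correct use of the conventions $\binom{m}{-1} = 0$ and $a(n-2,-1) = 0$ at the boundary $k = 0$. An alternative route would extract the coefficient $[t^k x^n]$ directly from the closed form~\eqref{eq:secondgf} (or from~\eqref{eq:Axt-intermediate} together with~\eqref{eq:central_terms}), expanding $1/(1-x-tx^2)$ as $\sum_{m \geq 0}(x+tx^2)^m$; this is feasible but leads to a messier partial-fraction computation than the clean inductive argument above, so I would favor the induction.
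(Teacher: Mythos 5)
Your proposal is correct and follows essentially the same route as the paper: both arguments observe that the boundary values (at $k=0$ and on the diagonal $n=2k$, via Lemma~\ref{lemma:explicit}) together with the recursion of Lemma~\ref{lemma:recursion} determine $a(n,k)$ uniquely, and then check via Pascal's rule that the proposed sum satisfies the same recursion; your explicit verification that the diagonal sum collapses to $(k+3)2^{k-2}$ is a minor addition the paper leaves as "easy to verify". The only cosmetic difference is that you phrase the uniqueness argument as an explicit strong induction on $n$, whereas the paper states it as a characterization.
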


\begin{proof}
Let $\Sigma(n,k)$ be the sum on the right side of the equation. It is easy to verify that $\Sigma(n,0) = a(n,0) = 1$ and $\Sigma(2k,k) = a(2k,k) = (k + 3)2^{k-2}$ as well as $\Sigma(2k+1,k) = a(2k+1,k) = (k + 2)2^{k-1}$. Since these values together with the recursion in Lemma~\ref{lemma:recursion} characterize $a(n,k)$ uniquely, it suffices to verify that
\[
\Sigma(n,k) = \Sigma(n-1,k) + \Sigma(n-2,k-1).
\]
The latter is a simple consequence of the recursion for the binomial coefficients, since
\[
\Sigma(n-1,k) = \sum_{j = 0}^k  (k - j + 1) \binom{n - k - 2}{j}
\]
and
\[
\Sigma(n-2,k-1) = \sum_{j = 0}^{k-1} (k-j)\binom{n-k-2}{j} = \sum_{j = 1}^{k} (k-j+1)\binom{n-k-2}{j-1}.
\]
\end{proof}

\begin{table}[ht]
\centering
\begin{tabular}{ |c|ccccccccccc|} 
 \hline
 \diagbox[width=10mm]{$n$}{$k$} & 0 & 1 & 2 & 3 & 4 & 5 & 6 & 7 & 8 & 9 & 10 \\ 
 \hline
$0$ & $1$ & & & & & & & & & & \\
$1$ & $1$ & $1$ & & & & & & & & & \\
$2$ & $1$ & $2$ & $1$ & & & & & & & & \\
$3$ & $1$ & $3$ & $3$ & $1$ & & & & & & & \\
$4$ & $1$ & $4$ & $5$ & $4$ & $1$ & & & & & & \\
$5$ & $1$ & $5$ & $8$ & $8$ & $5$ & $1$ & & & & & \\
$6$ & $1$ & $6$ & $12$ & $12$ & $12$ & $6$ & $1$ & & & & \\ 
$7$ & $1$ & $7$ & $17$ & $20$ & $20$ & $17$ & $7$ & $1$ & & & \\
$8$ & $1$ & $8$ & $23$ & $32$ & $28$ & $32$ & $23$ & $8$ & $1$ & & \\ 
$9$ & $1$ & $9$ & $30$ & $49$ & $48$ & $48$ & $49$ & $30$ & $9$ & $1$ & \\
$10$ & $1$ & $10$ & $38$ & $72$ & $80$ & $64$ & $80$ & $72$ & $38$ & $10$ & $1$ \\
\hline
\end{tabular}
\caption{Table of the values of $a(n,k)$.}\label{tab:a}
\end{table}

\begin{remark}\label{rem:comb_int}
The terms in Corollary~\ref{cor:sumformula} (see Table~\ref{tab:a} for some explicit values) have a simple combinatorial interpretation. Recall that the up-normal representative of any equivalence class can be written as $D^a\, (UD)^{r_1}U\, (UD)^{r_2}U\, \cdots \, (UD)^{r_h}U\, D^b$ for some nonnegative integers $a,b,h$ and $r_1,r_2,\ldots,r_h$ (see~\eqref{eq.lem.unique.w=}). Counting $U$'s and $D$'s, we find that $a+b+r_1+r_2+\cdots+r_h = k$ and $(r_1+1)+(r_2+1)+\cdots+(r_h+1) = n-k$. From these, one obtains $a+b = 2k+h-n$. Given $h$ (which is the number of distinct heights of NE-steps in the corresponding standard path), there are thus $2k+h-n+1$ possibilities for $a$ and $b$. 
Moreover, $r_1+1,r_2+1,\ldots,r_h+1$ is a composition of $n-k$ into $h$ positive integers, for which there are $\binom{n-k-1}{h-1}$ possibilities. Thus the total number of equivalence classes must be
\begin{equation*}
\sum_{h=n-2k}^{n-k} (2k+h-n+1) \binom{n-k-1}{h-1} = \sum_{j=0}^k (k-j+1) \binom{n-k-1}{j},
\end{equation*}
where the second expression is obtained from the first by the simple substitution $h = n-k-j$. The numbers $r_1+1,r_2+1,\ldots,r_h+1$ are the multiplicities in the multiset of heights of NE-steps, while $a$ and $b$ determine at which $h$ consecutive heights the NE-steps occur. 
\end{remark}

\subsection{Counting all equivalence classes of a given length}

\begin{corollary}\label{cor:total_number}
    The total number of equivalence classes of words of length $n > 0$ is
    \[
	\sum_{k=0}^n a(n,k)
	= 2F_{n+4} - \begin{cases} (3n+42)2^{n/2-3}, & \text{if } n \text{ even,} \\ (n+15)2^{(n-3)/2}, & \text{if } n \text{ odd,} \end{cases}
	\]
    where $F_n$ is the $n$-th Fibonacci number. See Table~\ref{tab:a_tot}.
\end{corollary}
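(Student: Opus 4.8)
The plan is to obtain the generating function of $\sum_k a(n,k)$ by specializing the bivariate generating function of Theorem~\ref{thm:generatingfunction} at $t=1$, and then to read off its coefficients via partial fractions. Setting $t=1$ in the closed form \eqref{eq:firstgf} gives
\[
\sum_{n\ge 0}\left(\sum_{k=0}^n a(n,k)\right)x^n = \frac{(1-3x^2+x^4)(1-x^2)^2}{(1-x-x^2)^2(1-2x^2)^2}.
\]
The crucial observation is the factorization $1-3x^2+x^4 = (1-x-x^2)(1+x-x^2)$, which cancels one of the two factors $1-x-x^2$ in the denominator, so that the apparent double pole at the reciprocals of the golden ratio collapses to a simple pole. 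This is precisely what makes the Fibonacci contribution a pure (rather than polynomially weighted) Fibonacci term. Writing $G(x)$ for the resulting generating function, we have
\[
G(x) = \frac{(1+x-x^2)(1-x^2)^2}{(1-x-x^2)(1-2x^2)^2}.
\]

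Next I would decompose $G$ into partial fractions over $\QQ$. Since numerator and denominator both have degree $6$, there is a constant part, equal to $\tfrac14$ by comparing leading coefficients, and I would write
\[
G(x) = \frac14 + \frac{Ax+B}{1-x-x^2} + \frac{Cx+D}{1-2x^2} + \frac{Ex+F}{(1-2x^2)^2}.
\]
To pin down $A$ and $B$, I would work in the ring $\QQ[x]/(1-x-x^2)$, in which $x^2 = 1-x$; there one computes $(1-x^2)^2 \equiv 1-x$, $1+x-x^2 \equiv 2x$ and $(1-2x^2)^2 \equiv 5-8x$, and inverting $5-8x$ modulo $1-x-x^2$ yields $Ax+B = 4x+6$. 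The three remaining numerators are then found by subtracting $\frac{4x+6}{1-x-x^2}$ from $G$ and dividing the resulting numerator by $1-x-x^2$ (this division is exact, since the difference has no pole at the roots of $1-x-x^2$); a short computation gives $(C,D,E,F)=(-7,-\tfrac92,-1,-\tfrac34)$.

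Finally I would extract coefficients for $n>0$ using $\frac{x}{1-x-x^2} = \sum_n F_n x^n$, together with $\frac{1}{1-2x^2} = \sum_m 2^m x^{2m}$ and $\frac{1}{(1-2x^2)^2} = \sum_m (m+1)2^m x^{2m}$. The Fibonacci part contributes $4F_n + 6F_{n+1}$, which equals $2F_{n+4}$ by the identity $F_{n+4} = 2F_n + 3F_{n+1}$. The two $(1-2x^2)$-terms contribute to even powers only through their constant numerators $D$ and $F$, and to odd powers only through $C$ and $E$; collecting these yields the correction $-(3n+42)2^{n/2-3}$ for even $n$ and $-(n+15)2^{(n-3)/2}$ for odd $n$, giving the claimed formula. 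I expect the only real work to be the bookkeeping in the partial-fraction step and the final even/odd case split; the one genuine idea is the numerator factorization that kills the double pole, after which everything reduces to routine power-series extraction.
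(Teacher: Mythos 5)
Your proposal is correct and follows essentially the same route as the paper: specialize \eqref{eq:firstgf} at $t=1$, cancel one factor of $1-x-x^2$ via $1-3x^2+x^4=(1-x-x^2)(1+x-x^2)$, and extract coefficients from the partial-fraction decomposition, whose numerators $(4x+6,\ -7x-\tfrac92,\ -x-\tfrac34)$ agree exactly with the paper's displayed decomposition. Your verification details (the computation modulo $1-x-x^2$, the identity $2F_{n+4}=4F_n+6F_{n+1}$, and the even/odd split) all check out.
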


\begin{table}[ht]
\centering
\begin{tabular}{ |c|ccccccccccc|} 
 \hline
$n$ & 0 & 1 & 2 & 3 & 4 & 5 & 6 & 7 & 8 & 9 & 10 \\ 
\hline
$\sum_k a(n,k)$ & 1 & 2 & 4 & 8 & 15 & 28 & 50 & 90 & 156 & 274 & 466 \\
\hline
\end{tabular}
\caption{Total number of equivalence classes for $n \leq 10$.}\label{tab:a_tot}
\end{table}

\begin{proof}
\begin{verlong}
We simply plug $t=1$ into~\eqref{eq:firstgf} to obtain the generating function for the total number of equivalence classes, which is
\begin{align*}
\sum_{n \geq 0} \Big( \sum_{0 \leq k \leq n} a(n,k) \Big) x^n &= \frac{(1+x-x^2)(1-x^2)^2}{(1-x-x^2)(1-2x^2)^2} \\
&= \frac14 + \frac{2 (3 + 2 x)}{1-x-x^2} - \frac{9+14x}{2(1-2x^2)} - \frac{3+4x}{4(1-2x^2)^2}.
\end{align*}
Reading off coefficients from the generating function yields the formula.
\end{verlong}
\begin{vershort}
Plug $t = 1$ into~\eqref{eq:firstgf} and compare coefficients using standard generating function techniques.
\end{vershort}
\end{proof}

\subsection{$c$-Dyck words}

Let us now turn our attention to restricted words. Fix a constant $c > 0$, and consider words with the property that every prefix has at least $c$ times as many $U$'s as $D$'s. These words form a submonoid $\M_c$ of $\M$. Again, we will be interested in the number of equivalence classes of words of length $n$ in $\M_c$. Note here that not all words equivalent to a word in $\M_c$ are necessarily also in $\M_c$. To give a simple example, the word $UUDUUD$ is in $\M_2$ while the equivalent word $UUUDDU$ is not.

Let $a_c(n,k)$ be the number of equivalence classes of words in the submonoid $\M_c$ that consist of $k$ $D$'s and $n-k$ $U$'s. Note that we must have $n-k \geq ck$, or equivalently $n \geq (c+1)k$. We first show that we can again focus on up-normal words.

\begin{lemma}\label{lem:normal_form_only}
    Let $c \geq 1$ be a real constant. An equivalence class of words in $\M$ contains words in $\M_c$ if and only if it consists of rising words and the unique up-normal representative is in $\M_c$.
\end{lemma}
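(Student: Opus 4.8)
The plan is to dispatch the easy direction at once and then spend all the effort on the converse, that the up-normal representative lies in $\M_c$ as soon as \emph{some} word of its class does. For the ``if'' direction: if the class consists of rising words it has a unique up-normal representative $t$ (Proposition~\ref{prop.upnorm.nf}), and if this $t$ lies in $\M_c$ then $t$ is itself a member of the class lying in $\M_c$, so nothing more is needed. For the ``only if'' direction, suppose $w$ is a word of the class with $w \in \M_c$. Since $c \geq 1$, the full word $w$ (being one of its own prefixes) has at least as many $U$'s as $D$'s and so is rising; as the numbers of $U$'s and $D$'s are class invariants (Proposition~\ref{prop.DU-eqlett2}), the whole class consists of rising words and has a unique up-normal representative $t$ (Proposition~\ref{prop.upnorm.nf}). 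Everything thus reduces to showing $t \in \M_c$.

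First I would recast the condition ``$w \in \M_c$'' geometrically. For the length-$k$ prefix of $w$, with $u_k$ many $U$'s and $d_k$ many $D$'s, the inequality $u_k \geq c\,d_k$ is equivalent — via $u_k + d_k = k$ and $u_k - d_k = h_k$ (the height of the $k$-th vertex of the standard path) — to $h_k \geq \tfrac{c-1}{c+1}\,k$; in particular every vertex, hence every NE-step, of an $\M_c$-word has height $\geq 0$. The key point is that the multiset of NE-step heights is a class invariant (the equivalence $\mathcal{S}_1 \Longleftrightarrow \mathcal{S}_3$ of Theorem~\ref{thm.DU-eqs2}); write $g(j)$ for the number of NE-steps of height $\leq j$, the same in every word of the class. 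By Proposition~\ref{rem.shape_normal}, $t = D^a(UD)^{r_1}U(UD)^{r_2}U\cdots (UD)^{r_h}U\,D^b$, whose NE-steps occur at heights $-a,-a+1,\ldots,-a+h-1$ with multiplicities $r_1+1,\ldots,r_h+1$; thus $-a$ is the minimum NE-step height. Since $w \in \M_c$ forces that minimum to be $\geq 0$, we get $a = 0$ (so $t$ does not begin with a $D$), and then $g(i-1) = i + \sum_{l \leq i} r_l$.

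The core step is the prefix condition for $t$. Running along $t$, the quantity $u - c\,d$ (number of $U$'s minus $c$ times number of $D$'s seen so far) attains its minimum on each staircase block $(UD)^{r_i}U$ right after that block's last $D$, where it equals $(i-1) - (c-1)\sum_{l \leq i} r_l$; on the terminal run $D^b$ its minimum is the whole-word value $\#U - c\,\#D \geq 0$, which is nonnegative because $\#U$ and $\#D$ are class invariants inherited from $w$. Hence $t \in \M_c$ follows once I prove $(c-1)\sum_{l \leq i} r_l \leq i-1$ for all $i$, equivalently $g(j) \leq 1 + \tfrac{c}{c-1}\,j$ for $0 \leq j \leq h-1$ (here $c > 1$; the case $c=1$ is immediate, since $\M_1$ only asks that all heights be $\geq 0$, which the $a=0$ staircase visibly satisfies). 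To get this bound I return to $w$: let the last NE-step of $w$ at height $\leq j$ sit at position $k$, so $h_k \leq j$. Every NE-step of height $\leq j$ is either this terminal one or a $U$ strictly before position $k$, so $g(j) \leq u_k + 1$; and since the length-$k$ prefix of $w$ lies in $\M_c$ with height $h_k \leq j$, the relation $u_k \geq c\,d_k = c(u_k - h_k)$ yields $u_k \leq \tfrac{c\,h_k}{c-1} \leq \tfrac{c\,j}{c-1}$, whence $g(j) \leq \tfrac{c\,j}{c-1} + 1$, exactly what is needed.

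The delicate part is making this last estimate \emph{sharp}. A careless count yields only $g(j) \leq \tfrac{c(j+1)}{c-1}$, which is weaker by $\tfrac{1}{c-1}$ and already violates the target inequality at $j=0$, $c=2$ (where one needs $g(0) \leq 1$). The remedy is the precise bookkeeping above — separating off the terminal NE-step and measuring $u_k$ at the length-$k$ rather than the length-$(k+1)$ prefix — together with the separate, trivial handling of $c=1$. The remaining verifications (that the minimum of $u - c\,d$ on each staircase block is attained just after its last $D$, and that the terminal run $D^b$ contributes nothing worse than the whole-word value) are routine monotonicity checks that I would state but not belabor.
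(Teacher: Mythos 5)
Your proof is correct and follows essentially the same route as the paper's: both reduce to the up-normal representative, exploit the class-invariance of the step-height multiset together with the fact that the up-normal word packs all its low steps at the front, and extract from $w \in \M_c$ a bound of the form (number of steps at height $\leq j$) $\leq \frac{c}{c-1}j + 1$ that forces the representative into $\M_c$. The only differences are cosmetic: you count NE-steps and verify the prefix inequality $u \geq c\,d$ algebraically at the critical positions of the staircase, while the paper counts all arcs incident to low vertices and phrases the conclusion geometrically via the line $y = \frac{c-1}{c+1}x$.
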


\begin{proof}
A word lies in $\M_c$ if and only if the associated diagonal path stays above\footnote{``Above'' means ``weakly above''; i.e., the path is allowed to touch this line.} the line $y = \frac{c-1}{c+1} x$. This is because the part of the path corresponding to a prefix with $a$ $U$'s and $b$ $D$'s ends at $(a+b,a-b)$. The condition $a \geq cb$ translates to $a-b \geq \frac{c-1}{c+1} (a+b)$. In particular, for $c \geq 1$, all elements of $\M_c$ are rising by definition. We know that for every equivalence class of rising words, there is a unique up-normal representative (Proposition~\ref{prop.upnorm.nf}).

Now let an equivalence class $C$ be given. All diagonal paths that correspond to a word in $C$ have to end at the same point $(a,b)$. If $b < \frac{c-1}{c+1} a$, then $\M_c$ cannot contain any elements of $C$, and there is nothing to prove. 

So assume that $b \geq \frac{c-1}{c+1} a$, and assume also that there is a word $w \in C$ that lies in $\M_c$. Let $\pp$ be the corresponding standard path, and consider any nonnegative integer $s < b$. Every vertex of $\pp$ whose $y$-coordinate is less than or equal to $s$ has to have $x$-coordinate less than or equal to $\frac{c+1}{c-1} s$ (we can interpret this as $\infty$ if $c = 1$). Therefore, there are at most $\frac{c+1}{c-1}s + 1$ arcs in $\pp$ that have at least one end at height $s$ or less. Since the multiset of step heights is the same for all words in $C$ (by Theorem~\ref{thm.DU-eqs2}), this also holds for the diagonal path $\qq$ that corresponds to the up-normal representative. By construction, all steps with an end at height $s$ or less occur before all others in $\qq$, and it follows that the rightmost vertex of $\qq$ whose height is $s$ has an $x$-coordinate of at most $\frac{c+1}{c-1}s$, which means that it lies above or on the line $y = \frac{c-1}{c+1} x$. For every vertex whose height is greater than or equal to $b$, this is automatically true since the final vertex $(a,b)$ of $\qq$ has this property. We have thus shown that the entire path $\qq$ lies above the line $y = \frac{c-1}{c+1} x$, so the up-normal representative lies in $\M_c$. This completes the proof.
\end{proof}

In analogy to Lemma~\ref{lemma:recursion}, the following lemma holds.

\begin{lemma}
\label{lemma:recursion2}
For every real constant $c \geq 1$ and every pair $(n,k)$ of positive integers with $n-1 \geq (c+1)k$, we have
\begin{equation}\label{eq:general_rec}
a_c(n, k) = a_c(n-1, k) + a_c(n-2, k-1).
\end{equation}
\end{lemma}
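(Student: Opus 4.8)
The plan is to mirror the proof of Lemma~\ref{lemma:recursion} almost verbatim, reusing its two insertion/removal bijections and only checking the one new ingredient: that they respect membership in $\M_c$. First I would invoke Lemma~\ref{lem:normal_form_only} to reduce the count. Since $n-1 \geq (c+1)k$ forces $n-k > ck \geq k$, the relevant words are strictly rising, so $a_c(n,k)$ equals the number of up-normal words of $\M_c$ with $k$ letters $D$ and $n-k$ letters $U$. Because every word of $\M_c$ begins with a $U$ (for $c \geq 1$ a single leading $D$ already violates the defining inequality), such an up-normal word has the shape $(UD)^{r_1}U\,(UD)^{r_2}U\cdots(UD)^{r_h}U\,D^b$, i.e.\ $a=0$ in Proposition~\ref{rem.shape_normal}. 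I would then split these words into the same two types as before — Type 1 where $r_h=0$ (a single vertex of maximal height) and Type 2 where $r_h>0$ — and reuse the Type 1 map (delete the last $U$) and the Type 2 map (delete the last $U$ together with the $D$ right before it).

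The geometric heart of the argument is the reformulation from the proof of Lemma~\ref{lem:normal_form_only}: a word lies in $\M_c$ exactly when its standard path stays weakly above the line $y=\frac{c-1}{c+1}x$, i.e.\ when every vertex $(x,y)$ satisfies $y \geq \frac{c-1}{c+1}x$. The key observation I would record is that along any maximal run of SE-steps this margin strictly decreases, by $\frac{2c}{c+1}$ per step, so the binding constraint on such a descending run sits at its lowest (final) vertex. In each of the two maps, the path of $w$ and the path of its image agree on a common initial segment — everything up to the last peak in Type 1, and up to the second-to-last peak in Type 2 — and the only vertices that change belong to the terminal descending run $D^b$ (plus, in Type 2, a single dip vertex which is in fact a genuine vertex of the shorter word as well). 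Thus verifying $\M_c$-membership reduces to comparing the two final vertices against the line.

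This turns everything into a short computation. The final vertex of $w$ is $(n,\,n-2k)$, above the line iff $n \geq (c+1)k$; the Type 1 image ends at $(n-1,\,n-2k-1)$, above the line iff $n-1 \geq (c+1)k$; and the Type 2 image ends at $(n-2,\,n-2k)$, above the line iff $n+c-1 \geq (c+1)k$. Under the hypothesis $n-1 \geq (c+1)k$ together with $c \geq 1$, all three inequalities hold at once, so in both maps $w \in \M_c$ if and only if its image lies in $\M_c$. Consequently each map restricts to a bijection between up-normal $\M_c$-words, and adding the counts of the two types gives $a_c(n,k)=a_c(n-1,k)+a_c(n-2,k-1)$.

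I expect the main obstacle to be precisely the asymmetry that makes the hypothesis $n-1 \geq (c+1)k$ (rather than the weaker $n \geq (c+1)k$ that merely guarantees $w \in \M_c$) indispensable: deleting steps shifts the terminal descent downward and/or leftward, pushing it toward the line, so one must confirm the shortened path never dips below it. Isolating the ``binding vertex is the bottom of the descent'' principle is what tames this, collapsing the verification to the three one-line inequalities above; the point demanding the most care is checking, in Type 2, that the dip vertex involved in the modification coincides with an actual vertex of the shorter word, so that no new below-the-line vertex can be introduced.
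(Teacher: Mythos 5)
Your proposal is correct and follows essentially the same route as the paper: reduce to up-normal representatives via Lemma~\ref{lem:normal_form_only}, reuse the two insertion/deletion bijections from Lemma~\ref{lemma:recursion}, and observe that since the margin above the line $y=\frac{c-1}{c+1}x$ only decreases along a run of SE-steps, membership in $\M_c$ after the modification is governed by the final vertex of the terminal descent, which the hypothesis $n-1\geq (c+1)k$ (together with $c\geq 1$) keeps weakly above the line in both directions and for both types. Your explicit three-inequality bookkeeping is a slightly more detailed writing-out of the paper's "guaranteed in both directions by the assumption that $n-1\geq (c+1)k$," but the argument is the same.
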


\begin{proof}
Again, we use the characterization that the standard paths corresponding to words in the submonoid $\M_c$ have to stay entirely above the line $y = \frac{c-1}{c+1} x$. By Lemma~\ref{lem:normal_form_only}, it suffices to consider the unique up-normal representative of any equivalence class that is counted by $a_c(n,k)$. The same bijections as in the proof of Lemma~\ref{lemma:recursion} apply; we only need to check that removing the last $U$ (Type 1) or the last $DU$ (Type 2) yields a new word that is still in in $\M_c$. Let us consider the two different types:
\begin{enumerate}[itemindent=.7cm,label=\textbf{Type \arabic*:}]
\item The only change in the standard path is the final descent, which is shifted one unit to the left and one unit down (or one unit to the right and one unit up in the inverse operation). The final descent is entirely above the line $y = \frac{c-1}{c+1} x$ provided that its final vertex is. This is guaranteed in both directions by the assumption that $n-1 \geq (c+1)k$. The procedure is shown in Figure~\ref{fig.bij2.type.1}. 
\item In this case, the only change in the standard path is that the final descent is moved two units to the left (or to the right when the inverse is applied). As for Type 1, it stays entirely above the line $y = \frac{c-1}{c+1} x$ in both directions because of the assumption that $n-1 \geq (c+1)k$. See Figure~\ref{fig.bij2.type.2} for an illustration.
\end{enumerate}
Combining the two types, we obtain the desired recursion again.
\end{proof}

\begin{figure}[htbp]
\begin{center}
\begin{tikzpicture}[scale=0.7]
\draw[help lines] (-0.2, -0.2) grid (13.2, 7.2);
\draw [line width=2pt] (-0.2, -0.066666) -- (13.2, 4.4);
\draw
[line width=2pt,-stealth] (0, 0) edge (1, 1) (1, 1) edge (2, 2) (2, 2) edge (3, 1) (3, 1) edge (4, 2) (4, 2) edge (5, 3) (5, 3) edge (6, 4) (6, 4) edge (7, 5) (7, 5) edge (8, 4) (8, 4) edge (9, 5) (9, 5) -- (10, 6);
\draw
[line width=2pt,-stealth,dashed,red] (10, 6) -- (11, 7);
\draw
[line width=1pt,dotted,blue,-stealth] (11, 7) edge (12, 6) (12, 6) -- (13, 5);
\draw
[line width=1pt,dotted,blue,-stealth] (10, 6) edge (11, 5) (11, 5) -- (12, 4);
\node at (6.5, -1) {$UUDUUUUDUU\red{U}DD \leftrightarrow UUDUUUUDUUDD$};
\end{tikzpicture}
\end{center}
\caption{The bijection for words of type 1. The arc that is removed/inserted is indicated by a dashed red line. Arcs that are shifted by the procedure are indicated by dotted blue lines. The line $y = \frac{c-1}{c+1} x$ is shown as well: in this example, $c=2$.}
\label{fig.bij2.type.1}
\end{figure}

\begin{figure}[htbp]
\begin{center}
\begin{tikzpicture}[scale=0.7]
\draw[help lines] (-0.2, -0.2) grid (13.2, 7.2);
\draw [line width=2pt] (-0.2, -0.066666) -- (13.2, 4.4);
\draw
[line width=2pt,-stealth] (0, 0) edge (1, 1) (1, 1) edge (2, 2) (2, 2) edge (3, 1) (3, 1) edge (4, 2) (4, 2) edge (5, 3) (5, 3) edge (6, 4) (6, 4) edge (7, 5) (7, 5) edge (8, 6) (8, 6) -- (9, 7);
\draw
[line width=2pt,-stealth,dashed,red] (9, 7) edge (10, 6) 
(10, 6) -- (11, 7);
\draw
[line width=1pt,dotted,blue,-stealth]  (11, 7) edge (12, 6) (12, 6) -- (13, 5);
\draw
[line width=1pt,dotted,blue,-stealth]  (9, 7) edge (10, 6) (10, 6) -- (11, 5);
\node at (6.5, -1)
{$UUDUUUUUU\red{DU}DD \leftrightarrow UUDUUUUUUDD$};
\end{tikzpicture}
\end{center}
\caption{The bijection for words of type 2. The arcs that are removed/inserted are indicated by dashed red lines. Arcs that are shifted by the procedure are indicated by dotted blue lines. The line $y = \frac{c-1}{c+1} x$ is shown as well: in this example, $c=2$.}
\label{fig.bij2.type.2}
\end{figure}

\begin{remark}
The recursion~\eqref{eq:general_rec} is in general false for $c < 1$. As a counterexample, note that $a_{1/2}(4,2) = 3$ (the three elements $UUDD$, $UDUD$ and $UDDU$ of $\M_{1/2}$ belong to three distinct equivalence classes), while $a_{1/2}(3,2) = 1$ (the only element is $UDD$) and $a_{1/2}(2,1) = 1$ (the only element is $UD$).
\end{remark}

For positive integer values of $c$, there is in fact a fairly simple explicit formula for $a_c(n,k)$.

\begin{theorem}
If $c$ is a positive integer and $n,k$ are positive integers with $n \geq (c+1)k$, then we have
\begin{equation}\label{eq:explicit_general_c}
a_c(n,k) = \binom{n-k-1}{k} - (c-2) \sum_{j=0}^{k-1} \binom{n-k-1}{j}.
\end{equation}
\end{theorem}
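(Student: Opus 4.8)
The plan is to prove~\eqref{eq:explicit_general_c} by a double induction that mirrors the proof of Corollary~\ref{cor:sumformula}, with the recursion of Lemma~\ref{lemma:recursion2} as the engine. Write $\Sigma_c(n,k)$ for the right-hand side of~\eqref{eq:explicit_general_c}. I would induct on $k$, and for each fixed $k$ induct on $n \geq (c+1)k$. The outer base case $k=0$ is immediate: the only word with no $D$'s is $U^n$, so $a_c(n,0)=1$, while $\Sigma_c(n,0)=\binom{n-1}{0}=1$ because the correction sum is empty.

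First I would check that $\Sigma_c$ obeys the same recursion as $a_c$. Using Pascal's rule $\binom{n-k-1}{k}=\binom{n-k-2}{k}+\binom{n-k-2}{k-1}$ on the leading term and the identity $\sum_{j=0}^{k-1}\binom{n-k-1}{j}=\sum_{j=0}^{k-1}\binom{n-k-2}{j}+\sum_{j=0}^{k-2}\binom{n-k-2}{j}$ (again Pascal's rule, after the shift $j\mapsto j-1$) on the correction term, one gets
\[
\Sigma_c(n-1,k)+\Sigma_c(n-2,k-1)=\Sigma_c(n,k).
\]
Both summands lie in the inductive range: $\Sigma_c(n-1,k)$ because $n-1\geq (c+1)k$ whenever $n>(c+1)k$, and $\Sigma_c(n-2,k-1)$ because $n\geq (c+1)k+1$ gives $n-2\geq (c+1)(k-1)+c\geq (c+1)(k-1)$. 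Since Lemma~\ref{lemma:recursion2} furnishes the matching recursion for $a_c$ on exactly this range, this settles every $n>(c+1)k$ once the single diagonal seed $a_c((c+1)k,k)$ is known.

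The hard part will be precisely this seed, the value on the diagonal $n=(c+1)k$, where Lemma~\ref{lemma:recursion2} no longer applies: its hypothesis $n-1\geq (c+1)k$ fails, and indeed the Type~1 bijection breaks, since deleting the last $U$ pushes the endpoint strictly below the line $y=\tfrac{c-1}{c+1}x$. To compute the seed I would count up-normal representatives directly, as licensed by Lemma~\ref{lem:normal_form_only}. Since $c\geq 1$, such a word cannot begin with a $D$, so by Proposition~\ref{rem.shape_normal} it has the shape $(UD)^{r_1}U\,(UD)^{r_2}U\cdots (UD)^{r_h}U\,D^b$; writing $R=\sum_i r_i$, the conditions $\#U=ck$ and $\#D=k$ force $h=ck-R$ and $b=k-R$. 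Tracking $\#U-c\,\#D$ along the path shows that membership in $\M_c$ is equivalent to the ballot-type inequalities $(c-1)\sum_{l\leq i} r_l\leq i-1$ for all $i$.

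Consequently $a_c((c+1)k,k)$ is a sum over $R$ of constrained composition counts, and the crux is to evaluate this sum and match it to $\Sigma_c((c+1)k,k)=\binom{ck-1}{k}-(c-2)\sum_{j=0}^{k-1}\binom{ck-1}{j}$. I expect to obtain the closed form either by the cycle lemma applied to the line of slope $\tfrac{c-1}{c+1}$, or by setting up an auxiliary induction on $k$ for the diagonal values alone; a secondary decomposition of the boundary words according to whether $r_h=0$ (the ``stuck'' Type~1 words) or $r_h>0$ (reducible by Type~2 to $a_c((c+1)k-2,k-1)$) is likely to drive such an induction. Once this diagonal identity is established, the surrounding double induction is entirely routine, completing the proof.
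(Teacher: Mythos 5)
Your off-diagonal work is sound and matches the paper: the recursion of Lemma~\ref{lemma:recursion2} together with Pascal's rule shows that the right-hand side $\Sigma_c(n,k)$ propagates correctly for all $n > (c+1)k$, so everything reduces to the diagonal seed $a_c((c+1)k,k)$. But that seed is exactly where your argument stops being a proof. You reduce it to counting compositions subject to ballot-type inequalities and then only express the \emph{hope} of evaluating the resulting sum (``I expect to obtain the closed form either by the cycle lemma \ldots or by setting up an auxiliary induction''), without carrying out either route. The secondary decomposition you suggest does not obviously close the induction either: the $r_h=0$ words are, as you yourself note, ``stuck,'' and you give no way to count them. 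As written, the crucial identity $a_c((c+1)k,k)=\binom{ck-1}{k}-(c-2)\sum_{j=0}^{k-1}\binom{ck-1}{j}$ is asserted, not proved.

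The paper closes this gap with a one-line observation that you missed: when $n=(c+1)k$, every word in $\M_c$ with $k$ many $D$'s must \emph{end} in a $D$, because otherwise the length-$(n-1)$ prefix would have $ck-1$ many $U$'s against $k$ many $D$'s, violating the defining inequality of $\M_c$. Deleting this final $D$ gives a bijection (on up-normal representatives, hence on equivalence classes by Lemma~\ref{lem:normal_form_only}) yielding the boundary relation $a_c((c+1)k,k)=a_c((c+1)k-1,k-1)$. One then only needs to check that $\Sigma_c$ satisfies the same boundary relation, which after cancellation is the elementary identity $\binom{ck-1}{k}=(c-1)\binom{ck-1}{k-1}$. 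I recommend replacing your direct enumeration of the diagonal by this reduction; with it, your double induction goes through verbatim.
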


\begin{proof}
The recursion~\eqref{eq:general_rec} determines all values of $a_c(n,k)$ except for the boundary cases where $n = (c+1)k$. 
However, if $n = (c+1)k$, then the last letter of every valid word in $\M_c$ has to be a $D$ (otherwise, the condition of the submonoid $\M_c$ is not satisfied for the prefix obtained by removing the last letter). This immediately implies that
\begin{equation}\label{eq:boundary}
a_c((c+1)k,k) = a_c((c+1)k-1,k-1).
\end{equation}
Together with~\eqref{eq:general_rec} and the trivial initial value $a_c(0,0) = 1$, this determines $a_c(n,k)$ uniquely for all values of $n$ and $k$ (with $n \geq (c+1)k \geq 0$), so it suffices to verify that the expression on the right side of~\eqref{eq:explicit_general_c} satisfies the recursion~\eqref{eq:general_rec} as well as~\eqref{eq:boundary}. The former is a simple consequence of the recursion for the binomial coefficients. The latter is (after some trivial cancellations) equivalent to
\[\binom{ck-1}{k} = (c-1) \binom{ck-1}{k-1},\]
which is readily verified. The theorem follows immediately by induction.
\end{proof}

\begin{table}[ht]
\centering
\begin{tabular}{ |c|cccccc|c|} 
 \hline
 \diagbox[width=10mm]{$n$}{$k$} & 0 & 1 & 2 & 3 & 4 & 5 & $\sum$ \\ 
 \hline
$1$ & $1$ & & & & & & $1$ \\
$2$ & $1$ & $1$ & & & & & $2$ \\
$3$ & $1$ & $2$ & & & & & $3$ \\
$4$ & $1$ & $3$ & $2$ & & & & $6$ \\
$5$ & $1$ & $4$ & $4$ & & & & $9$ \\
$6$ & $1$ & $5$ & $7$ & $4$ & & & $17$ \\ 
$7$ & $1$ & $6$ & $11$ & $8$ & & & $26$ \\
$8$ & $1$ & $7$ & $16$ & $15$ & $8$ & & $47$ \\ 
$9$ & $1$ & $8$ & $22$ & $26$ & $16$ & & $73$ \\
$10$ & $1$ & $9$ & $29$ & $42$ & $31$ & $16$ & $128$ \\
\hline
\end{tabular}
\caption{Table of the values of $a_1(n,k)$. The final column gives the total number $\sum_k a_1(n,k)$.}\label{tab:a1}
\end{table}

\begin{remark}
In the special case $c = 1$, we obtain
\[
a_1(n,k) = \sum_{j=0}^k \binom{n-k-1}{j},
\]
see Table~\ref{tab:a1}. These partial sums of binomial coefficients are the entries of \emph{Bernoulli's triangle} (see \cite[A008949]{OEIS}) and appear famously as numbers of regions in a general-position hyperplane arrangement \cite[Proposition 2.4]{Stanley-IHA}. The terms in the sum have a combinatorial interpretation again (compare Remark~\ref{rem:comb_int}). The only difference to the unrestricted case is that the up-normal representative cannot have an initial segment of $D$'s and can thus be written as $(UD)^{r_1}U\, (UD)^{r_2}U\, \cdots\, (UD)^{r_h}U\, D^b$ for some nonnegative integers $b,h$ and $r_1,r_2,\ldots,r_h$. The numbers $r_1+1,r_2+1,\ldots,r_h+1$, i.e., the multiplicities in the multiset of heights of NE-steps, 
form a composition of $n-k$ into $h$ positive integers. Since there are $\binom{n-k-1}{h-1}$ such compositions, the total number of equivalence classes must be
\begin{equation*}
\sum_{h=n-2k}^{n-k} \binom{n-k-1}{h-1} = \sum_{j=0}^k \binom{n-k-1}{j}.
\end{equation*}
In this special case, we also have a simple generating function that is similar to~\eqref{eq:secondgf}. We now have
\begin{equation}\label{eq:secondgf2}
A_1(x,t) = \sum_{n \geq 0} \sum_{0 \leq k \leq n/2} a_1(n,k)t^k x^n = \frac{(1-tx^2)^2}{(1-x-tx^2)(1-2tx^2)}.
\end{equation}
This is proved in the same way as~\eqref{eq:secondgf}. First, since $a_1(n,k)$ satisfies the same recursion as $a(n,k)$, one obtains
\begin{equation*}
A_1(x,t) = \frac{1-tx^2}{1-x-tx^2} \sum_{k \geq 0} a_1(2k,k) t^k x^{2k}
\end{equation*}
in the same way as~\eqref{eq:Axt-intermediate}. Now,
\begin{equation*}
    a_1(2k,k) = \sum_{j=0}^k \binom{k-1}{j} = 2^{k-1}
\end{equation*}
for $k \geq 1$ and $a_1(0,0) = 1$, thus
\begin{equation*}
\sum_{k \geq 0} a_1(2k,k) t^k x^{2k} = \frac{1-tx^2}{1-2tx^2},
\end{equation*}
and~\eqref{eq:secondgf2} follows.

Furthermore, we also have an explicit formula for the total number of equivalence classes in this case: the number of equivalence classes of words of length $n > 0$ for which every prefix has at least as many $U$'s as $D$'s is precisely
    \[F_{n+2} - 2^{\floor{(n-1)/2}},\]
which is \cite[A079289]{OEIS}. The final column in Table~\ref{tab:a1} gives the values of this sequence up to $n=10$. This follows e.g.~by plugging $t=1$ into the generating function, in the same way as the formula in Corollary~\ref{cor:total_number}.
\end{remark}


\begin{remark}
In the special case $c=2$, the sum disappears from~\eqref{eq:explicit_general_c}, and we obtain the remarkably simple formula
\[a_2(n,k) = \binom{n-k-1}{k}.\]
There is a connection to the famous ballot problem: consider any up-normal word in the submonoid $\M_2$ that consists of $k$ $D$'s and $n-k$ $U$'s. Remove the last $U$ and all $D$'s that follow. Moreover, replace every occurrence of $UD$ by a single $D$. The result is a word in $\M_1$, i.e., a $1$-Dyck word (every prefix contains at least as many $U$'s as $D$'s) of length $n-k-1$ with at most $k$ $D$'s. Conversely, any $1$-Dyck word of length $n-k-1$ with at most $k$ many $D$'s can be turned into an up-normal word in $\M_2$ with $k$ $D$'s and $n-k$ $U$'s: letting $r$ denote the number of $D$'s ($r \leq k$), replace every $D$ by $UD$ and add $UD^{k-r}$ at the end. So we have a bijection between equivalence classes in $\M_2$ and $1$-Dyck words of length $n-k-1$ with at most $k$ $D$'s. Since it is well-known that there are $\binom{\ell}{j} - \binom{\ell}{j-1}$ many $1$-Dyck words of length $\ell$ with exactly $j$ many $D$'s ($1 \leq j \leq \frac{\ell}2$), it follows that
\[
a_2(n,k) = 1 + \sum_{j=1}^k \Big( \binom{n-k-1}{j} - \binom{n-k-1}{j-1} \Big) = \binom{n-k-1}{k}.
\]
See Table~\ref{tab:a2} for some values of $a_2(n,k)$. 
\end{remark}

\begin{table}[ht]
\centering
\begin{tabular}{ |c|cccc|c|} 
 \hline
 \diagbox[width=10mm]{$n$}{$k$} & 0 & 1 & 2 & 3 & $\sum$ \\ 
 \hline
$1$ & $1$ & & & & $1$ \\
$2$ & $1$ & & & & $1$ \\
$3$ & $1$ & $1$ & & & $2$ \\
$4$ & $1$ & $2$ & & & $3$ \\
$5$ & $1$ & $3$ & & & $4$ \\
$6$ & $1$ & $4$ & $3$ & & $8$ \\ 
$7$ & $1$ & $5$ & $6$ & & $12$ \\
$8$ & $1$ & $6$ & $10$ & & $17$ \\ 
$9$ & $1$ & $7$ & $15$ & $10$ & $33$ \\
$10$ & $1$ & $8$ & $21$ & $20$ & $50$ \\
\hline
\end{tabular}
\caption{Table of the values of $a_2(n,k)$. The final column gives the total number $\sum_k a_2(n,k)$.}\label{tab:a2}
\end{table}

\subsection{The size of an equivalence class}

Given an equivalence relation on a finite set, its equivalence classes are not
the only thing that can be counted. One can also ask how large the equivalence
classes are. The following theorem answers this question.

\begin{theorem}
Let $w\in\M$ be a word with NE-height polynomial $H_{\NE}\tup{w,z} = \sum\limits_{i\in \ZZ} a_{i}z^{i}$ and SE-height polynomial $H_{\SE}\tup{w,z} = \sum\limits_{i\in \ZZ} b_{i}z^{i}$.
(Note that $a_i = b_i = 0$ for all but finitely many $i$.)
Then, the size of the equivalence class containing $w$ (that is, the number of words $u\in\M$ satisfying $u \balsim w$) is
\begin{align}
&\prod_{i \geq 0} \binom{a_i + b_{i+2} - 1}{b_{i+2}} \binom{b_{-i} + a_{-i-2} - 1}{a_{-i-2}} \nonumber \\
&\qquad\times \begin{cases} \binom{a_0+b_0}{a_0}, & \text{ if $w$ is balanced;} \\
\binom{a_0+b_0-1}{b_0}, & \text{ if $w$ is rising and non-balanced;} \\
\binom{a_0+b_0-1}{a_0}, & \text{ if $w$ is falling and non-balanced.} \\
\end{cases}
\label{eq:size-of-eqclass}
\end{align}
\end{theorem}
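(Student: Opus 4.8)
The plan is to reduce the statement to a pure lattice-path count via Theorem~\ref{thm.DU-eqs2}, and then to evaluate that count by an excursion decomposition of the walk. By the equivalence $\mathcal{S}_1 \Longleftrightarrow \mathcal{S}_5$, combined with $\mathcal{S}_3$ and $\mathcal{S}'_3$, a word $u$ satisfies $u \balsim w$ if and only if its standard path has NE-height polynomial $\sum_i a_i z^i$ and SE-height polynomial $\sum_i b_i z^i$: equal $H_{\NE}$ and equal $H_{\SE}$ force equal final height (both equal $\sum_i a_i - \sum_i b_i$), so $\mathcal{S}_3$ makes them equivalent to $u \balsim w$, while conversely $u \balsim w$ yields all of $\mathcal{S}_3$ and $\mathcal{S}'_3$. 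Thus the size of the class equals the number of diagonal paths starting at $\tup{0,0}$ that take exactly $a_i$ NE-steps and $b_i$ SE-steps at each height $i$. Reading such a path as a walk on the height-axis $\ZZ$, I must count walks from $0$ taking exactly $a_i$ up-steps out of level $i$ and $b_i$ down-steps out of level $i$.

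First I would record the crossing identities every such walk forces (relating $a_i$ to $b_{i+1}$ according to whether the gap between levels $i$ and $i+1$ lies between $0$ and the final height); since $w$ is itself such a walk, these hold automatically. Then I decompose the walk at level $0$ into maximal positive excursions (staying $\geq 1$, opened by an up-step out of $0$ and closed by a down-step into $0$) and maximal negative excursions (staying $\leq -1$), listed in time order. The walk is the interleaving of these blocks, with exactly one unfinished block at the very end precisely when $w$ is not balanced (positive if $w$ is rising, negative if falling). Counting the interleavings of $a_0$-versus-$b_0$ blocks (balanced case), of $b_1$-versus-$b_0$ blocks (rising, where $a_0 = b_1 + 1$), or of $a_0$-versus-$a_{-1}$ blocks (falling, where $b_0 = a_{-1}+1$) produces exactly the three central factors $\binom{a_0+b_0}{a_0}$, $\binom{a_0+b_0-1}{b_0}$, $\binom{a_0+b_0-1}{a_0}$.

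Next I recurse upward, building the positive part. The point is that once the lower levels are fixed, the maximal sub-walks staying $\geq \ell$ acquire a canonical time order. At level $\ell \geq 1$ there are exactly $a_{\ell-1}$ such sub-walks (one opened by each up-step out of $\ell-1$), forming $a_{\ell-1}$ ordered boxes; into them I place, in order, the $b_{\ell+1}$ maximal excursions that rise above $\ell$ and return (each closed by a down-step out of $\ell+1$). Distributing $b_{\ell+1}$ indistinguishable excursions into $a_{\ell-1}$ ordered boxes is the stars-and-bars count $\binom{a_{\ell-1}+b_{\ell+1}-1}{b_{\ell+1}}$, which for $\ell = i+1$ is the factor $\binom{a_i+b_{i+2}-1}{b_{i+2}}$; the internal structure of each such excursion is then fixed at the next level up, so the choices compose without interference. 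The mirror-image recursion below level $0$ yields $\binom{b_{-i}+a_{-i-2}-1}{a_{-i-2}}$ for $\ell = -i-1$, and multiplying all these with the central factor gives~\eqref{eq:size-of-eqclass}. (Alternatively, the falling case reduces to the rising one through the reflection $\omega$ of Proposition~\ref{prop.bal-symm}, which exchanges the roles of $a$ and $b$ and turns rising words into falling ones.)

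The main obstacle will be turning this decomposition into an honest bijection: I must check that the level-$0$ interleaving together with one stars-and-bars distribution at every level reconstructs a unique walk, and that every admissible walk arises exactly once. What makes this clean is that the boxes at each level inherit a canonical total order from the levels below, so placing excursions by multiplicity rather than by label neither loses nor duplicates information. The most delicate bookkeeping is the single unfinished excursion in the non-balanced cases: I have to verify that it always occupies the final slot at each level it traverses, contributing no extra freedom, since this is exactly what shifts one of the two block counts by $-1$ and produces the trichotomy in the central factor.
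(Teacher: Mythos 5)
Your proposal is correct and follows essentially the same route as the paper: reduce to counting standard paths with prescribed multisets of NE- and SE-step heights, obtain the central factor by interleaving completed positive and negative excursions at level $0$ (with the single unfinished excursion forced to come last in the non-balanced cases), and obtain each factor $\binom{a_i+b_{i+2}-1}{b_{i+2}}$ (resp.\ its mirror) by a stars-and-bars distribution one level at a time. The paper packages the level-by-level step as an induction on the maximum height, slicing off the part of the path above $y=1$ and counting the ways to reinsert the $a_0-1$ dips into $b_2+1$ slots, which is the dual description of your distribution of $b_2$ excursions into $a_0$ ordered boxes.
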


\begin{proof}
We prove the formula by considering the associated standard paths. Then $a_i$ is the number of NE-arcs from height $i$ to height $i+1$, and $b_i$ is the number of SE-arcs from height $i$ to height $i-1$. Our aim is to show that the number of standard paths, given all $a_i$ and $b_i$, is given by the formula~\eqref{eq:size-of-eqclass}.

We first consider the special case that $w$ is a $1$-Dyck word, i.e., a word whose prefixes are all rising, so that the standard path $\pp$ stays above the $x$-axis. In this case, $a_i = 0$ whenever $i < 0$, and $b_i = 0$ whenever $i \leq 0$, so the formula reduces to 
\[
\prod_{i \geq 0} \binom{a_i + b_{i+2} - 1}{b_{i+2}}.
\]
We use induction on the maximum height $d$ of vertices in the standard path. For $d=0$, the word and its associated path are empty, so the statement becomes trivial.

Now we proceed with the induction step. Consider only the part $\pp'$ of the path that lies above the line $y=1$ (after removing all gaps, see Figure~\ref{fig:eq-class-1}). By the induction hypothesis, the number of possibilities for this path is
\[
\prod_{i \geq 1} \binom{a_i + b_{i+2} - 1}{b_{i+2}}.
\]
Given $\pp'$, in order to obtain a feasible path $\pp$, we always have to add an NE-step at the beginning, an SE-step at the end if $a_0 = b_1$ (so that the path ends at height $0$), and insert a total of $a_0-1$ copies of an SE-step followed by an NE-step at vertices of $\pp'$ that lie at height $0$. There are $b_2+1$ such places (at the beginning and after each of the $b_2$ SE-steps that end at height $0$), so the possibilities for $\pp$, given $\pp'$, correspond to the weak compositions of $a_0-1$ into $b_2+1$ nonnegative integers. Since there are $\binom{a_0+b_2-1}{b_2}$ such compositions, the desired formula follows, completing the induction.

Now we consider the general case: every diagonal path can be decomposed into the part above the $x$-axis and the part below the $x$-axis (see Figure~\ref{fig:eq-class-2}). The number of possibilities for these two parts is
\[
\prod_{i \geq 0} \binom{a_i + b_{i+2} - 1}{b_{i+2}} \binom{b_{-i} + a_{-i-2} - 1}{a_{-i-2}}
\]
in view of what has already been shown. It remains to multiply by the number of ways to combine them: each time the path is at height $0$ (but not completed yet), we have to decide whether to continue with an NE-step (thus a piece of the path that lies above the $x$-axis) or an SE-step (thus a piece of the path that lies below the $x$-axis). The only exception is the final return to the $x$-axis in the non-balanced case: if the path ends above the $x$-axis, the final step from the $x$-axis must be an NE-step; if it ends below the $x$-axis, it must be an SE-step.

The total number of steps to be chosen in this way is $a_0+b_0$. In the balanced case, we have $\binom{a_0+b_0}{a_0}$ possibilities. In the non-balanced case, the number of choices is $\binom{a_0+b_0-1}{b_0}$ (rising) or $\binom{a_0+b_0-1}{a_0}$ (falling),
respectively. Combining this with the number of possibilities for the two parts above and below the $x$-axis, we reach the desired formula.
\end{proof}

\begin{figure}[htbp]
\begin{center}
\begin{tikzpicture}[scale=0.6]
\draw[help lines] (-0.2, -0.2) grid (12.2, 3.2);
\draw[help lines] (13.8, 0.8) grid (20.2, 3.2);
\draw [line width=2pt] (-0.2, 0) -- (12.2, 0);
\draw [line width=2pt] (-0.2, 1) -- (12.2, 1);
\draw
[line width=2pt,-stealth,dashed,red] (0, 0) edge (1, 1) (1, 1) edge (2, 0) (2, 0) -- (3, 1);
\draw
[line width=2pt,-stealth,blue] (3, 1) edge (4, 2) (4, 2) edge (5, 3) (5, 3) edge (6, 2) (6, 2) -- (7, 1);
\draw
[line width=2pt,-stealth,dashed,red] (7, 1) edge (8, 0) (8, 0) -- (9, 1);
\draw
[line width=2pt,-stealth,blue] (9, 1) edge (10, 2) (10, 2) -- (11, 1);
\draw
[line width=2pt,-stealth,dashed,red] (11, 1) -- (12, 0);
\draw
[line width=2pt,-stealth,blue] (14, 1) edge (15, 2) (15, 2) edge (16, 3) (16, 3) edge (17, 2) (17, 2) edge (18, 1) (18, 1) edge (19, 2) (19, 2) -- (20, 1);
\node at (17,0) {$\pp'$};
\end{tikzpicture}
\end{center}
\caption{Decomposition into the part above the line $y=1$ (blue, solid) and the part between $y=0$ and $y=1$ (red, dashed).}
\label{fig:eq-class-1}
\end{figure}

\begin{figure}[htbp]
\begin{center}
\begin{tikzpicture}[scale=0.6]
\draw[help lines] (-0.2, -2.2) grid (14.2, 2.2);
\draw[help lines] (15.8, 0.8) grid (22.2, 3.2);
\draw[help lines] (15.8, -0.8) grid (24.2, -3.2);
\draw [line width=2pt] (-0.2, 0) -- (14.2, 0);
\draw
[line width=2pt,-stealth,dashed,red] (0, 0) edge (1, -1) (1, -1) -- (2, 0);
\draw
[line width=2pt,-stealth,blue] (2, 0) edge (3, 1) (3, 1) edge (4, 2) (4, 2) edge (5, 1) (5, 1) -- (6, 0);
\draw
[line width=2pt,-stealth,dashed,red] (6, 0) edge (7, -1) (7, -1) edge (8, -2) (8, -2) edge (9, -1) (9, -1) edge (10, 0) (10, 0) edge (11,-1) (11,-1) -- (12, 0);
\draw
[line width=2pt,-stealth,blue] (12, 0) edge (13, 1) (13, 1) -- (14, 0);
\draw
[line width=2pt,-stealth,blue] (16, 1) edge (17, 2) (17, 2) edge (18, 3) (18, 3) edge (19, 2) (19, 2) edge (20, 1) (20, 1) edge (21, 2) (21, 2) -- (22, 1);
\draw
[line width=2pt,-stealth,dashed,red]
(16, -1) edge (17, -2) (17, -2) edge (18, -1) (18, -1) edge (19, -2) (19, -2) edge (20, -3) (20, -3) edge (21, -2) (21, -2) edge (22,-1) (22,-1) edge (23,-2) (23,-2) -- (24, -1);
\end{tikzpicture}
\end{center}
\caption{Decomposition into the part above (blue, solid) and below (red, dashed) the $x$-axis.}
\label{fig:eq-class-2}
\end{figure}

\begin{remark}
The special case of balanced $1$-Dyck words (balanced words for which every prefix is rising) appears in different places in the literature. See \cite[Theorem 6]{GJW75} (in the context of rook theory) or \cite[Proposition 3A and 3B]{Flajol80} (in the context of continued fractions and lattice paths).    
\end{remark}

\section{Bond percolation}\label{sec:bondperc}

The diagonal lattice interpretation of the Weyl algebra brings forward an intriguing connection with bond percolation on a directed square lattice.
In this section we explore this connection in depth.

\begin{figure}[htbp]
\begin{center}
\begin{tikzpicture}

\draw[fill=black] (0,0) circle (.3ex);
\draw[fill=black] (1,1) circle (.3ex);
\draw[fill=black] (1,-1) circle (.3ex);
\draw[fill=black] (2,2) circle (.3ex);
\draw[fill=black] (2,-2) circle (.3ex);
\draw[fill=black] (2,0) circle (.3ex);
\draw[fill=black] (4,0) circle (.3ex);
\draw[fill=black] (0,2) circle (.3ex);
\draw[fill=black] (0,4) circle (.3ex);
\draw[fill=black] (3,1) circle (.3ex);
\draw[fill=black] (3,-1) circle (.3ex);

\node at (-0.3,-0.3) {(0,0)};
\node at (2,-0.3) {2};
\node at (4,-0.3) {4};
\node at (-0.3,2) {2};
\node at (-0.3,4) {4};
\node at (6.1, -0.2) {$t$};
\node at (-0.2, 5.9) {$x$};

\draw[thick, ->-] (0, 0)--(1, 1);
\draw[thick, ->-] (0, 0)--(1, -1);
\draw[thick, ->-] (1, -1)--(2, 0);
\draw[thick, ->-] (1, -1)--(2, -2);
\draw[thick, ->-] (1, 1)--(2, 2);
\draw[thick, ->-] (2, 2)--(3, 3);
\draw[thick, ->-] (2, 2)--(3, 1);
\draw[thick, ->-] (3, 1)--(4, 0);
\draw[thick, ->-] (1, 1)--(2, 0);
\draw[thick, ->-] (2, 0)--(3, 1);
\draw[thick, ->-] (2, 0)--(3, -1);
\draw[thick, ->-] (3, 1)--(4, 2);
\draw[thick, ->-] (2, -2)--(3, -1);
\draw[thick, ->-] (2, -2)--(3, -3);
\draw[thick, ->-] (3, -1)--(4, 0);
\draw[thick, ->-] (3, -1)--(4, -2);
\draw[thick, ->-] (4, 0)--(5, 1);
\draw[thick, ->-] (4, 0)--(5, -1);
\draw[thick, ->] (0, 0)--(6, 0);
\draw[thick, ->] (0, 0)--(0, 6);

\end{tikzpicture}
\end{center}
\caption{Acyclic directed square lattice.}\label{fig:illustration1}
\end{figure}

Percolation is one of the fundamental problems in statistical physics \cite{Grimme99} \cite{StaAha94}, and is of great theoretical interest in its own right as well as being applicable to a wide variety of
problems in physics, biology, chemistry, and many other areas of science. Bond percolation, the phenomenon of interest here, was introduced in the mathematics literature by Broadbent and Hammersley in 1957 \cite{BroHam57}, and has been studied extensively by mathematicians since then.

The prototype setting for bond percolation is a directed square lattice, whose vertices (called \emph{sites} in this context) are the points in the Cartesian $t$-$x$-plane with integer coordinates such that $t \geq 0$ and $t+x$ is even. See Figure \ref{fig:illustration1}.
Here $t$ is commonly thought of as the time (or stage) of the percolation process.
We regard the lattice as originally consisting of dry sites except for the origin, which is the source of fluid and wet at stage $0$.
There are two \emph{bonds} (i.e., arcs) leading from each site $(t, x)$; they terminate at the sites $(t+1, x+1)$ and $(t+1, x-1)$. (In our language, they are the NE-arcs and the SE-arcs.)
All bonds have probability $p$ of being open to the passage of fluid and probability $1-p$ of being closed.
Fluid flows from a wet site along an unblocked bond to wet another site (in the forward direction, i.e., from source to target).
Thus a site is wetted if there is a path of unblocked directed bonds (and wet sites) from the origin to that site.
See Figure \ref{fig:illustration2} for an illustration of all possible scenarios of the percolation process from the origin $(0, 0)$ to the site $(2, 0)$ in two time steps.
\emph{Clusters} are sets of connected bonds, where two bonds are said to be \emph{adjacent} if they have a vertex in common.
Sometimes, a \emph{wall} parallel to the $t$-axis (``growth direction'') is present to restrict the lateral growth of the percolation clusters.
In particular, we consider the situation where the $t$-axis itself is the wall, so that the bonds leading to sites with $x<0$ are always closed \cite{EGJT96}. See Figure \ref{fig:illustration3}.

\begin{figure}[htbp]
\begin{center}
\begin{tikzpicture}
\draw[fill=black] (0,0) circle (.3ex);
\draw[fill=black] (1,1) circle (.3ex);
\draw[fill=black] (1,-1) circle (.3ex);
\draw[fill=black] (2,0) circle (.3ex);

\draw[thick,  ->-] (0, 0)--(1, 1);
\draw[thick,  ->-] (0, 0)--(1, -1);
\draw[thick,  ->-] (1, 1)--(2, 0);
\draw[thick,  ->-] (1, -1)--(2, 0);
\end{tikzpicture}
\qquad
\begin{tikzpicture}
\draw[fill=black] (0,0) circle (.3ex);
\draw (1,1) circle (.3ex);
\draw[fill=black] (1,-1) circle (.3ex);
\draw[fill=black] (2,0) circle (.3ex);

\draw[thick, dashed] (0, 0)--(1, 1);
\draw[thick,  ->-] (0, 0)--(1, -1);
\draw[thick,  ->-] (1, 1)--(2, 0);
\draw[thick,  ->-] (1, -1)--(2, 0);
\end{tikzpicture}
\qquad
\begin{tikzpicture}
\draw[fill=black] (0,0) circle (.3ex);
\draw[fill=black] (1,1) circle (.3ex);
\draw[fill=black] (1,-1) circle (.3ex);
\draw[fill=black] (2,0) circle (.3ex);

\draw[thick,  ->-] (0, 0)--(1, 1);
\draw[thick,  ->-] (0, 0)--(1, -1);
\draw[thick, dashed] (1, 1)--(2, 0);
\draw[thick,  ->-] (1, -1)--(2, 0);
\end{tikzpicture}
\qquad
\begin{tikzpicture}
\draw[fill=black] (0,0) circle (.3ex);
\draw[fill=black] (1,1) circle (.3ex);
\draw[fill=black] (1,-1) circle (.3ex);
\draw[fill=black] (2,0) circle (.3ex);

\draw[thick,  ->-] (0, 0)--(1, 1);
\draw[thick,  ->-] (0, 0)--(1, -1);
\draw[thick,  ->-] (1, 1)--(2, 0);
\draw[thick, dashed] (1, -1)--(2, 0);
\end{tikzpicture}

\vskip.1truein

\begin{tikzpicture}
\draw[fill=black] (0,0) circle (.3ex);
\draw[fill=black] (1,1) circle (.3ex);
\draw (1,-1) circle (.3ex);
\draw[fill=black] (2,0) circle (.3ex);

\draw[thick,  ->-] (0, 0)--(1, 1);
\draw[thick, dashed] (0, 0)--(1, -1);
\draw[thick,  ->-] (1, 1)--(2, 0);
\draw[thick,  ->-] (1, -1)--(2, 0);
\end{tikzpicture}
\qquad
\begin{tikzpicture}
\draw[fill=black] (0,0) circle (.3ex);
\draw (1,1) circle (.3ex);
\draw (1,-1) circle (.3ex);
\draw (2,0) circle (.3ex);

\draw[thick, dashed] (0, 0)--(1, 1);
\draw[thick, dashed] (0, 0)--(1, -1);
\draw[thick,  ->-] (1, 1)--(2, 0);
\draw[thick,  ->-] (1, -1)--(2, 0);
\end{tikzpicture}
\qquad
\begin{tikzpicture}
\draw[fill=black] (0,0) circle (.3ex);
\draw (1,1) circle (.3ex);
\draw[fill=black] (1,-1) circle (.3ex);
\draw[fill=black] (2,0) circle (.3ex);

\draw[thick, dashed] (0, 0)--(1, 1);
\draw[thick,  ->-] (0, 0)--(1, -1);
\draw[thick, dashed] (1, 1)--(2, 0);
\draw[thick,  ->-] (1, -1)--(2, 0);
\end{tikzpicture}
\qquad
\begin{tikzpicture}
\draw[fill=black] (0,0) circle (.3ex);
\draw[fill=black] (1,1) circle (.3ex);
\draw[fill=black] (1,-1) circle (.3ex);
\draw (2,0) circle (.3ex);

\draw[thick,  ->-] (0, 0)--(1, 1);
\draw[thick,  ->-] (0, 0)--(1, -1);
\draw[thick, dashed] (1, 1)--(2, 0);
\draw[thick, dashed] (1, -1)--(2, 0);
\end{tikzpicture}

\vskip.1truein

\begin{tikzpicture}
\draw[fill=black] (0,0) circle (.3ex);
\draw[fill=black] (1,1) circle (.3ex);
\draw (1,-1) circle (.3ex);
\draw[fill=black] (2,0) circle (.3ex);

\draw[thick,  ->-] (0, 0)--(1, 1);
\draw[thick, dashed] (0, 0)--(1, -1);
\draw[thick,  ->-] (1, 1)--(2, 0);
\draw[thick, dashed] (1, -1)--(2, 0);
\end{tikzpicture}
\qquad
\begin{tikzpicture}
\draw[fill=black] (0,0) circle (.3ex);
\draw (1,1) circle (.3ex);
\draw[fill=black] (1,-1) circle (.3ex);
\draw (2,0) circle (.3ex);

\draw[thick, dashed] (0, 0)--(1, 1);
\draw[thick,  ->-] (0, 0)--(1, -1);
\draw[thick,  ->-] (1, 1)--(2, 0);
\draw[thick, dashed] (1, -1)--(2, 0);
\end{tikzpicture}
\qquad
\begin{tikzpicture}
\draw[fill=black] (0,0) circle (.3ex);
\draw[fill=black] (1,1) circle (.3ex);
\draw (1,-1) circle (.3ex);
\draw (2,0) circle (.3ex);

\draw[thick,  ->-] (0, 0)--(1, 1);
\draw[thick, dashed] (0, 0)--(1, -1);
\draw[thick, dashed] (1, 1)--(2, 0);
\draw[thick,  ->-] (1, -1)--(2, 0);
\end{tikzpicture}
\qquad
\begin{tikzpicture}
\draw[fill=black] (0,0) circle (.3ex);
\draw[fill=black] (1,1) circle (.3ex);
\draw (1,-1) circle (.3ex);
\draw (2,0) circle (.3ex);

\draw[thick,  ->-] (0, 0)--(1, 1);
\draw[thick, dashed] (0, 0)--(1, -1);
\draw[thick, dashed] (1, 1)--(2, 0);
\draw[thick, dashed] (1, -1)--(2, 0);
\end{tikzpicture}

\vskip.1truein

\begin{tikzpicture}
\draw[fill=black] (0,0) circle (.3ex);
\draw (1,1) circle (.3ex);
\draw (1,-1) circle (.3ex);
\draw (2,0) circle (.3ex);

\draw[thick, dashed] (0, 0)--(1, 1);
\draw[thick, dashed] (0, 0)--(1, -1);
\draw[thick,  ->-] (1, 1)--(2, 0);
\draw[thick, dashed] (1, -1)--(2, 0);
\end{tikzpicture}
\qquad
\begin{tikzpicture}
\draw[fill=black] (0,0) circle (.3ex);
\draw (1,1) circle (.3ex);
\draw (1,-1) circle (.3ex);
\draw (2,0) circle (.3ex);

\draw[thick, dashed] (0, 0)--(1, 1);
\draw[thick, dashed] (0, 0)--(1, -1);
\draw[thick, dashed] (1, 1)--(2, 0);
\draw[thick,  ->-] (1, -1)--(2, 0);
\end{tikzpicture}
\qquad
\begin{tikzpicture}
\draw[fill=black] (0,0) circle (.3ex);
\draw (1,1) circle (.3ex);
\draw[fill=black] (1,-1) circle (.3ex);
\draw (2,0) circle (.3ex);

\draw[thick, dashed] (0, 0)--(1, 1);
\draw[thick,  ->-] (0, 0)--(1, -1);
\draw[thick, dashed] (1, 1)--(2, 0);
\draw[thick, dashed] (1, -1)--(2, 0);
\end{tikzpicture}
\qquad
\begin{tikzpicture}
\draw[fill=black] (0,0) circle (.3ex);
\draw (1,1) circle (.3ex);
\draw (1,-1) circle (.3ex);
\draw (2,0) circle (.3ex);

\draw[thick, dashed] (0, 0)--(1, 1);
\draw[thick, dashed] (0, 0)--(1, -1);
\draw[thick, dashed] (1, 1)--(2, 0);
\draw[thick, dashed] (1, -1)--(2, 0);
\end{tikzpicture}
\end{center}
\caption{Directed bond percolation from the origin to the site $(2, 0)$. Open (closed) bonds are indicated by solid (dashed) lines. Filled (hollow) circles denote wet (dry) sites.}\label{fig:illustration2}
\end{figure}

\begin{figure}[htbp]
\begin{center}
\begin{tikzpicture}

\draw[fill=black] (0,0) circle (.3ex);
\draw[fill=black] (1,1) circle (.3ex);
\draw[fill=black] (2,2) circle (.3ex);
\draw[fill=black] (2,0) circle (.3ex);
\draw[fill=black] (4,0) circle (.3ex);
\draw[fill=black] (0,2) circle (.3ex);
\draw[fill=black] (0,4) circle (.3ex);
\draw[fill=black] (3,1) circle (.3ex);

\node at (-0.3,-0.3) {(0,0)};
\node at (2,-0.3) {2};
\node at (4,-0.3) {4};
\node at (-0.3,2) {2};
\node at (-0.3,4) {4};
\node at (6.1, -0.2) {$t$};
\node at (-0.2, 5.9) {$x$};

\draw[thick, ->-] (0, 0)--(1, 1);
\draw[thick, ->-] (1, 1)--(2, 2);
\draw[thick, ->-] (2, 2)--(3, 3);
\draw[thick, ->-] (2, 2)--(3, 1);
\draw[thick, ->-] (3, 1)--(4, 0);
\draw[thick, ->-] (1, 1)--(2, 0);
\draw[thick, ->-] (2, 0)--(3, 1);
\draw[thick, ->-] (3, 1)--(4, 2);
\draw[thick, ->-] (4, 0)--(5, 1);
\draw[red, thick, ->] (0, 0)--(6, 0);
\draw[thick, ->] (0, 0)--(0, 6);

\end{tikzpicture}
\end{center}
\caption{Acyclic directed square lattice with a wall at $x=0$.}\label{fig:illustration3}
\end{figure}

The mean size $S(p)$ of the clusters is a quantity that has captured a lot of interest:
\begin{equation*}
S(p) = \sum_{\text{sites }(t, x)} C(t, x; p),
\end{equation*}
where $C(t, x; p)$ is the probability that there is an open path from the origin to the site $(t, x)$. For example, we may readily calculate that $C(2, 0; p)=2p^2-p^4$ for percolation (without a wall) from Figure \ref{fig:illustration2}. We may also easily calculate that $C(2, 0; p)=p^2$ for percolation (with a wall). For large $t$ and $x$, however, calculating the probability $C(t, x; p)$ becomes a tedious matter and is usually done with the help of a computer. There is a vast body of literature in statistical physics regarding the implementation of the computational procedure, commonly referred to as a \emph{transfer matrix method}. See \cite{Ble77} for the setup in physics. The main idea is the following: The state of time step $t$ is a specification of which sites in column $t$ of the directed square lattice are wet and which sites are dry. Essentially the state vector of a given column is completely determined by that of the previous column and only one state vector need be held in the computer at any stage and all other state vectors overwritten, although some care is necessary for the execution.

The low-density series expansion of $S(p)$ for bond percolation (both with and without a wall) may be performed to order $p^n$ (for varying values of $n$) by calculating $C(t, x; p)$ to order $p^n$ of all sites which may be reached in a walk of $n$ or fewer steps from the origin, i.e., summing up $C(t, x; p)$ of all those reachable sites before or at column $n$. So for example, when a wall is not present, to obtain $S(p)$ to order $p$, we compute
\begin{equation*}
C(0, 0; p)+C(1, 1; p)+C(1, -1; p)=1+p+p=1+2p.
\end{equation*}
To obtain $S(p)$ to order $p^2$, we compute
\begin{multline*}
C(0, 0;p)+C(1, 1; p)+C(1, -1; p)+C(2, 2; p)+C(2, 0; p)+C(2, -2; p)\\
=1+p+p+p^2+(2p^2-p^4)+p^2=1+2p+4p^2-p^4,
\end{multline*}
which gives $1+2p+4p^2$ when kept to order $p^2$. And when a wall is present, to obtain $S(p)$ to order $p$, we compute
\begin{equation*}
C(0, 0; p)+C(1, 1; p)=1+p.
\end{equation*}
To obtain $S(p)$ to order $p^2$, we compute
\begin{equation*}
C(0, 0;p)+C(1, 1; p)+C(2, 2; p)+C(2, 0; p)=1+p+p^2+p^2=1+p+2p^2.
\end{equation*}
The coefficients of the series expansion of $S(p)$ (without a wall/with a wall) are respectively given in \cite[A006727]{OEIS} and \cite[A056532]{OEIS}. We note in particular that negative terms appear starting from $n=50$ for \cite[A006727]{OEIS} and from $n=39$ for \cite[A056532]{OEIS}, as for large $n$, the negative higher-order terms from columns before column $n$ might dominate the positive $p^n$ term from column $n$.

Compared with our results from earlier, we see that $a(n):=\sum_k a(n, k)$ (see Table \ref{tab:a_tot}) gives the total number of equivalence classes of diagonal paths from the origin to all sites in column $n$ on a directed square lattice without a wall in Figure \ref{fig:illustration1} while $a_1(n):=\sum_k a_1(n, k)$ (see Table \ref{tab:a1}) gives the analogous number for a directed square lattice with a wall in Figure \ref{fig:illustration3}.
It is thus not entirely surprising that $a(n)$ agrees with \cite[A006727]{OEIS} up to $n=11$, and that $a_1(n)$ agrees with \cite[A056532]{OEIS} up to $n=8$. After all, recall from Lemma \ref{lemma:recursion} that the $a(n,k)$ satisfy the recursion $a(n, k)=a(n-1, k)+a(n-2, k-1)$. Due to the transfer matrix method that was explained briefly earlier, this recursive feature appears again in calculating the bond percolation on a directed square lattice. To derive $C(t, x; p)$, the probability that there is an open path from the origin to the site $(t, x)$, we only need to keep track of the corresponding $C$ values on the one square to the left of the site $(t, x)$, consisting of sites $(t-2, x), (t-1, x+1), (t-1, x-1), (t, x)$.

Nevertheless, the Weyl algebra problem and the bond percolation problem are different in nature: One is deterministic while the other is probabilistic, and more importantly, the contribution to the $n$th term only comes from the $n$th column for one but might involve some columns before column $n$ for the other. In detail, the coefficient of $p^n$ term in the low-density series expansion of $S(p)$ is calculated by summing up $C(t, x; p)$ of all those reachable sites before or at column $n$, not just at column $n$. (In our earlier calculation for $S(p)$, summing up column $2$ sites contributes a higher-order term $-p^4$ in addition to $4p^2$. For larger $n$, the difference is more evident.)

\section{The rook theory connection}\label{sec:rook}

Theorem \ref{thm.DU-eqs2} classifies equalities between products of $D$'s and $U$'s in the Weyl algebra
$\W$. Another approach to this classification problem is to expand
any such product in one of the bases $\left(  D^{i}%
U^{j}\right)  _{i,j\in\NN}$ and $\left(  U^{j}D^{i}\right)
_{i,j\in\NN}$ of $\W$; the uniqueness of this expansion then
allows us to compare two such products by comparing their
respective coefficients.

It turns out that this expansion can be done in explicit combinatorial terms
using \emph{rook theory}. We do not give a detailed introduction to this
subject (see \cite{BCHR11} and \cite[\S 2.4.4]{ManSch16} for that), but
quickly recall the basics we need.

A \emph{cell}  means a pair $\left(  i,j\right)  $ of two positive
integers. Each cell $\left(  i,j\right)  $ will be drawn as a $1\times
1$-square, situated in the Cartesian plane with center at the point $\left(
i,j\right)  $, with its sides parallel to the axes. A \emph{board} means
a finite set of cells. For instance, the board $\left\{  \left(  1,1\right)
,\ \left(  2,2\right)  ,\ \left(  3,1\right)  ,\ \left(  4,2\right)
,\ \left(  6,1\right)  ,\ \left(  6,2\right)  \right\}  $ looks as
follows:
\begin{equation}
\begin{tikzpicture}
\draw[fill=red!50] (0, 0) rectangle (1, 1);
\draw[fill=red!50] (1, 1) rectangle (2, 2);
\draw[fill=red!50] (2, 0) rectangle (3, 1);
\draw[fill=red!50] (3, 1) rectangle (4, 2);
\draw[fill=red!50] (5, 0) rectangle (6, 1);
\draw[fill=red!50] (5, 1) rectangle (6, 2);
\end{tikzpicture}
\ \ .\label{eq.rook.board1}
\end{equation}

A \emph{rook placement} of a board $B$ is a subset $S$ of $B$ such that no two
cells in $S$ lie in the same row or column. If $B$ is a board, and if
$k\in\NN$, then the \emph{rook number} $r_{k}\left(  B\right)  $ is the
number of $k$-element rook placements of $B$. For instance, if $B$ is the board in
(\ref{eq.rook.board1}), then its rook numbers are $r_{0}\left(  B\right)  =1$
and $r_{1}\left(  B\right)  =\left\vert B\right\vert =6$ and $r_{2}\left(
B\right)  =8$ and $r_{k}\left(  B\right)  =0$ for all $k>2$.

Two boards $B$ and $C$ are said to be \emph{rook-equivalent} if they share the
same rook numbers (i.e., if $r_{k}\left(  B\right)  =r_{k}\left(  C\right)  $
for all $k\in\NN$).

If $w$ is a word in $\M$, then the \emph{Ferrers board} $B_{w}$ is a
special board defined as follows: It is a contiguous set of cells, whose
bottom and right boundaries are straight lines, whereas the rest of its
boundary is a jagged path that (when walked from southwest to northeast) takes
a north-step for each $D$ in $w$ and an east-step for each $U$ in $w$ (reading
the word $w$ from left to right). For instance, if $w=UDDUDUUDUD$, then $B_{w}$
looks as follows:
\[
\begin{tikzpicture}
\draw[fill=red!50] (1, 0) rectangle (2, 1);
\draw[fill=red!50] (1, 1) rectangle (2, 2);
\draw[fill=red!50] (2, 0) rectangle (3, 1);
\draw[fill=red!50] (2, 1) rectangle (3, 2);
\draw[fill=red!50] (2, 2) rectangle (3, 3);
\draw[fill=red!50] (3, 0) rectangle (4, 1);
\draw[fill=red!50] (3, 1) rectangle (4, 2);
\draw[fill=red!50] (3, 2) rectangle (4, 3);
\draw[fill=red!50] (4, 0) rectangle (5, 1);
\draw[fill=red!50] (4, 1) rectangle (5, 2);
\draw[fill=red!50] (4, 2) rectangle (5, 3);
\draw[fill=red!50] (4, 3) rectangle (5, 4);
\draw (0, 0) -- (1, 0);
\draw (5, 4) -- (5, 5);
\node(x1) at (0.5, -0.25) {$U$};
\node(x2) at (0.75, 0.5) {$D$};
\node(x3) at (0.75, 1.5) {$D$};
\node(x4) at (1.5, 1.75) {$U$};
\node(x5) at (1.75, 2.5) {$D$};
\node(x6) at (2.5, 2.75) {$U$};
\node(x7) at (3.5, 2.75) {$U$};
\node(x8) at (3.75, 3.5) {$D$};
\node(x9) at (4.5, 3.75) {$U$};
\node(x10) at (4.75, 4.5) {$D$};
\end{tikzpicture}
\]
(where the $D$ and $U$ labels are signaling the correspondence between
the letters of $w$ and the steps of the jagged boundary).\footnote{Note that the $U$ at the beginning of $w$, and the $D$ at the end, do not affect the Ferrers board.}

Now a classical result of Navon (originally \cite[\S 2]{Navon73}, but see
\cite[Theorem 20]{BCHR11} or \cite[Theorem 6.11 for $h=1$]{ManSch16} or \cite[Theorem 3.2]{Varvak} for a
modern treatment\footnote{See also \cite{GoF09} for a brief introduction with physics in mind. The sources differ slightly in their notation, but are easily seen to be equivalent (e.g., by reflecting the Ferrers boards across a diagonal line).}) says the following:

\begin{theorem}
\label{thm.navon}
Let $w\in\M$ be any word that contains $n$ many
$D$'s and $m$ many $U$'s. Then, in $\W$, we have
\[
\phi\left(  w\right)
= \sum_{k=0}^{\min\left\{  m,n\right\}  }r_{k}\left(
B_{w}\right)  U^{m-k}D^{n-k}.
\]

\end{theorem}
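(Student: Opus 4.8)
The plan is to prove the identity by induction on the length of $w$, building $w$ up one letter at a time by appending letters on the \emph{right}. The base case is the empty word, where $m = n = 0$, the board $B_w$ is empty, $r_0\tup{B_w} = 1$, and $\phi\tup{w} = 1 = U^0 D^0$, so the claim holds. For the induction step I write $w = w' x$ with $x \in \set{U, D}$, and let $w'$ contain $n'$ many $D$'s and $m'$ many $U$'s. The two algebraic ingredients are the elementary commutation identity $D^a U = U D^a + a D^{a-1}$ (valid in $\W$ for all $a \in \NN$, provable by a one-line induction on $a$ from $DU - UD = 1$) and the description of $B_w$ in terms of columns: reading the boundary construction, the column associated with each $U$ in $w$ has height equal to the number of $D$'s preceding that $U$, so the column heights are weakly increasing from left to right.

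First consider $x = D$, so $m = m'$ and $n = n'+1$. Here $\phi\tup{w} = \phi\tup{w'} D$, and appending a $D$ at the very end of $w'$ adds no new column and raises no existing column height (it only places a $D$ to the right of all $U$'s of $w'$), so $B_w = B_{w'}$. Multiplying the induction hypothesis $\phi\tup{w'} = \sum_k r_k\tup{B_{w'}} U^{m'-k} D^{n'-k}$ on the right by $D$ merely raises each $D$-power by one, which together with $r_k\tup{B_w} = r_k\tup{B_{w'}}$ yields the desired formula immediately.

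Next consider $x = U$, so $m = m'+1$ and $n = n'$. Now $\phi\tup{w} = \phi\tup{w'} U$, and appending a $U$ at the end adjoins a new rightmost column whose height is the number of $D$'s lying to its left, namely all $n$ of them; thus $B_w$ is $B_{w'}$ together with one extra column of full height $n$ (which is maximal, since all other columns have height $\leq n$). Using the induction hypothesis and $D^{n-k} U = U D^{n-k} + (n-k) D^{n-k-1}$, I compute
\begin{align*}
\phi\tup{w}
&= \sum_k r_k\tup{B_{w'}} U^{m'-k} D^{n-k} U \\
&= \sum_k r_k\tup{B_{w'}} \tup{ U^{m-k} D^{n-k} + (n-k) U^{m-1-k} D^{n-k-1} } ,
\end{align*}
and collecting the coefficient of $U^{m-j} D^{n-j}$ gives $r_j\tup{B_{w'}} + (n-j+1)\, r_{j-1}\tup{B_{w'}}$.

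The crux — and the step I expect to require the most care — is matching this coefficient with $r_j\tup{B_w}$, i.e., establishing the rook-number recursion for adjoining a rightmost full-height column. This is a purely combinatorial count: a $j$-element rook placement on $B_w$ either avoids the new column (there are $r_j\tup{B_{w'}}$ such placements) or uses it, in which case the remaining $j-1$ rooks form a placement on $B_{w'}$ occupying $j-1$ distinct rows, and the last rook may be placed in any of the $n - (j-1) = n - j + 1$ rows of the new column not yet occupied (here the maximality of the new column's height is exactly what guarantees that all occupied rows lie within it). Hence $r_j\tup{B_w} = r_j\tup{B_{w'}} + (n-j+1) r_{j-1}\tup{B_{w'}}$, completing the induction. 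The sum truncates at $k = \min\set{m,n}$ automatically, since a $k$-element rook placement needs $k$ distinct rows and $k$ distinct columns, forcing $r_k\tup{B_w} = 0$ once $k > \min\set{m,n}$.
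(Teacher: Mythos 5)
Your proof is correct. Note that the paper itself does not prove Theorem~\ref{thm.navon} at all: it quotes it as a classical result of Navon and points to \cite{BCHR11}, \cite{ManSch16} and \cite{Varvak} for modern treatments. So there is no in-paper argument to compare against; what you have written is a self-contained proof, and it is essentially the standard one found in those references (e.g., Varvak's): induct on the length of $w$, appending one letter at a time on the right. The two halves of your induction step are exactly the right ones. Appending a $D$ adds a north-step along the right boundary, so $B_{w'D}=B_{w'}$ (this is the paper's own footnote about trailing $D$'s) and the algebra is just right-multiplication by $D$. Appending a $U$ adjoins a rightmost column of full height $n$, and the commutation $D^aU = UD^a + aD^{a-1}$ produces the coefficient $r_j\tup{B_{w'}} + (n-j+1)\,r_{j-1}\tup{B_{w'}}$, which your case analysis (new column used or not, with the full height of the new column guaranteeing that all $j-1$ occupied rows exclude exactly $j-1$ of its $n$ cells) correctly identifies with $r_j\tup{B_w}$. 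The boundary conventions ($r_{-1}=0$, the vanishing of $r_k$ beyond $\min\set{m,n}$ so that all sums may be taken over all $k\in\NN$, and the harmless $a=0$ instance of the commutation identity) are all handled or trivially checkable. Supplying this proof makes the paper's use of the theorem self-contained, which is a genuine, if modest, gain over the citation.
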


As a consequence, we obtain the following:

\begin{theorem}
\label{thm.DU-eqs3}
Let $u$ and $v$ be two words in $\M$ that have the
same number of $D$'s and the same number of $U$'s. Then, the following
statements are equivalent:
the statements $\mathcal{S}_{1}$, $\mathcal{S}_{2}$, $\mathcal{S}_{3}$, $\mathcal{S}'_3$, $\mathcal{S}_{4}$, $\mathcal{S}_{5}$ and $\mathcal{S}_6$ from Theorem \ref{thm.DU-eqs2},
and the additional statement

\begin{itemize}
\item $\mathcal{R}_1$: The boards $B_{u}$ and $B_{v}$ are rook-equivalent.
\end{itemize}
\end{theorem}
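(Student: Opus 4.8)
The plan is to deduce everything from Navon's theorem (Theorem~\ref{thm.navon}), which already contains all the real content. Since the statements $\mathcal{S}_1, \ldots, \mathcal{S}_6$ are equivalent to one another by Theorem~\ref{thm.DU-eqs2}, it suffices to show that $\mathcal{R}_1$ is equivalent to a single one of them, and the most convenient target is $\mathcal{S}_1$, i.e. the equality $\phi(u) = \phi(v)$. The standing hypothesis that $u$ and $v$ have the same number of $D$'s and the same number of $U$'s is exactly what makes this comparison possible: it forces Navon's expansions of $\phi(u)$ and $\phi(v)$ to be supported on the very same basis elements, so that the two elements of $\W$ can be compared coefficient by coefficient.

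Concretely, I would let $u$ (and hence also $v$) contain $m$ many $U$'s and $n$ many $D$'s. Applying Theorem~\ref{thm.navon} to $u$ and to $v$ yields
\[
\phi(u) = \sum_{k=0}^{\min\{m,n\}} r_k(B_u)\, U^{m-k} D^{n-k}
\qquad\text{and}\qquad
\phi(v) = \sum_{k=0}^{\min\{m,n\}} r_k(B_v)\, U^{m-k} D^{n-k}.
\]
For the distinct values $k = 0, 1, \ldots, \min\{m,n\}$, the elements $U^{m-k} D^{n-k}$ occurring here are pairwise distinct members of the basis $(U^j D^i)_{i,j\in\NN}$ of $\W$, because the index pairs $(n-k, m-k)$ are pairwise distinct. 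Consequently the coefficients in these two expansions are uniquely determined, and $\phi(u) = \phi(v)$ holds if and only if $r_k(B_u) = r_k(B_v)$ for every $k \in \{0, 1, \ldots, \min\{m,n\}\}$.

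It then remains to upgrade this equality of finitely many rook numbers to genuine rook-equivalence, namely $r_k(B_u) = r_k(B_v)$ for \emph{all} $k \in \NN$. This is immediate once one observes that no board $B_w$ coming from a word $w$ with $m$ many $U$'s and $n$ many $D$'s can carry a rook placement of size exceeding $\min\{m,n\}$: a rook placement occupies distinct rows and distinct columns, and $B_w$ has at most $n$ rows and at most $m$ columns. Hence $r_k(B_u) = r_k(B_v) = 0$ automatically for $k > \min\{m,n\}$, so equality of the rook numbers up to $k = \min\{m,n\}$ is the same as full rook-equivalence. Stringing these observations together gives $\mathcal{S}_1 \Leftrightarrow \mathcal{R}_1$, which, combined with Theorem~\ref{thm.DU-eqs2}, proves the theorem.

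I do not expect a genuine obstacle, since all the substance is packaged inside Navon's theorem, which we are free to invoke. The only two points demanding care are the bookkeeping facts used above — that the monomials $U^{m-k}D^{n-k}$ are distinct basis vectors, and that the higher rook numbers vanish — and both rest entirely on $u$ and $v$ sharing the same pair $(m,n)$, which is precisely the hypothesis of the theorem. (The subtlety flagged in Remark~\ref{rmk.rook.as-crit}, that distinct words may yield the same Ferrers board, does not interfere here, because we argue through $\phi$ rather than through the word-to-board assignment directly.)
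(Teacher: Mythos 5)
Your proposal is correct and follows essentially the same route as the paper's own proof: reduce to $\mathcal{S}_1 \Longleftrightarrow \mathcal{R}_1$, expand both sides via Navon's theorem in the basis $\left(U^j D^i\right)_{i,j\in\NN}$, compare coefficients, and note that the rook numbers beyond $\min\{m,n\}$ vanish for both boards so that the finite comparison already amounts to full rook-equivalence. Both bookkeeping points you flag are exactly the ones the paper checks, so there is nothing to add.
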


\begin{proof}
It suffices to prove the equivalence $\mathcal{S}_{1}\Longleftrightarrow
\mathcal{R}_1$.

Let $n$ be the \# of $D$'s in $u$ (or, equivalently, in $v$), and let $m$ be
the \# of $U$'s in $u$ (or, equivalently, in $v$). Then, the board $B_{u}$ has
$\leq n$ nonempty rows (since $u$ has $n$ many $D$'s). Hence, any rook placement of
$B_{u}$ has size $\leq n$ (since otherwise, it would contain two cells in the
same row, by the pigeonhole principle). In other words, $r_{k}\left(
B_{u}\right)  =0$ for all $k>n$. Similarly, $r_{k}\left(  B_{u}\right)  =0$
for all $k>m$. Combining these two observations, we obtain%
\begin{equation}
r_{k}\left(  B_{u}\right)  =0\ \ \ \ \ \ \ \ \ \ \text{for all }k>\min\left\{
m,n\right\}  .\label{pf.thm.DU-eqs3.1}%
\end{equation}
Similarly,%
\begin{equation}
r_{k}\left(  B_{v}\right)  =0\ \ \ \ \ \ \ \ \ \ \text{for all }k>\min\left\{
m,n\right\}  .\label{pf.thm.DU-eqs3.2}%
\end{equation}
Comparing these two equalities, we conclude that%
\begin{equation}
r_{k}\left(  B_{u}\right)  =r_{k}\left(  B_{v}\right)
\ \ \ \ \ \ \ \ \ \ \text{for all }k>\min\left\{  m,n\right\}
.\label{pf.thm.DU-eqs3.1=2}%
\end{equation}

Recall that the family $\left(  U^{j}D^{i}\right)  _{i,j\in\NN}$ is a
basis of $\W$ (by \cite[Proposition 2.7.1 (i)]{Etingof}), therefore
$\kk$-linearly independent.

Now, we have the following chain of equivalences:%
\begin{align*}
\mathcal{S}_{1}\   & \Longleftrightarrow\ \left(  \phi\left(  u\right)
=\phi\left(  v\right)  \right)  \\
& \Longleftrightarrow\ \left(  \sum_{k=0}^{\min\left\{  m,n\right\}  }%
r_{k}\left(  B_{u}\right)  U^{m-k}D^{n-k}=\sum_{k=0}^{\min\left\{
m,n\right\}  }r_{k}\left(  B_{v}\right)  U^{m-k}D^{n-k}\right)  \\
& \ \ \ \ \ \ \ \ \ \ \ \ \ \ \ \ \ \ \ \ \left(  \text{here, we rewrote }%
\phi\left(  u\right)  \text{ and }\phi\left(  v\right)  \text{ using Theorem
\ref{thm.navon}}\right)  \\
& \Longleftrightarrow\ \left(  r_{k}\left(  B_{u}\right)  =r_{k}\left(
B_{v}\right)  \text{ for all }k\in\left\{  0,1,\ldots,\min\left\{
m,n\right\}  \right\}  \right)  \\
& \ \ \ \ \ \ \ \ \ \ \ \ \ \ \ \ \ \ \ \ \left(  \text{since the family
}\left(  U^{j}D^{i}\right)  _{i,j\in\NN}\text{ is }\kk%
\text{-linearly independent}\right)  \\
& \Longleftrightarrow\ \left(  r_{k}\left(  B_{u}\right)  =r_{k}\left(
B_{v}\right)  \text{ for all }k\in\NN\right)
\ \ \ \ \ \ \ \ \ \ \left(  \text{by (\ref{pf.thm.DU-eqs3.1=2})}\right)  \\
& \Longleftrightarrow\ \left(  \text{the boards }B_{u}\text{ and }B_{v}\text{
are rook-equivalent}\right)  \ \Longleftrightarrow\ \mathcal{R}_1.
\end{align*}
This completes the proof of $\mathcal{S}_{1}\Longleftrightarrow\mathcal{R}_1$ and thus the proof of Theorem \ref{thm.DU-eqs3}.
\end{proof}

The implication $\mathcal{R}_1\Longrightarrow\mathcal{S}_{1}$ in Theorem
\ref{thm.DU-eqs3} has been implicitly observed in \cite[bottom of p.
40]{BCHR11}. Note that this implication really requires the assumption about
equal numbers of $D$'s and of $U$'s in Theorem \ref{thm.DU-eqs3}, since (e.g.)
the Ferrers boards $B_{DUUDU}$ and $B_{DDUU}$ are rook-equivalent without
$\phi\left(  DUUDU\right)  $ equalling $\phi\left(  DDUU\right)  $. For an
even starker example, if $w\in\M$ is any word, then the Ferrers
boards $B_{w}$ and $B_{\omega\left(  w\right)  }$ are rook-equivalent (being
each other's reflection across a diagonal), but $\phi\left(  w\right)  $ is
usually not $\phi\left(  \omega\left(  w\right)  \right)  $. We note that this reasoning leads to a new proof of Theorem \ref{thm.phiomega}.

\begin{noncompile}
\begin{proof}[Second proof of Theorem \ref{thm.phiomega}.]
The word $u$ is balanced, i.e., has the same number of $U$'s and of $D$'s. Let
$n$ be this number. Then, the word $\omega\left(  u\right)  $ also has $n$
many $D$'s and many $U$'s. Hence, Theorem \ref{thm.DU-eqs3} (applied to
$v=\omega\left(  u\right)  $) shows that the statements $\mathcal{S}_{1}$,
$\mathcal{S}_{2}$, $\mathcal{S}_{5}$, $\mathcal{S}_6$ and $\mathcal{R}_1$ (for $v=\omega
\left(  u\right)  $) are equivalent. But the Ferrers boards $B_{u}$ and
$B_{\omega\left(  u\right)  }$ are rook-equivalent, since one can be obtained
from the other by a reflection across a diagonal. Thus, statement
$\mathcal{R}_1$ (for $v=\omega\left(  u\right)  $) holds. Hence, statements
$\mathcal{S}_{1}$ and $\mathcal{S}_{5}$ also hold (since the statements
$\mathcal{S}_{1}$, $\mathcal{S}_{2}$, $\mathcal{S}_{5}$, $\mathcal{S}_6$ and $\mathcal{R}_1$
are equivalent). In other words, $\phi\left(  u\right)  =\phi\left(
\omega\left(  u\right)  \right)  $ and $u \balsim \omega\left(  u\right)  $. This proves Theorem \ref{thm.phiomega}.
\end{proof}
\end{noncompile}

Theorem \ref{thm.DU-eqs3} connects our study of the kernel of $\phi$ to a
classical question, namely: When are two Ferrers boards rook-equivalent? A
classical result of Foata and Sch\"{u}tzenberger (\cite[Theorem 6]{FoaSch70}
or \cite[Theorem 7]{BCHR11}) shows that each Ferrers board is rook-equivalent
to a unique \textquotedblleft increasing Ferrers board\textquotedblright.
These \textquotedblleft increasing Ferrers boards\textquotedblright\ are
somewhat similar to our up-normal words, but not quite in bijection, since (as
we said) the rook equivalence of $B_{u}$ and $B_{v}$ implies $\phi\left(
u\right)  =\phi\left(  v\right)  $ only when we know that $u$ and $v$ have the
same number of $U$'s and the same number of $D$'s.

Interestingly, Foata and Sch\"{u}tzenberger have their own kind of moves that
they use to normalize a Ferrers board modulo rook equivalence: the
\textquotedblleft$\left(  k,k^{\prime}\right)  $-transforms\textquotedblright%
\ (see \cite[Definition 8 bis on page 9]{FoaSch70}). These appear to be
close relatives of our balanced flips. \medskip

A recent preprint by Cotardo, Gruica and Ravagnani \cite{CoGrRa23} proves
another set of equivalent criteria for the rook-equivalence of two Ferrers
boards \cite[Corollary 3.2]{CoGrRa23}.
It lists six equivalent conditions, one of which (condition 6) is
rook-equivalence, whereas another (condition 1) is equivalent to our
statement $\mathcal{S}_3$ (albeit this equivalence takes some work to
prove).
The other four conditions are not found in our lists so far.
In the following, we  state two of these four conditions (4 and 5),
as they are rather surprising and reveal an unexpected connection to
the theory of finite fields. (Arguably, at least one of them has been
foreseen, to some extent, in Haglund's \cite{Haglun98}.)

First, we introduce the necessary notations.
For any finite field $F$, any nonnegative integers $n$ and $k$, and any board
$B \subseteq \set{1, 2, \ldots, n}^2$, we define $P_k \tup{B / F}$
to be the number of $n \times n$-matrices $A \in F^{n \times n}$ of
rank $k$ such that all entries of $A$ in cells outside of $B$ are zero.
This is called $P_k \tup{B}$ in \cite[Definition 1]{Haglun98}, where
$F = \mathbb{F}_q$; but we include $F$ in the notation since
$P_k \tup{B / F}$ depends on $F$. It is easy to see, however, that
$P_k \tup{B / F}$ does not depend on $n$,
as long as $n$ is large enough that $B \subseteq \set{1, 2, \ldots, n}^2$.
In \cite{CoGrRa23}, our $P_k \tup{B / F}$ is called
$W_k\tup{\operatorname{Mat}^{n\times m}_q\tup{B}}$, where $F = \mathbb{F}_q$,
and where $n$ and $m$ are chosen large enough that
$B \subseteq \set{1,2,\ldots,n} \times \set{1,2,\ldots,m}$.
Now, we claim the following:

\begin{theorem}
\label{thm.DU-eqs4}
Let $u$ and $v$ be two words in $\M$ that have the
same number of $D$'s and the same number of $U$'s. Then, the following
statements are equivalent:
the statements $\mathcal{S}_{1}$, $\mathcal{S}_{2}$, $\mathcal{S}_{3}$, $\mathcal{S}'_3$, $\mathcal{S}_{4}$, $\mathcal{S}_{5}$ and $\mathcal{S}_6$ from Theorem \ref{thm.DU-eqs2},
the statement $\mathcal{R}_1$ from Theorem \ref{thm.DU-eqs3},
and the following two additional statements:

\begin{itemize}
\item $\mathcal{R}_2$: For any finite field $F$ and any $k\in\NN$, we
have $P_k \tup{B_u / F} = P_k \tup{B_v / F}$.

\item $\mathcal{R}_3$: For any finite field $F$, we have $P_1 \tup{B_u / F} = P_1 \tup{B_v / F}$.
\end{itemize}
\end{theorem}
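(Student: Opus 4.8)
The plan is to piggyback on Theorem~\ref{thm.DU-eqs3}, which (under the standing hypothesis that $u$ and $v$ have equal numbers of $D$'s and equal numbers of $U$'s) already establishes the equivalence of $\mathcal{S}_1$, $\mathcal{S}_2$, $\mathcal{S}_3$, $\mathcal{S}'_3$, $\mathcal{S}_4$, $\mathcal{S}_5$, $\mathcal{S}_6$ and $\mathcal{R}_1$. Thus it suffices to fold the two new conditions $\mathcal{R}_2$ and $\mathcal{R}_3$ into this single equivalence class, and the cleanest way to do this is to prove the cycle
\[
\mathcal{R}_1 \Longrightarrow \mathcal{R}_2 \Longrightarrow \mathcal{R}_3 \Longrightarrow \mathcal{R}_1 .
\]
Once this cycle is closed, combining it with Theorem~\ref{thm.DU-eqs3} yields the asserted equivalence of all the listed statements.

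For the implication $\mathcal{R}_1 \Longrightarrow \mathcal{R}_2$ I would invoke Haglund's theorem \cite{Haglun98}, which (for a Ferrers board $B$) expresses the matrix count $P_k\tup{B / F}$ as a fixed universal polynomial in $q = \abs{F}$ whose coefficients are $\ZZ$-linear combinations of the rook numbers $r_0\tup{B}, r_1\tup{B}, \ldots$ of $B$. Since $\mathcal{R}_1$ asserts $r_j\tup{B_u} = r_j\tup{B_v}$ for all $j\in\NN$, substituting these equal rook numbers into Haglund's formula gives $P_k\tup{B_u / F} = P_k\tup{B_v / F}$ for every $k\in\NN$ and every finite field $F$; this is exactly $\mathcal{R}_2$. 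The next implication $\mathcal{R}_2 \Longrightarrow \mathcal{R}_3$ is immediate, as $\mathcal{R}_3$ is just the special case $k = 1$ of $\mathcal{R}_2$.

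The genuine content, and what I expect to be the main obstacle, is the remaining implication $\mathcal{R}_3 \Longrightarrow \mathcal{R}_1$: one must show that the \emph{single} polynomial $q \mapsto P_1\tup{B_w / \mathbb{F}_q}$ already pins down all rook numbers of $B_w$. To see why this is delicate, note that a rank-$1$ matrix supported in $B$ has support equal to a full combinatorial rectangle $I \times J \subseteq B$, and a short count (choosing the two defining vectors up to scaling) gives
\[
P_1\tup{B / \mathbb{F}_q} = \sum_{\substack{\emptyset \ne I,\ \emptyset \ne J \\ I \times J \subseteq B}} \tup{q-1}^{\abs{I} + \abs{J} - 1} .
\]
Thus $\mathcal{R}_3$ merely says that $B_u$ and $B_v$ contain equally many all-filled subrectangles of each semiperimeter $\abs{I}+\abs{J}$, and it is not at all obvious that this weak-looking invariant forces rook-equivalence. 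This is precisely the surprising direction of \cite[Corollary 3.2]{CoGrRa23}, so I would deduce $\mathcal{R}_3 \Longrightarrow \mathcal{R}_1$ directly from their result rather than reprove it.

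The bulk of the remaining work is then bookkeeping to apply \cite[Corollary 3.2]{CoGrRa23} correctly: I would set up the dictionary identifying our $P_k\tup{B / F}$ with their $W_k\tup{\operatorname{Mat}^{n\times m}_q\tup{B}}$ (choosing $n, m$ large enough and recalling that $P_k\tup{B / F}$ is independent of this choice), verifying that our Ferrers board $B_w$ matches theirs up to the diagonal reflection that preserves both rook numbers and matrix-rank counts, and checking that our statements $\mathcal{R}_1$, $\mathcal{R}_2$, $\mathcal{R}_3$ correspond to their conditions $6$, $4$ and $5$ respectively. With this correspondence established, their Corollary~3.2 closes the cycle, completing the proof.
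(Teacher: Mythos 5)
Your proposal is correct and follows essentially the same route as the paper: the published proof likewise reduces everything to Theorem~\ref{thm.DU-eqs3} plus \cite[Corollary 3.2]{CoGrRa23} (and, in its expanded version, also gives the Haglund-based argument for $\mathcal{R}_1 \Longleftrightarrow \mathcal{R}_2$ that you sketch). The only nitpick is a label swap: the paper matches $\mathcal{R}_2$ and $\mathcal{R}_3$ to conditions $5$ and $4$ of \cite[Corollary 3.2]{CoGrRa23} respectively, not $4$ and $5$.
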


\begin{vershort}
\begin{proof}
Let $\mathcal{F}$ and $\mathcal{F}'$ be the Ferrers boards $B_u$ and $B_v$.
Then, our statements $\mathcal{R}_1$, $\mathcal{R}_2$ and $\mathcal{R}_3$ are (respectively) the conditions 6, 5 and 4 of \cite[Corollary 3.2]{CoGrRa23}.
Thus, the former three statements are equivalent (since \cite[Corollary 3.2]{CoGrRa23} shows that the latter three conditions are equivalent).
Combined with Theorem~\ref{thm.DU-eqs3}, this proves Theorem~\ref{thm.DU-eqs4}.
\end{proof}
\end{vershort}

\begin{verlong}
\begin{proof}
Let $\mathcal{F}$ and $\mathcal{F}'$ be the Ferrers boards $B_u$ and $B_v$.
Then, our statements $\mathcal{R}_1$, $\mathcal{R}_2$ and $\mathcal{R}_3$ are (respectively) the conditions 6, 5 and 4 of \cite[Corollary 3.2]{CoGrRa23}.
Thus, the former three statements are equivalent (since \cite[Corollary 3.2]{CoGrRa23} shows that the latter three conditions are equivalent).
In other words,
$\mathcal{R}_1 \Longleftrightarrow \mathcal{R}_2 \Longleftrightarrow \mathcal{R}_3$.
Combined with Theorem~\ref{thm.DU-eqs3}, this proves Theorem~\ref{thm.DU-eqs4}.

However, let us also give an alternative proof of the equivalence
$\mathcal{R}_1\Longleftrightarrow\mathcal{R}_2$ using some results of
Haglund \cite{Haglun98} and Garsia and Remmel
\cite{GarRem86}. We should warn that Haglund's paper \cite{Haglun98} differs
from us (and from \cite{GarRem86}) in how it defines Ferrers boards: Haglund's
Ferrers boards are the reflections of ours across a horizontal axis.
Fortunately, this reflection does not affect the theory in any significant way
(since rook placements and matrices can be reflected along with the boards),
and thus any result can be easily translated from one convention to another.

Any Ferrers board $B$ and any number $k \in \NN$ give rise to a $q$\emph{-rook polynomial} (aka $q$\emph{-rook
number}) $R_{k}\left(  B,q\right)  \in\ZZ\left[  q\right]  $,
which is defined in \cite[(I.4)]{GarRem86} or in \cite[(1)]{Haglun98}.
(In \cite[(1)]{Haglun98}, it is denoted by $R_{k}\left(  B\right)  $, and the
variable $q$ is renamed $x$.) A result of Garsia and Remmel (\cite[last
paragraph of \S 1]{GarRem86}, \cite[between (2) and (3)]{Haglun98}) says that
the two Ferrers boards $B_{u}$ and $B_{v}$ have the same rook numbers (i.e.,
are rook-equivalent) if and only if they have the same $q$-rook numbers (i.e.,
if $R_{k}\left(  B_{u},q\right)  =R_{k}\left(  B_{v},q\right)  $ for all
$k\in\NN$). In other words, the statement $\mathcal{R}_1$ is
equivalent to the following statement:

\begin{itemize}
\item $\mathcal{R}_5$: We have $R_{k}\left(  B_{u},q\right)  =R_{k}\left(
B_{v},q\right)  $ for all $k\in\NN$.
\end{itemize}

Next we recall a result of Haglund \cite[Theorem 1]{Haglun98}, which says that
any Ferrers board $B$, any finite
field $F$ and any $k\in\NN$ satisfy
\begin{equation}
P_{k}\left(  B/F\right)  =\left(  \left\vert F\right\vert -1\right)
^{k}\left\vert F\right\vert ^{\left\vert B\right\vert -k}R_{k}\left(
B,\ \left\vert F\right\vert ^{-1}\right)  .\label{pf.thm.DU-eqs4.hag}%
\end{equation}
(Note that \cite{Haglun98} denotes $\left\vert F\right\vert $ by $q$ and
denotes $\left\vert B\right\vert $ by $\operatorname*{Area}\left(  B\right)
$, and also abbreviates $R_{k}\left(  B,q^{-1}\right)  $ by $R_{k}\left(
q^{-1}\right)  $.)

We are now ready to prove the equivalence $\mathcal{R}_1\Longleftrightarrow
\mathcal{R}_2$:

$\mathcal{R}_1\Longrightarrow\mathcal{R}_2$:
Assume that $\mathcal{R}_1$ holds. Thus, $r_{k}\left(  B_{u}\right)  =r_{k}\left(  B_{v}\right)  $
for all $k\in\NN$. In particular, $r_{1}\left(  B_{u}\right)
=r_{1}\left(  B_{v}\right)  $. In other words,
\begin{equation}
\left\vert B_{u}\right\vert =\left\vert B_{v}\right\vert
\label{pf.thm.DU-eqs4.1}%
\end{equation}
(since $r_{1}\left(  B\right)  =\left\vert B\right\vert $ for any board $B$).
Moreover, we have assumed that $\mathcal{R}_1$ holds; thus, $\mathcal{R}_5$ holds as well
(since we have already observed that $\mathcal{R}_1$ is
equivalent to $\mathcal{R}_5$). In other words,%
\begin{equation}
R_{k}\left(  B_{u},q\right)  =R_{k}\left(  B_{v},q\right)
\label{pf.thm.DU-eqs4.2}%
\end{equation}
for all $k\in\NN$. Now, if $F$ is any finite field, and if
$k\in\NN$, then%
\begin{align*}
P_{k}\left(  B_{u}/F\right)    & =\left(  \left\vert F\right\vert -1\right)
^{k}\left\vert F\right\vert ^{\left\vert B_{u}\right\vert -k}R_{k}\left(
B_{u},\ \left\vert F\right\vert ^{-1}\right)  \ \ \ \ \ \ \ \ \ \ \left(
\text{by (\ref{pf.thm.DU-eqs4.hag})}\right)  \\
& =\left(  \left\vert F\right\vert -1\right)  ^{k}\left\vert F\right\vert
^{\left\vert B_{v}\right\vert -k}R_{k}\left(  B_{v},\ \left\vert F\right\vert
^{-1}\right)  \ \ \ \ \ \ \ \ \ \ \left(  \text{by (\ref{pf.thm.DU-eqs4.1})
and (\ref{pf.thm.DU-eqs4.2})}\right)  \\
& =P_{k}\left(  B_{v}/F\right)  \ \ \ \ \ \ \ \ \ \ \left(  \text{by
(\ref{pf.thm.DU-eqs4.hag})}\right)  .
\end{align*}
In other words, statement $\mathcal{R}_2$ holds. This proves the implication
$\mathcal{R}_1\Longrightarrow\mathcal{R}_2$.

$\mathcal{R}_2\Longrightarrow\mathcal{R}_1$: Assume that $\mathcal{R}_2$ holds.
In other words, for any finite field $F$ and any $k\in\NN$, we have
\begin{equation}
P_{k}\left(  B_{u}/F\right)  =P_{k}\left(  B_{v}/F\right)
.\label{pf.thm.DU-eqs4.5}%
\end{equation}
Summing this equality over all $k\in\NN$, we obtain%
\begin{equation}
\sum_{k\in\NN} P_{k}\left(  B_{u}/F\right)
=
\sum_{k\in\NN} P_{k}\left(  B_{v}/F\right)
\label{pf.thm.DU-eqs4.6}
\end{equation}
for any finite field $F$. However, if $F$ is a finite field and $B\subseteq
\left\{  1,2,\ldots,n\right\}  ^{2}$ is any board, then
$\sum\limits_{k\in\NN}P_{k}\left(  B/F\right)  $ is
the number of \textbf{all} $n\times
n$-matrices $A\in F^{n\times n}$ (of all possible ranks) such that all entries
of $A$ in cells outside of $B$ are zero (by the definition of $P_{k}\left(
B/F\right)  $). This number is $\left\vert F\right\vert ^{\left\vert
B\right\vert }$, since the $\left\vert B\right\vert $ entries of such a matrix
$A$ that lie in the cells of $B$ can be chosen freely from the field $F$
(while the remaining entries are determined to be $0$). Thus, for any finite
field $F$ and any board $B\subseteq\left\{  1,2,\ldots,n\right\}  ^{2}$, we
obtain%
\[
\sum_{k\in\NN}P_{k}\left(  B/F\right)
=\left\vert F\right\vert ^{\left\vert B\right\vert }.
\]
Therefore, (\ref{pf.thm.DU-eqs4.6}) can be rewritten as%
\[
\left\vert F\right\vert ^{\left\vert B_{u}\right\vert }=\left\vert
F\right\vert ^{\left\vert B_{v}\right\vert }.
\]
Applying this to $F=\mathbb{F}_{2}$ (in which case $\left\vert F\right\vert
=2$), we obtain $2^{\left\vert B_{u}\right\vert }=2^{\left\vert B_{v}%
\right\vert }$. Therefore, $\left\vert B_{u}\right\vert =\left\vert
B_{v}\right\vert $.

Now, let $k\in\NN$. If $F$ is any finite field, then%
\[
P_{k}\left(  B_{u}/F\right)  =\left(  \left\vert F\right\vert -1\right)
^{k}\left\vert F\right\vert ^{\left\vert B_{u}\right\vert -k}R_{k}\left(
B_{u},\ \left\vert F\right\vert ^{-1}\right)  \ \ \ \ \ \ \ \ \ \ \left(
\text{by (\ref{pf.thm.DU-eqs4.hag})}\right)
\]
and%
\[
P_{k}\left(  B_{v}/F\right)  =\left(  \left\vert F\right\vert -1\right)
^{k}\left\vert F\right\vert ^{\left\vert B_{v}\right\vert -k}R_{k}\left(
B_{v},\ \left\vert F\right\vert ^{-1}\right)  \ \ \ \ \ \ \ \ \ \ \left(
\text{by (\ref{pf.thm.DU-eqs4.hag})}\right)  .
\]
The left hand sides of these two equalities are equal (by
(\ref{pf.thm.DU-eqs4.5})). Thus, so are their right hand sides. In other
words, for any finite field $F$, we have%
\[
\left(  \left\vert F\right\vert -1\right)  ^{k}\left\vert F\right\vert
^{\left\vert B_{u}\right\vert -k}R_{k}\left(  B_{u},\ \left\vert F\right\vert
^{-1}\right)  =\left(  \left\vert F\right\vert -1\right)  ^{k}\left\vert
F\right\vert ^{\left\vert B_{v}\right\vert -k}R_{k}\left(  B_{v},\ \left\vert
F\right\vert ^{-1}\right)  .
\]
The factors $\left(  \left\vert F\right\vert -1\right)  ^{k}\left\vert
F\right\vert ^{\left\vert B_{u}\right\vert -k}$ and $\left(  \left\vert
F\right\vert -1\right)  ^{k}\left\vert F\right\vert ^{\left\vert
B_{v}\right\vert -k}$ in this equality are equal (since $\left\vert
B_{u}\right\vert =\left\vert B_{v}\right\vert $) and nonzero (since
$\left\vert F\right\vert >1$); thus, we can cancel them and obtain
\begin{equation}
R_{k}\left(  B_{u},\ \left\vert F\right\vert ^{-1}\right)  =R_{k}\left(
B_{v},\ \left\vert F\right\vert ^{-1}\right)  .\label{pf.thm.DU-eqs4.8}%
\end{equation}
In particular, for any prime number $p$, we obtain
\begin{equation}
R_{k}\left(  B_{u},\ p^{-1}\right)  =R_{k}\left(
B_{v},\ p^{-1}\right)  .\label{pf.thm.DU-eqs4.9}
\end{equation}
(by applying \eqref{pf.thm.DU-eqs4.8} to $F = \mathbb{F}_p$).
But $R_{k}\left(  B_{u},q\right)  $ and $R_{k}\left(  B_{v},q\right)  $ are
two polynomials in $q$. The equality (\ref{pf.thm.DU-eqs4.9}) shows that these
two polynomials agree at infinitely many rational inputs (namely, at the reciprocals of all prime numbers). Hence, these two
polynomials must be equal. In other words, $R_{k}\left(  B_{u},q\right)
=R_{k}\left(  B_{v},q\right)  $.

Since we have proved this for all $k\in\NN$, we thus conclude that
statement $\mathcal{R}_5$ holds. Hence, $\mathcal{R}_1$ holds as well
(since we have already observed that $\mathcal{R}_1$ is equivalent to
$\mathcal{R}_5$). The implication $\mathcal{R}_2\Longrightarrow
\mathcal{R}_1$ is thus proved.

Combining the implications $\mathcal{R}_1\Longrightarrow\mathcal{R}_2$
and $\mathcal{R}_2\Longrightarrow\mathcal{R}_1$ yields the equivalence
$\mathcal{R}_1\Longleftrightarrow\mathcal{R}_2$. This completes our proof.
\end{proof}
\end{verlong}


\begin{remark}
\label{rmk.rook.as-crit}
Any word $w \in \M$ satisfies $B_w = B_{Uw} = B_{wD}$.
That is, the Ferrers board $B_w$ of a word $w \in \M$ does not change if we insert a $U$ at the beginning of $w$ or a $D$ at the end of $w$.
Thus, we can use Theorem~\ref{thm.DU-eqs4} to tell whether two Ferrers boards are rook-equivalent:
Namely, we write the two Ferrers boards as $B_u$ and $B_v$, where $u$ and $v$ are two words with the same number of $U$'s and the same number of $D$'s (this can be ensured by inserting an appropriate number of $U$'s at the beginning and an appropriate number of $D$'s at the end of either word), and then Theorem~\ref{thm.DU-eqs4} provides us several equivalent criteria for the rook-equivalence of $B_u$ and $B_v$.
\end{remark}

\section{Balanced commutations revisited: irreducible balanced words}\label{sec:irredbal}

In our definition of balanced commutations (which underlay the definition of the equivalence relation $\balsim$), we allowed two arbitrary balanced factors of our word to trade places, as long as they were adjacent in the word.
Now, one may wonder whether we can get by with a smaller set of allowed swaps:
Is there a more restrictive subset of balanced commutations that generates the same equivalence relation $\balsim$ ?

The answer is ``yes'', and in fact there are likely several reasonable choices.
We  here present one, which is not minimal but still far more parsimonious than the set of all balanced commutations.

To define it, we begin with a simple notion:
A balanced word $w$ is said to be \emph{irreducible} if it is nonempty and cannot be written as a concatenation $w = uv$ of two nonempty balanced words $u$ and $v$.
For instance, the balanced word $DDUDUU$ is irreducible, whereas the balanced word $DUUUDD$ is not (since it is the concatenation $DU \cdot UUDD$).
In terms of diagonal paths, this notion can be restated as follows:
Given a nontrivial diagonal path $\pp$ with initial height $0$ and final height $0$, its reading word $\w\tup{\pp}$ is irreducible if and only if $\pp$ intersects the x-axis only in its first and last vertices.

Given two words $v,w\in\M$, we say that $v$ is obtained from
$w$ by an \emph{irreducible balanced commutation} if and only if we can write $v$ and $w$
as $v=pxyq$ and $w=pyxq$, where $p,q\in\M$ are two arbitrary words and where
$x,y\in\M$ are two irreducible balanced words with different first letters.
Clearly, this condition implies that $v$ is obtained from $w$ by a balanced commutation, but the converse is not true.
For example, the word $UUDDDUUD$ is obtained from $UDDUUUDD$ by an irreducible balanced commutation (swapping the $DDUU$ with the $UD$ in the middle), but the word $UUDDUDDU$ is not (indeed, it is obtained by swapping the balanced factors $UDDU$ and $UUDD$, but these factors don't have different first letters, and the first of them is not irreducible).

We define an equivalence relation $\irrsim$ on
$\M$ by stipulating that two words $w,v\in\M$ satisfy
$w \irrsim v$ if and only if $v$ can be obtained
from $w$ by a sequence (possibly empty) of irreducible balanced commutations.
Even though not every balanced commutation is irreducible, we claim the following:

\begin{theorem}
    \label{thm.irr=bal}
    The relations $\irrsim$ and $\balsim$ are the same.
    That is, if a word $v$ can be obtained from a word $w$ by a sequence of balanced commutations, then we can also obtain $v$ from $w$ by a (possibly longer) sequence of irreducible balanced commutations.
\end{theorem}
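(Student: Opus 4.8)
The inclusion $\irrsim\,\subseteq\,\balsim$ is immediate, since every irreducible balanced commutation is in particular a balanced commutation. The real task is the reverse inclusion, and since $\irrsim$ is an equivalence relation it suffices to show that a \emph{single} balanced commutation can be simulated by irreducible ones; that is, I plan to prove that $p\,yx\,q \irrsim p\,xy\,q$ for all words $p,q\in\M$ and all balanced words $x,y$. First I would reduce this to the context-free statement $yx \irrsim xy$ for balanced $x,y$, because any chain of irreducible balanced commutations witnessing $yx\irrsim xy$ lifts verbatim to one witnessing $p\,yx\,q \irrsim p\,xy\,q$: each commutation swaps two adjacent factors, and these factors stay adjacent (and irreducible, balanced, with the same first letters) after prepending $p$ and appending $q$.

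\textbf{Reduction to adjacent irreducible swaps.} To prove $yx \irrsim xy$ I would induct on the total length $|x|+|y|$. Factoring $x=x_1\cdots x_a$ and $y=y_1\cdots y_b$ into irreducible balanced words, one can pass from $y_1\cdots y_b\,x_1\cdots x_a$ to $x_1\cdots x_a\,y_1\cdots y_b$ by a bubble-sort that only ever transposes two \emph{adjacent} irreducible balanced factors. So it suffices to swap two adjacent irreducible balanced words $s,t$, i.e.\ to show $ts \irrsim st$. If $s$ and $t$ have different first letters this is a single irreducible balanced commutation, so the only genuine difficulty is when they begin with the same letter. Applying the letterwise toggle automorphism $\tau:\M\to\M$ (which replaces each $U$ by $D$ and each $D$ by $U$ while keeping the order of letters; it is a monoid automorphism preserving irreducibility, balancedness and $\irrsim$, and it turns $D$-initial words into $U$-initial ones), I may assume both $s$ and $t$ begin with $U$, hence both end with $D$.

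\textbf{The main case.} Writing $t=Ut'D$ and $s=Us'D$, the interiors $t'$ and $s'$ are \emph{Dyck words} (balanced words all of whose prefixes are rising), strictly shorter than $t$ and $s$. Then
\[
ts = U\,t'\,(DU)\,s'\,D ,
\]
and the central factor $DU$ is irreducible balanced and begins with $D$. Since $t'$ is a Dyck word, each of its irreducible factors begins with $U$, so I can slide $DU$ leftward past $t'$ one factor at a time, each transposition being legitimate (the factors $U\cdots$ and $DU$ have different first letters), reaching $U\,(DU)\,t'\,s'\,D$. By the induction hypothesis applied to the pair $(t',s')$, whose total length $|t'|+|s'|=|s|+|t|-4$ is strictly smaller than $|x|+|y|$, we get $t's'\irrsim s't'$; lifting this into the context $U(DU)\,\bullet\,D$ yields $U\,(DU)\,s'\,t'\,D$. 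Finally I slide $DU$ rightward past the $U$-initial irreducible factors of $s'$, arriving at $U\,s'\,(DU)\,t'\,D=st$. Chaining these relations proves $ts \irrsim st$ and closes the induction.

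\textbf{Expected obstacle.} The delicate point is precisely this same-first-letter swap: two irreducible balanced words both beginning with $U$ cannot be exchanged by one move, and naively splitting them into smaller balanced pieces merely recreates the obstruction. The resolution above sidesteps it by peeling off the outer $U\cdots D$ to expose shorter Dyck interiors and by using the ``valley'' $DU$ as a mobile block that legitimately commutes past any $U$-initial factor, with the recursion terminating because each reduction strictly shortens the combined length. When writing this out I would verify the degenerate cases $t'=\varnothing$ or $s'=\varnothing$ (i.e.\ $t$ or $s$ equal to $UD$), where the sliding steps or the inductive swap become trivial but the identity $ts\irrsim st$ still holds.
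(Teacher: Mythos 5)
Your proof is correct, and it takes a genuinely different route from the paper. The paper does not try to decompose a single balanced commutation into irreducible ones; instead it re-runs the normal-form machinery for the finer relation (analogues of Lemma~\ref{lem.balanced.0}, Lemma~\ref{lem.upnorm.1} and Proposition~\ref{prop.upnorm.nf} producing an up-normal representative via irreducible commutations only), and then inserts the statement ``$u\irrsim v$'' into the chain of equivalences of Theorem~\ref{thm.DU-eqs2}, so that $\balsim$ and $\irrsim$ are identified only through the invariant $\mathcal{S}_4$. You instead prove the stronger, purely local statement that $yx\irrsim xy$ for any two balanced $x,y$, by induction on $\abs{x}+\abs{y}$: factor into irreducibles, bubble-sort, reduce to two adjacent irreducible words with the same first letter, normalize to $U$-initial via the letterwise toggle (which indeed preserves balancedness, irreducibility and $\irrsim$), write $ts=U\,t'\,(DU)\,s'\,D$ with $t',s'$ Dyck, and use the mobile DIB block $DU$ together with the fact that the irreducible factors of a Dyck word are all UIB. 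I checked the sliding steps, the lifting of a chain into a context $p\,\bullet\,q$, the induction bookkeeping (the bubble-sort swaps have total length $\le n-2$ when at least three irreducible factors are present, and the recursive call is to length $n-4$), and the degenerate cases $t'=\varnothing$ or $s'=\varnothing$; all are sound. Your argument is more elementary and self-contained -- it needs no height polynomials, no up-normal forms, and no input from the Weyl algebra, and it yields an explicit algorithm for factoring a balanced commutation into irreducible ones -- whereas the paper's route gets the theorem almost for free from machinery it has already built and simultaneously extends the list of equivalent conditions in Theorem~\ref{thm.DU-eqs2}. One trivial slip: in your lifting paragraph the factors stay adjacent with \emph{unchanged} (hence still different) first letters, not ``the same'' first letters.
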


The proof of this theorem needs a few lemmas. The first one is nearly obvious:

\begin{lemma}
\label{lem.irr.fac}
Any balanced word $w\in\M$ can be decomposed into
a product $w=v_{1}v_{2}\cdots v_{k}$ of irreducible balanced words
$v_{1},v_{2},\ldots,v_{k}\in\M$. (If $w$ is empty, we will have $k=0$ here.)
\end{lemma}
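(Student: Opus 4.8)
The plan is to prove Lemma~\ref{lem.irr.fac} by strong induction on the length of $w$ (equivalently, on the number of letters, or on the number of $U$'s, since $w$ is balanced). The statement is a purely combinatorial factorization fact about balanced words, independent of the Weyl algebra machinery, so I expect it to be genuinely easy — the wording ``nearly obvious'' in the excerpt is apt.

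First I would dispose of the base case: if $w$ is empty, then $w = v_1 v_2 \cdots v_k$ with $k = 0$ (the empty product), and we are done. Now suppose $w$ is a nonempty balanced word, and assume (induction hypothesis) that every balanced word strictly shorter than $w$ admits such a decomposition into irreducible balanced words. I would then split into two cases according to whether $w$ itself is irreducible.

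\emph{Case 1: $w$ is irreducible.} Then $w = v_1$ with $k = 1$ is already the desired decomposition (a single irreducible balanced factor), and there is nothing more to prove.

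\emph{Case 2: $w$ is not irreducible.} By the definition of irreducibility, since $w$ is nonempty and balanced but not irreducible, we can write $w = uv$ where $u$ and $v$ are two nonempty balanced words. Both $u$ and $v$ are strictly shorter than $w$ (since each is nonempty and their concatenation is $w$). By the induction hypothesis applied separately to $u$ and to $v$, we can write $u = v_1 v_2 \cdots v_a$ and $v = v_{a+1} v_{a+2} \cdots v_{a+b}$, each $v_i$ an irreducible balanced word. Concatenating, $w = uv = v_1 v_2 \cdots v_{a+b}$ is a product of irreducible balanced words, which is exactly what we want (with $k = a+b$). This completes the induction and hence the proof.

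The only thing to watch is that the recursion terminates, which it does because at each non-irreducible step we break $w$ into two \emph{strictly shorter} balanced pieces; the strong induction hypothesis then handles both pieces directly, so there is no real obstacle here. The one conceptual point worth stating explicitly is that the concatenation of irreducible balanced words is balanced (clear, since each factor is balanced) and that the factorization $w = uv$ into nonempty balanced pieces is precisely the failure of irreducibility, which is how the definition was set up.
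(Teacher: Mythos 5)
Your proof is correct and is essentially the same argument as the paper's, which also proceeds by repeatedly splitting any non-irreducible factor into two shorter nonempty balanced words (phrased there as an iterative process analogous to factoring an integer into primes, rather than as a formal strong induction). The induction packaging is a fine, slightly more rigorous presentation of the same idea.
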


\begin{proof}
This is shown in the same way as the existence of a factorization of a
positive integer into primes:

Let $w\in\M$ be a balanced word. If $w$ is irreducible or empty, then
we are done. If not, then $w$ can be written as a product of two shorter
nonempty balanced words. If these two shorter words are irreducible, then we
are done. If not, then at least one of them can itself be written as a product
of two shorter nonempty balanced words, so that $w$ becomes a product of three
nonempty balanced words. Thus, we obtain longer and longer factorizations of
$w$ into shorter and shorter nonempty balanced words. Obviously, this process
will eventually have to stop, and at that point we will have a factorization
of $w$ into irreducible balanced words in front of us.
\end{proof}

The decomposition in Lemma \ref{lem.irr.fac} is furthermore unique (and this
is easy to see using diagonal paths), but we do not need this.

Next we introduce some shorthand terminology: A \emph{UIB word} will mean an
irreducible balanced word that begins with a $U$. A \emph{DIB word} will mean
an irreducible balanced word that begins with a $D$. Note that any irreducible
balanced word is nonempty, and thus is either UIB or DIB (but not both). The
following is another easy observation:

\begin{lemma}
\label{lem.irr-omega}
\textbf{(a)} Any UIB word ends with a $D$.

\textbf{(b)} Any DIB word ends with a $U$.

\textbf{(c)} If $u\in\M$ is a UIB word, then $\omega\left(  u\right)
$ is a UIB word as well.

\textbf{(d)} If $u\in\M$ is a DIB word, then $\omega\left(  u\right)
$ is a DIB word as well.
\end{lemma}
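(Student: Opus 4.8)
The plan is to prove \textbf{(a)} and \textbf{(b)} directly by a height-function argument, and then to deduce \textbf{(c)} and \textbf{(d)} from them together with a single structural observation about $\omega$. I would begin with that observation, since it is shared by both \textbf{(c)} and \textbf{(d)}: the anti-morphism $\omega$ sends balanced words to balanced words (toggling swaps the number of $U$'s with the number of $D$'s, and reversal changes neither count, so a word with equal counts keeps equal counts) and sends irreducible balanced words to irreducible balanced words. The latter follows from $\omega\circ\omega=\id$: if $\omega\tup{u}$ could be written as a concatenation $ab$ of two nonempty balanced words, then applying $\omega$ and using $\omega\tup{ab}=\omega\tup{b}\omega\tup{a}$ would express $u=\omega\tup{b}\omega\tup{a}$ as a product of two nonempty balanced words, contradicting the irreducibility of $u$.

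For \textbf{(a)}, I would write a UIB word as $u=u_1u_2\cdots u_\ell$ with $u_1=U$, and introduce the partial heights $h_k$, defined as the number of $U$'s minus the number of $D$'s among $u_1,\ldots,u_k$ (exactly as in the proof of Lemma~\ref{lem.balanced.0}). Then $h_0=h_\ell=0$ because $u$ is balanced, and $h_1=1$ because $u_1=U$. The crux is to show that $h_k>0$ for every interior index $1\leq k\leq \ell-1$: irreducibility forces $h_k\neq 0$ on this range (otherwise $u_1\cdots u_k$ and $u_{k+1}\cdots u_\ell$ would be two nonempty balanced factors), and since consecutive $h_k$ differ by exactly $\pm1$ and $h_1>0$, a discrete intermediate value argument forbids any sign change, hence $h_k>0$ throughout. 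In particular $h_{\ell-1}\geq 1$, so the final step satisfies $h_\ell-h_{\ell-1}=-h_{\ell-1}\leq -1<0$, which forces $u_\ell=D$. This is the one genuinely substantive step; everything else is bookkeeping.

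Part \textbf{(b)} is entirely symmetric. I would either repeat the argument with a DIB word (now $h_1=-1$, so the interior heights are all negative and $h_{\ell-1}\leq -1$, forcing $u_\ell=U$), or deduce it from \textbf{(a)} by toggling every letter of $u$ without reversing: this operation is a monoid morphism that preserves balancedness and irreducibility and is an involution, so it carries a DIB word to a UIB word, which ends in $D$ by \textbf{(a)}, whence the original DIB word ends in $U$.

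Finally, \textbf{(c)} and \textbf{(d)} follow immediately. For a UIB word $u$, the structural observation gives that $\omega\tup{u}$ is again irreducible and balanced, and its first letter is the toggle of the last letter of $u$; since that last letter is $D$ by \textbf{(a)}, the first letter of $\omega\tup{u}$ is $U$, so $\omega\tup{u}$ is UIB. The argument for \textbf{(d)} is identical, invoking \textbf{(b)} in place of \textbf{(a)}.
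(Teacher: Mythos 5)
Your proof is correct. Parts \textbf{(c)} and \textbf{(d)} are argued exactly as in the paper: $\omega$ preserves balancedness and (via $\omega\circ\omega=\id$ and the anti-morphism identity $\omega\tup{ab}=\omega\tup{b}\omega\tup{a}$) irreducibility, and the first letter of $\omega\tup{u}$ is the toggle of the last letter of $u$, so \textbf{(a)} and \textbf{(b)} finish the job. Where you diverge is in \textbf{(a)} itself: the paper argues by contradiction, assuming the UIB word ends with a $U$ and then invoking Lemma~\ref{lem.balanced.0} to split it into two nonempty balanced factors, contradicting irreducibility. You instead give a direct, self-contained height argument: irreducibility forces every interior partial height $h_k$ to be nonzero, the discrete intermediate value property together with $h_1=1$ forces them all to be positive, and then $h_{\ell-1}\geq 1$ with $h_\ell=0$ forces the last letter to be $D$. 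Your route avoids the dependence on Lemma~\ref{lem.balanced.0} and, as a bonus, essentially establishes the sharper structural fact the paper states only after the lemma (that every UIB word has the form $UsD$ with $s$ a balanced $1$-Dyck word); the paper's route is shorter on the page because it reuses machinery already proved. Both are sound, including your two alternative treatments of \textbf{(b)}.
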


\begin{proof}
\textbf{(a)} Let $w$ be a UIB word. We need to prove that $w$ ends with a $D$.

\begin{vershort}
Assume the contrary. Thus, $w$ ends with a $U$. But $w$ also starts with a $U$
(since $w$ is UIB) and is balanced (for the same reason). Hence, Lemma
\ref{lem.balanced.0} shows that we can write $w$ as a concatenation $w=pq$,
where $p$ and $q$ are two nonempty balanced words. Hence, $w$ is not
irreducible, despite being a UIB word.
This contradiction shows that our assumption was false.
Lemma \ref{lem.irr-omega} \textbf{(a)} is thus proved.
\end{vershort}

\begin{verlong}
Assume the contrary. Thus, $w$ ends with a $U$. But $w$ also starts with a $U$
(since $w$ is UIB) and is balanced (for the same reason). Hence, Lemma
\ref{lem.balanced.0} shows that we can write $w$ as a concatenation $w=pq$,
where $p$ is a balanced word starting with a $U$, and where $q$ is a balanced
word starting with a $D$. Consider these $p$ and $q$. Thus, both $p$ and $q$
are nonempty balanced words. Hence, $w=pq$ shows that $w$ is not irreducible.
But this contradicts our assumption that $w$ be UIB. This contradiction shows
that our assumption was false. Lemma \ref{lem.irr-omega} \textbf{(a)} is thus proved.
\end{verlong}

\textbf{(b)} This is analogous to part \textbf{(a)}; we just need to
interchange the roles of $U$ and $D$.

\textbf{(c)} Let $u\in\M$ be a UIB word. Then, $u$ ends with a $D$
(by part \textbf{(a)}). Hence, $\omega\left(  u\right)  $ starts with a $U$
(since the anti-automorphism $\omega$ reverses a word and replaces each $D$ by
a $U$ and each $U$ by a $D$). Moreover, $u$ is irreducible balanced (since $u$
is UIB). Therefore, $\omega\left(  u\right)  $ is irreducible balanced as well
(indeed, the irreducibility follows from the fact that any factorization of
$\omega\left(  u\right)  $ into two nonempty balanced factors could be turned
back into a factorization of $u$ by applying $\omega$ again%
\begin{verlong}
\footnote{In more
detail: Assume that $\omega\left(  u\right)  $ is not irreducible. Thus,
$\omega\left(  u\right)  $ can be factored as $\omega\left(  u\right)  =pq$
for two nonempty balanced words $p$ and $q$. Consider these $p$ and $q$. Then,
$\omega\left(  p\right)  $ and $\omega\left(  q\right)  $ are nonempty
balanced words (since $p$ and $q$ are nonempty balanced words). However, from
$\omega\circ\omega=\id$, we obtain $\omega\left(  \omega\left(
u\right)  \right)  =u$. Thus, $u=\omega\left(  \underbrace{\omega\left(
u\right)  }_{=pq}\right)  =\omega\left(  pq\right)  =\omega\left(  q\right)
\omega\left(  p\right)  $. Thus, $u$ is the product of two nonempty balanced
words (namely, $\omega\left(  q\right)  $ and $\omega\left(  p\right)  $). But
this contradicts the irreducibility of $u$. This contradiction shows that our
assumption was false. Hence, $\omega\left(  u\right)  $ is irreducible.}
\end{verlong}
).
Hence, $\omega\left(  u\right)  $ is a UIB word (since $\omega\left(
u\right)  $ starts with a $U$). This proves Lemma \ref{lem.irr-omega}
\textbf{(c)}.

\textbf{(d)} This is analogous to part \textbf{(c)}; we just need to
interchange the roles of $U$ and $D$.
\end{proof}

It is not hard to see that each UIB word has the form $UsD$ where $s$ is a balanced $1$-Dyck word (i.e., a balanced word whose each prefix is rising).
Likewise, each DIB word has the form $DtU$ where $t$ is a balanced anti-$1$-Dyck word (i.e., a balanced word whose each prefix is falling).
Obviously, UIB words can be distinguished from DIB words by their first letter.

Now we claim the following variant of Lemma \ref{lem.balanced.0}:

\begin{lemma}
\label{lem.irr.0} Let $w\in\M$ be a balanced word that starts with a
$U$ and ends with a $U$. Then, we can write $w$ as a concatenation $w=spqt$,
where $s,p,q,t\in\M$ are four balanced words such that $p$ is UIB and
$q$ is DIB.
\end{lemma}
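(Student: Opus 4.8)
The plan is to reduce everything to the factorization into irreducible pieces provided by Lemma~\ref{lem.irr.fac}. First I would apply that lemma to our balanced word $w$, obtaining a decomposition $w = v_1 v_2 \cdots v_k$ into irreducible balanced words $v_1, v_2, \ldots, v_k \in \M$. Since $w$ starts (and ends) with a $U$, it is nonempty, so $k \geq 1$. Recall that every irreducible balanced word is nonempty and hence is either UIB or DIB (but not both), so each $v_j$ carries a well-defined label.

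Next I would pin down the labels of the two outer factors. Because $w$ starts with a $U$, so does its first factor $v_1$; hence $v_1$ is UIB. Because $w$ ends with a $U$, so does its last factor $v_k$. But Lemma~\ref{lem.irr-omega}~\textbf{(a)} tells us that every UIB word \emph{ends} with a $D$; since $v_k$ ends with a $U$, it cannot be UIB, and therefore $v_k$ must be DIB.

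The heart of the argument is then a ``discrete intermediate value'' observation on the sequence of labels: this sequence begins with a UIB (namely $v_1$) and ends with a DIB (namely $v_k$), so somewhere it must switch from UIB to DIB. To make this precise, I would let $i$ be the \emph{largest} index such that $v_i$ is UIB; such an $i$ exists since $v_1$ is UIB. Because $v_k$ is DIB and hence not UIB, we have $i < k$, so $v_{i+1}$ is defined; and by maximality of $i$, the factor $v_{i+1}$ is not UIB, whence it is DIB. Now I would set
\[
s = v_1 v_2 \cdots v_{i-1}, \qquad p = v_i, \qquad q = v_{i+1}, \qquad t = v_{i+2} v_{i+3} \cdots v_k ,
\]
where $s$ and $t$ are (possibly empty) products of balanced words, hence balanced. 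Then $w = spqt$ with $p$ UIB and $q$ DIB, exactly as required.

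There is essentially no hard obstacle here: the only points needing care are the bookkeeping of the empty edge cases (when $i = 1$ the factor $s$ is empty, and when $i+1 = k$ the factor $t$ is empty, both of which are harmless since the empty word is balanced) and the invocation of Lemma~\ref{lem.irr-omega}~\textbf{(a)} to rule out $v_k$ being UIB, which is what forces the label to actually change somewhere along the sequence. Thus the proof is a short deduction from Lemmas~\ref{lem.irr.fac} and~\ref{lem.irr-omega}, with the ``largest UIB index'' trick doing the real work of locating the switch.
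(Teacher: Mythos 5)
Your proof is correct. It differs from the paper's in one genuinely useful way: the paper first invokes Lemma~\ref{lem.balanced.0} to split $w = p'q'$ with $p'$ starting with a $U$ and $q'$ starting with a $D$, and only then factors each piece into irreducibles via Lemma~\ref{lem.irr.fac}; the presence of a DIB factor is guaranteed because the first irreducible factor of $q'$ starts with a $D$. You instead factor $w$ into irreducibles directly and deduce that the \emph{last} factor $v_k$ must be DIB from the fact that it ends with a $U$ together with Lemma~\ref{lem.irr-omega}~\textbf{(a)} (UIB words end with a $D$). This bypasses Lemma~\ref{lem.balanced.0} entirely, which is a small economy, since that lemma is itself proved by a discrete intermediate-value argument that your route does not need here. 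Both proofs then locate an adjacent (UIB, DIB) pair by a switch-point argument --- the paper takes the smallest index whose factor starts with a $D$, you take the largest index whose factor is UIB --- and these are interchangeable. The edge cases (empty $s$ or $t$) are handled correctly in both versions.
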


\begin{proof}
Lemma \ref{lem.balanced.0} shows that we can write $w$ as a concatenation
$w=p^{\prime}q^{\prime}$, where $p^{\prime}$ is a balanced word starting with
a $U$, and where $q^{\prime}$ is a balanced word starting with a $D$. Consider
these $p^{\prime}$ and $q^{\prime}$.

Lemma \ref{lem.irr.fac} shows that $p^{\prime}$ can be decomposed into a
product $p^{\prime}=p_{1}p_{2}\cdots p_{k}$ of irreducible balanced words
$p_{1},p_{2},\ldots,p_{k}\in\M$. Likewise, $q^{\prime}$ can be
decomposed into a product $q^{\prime}=p_{k+1}p_{k+2}\cdots p_{\ell}$ of
irreducible balanced words $p_{k+1},p_{k+2},\ldots,p_{\ell}\in\M$.
Consider these decompositions. Thus, $p_{1},p_{2},\ldots,p_{\ell}$ are $\ell$
irreducible balanced words such that $p^{\prime}=p_{1}p_{2}\cdots p_{k}$ and
$q^{\prime}=p_{k+1}p_{k+2}\cdots p_{\ell}$. Hence,%
\[
p^{\prime}q^{\prime}=\left(  p_{1}p_{2}\cdots p_{k}\right)  \left(
p_{k+1}p_{k+2}\cdots p_{\ell}\right)  =p_{1}p_{2}\cdots p_{\ell}.
\]

The word $p_{1}$ is nonempty (since it is irreducible) and is a prefix of
$p^{\prime}$ (since $p^{\prime}=p_{1}p_{2}\cdots p_{k}$). Thus, it starts with
a $U$ (since $p^{\prime}$ starts with a $U$). Similarly, $p_{k+1}$ starts with
a $D$.

Now, consider the \textbf{smallest} number $i\in\left\{  1,2,\ldots
,\ell\right\}  $ for which the word $p_{i}$ starts with a $D$. (Such an $i$
exists, since $p_{k+1}$ starts with a $D$.) Then, $i$ cannot be $1$ (since
$p_{1}$ starts with a $U$, not with a $D$), and thus must be $\geq2$. Hence,
$i-1\in\left\{  1,2,\ldots,\ell\right\}  $. The word $p_{i-1}$ cannot start
with a $D$ (because then, $i$ would not be the \textbf{smallest} number for
which $p_{i}$ starts with a $D$), and thus must start with a $U$. Hence, the
word $p_{i-1}$ is UIB (since it is irreducible balanced). Meanwhile, the word
$p_{i}$ is DIB (since it is irreducible balanced and starts with a $D$). All
the words $p_{1},p_{2},\ldots,p_{\ell}$ are balanced; hence, their
concatenations $p_{1}p_{2}\cdots p_{i-2}$ and $p_{i+1}p_{i+2}\cdots p_{\ell}$
are balanced as well (since any concatenation of balanced words is balanced).

Now,%
\[
w=p^{\prime}q^{\prime}=p_{1}p_{2}\cdots p_{\ell}=\underbrace{\left(
p_{1}p_{2}\cdots p_{i-2}\right)  }_{\text{balanced word}}\underbrace{p_{i-1}%
}_{\text{UIB word}}\underbrace{p_{i}}_{\text{DIB word}}\underbrace{\left(
p_{i+1}p_{i+2}\cdots p_{\ell}\right)  }_{\text{balanced word}}.
\]
Hence, we can write $w$ as a concatenation $w=spqt$, where $s,p,q,t\in
\M$ are four balanced words such that $p$ is UIB and $q$ is DIB
(namely, we take $s=p_{1}p_{2}\cdots p_{i-2}$ and $p=p_{i-1}$ and $q=p_{i}$
and $t=p_{i+1}p_{i+2}\cdots p_{\ell}$). This proves Lemma \ref{lem.irr.0}.
\end{proof}

This, in turn, allows us to improve Lemma \ref{lem.upnorm.1} as follows:

\begin{lemma}
\label{lem.irr.1}Let $w\in\M$ be a rising word that is not up-normal.
Then, we can write $w$ in the form $w=upqv$, where $u$ and $v$ are two words,
where $p$ is a UIB word, and where $q$ is a DIB word.
\end{lemma}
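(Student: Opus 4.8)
The plan is to bootstrap from Lemma~\ref{lem.upnorm.1}, which already produces a coarse factorization, and then to refine it into irreducible pieces using Lemma~\ref{lem.irr.fac} together with a ``first transition'' argument in the same spirit as the proof of Lemma~\ref{lem.irr.0}.

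First I would apply Lemma~\ref{lem.upnorm.1} to the rising, non-up-normal word $w$. This yields a factorization $w = u' p' q' v'$ in which $u', v' \in \M$ are words, $p'$ is a balanced word starting with a $U$, and $q'$ is a balanced word starting with a $D$. Both $p'$ and $q'$ are nonempty, since a balanced word beginning with a prescribed letter has length at least $2$.

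Next I would decompose $p'$ and $q'$ into irreducible balanced words via Lemma~\ref{lem.irr.fac}: write $p' = r_1 r_2 \cdots r_k$ and $q' = r_{k+1} r_{k+2} \cdots r_\ell$, where each $r_i$ is irreducible balanced and $k \geq 1$, $\ell \geq k+1$. Since $r_1$ is a nonempty prefix of $p'$, it starts with a $U$, so $r_1$ is a UIB word; since $r_{k+1}$ is a nonempty prefix of $q'$, it starts with a $D$, so $r_{k+1}$ is a DIB word. Now let $i$ be the smallest index in $\{1, 2, \ldots, \ell\}$ for which $r_i$ starts with a $D$ (such an $i$ exists because $r_{k+1}$ does). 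Then $i \geq 2$, since $r_1$ starts with a $U$; hence $r_{i-1}$ is defined, and by minimality of $i$ it starts with a $U$, making it a UIB word, while $r_i$ is irreducible balanced and starts with a $D$, making it a DIB word. Setting $p := r_{i-1}$ and $q := r_i$, and absorbing the remaining factors into the outer words by putting $u := u' r_1 \cdots r_{i-2}$ and $v := r_{i+1} \cdots r_\ell\, v'$, I obtain $w = u p q v$ with $p$ a UIB word and $q$ a DIB word, as desired. (Here $r_1 \cdots r_{i-2}$ and $r_{i+1}\cdots r_\ell$ are allowed to be empty, so $u$ and $v$ may coincide with $u'$ and $v'$.)

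The only point genuinely requiring care is that the extracted UIB and DIB words be \emph{adjacent} in $w$ rather than merely both present somewhere; this is exactly what the minimal-index choice (the first $D$-starting irreducible factor, with its $U$-starting predecessor) guarantees. I expect no real obstacle beyond this bookkeeping. An alternative packaging would be to recall from the proof of Lemma~\ref{lem.upnorm.1} that the balanced factor it extracts both starts and ends with a $U$, and then to apply Lemma~\ref{lem.irr.0} directly to that factor; this produces the same conclusion via the same underlying argument.
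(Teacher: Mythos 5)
Your proof is correct and rests on the same core argument as the paper's (decompose into irreducible balanced factors via Lemma~\ref{lem.irr.fac}, then locate the first $D$-starting factor and pair it with its $U$-starting predecessor); the paper packages this step as Lemma~\ref{lem.irr.0} applied to a balanced factor starting and ending with a $U$ that must be extracted from the \emph{proof} of Lemma~\ref{lem.upnorm.1}, whereas you apply the argument directly to the output of the \emph{statement} of Lemma~\ref{lem.upnorm.1}, which is marginally more modular. Your closing remark about the ``alternative packaging'' describes exactly the route the paper takes.
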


\begin{vershort}
\begin{proof}
Proceed as in the above proof of Lemma \ref{lem.upnorm.1} to find a factor $w'$ of $w$ that is balanced and starts and ends with a $U$.
Then, apply Lemma~\ref{lem.irr.0} to $w'$ instead of $w$.
Thus, $w'$ is written as a concatenation $w' = spqt$,
where $s,p,q,t\in\M$ are four balanced words such that $p$ is UIB and $q$ is DIB.
Let $\widetilde{u}$ be the prefix of $w$ coming before the factor $w'$, and $\widetilde{v}$ be the suffix of $w$ coming after $w'$.
Then, $w = \widetilde{u} w' \widetilde{v} = \underbrace{\widetilde{u} s}_{=:u} p q \underbrace{t \widetilde{v}}_{=:v}$
is the factorization we are looking for.
\end{proof}
\end{vershort}

\begin{verlong}
\begin{proof}
Write $w$ as $w=w_{1}w_{2}\cdots w_{\ell}$, where $w_{1},w_{2},\ldots,w_{\ell
}\in\left\{  U,D\right\}  $ are the letters of $w$. In the proof of Lemma
\ref{lem.upnorm.1}, we have found a balanced factor $w_{a+1}w_{a+2}\cdots
w_{b}$ of $w$ that starts with a $U$ and ends with a $U$. Consider this
balanced factor. Lemma \ref{lem.irr.0} (applied to $w_{a+1}w_{a+2}\cdots
w_{b}$ instead of $w$) then shows that we can write $w_{a+1}w_{a+2}\cdots
w_{b}$ as a concatenation $w_{a+1}w_{a+2}\cdots w_{b}=spqt$, where
$s,p,q,t\in\M$ are four balanced words such that $p$ is UIB and $q$
is DIB. Consider these four words $s,p,q,t$.

Now,%
\begin{align*}
w  & =w_{1}w_{2}\cdots w_{\ell}=\left(  w_{1}w_{2}\cdots w_{a}\right)
\underbrace{\left(  w_{a+1}w_{a+2}\cdots w_{b}\right)  }_{=spqt}\left(
w_{b+1}w_{b+2}\cdots w_{\ell}\right)  \\
& =\left(  w_{1}w_{2}\cdots w_{a}\right)  spqt\left(  w_{b+1}w_{b+2}\cdots
w_{\ell}\right)  .
\end{align*}
Hence, we can write $w$ in the form $w=upqv$, where $u$ and $v$ are two words,
where $p$ is a UIB word, and where $q$ is a DIB word (namely, we set
$u=\left(  w_{1}w_{2}\cdots w_{a}\right)  s$ and $p=p$ and $q=q$ and
$v=t\left(  w_{b+1}w_{b+2}\cdots w_{\ell}\right)  $). This proves Lemma
\ref{lem.irr.1}.
\end{proof}
\end{verlong}

Next, we prove an analogue of Proposition \ref{prop.upnorm.nf}:

\begin{lemma}
\label{lem.irr.nf} Let $w\in\M$ be a rising word. Then, there exists
a unique up-normal word $t\in\M$ such that $t\irrsim w$.
\end{lemma}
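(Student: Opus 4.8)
The plan is to mirror the proof of Proposition~\ref{prop.upnorm.nf}, replacing general balanced commutations by irreducible ones. The crucial new ingredient is Lemma~\ref{lem.irr.1}, which guarantees that any rising word $w$ that is not up-normal can be factored as $w = upqv$ with $p$ a UIB word and $q$ a DIB word. Since a UIB word begins with a $U$ and a DIB word begins with a $D$, the two irreducible balanced words $p$ and $q$ have different first letters; hence the swap producing $w' = uqpv$ is precisely an irreducible balanced commutation, so that $w \irrsim w'$.

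For existence, I would equip $\M$ with the lexicographic order ($D < U$), exactly as in the proof of Proposition~\ref{prop.upnorm.nf}. As long as the current word is rising but not yet up-normal, Lemma~\ref{lem.irr.1} supplies a factorization $upqv$ as above, and the irreducible balanced commutation $upqv \mapsto uqpv$ strictly decreases the word in lexicographic order (the leading $U$ of $p$ is replaced by the leading $D$ of $q$), while preserving both its length and the property of being rising (an irreducible balanced commutation leaves the total numbers of $U$'s and $D$'s unchanged). Since there are only finitely many words of a given length, this descent must terminate, and it can only terminate at an up-normal word $t$; by construction $t \irrsim w$.

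Uniqueness comes essentially for free from the corresponding uniqueness in Proposition~\ref{prop.upnorm.nf}. Indeed, every irreducible balanced commutation is in particular a balanced commutation, so the relation $\irrsim$ is contained in $\balsim$; consequently $t \irrsim w$ implies $t \balsim w$. If $t_1$ and $t_2$ were two up-normal words with $t_1 \irrsim w$ and $t_2 \irrsim w$, then both would satisfy $t_i \balsim w$, and the uniqueness clause of Proposition~\ref{prop.upnorm.nf} would force $t_1 = t_2$.

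There is no serious obstacle in this argument: it rests entirely on the already-established Lemma~\ref{lem.irr.1} together with the one-line observation that a swap of a UIB word with a DIB word is an irreducible balanced commutation. The only point requiring slight care is verifying that the lexicographic descent uses genuinely irreducible commutations at every step (which is exactly what Lemma~\ref{lem.irr.1} delivers, as opposed to the weaker Lemma~\ref{lem.upnorm.1}), so that the equivalence produced is $\irrsim$ rather than merely $\balsim$.
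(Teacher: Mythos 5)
Your proposal is correct and follows exactly the same route as the paper: existence by rerunning the lexicographic-descent argument from Proposition~\ref{prop.upnorm.nf} with Lemma~\ref{lem.irr.1} in place of Lemma~\ref{lem.upnorm.1} (noting that swapping a UIB word with a DIB word is an irreducible balanced commutation because their first letters differ), and uniqueness by observing that $\irrsim$ is contained in $\balsim$ and invoking the uniqueness clause of Proposition~\ref{prop.upnorm.nf}. No gaps.
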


\begin{proof}
The existence of $t$ can be proved in the same way as the existence
part of Proposition \ref{prop.upnorm.nf}, but using Lemma \ref{lem.irr.1}
instead of Lemma~\ref{lem.upnorm.1} (since UIB words start with a $U$, while
DIB words start with a $D$).

The uniqueness of $t$ follows from the uniqueness of $t$ in Proposition
\ref{prop.upnorm.nf}, since the relation $t\irrsim w$ implies $t\balsim w$.
\end{proof}

The following is an analogue of Proposition \ref{prop.bal-symm}:

\begin{proposition}
\label{prop.irr-symm} Let $u$ and $v$ be two words in $\M$. Then,
$u\irrsim v$ if and only if $\omega\left(  u\right)  \irrsim\omega\left(
v\right)  $.
\end{proposition}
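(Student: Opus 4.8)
The plan is to mirror the proof of Proposition~\ref{prop.bal-symm} exactly, replacing ``balanced commutation'' by ``irreducible balanced commutation'' throughout and supplying the one extra ingredient that distinguishes the irreducible case, namely that $\omega$ respects the UIB/DIB dichotomy. As in Proposition~\ref{prop.bal-symm}, it suffices to prove only the forward implication ``$u \irrsim v \Longrightarrow \omega\tup{u} \irrsim \omega\tup{v}$'', since the reverse implication then follows by applying the forward one to $\omega\tup{u}$ and $\omega\tup{v}$ and using $\omega\circ\omega = \id$. And since $\irrsim$ is generated by single irreducible balanced commutations, it is enough to show that if $v$ is obtained from $w$ by a single irreducible balanced commutation, then $\omega\tup{v}$ is obtained from $\omega\tup{w}$ by a single irreducible balanced commutation; the general case follows by applying this to each step of the defining sequence.

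So I would start from the definition: write $v = pxyq$ and $w = pyxq$, where $p,q \in \M$ and $x,y \in \M$ are irreducible balanced words with different first letters. The key computation is then just the anti-morphism property of $\omega$, which gives
\[
\omega\tup{v} = \omega\tup{q}\,\omega\tup{y}\,\omega\tup{x}\,\omega\tup{p}
\qquad\text{and}\qquad
\omega\tup{w} = \omega\tup{q}\,\omega\tup{x}\,\omega\tup{y}\,\omega\tup{p}.
\]
Setting $p' := \omega\tup{q}$, $q' := \omega\tup{p}$, and taking the two swapped factors to be $\omega\tup{y}$ and $\omega\tup{x}$, this exhibits $\omega\tup{v} = p'\,\omega\tup{y}\,\omega\tup{x}\,q'$ as obtained from $\omega\tup{w} = p'\,\omega\tup{x}\,\omega\tup{y}\,q'$ by swapping the adjacent factors $\omega\tup{x}$ and $\omega\tup{y}$.

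It then remains to check that this swap qualifies as an \emph{irreducible} balanced commutation, i.e.\ that $\omega\tup{x}$ and $\omega\tup{y}$ are irreducible balanced words with different first letters. Here is where I would invoke Lemma~\ref{lem.irr-omega}: parts \textbf{(c)} and \textbf{(d)} say that $\omega$ sends UIB words to UIB words and DIB words to DIB words. In particular $\omega\tup{x}$ and $\omega\tup{y}$ are again irreducible balanced, and since $x,y$ have different first letters, exactly one of them is UIB and the other DIB; as $\omega$ preserves each type, $\omega\tup{x}$ and $\omega\tup{y}$ again have different first letters. This completes the verification.

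Honestly there is no real obstacle here — the statement is a pure symmetry fact, and the only content beyond the one-line anti-morphism computation is the observation that ``having different first letters'' is preserved, which is precisely what Lemma~\ref{lem.irr-omega} is designed to supply. The mildest point of care is simply to track that $\omega$ reverses the order of the two swapped factors (and of $p,q$), so that the prefix/suffix roles of $p$ and $q$ are interchanged; but this does not affect the definition of an irreducible balanced commutation, which allows arbitrary prefixes and suffixes.
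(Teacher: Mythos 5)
Your proposal is correct and is essentially the paper's own proof: the paper likewise reduces to Proposition~\ref{prop.bal-symm}'s argument and notes that the only new ingredient is that $\omega$ sends two irreducible balanced words with different first letters to two such words, which is exactly Lemma~\ref{lem.irr-omega}~\textbf{(c)} and \textbf{(d)}. You have merely written out explicitly the single-commutation computation that the paper leaves implicit.
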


\begin{proof}
This is similar to the proof of Proposition \ref{prop.bal-symm}, with a slight
twist: We need to show that if $a$ and $b$ are two irreducible balanced words
that have different first letters, then their images $\omega\left(  a\right)
$ and $\omega\left(  b\right)  $ are again two irreducible balanced words that
have different first letters. But this follows from parts \textbf{(c)} and
\textbf{(d)} of Lemma \ref{lem.irr-omega}.
\end{proof}

We have an analogue of Lemma \ref{lem.4to5} as well:

\begin{lemma}
\label{lem.irr.4to5} Let $\pp$ and $\qq$ be two diagonal paths
with the same initial height and the same final height. Assume that $H\left(
\pp,z\right)  =H\left(  \qq,z\right)  $. Then,
$\w\left(  \pp\right)  \irrsim\w\left(
\qq\right)  $.
\end{lemma}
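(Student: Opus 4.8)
The plan is to mirror the proof of Lemma~\ref{lem.4to5} almost verbatim, substituting the irreducible machinery for the general one at each step. First I would set $u = \w\tup{\pp}$ and $v = \w\tup{\qq}$, and call $i$ and $f$ the common initial and final heights of the two paths. By Lemma~\ref{lem.H.shift} we have $H\tup{\pp, z} = z^i H\tup{u, z}$ and $H\tup{\qq, z} = z^i H\tup{v, z}$; cancelling $z^i$ in the hypothesis $H\tup{\pp, z} = H\tup{\qq, z}$ yields $H\tup{u, z} = H\tup{v, z}$. The words $u$ and $v$ also share the same final height, namely $f - i$.

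As in Lemma~\ref{lem.4to5}, I would first treat the case $f \geq i$, in which $u$ and $v$ are rising. Here I would invoke Lemma~\ref{lem.irr.nf} (the irreducible analogue of Proposition~\ref{prop.upnorm.nf}) to produce up-normal words $t_u$ and $t_v$ with $t_u \irrsim u$ and $t_v \irrsim v$. The crucial observation is that the height polynomial and the final height are still invariant under $\irrsim$: indeed, every irreducible balanced commutation is in particular a balanced commutation, so $\irrsim$ refines $\balsim$, and hence Lemma~\ref{lem:invariant} (together with the fact that balanced commutations preserve the numbers of $U$'s and $D$'s) gives $H\tup{t_u, z} = H\tup{u, z} = H\tup{v, z} = H\tup{t_v, z}$ as well as equal final heights $f - i$. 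Lemma~\ref{lem:unique} then forces $t_u = t_v$, and chaining $u \irrsim t_u = t_v \irrsim v$ yields $u \irrsim v$.

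The case $f < i$ I would reduce to the previous one by reflecting both paths across a vertical line, obtaining paths with initial height $f$, final height $i \geq f$, and the same height polynomial (a vertical reflection preserves vertex heights). Their reading words are $\omega\tup{u}$ and $\omega\tup{v}$, so the already-settled case yields $\omega\tup{u} \irrsim \omega\tup{v}$, and Proposition~\ref{prop.irr-symm} converts this back into $u \irrsim v$.

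There is no genuinely hard step here: the entire content is bookkeeping that transports the argument of Lemma~\ref{lem.4to5} through the irreducible versions of the three ingredients (normal form via Lemma~\ref{lem.irr.nf}, reflection symmetry via Proposition~\ref{prop.irr-symm}, and invariance of the height polynomial). The one point that deserves a sentence of care is the invariance claim, which is not proved directly for $\irrsim$ but is inherited from $\balsim$ through the inclusion $\irrsim\ \subseteq\ \balsim$; once that is spelled out, the uniqueness lemma (Lemma~\ref{lem:unique}) does all the remaining work exactly as in the proof of Lemma~\ref{lem.4to5}.
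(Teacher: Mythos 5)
Your proposal is correct and follows exactly the route the paper intends: it transports the proof of Lemma~\ref{lem.4to5} by substituting Lemma~\ref{lem.irr.nf} for Proposition~\ref{prop.upnorm.nf} and Proposition~\ref{prop.irr-symm} for Proposition~\ref{prop.bal-symm}, and it correctly notes that the height-polynomial invariance is inherited from $\balsim$ via the inclusion $\irrsim\ \subseteq\ \balsim$. No gaps.
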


\begin{proof}
Analogous to the proof of Lemma \ref{lem.4to5}, but using Proposition
\ref{prop.irr-symm} and Lemma \ref{lem.irr.nf} instead of Proposition
\ref{prop.bal-symm} and Proposition \ref{prop.upnorm.nf}. (Use the obvious
fact that $u\irrsim v$ implies $u\balsim v$.)
\end{proof}

We can now prove Theorem \ref{thm.irr=bal}:

\begin{proof}
[Proof of Theorem \ref{thm.irr=bal}.] Let us modify Theorem \ref{thm.DU-eqs2}
by adding the following extra statement:

\begin{itemize}
\item $\mathcal{S}_{5}^{\prime}$: We have $u\irrsim v$.
\end{itemize}

Clearly, this statement $\mathcal{S}_{5}^{\prime}$ implies $\mathcal{S}_{5}$,
since $u\irrsim v$ implies $u\balsim v$. But the implication $\mathcal{S}%
_{4}\Longrightarrow\mathcal{S}_{5}^{\prime}$ holds as well, and can be proved
just as the implication $\mathcal{S}_{4}\Longrightarrow\mathcal{S}_{5}$ in
Theorem \ref{thm.DU-eqs2} was proved (but using Lemma \ref{lem.irr.4to5} instead of
Lemma \ref{lem.4to5}). Hence, the statement $\mathcal{S}_{5}^{\prime}$ is
equivalent to all the seven statements
$\mathcal{S}_{1},\mathcal{S}_{2},\mathcal{S}_{3},\mathcal{S}'_3,\mathcal{S}_{4},\mathcal{S}_{5}, \mathcal{S}_6$
from Theorem \ref{thm.DU-eqs2}.
In
particular, $\mathcal{S}_{5}^{\prime}$ is equivalent to $\mathcal{S}_{5}$. In
other words, $u\irrsim v$ is equivalent to $u\balsim v$. In other words, the
relations $\irrsim$ and $\balsim$ are the same. This proves Theorem
\ref{thm.irr=bal}.
\end{proof}

\section{Other algebras}\label{sec:otheralg}

Everything we have done so far concerned the \textquotedblleft rank-$1$%
\textquotedblright\ Weyl algebra
\[
\W=\kk\left\langle
D,U\ \mid\ DU-UD=1\right\rangle.
\]
But the main question we addressed -- to
classify equal products of generators -- can be posed for any $\kk%
$-algebra given by generators and relations. In particular, several analogues
and variants of $\W$ are natural candidates for a
similar study. In this section, we briefly discuss some of them, giving some
answers and posing some questions.
(There are many more -- see, e.g., \cite{Gaddis23} for a recent survey.)

\subsection{Multivariate Weyl algebras}

For any $n\in\NN$, there is an \textquotedblleft$n$-Weyl
algebra\textquotedblright\ $\W_n$, defined as the $\kk$-algebra given by $2n$ generators
$D_{1},D_{2},\ldots,D_{n},U_{1},U_{2},\ldots,U_{n}$ and relations%
\begin{align*}
D_{i}U_{j}  &  =U_{j}D_{i}\ \ \ \ \ \ \ \ \ \ \text{for all }i\neq j;\\
D_{i}U_{i}  &  =U_{i}D_{i}+1\ \ \ \ \ \ \ \ \ \ \text{for all }i;\\
D_{i}D_{j}  &  =D_{j}D_{i}\ \ \ \ \ \ \ \ \ \ \text{for all }i,j;\\
U_{i}U_{j}  &  =U_{j}U_{i}\ \ \ \ \ \ \ \ \ \ \text{for all }i,j.
\end{align*}
It is isomorphic to the $\kk$-algebra of differential
operators on the polynomial ring $\kk\left[  x_{1},x_{2},\ldots
,x_{n}\right]  $.

However, this algebra $\W_n$ can also be seen as the $n$-fold
tensor power\footnote{All tensor products and tensor powers
in this paper are taken over the field $\kk$.} $\W^{\otimes n}$ of the original Weyl algebra
$\W$, via the $\kk$-algebra isomorphism
$\W_n \to \W^{\otimes n}$ that sends each generator $D_{i}$ to
$\underbrace{1\otimes1\otimes\cdots\otimes1}_{i-1\text{ times}}\otimes
\,D\otimes\underbrace{1\otimes1\otimes\cdots\otimes1}_{n-i\text{ times}}$ and
each generator $U_{i}$ to $\underbrace{1\otimes1\otimes\cdots\otimes
1}_{i-1\text{ times}}\otimes\,U\otimes\underbrace{1\otimes1\otimes
\cdots\otimes1}_{n-i\text{ times}}$. From this point of view, products of
generators of $\W_{n}$ are just elements of the form $\phi\left(
w_{1}\right)  \otimes\phi\left(  w_{2}\right)  \otimes\cdots\otimes\phi\left(
w_{n}\right)  \in\W^{\otimes n}$, where $w_{1},w_{2},\ldots,w_{n}%
\in\M$ are some words. Which of these products are equal? The answer
turns out to boil down to the answer for $n=1$ (which we know from Theorems
\ref{thm.DU-eqs2}, \ref{thm.DU-eqs3} and \ref{thm.DU-eqs4}):

\begin{theorem}
\label{thm.Wn.reduce}Let $u_{1},u_{2},\ldots,u_{n},v_{1},v_{2},\ldots,v_{n}$
be $2n$ words in $\M$. Then,%
\[
\phi\left(  u_{1}\right)  \otimes\phi\left(  u_{2}\right)  \otimes
\cdots\otimes\phi\left(  u_{n}\right)  =\phi\left(  v_{1}\right)  \otimes
\phi\left(  v_{2}\right)  \otimes\cdots\otimes\phi\left(  v_{n}\right)  \text{
in }\W^{\otimes n}
\]
if and only if
\[
\text{each }i\in\left\{  1,2,\ldots,n\right\}  \text{ satisfies }\phi\left(
u_{i}\right)  =\phi\left(  v_{i}\right)  .
\]

\end{theorem}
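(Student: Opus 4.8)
The ``if'' direction is immediate, since then the two tensor products agree factor by factor. For the ``only if'' direction the plan is to recover one equation $\phi(u_i)=\phi(v_i)$ per slot from the single tensor identity, exploiting that each $\phi(w)$ has a well-controlled leading term. Concretely, Navon's theorem (Theorem~\ref{thm.navon}) gives, for a word $w$ with $m$ many $U$'s and $n$ many $D$'s, the expansion $\phi(w)=\sum_{k=0}^{\min\{m,n\}} r_k(B_w)\,U^{m-k}D^{n-k}$ in the PBW basis $(U^j D^i)_{i,j\in\NN}$, whose summands have pairwise distinct total degrees $m+n-2k$. Ordering basis monomials by total degree $i+j$, the unique leading monomial of $\phi(w)$ is thus $U^m D^n$, with coefficient $r_0(B_w)=1$; in particular $\phi(w)\neq 0$, and its leading monomial encodes the numbers of $U$'s and $D$'s in $w$.

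Next I would use the standard fact that a nonzero decomposable tensor determines its factors up to scaling: if $a_1\otimes\cdots\otimes a_n=b_1\otimes\cdots\otimes b_n$ in $\W^{\otimes n}$ with all factors nonzero, then there are scalars $c_1,\dots,c_n\in\kk^\times$ with $b_l=c_l a_l$ and $\prod_l c_l=1$. (This follows in one line by contracting all tensor factors except the $l$-th against linear functionals $\lambda_r$ normalized so that $\lambda_r(a_r)=1$.) Applying it with $a_l=\phi(u_l)$ and $b_l=\phi(v_l)$, which are nonzero by the previous paragraph, produces nonzero scalars $c_l$ with $\phi(v_l)=c_l\,\phi(u_l)$.

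It then remains to show $c_l=1$, which is where the leading-term analysis is used. Scaling by the nonzero constant $c_l$ preserves the set of monomials occurring with nonzero coefficient, so $\phi(u_l)$ and $\phi(v_l)$ share the same leading monomial; by the first paragraph this forces $u_l$ and $v_l$ to have the same numbers of $U$'s and of $D$'s, say $m_l$ and $n_l$. Comparing the coefficient of the common leading monomial $U^{m_l}D^{n_l}$ --- which is $1$ in $\phi(u_l)$ and $c_l\cdot 1$ in $\phi(v_l)=c_l\phi(u_l)$ --- gives $c_l=1$, hence $\phi(v_l)=\phi(u_l)$, as desired.

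The main obstacle, and the only input beyond Navon's theorem that is not pure bookkeeping, is the decomposable-tensor proportionality statement. A route avoiding it would expand both tensor products directly in the induced PBW basis of $\W^{\otimes n}$: matching the unique terms of maximal total degree equates the $U$/$D$ counts slot by slot, and then fixing all but one index at its top value (using $r_0(B_w)=1$) isolates the equalities $r_k(B_{u_l})=r_k(B_{v_l})$ for every $l$ and $k$, whence $\phi(u_l)=\phi(v_l)$ by the rook-equivalence criterion of Theorem~\ref{thm.DU-eqs3}.
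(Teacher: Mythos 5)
Your proposal is correct and follows essentially the same route as the paper: the paper also combines the decomposable-tensor proportionality lemma (its Lemma~\ref{lem.tensor.eq}) with the facts that $\phi(w)\neq 0$ and that $\phi(u)=\lambda\phi(v)$ forces $\lambda=1$, both established exactly as you do via Navon's theorem and the leading coefficient $r_0(B_w)=1$. The only cosmetic difference is that the paper orders PBW monomials lexicographically rather than by total degree, which makes no difference here since the $k=0$ term of Navon's expansion is extremal in either ordering.
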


In other words, we don't get any \textquotedblleft new\textquotedblright%
\ equalities by tensoring $n$ copies of $\W$.

The \textquotedblleft
if\textquotedblright\ part of Theorem \ref{thm.Wn.reduce} is obvious, while the \textquotedblleft
only if\textquotedblright\ part of the theorem follows immediately from two lemmas. The first is a
general fact from linear algebra (\cite[Theorem 5.15]{KConrad-tp2}):

\begin{lemma}
\label{lem.tensor.eq}Let $V_{1},V_{2},\ldots,V_{n}$ be any $\kk$-vector
spaces. For each $i\in\left\{  1,2,\ldots,n\right\}  $, let $x_{i}$ and
$y_{i}$ be two nonzero vectors in $V_{i}$. Then,
\[
x_{1}\otimes x_{2}\otimes\cdots\otimes x_{n}=y_{1}\otimes y_{2}\otimes
\cdots\otimes y_{n}\text{ in }V_{1}\otimes V_{2}\otimes\cdots\otimes V_{n}%
\]
if and only if there exist some scalars $\lambda_{1},\lambda_{2}%
,\ldots,\lambda_{n}\in\kk$ such that $\lambda_{1}\lambda_{2}%
\cdots\lambda_{n}=1$ and such that
\[
\text{each }i\in\left\{  1,2,\ldots,n\right\}  \text{ satisfies }x_{i}%
=\lambda_{i}y_{i}.
\]

\end{lemma}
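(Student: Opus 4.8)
The plan is to reduce everything to two applications of the universal property of the tensor product, each time producing a scalar as the value of a cleverly chosen linear functional. The ``if'' direction is immediate: if $x_i = \lambda_i y_i$ for each $i$ with $\lambda_1 \lambda_2 \cdots \lambda_n = 1$, then multilinearity lets me pull all the scalars out of the tensor, giving $x_1 \otimes x_2 \otimes \cdots \otimes x_n = \tup{\lambda_1 \lambda_2 \cdots \lambda_n} \, y_1 \otimes y_2 \otimes \cdots \otimes y_n = y_1 \otimes y_2 \otimes \cdots \otimes y_n$. So I would concentrate on the ``only if'' direction.

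For the ``only if'' direction, I assume $x_1 \otimes x_2 \otimes \cdots \otimes x_n = y_1 \otimes y_2 \otimes \cdots \otimes y_n$. Since each $y_j$ is nonzero, I would first choose, for every $j \in \set{1, 2, \ldots, n}$, a linear functional $f_j \in V_j^*$ with $f_j\tup{y_j} = 1$ (such a functional exists because the one-element linearly independent set $\set{y_j}$ extends to a basis of $V_j$). Two kinds of linear maps out of the tensor product then do all the work, each obtained from the universal property applied to an evidently multilinear map on $V_1 \times V_2 \times \cdots \times V_n$.

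The first is the ``full evaluation'' map $\Psi : V_1 \otimes V_2 \otimes \cdots \otimes V_n \to \kk$ induced by $\tup{v_1, v_2, \ldots, v_n} \mapsto \prod_{j} f_j\tup{v_j}$. Applying $\Psi$ to both sides of the hypothesis gives $\prod_{j} f_j\tup{x_j} = \prod_{j} f_j\tup{y_j} = 1$; in particular, every factor $f_j\tup{x_j}$ is nonzero. The second is, for each fixed $i$, the ``partial evaluation'' map $\Phi_i : V_1 \otimes V_2 \otimes \cdots \otimes V_n \to V_i$ induced by $\tup{v_1, v_2, \ldots, v_n} \mapsto \tup{\prod_{j \neq i} f_j\tup{v_j}} v_i$. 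Applying $\Phi_i$ to the hypothesis yields $\tup{\prod_{j \neq i} f_j\tup{x_j}} x_i = y_i$.

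To finish, I would set $\lambda_i := f_i\tup{x_i}$. From $\prod_{j} f_j\tup{x_j} = 1$ and $f_i\tup{x_i} \neq 0$ I get $\prod_{j \neq i} f_j\tup{x_j} = \lambda_i^{-1}$, so the relation coming from $\Phi_i$ reads $\lambda_i^{-1} x_i = y_i$, that is, $x_i = \lambda_i y_i$, as desired; and $\lambda_1 \lambda_2 \cdots \lambda_n = \prod_{i} f_i\tup{x_i} = 1$. The only genuine content is the existence and well-definedness of $\Psi$ and of the $\Phi_i$, both of which follow routinely from multilinearity and the universal property; since the whole argument uses only linear functionals, it needs no finiteness assumption on the $V_i$. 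The mild obstacle, such as it is, is purely bookkeeping: one must keep straight that the single global identity $\prod_{j} f_j\tup{x_j} = 1$ is exactly what converts each partial product $\prod_{j \neq i} f_j\tup{x_j}$ into the clean reciprocal $\lambda_i^{-1}$, which in turn is what forces the scalars to multiply to $1$.
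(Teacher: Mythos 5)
Your proof is correct. Note that the paper does not actually prove Lemma~\ref{lem.tensor.eq} at all: it simply cites it from the literature (Keith Conrad's notes on tensor products, Theorem~5.15), so your argument is a genuine self-contained substitute rather than a variant of an in-paper proof. Your route --- picking functionals $f_j$ with $f_j\left(y_j\right)=1$, then using the full evaluation $\Psi$ to get $\prod_j f_j\left(x_j\right)=1$ (hence each $f_j\left(x_j\right)\neq 0$) and the partial evaluations $\Phi_i$ to get $\bigl(\prod_{j\neq i} f_j\left(x_j\right)\bigr)x_i=y_i$ --- is clean and complete; the two maps are well defined by the universal property since the defining expressions are multilinear, the functionals $f_j$ exist because $\kk$ is a field (extend $\left\{y_j\right\}$ to a basis), and no finite-dimensionality is needed. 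One could quibble that you should note $\Phi_i$ applied to the right-hand side gives $\bigl(\prod_{j\neq i} f_j\left(y_j\right)\bigr)y_i=y_i$ explicitly, but that is immediate from $f_j\left(y_j\right)=1$. The bookkeeping identity $\prod_{j\neq i} f_j\left(x_j\right)=\lambda_i^{-1}$ with $\lambda_i:=f_i\left(x_i\right)$ then yields $x_i=\lambda_i y_i$ and $\lambda_1\lambda_2\cdots\lambda_n=1$ exactly as you say.
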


The next lemma ensures that the conditions of Lemma \ref{lem.tensor.eq} are
met in the appropriate case:

\begin{lemma}
\label{lem.Wn.scalar1}\textbf{(a)} For any $w\in\M$, we have
$\phi\left(  w\right)  \neq0$.

\textbf{(b)} If $u,v\in\M$ and $\lambda\in\kk$ satisfy
$\phi\left(  u\right)  =\lambda\phi\left(  v\right)  $, then $\lambda=1$ and
$\phi\left(  u\right)  =\phi\left(  v\right)  $.
\end{lemma}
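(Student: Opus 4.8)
The key observation is that the Weyl algebra $\W$ carries a natural $\ZZ$-grading (already used in the second proof of Lemma~\ref{lem.bal.weyl0}), in which $U$ is homogeneous of degree $1$ and $D$ of degree $-1$. Under this grading, $\phi\tup{w}$ is a homogeneous element of degree equal to the final height of $w$ (that is, $\tup{\text{\# of $U$'s in $w$}} - \tup{\text{\# of $D$'s in $w$}}$), since $w$ is a product of $U$'s and $D$'s. This is the structural fact that both parts will rest on.

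\textbf{Part (a).} The plan is to show $\phi\tup{w} \neq 0$ by exhibiting a polynomial on which $\phi\tup{w}$ acts nontrivially. By Lemma~\ref{lem.weyl.faith}, the action of $\W$ on $\kk\ive{x}$ is faithful, so it suffices to find some $s$ with $\tup{\phi\tup{w}}\tup{x^s} \neq 0$. Let $\pp$ be the standard path of $w$, and apply Proposition~\ref{prop.UD-act}: for each $s \in \ZZ$,
\[
\tup{\phi\tup{w}}\tup{x^s} = \left( \prod_{p_i \text{ is an SE-step of } \pp} \tup{s + h_k - h_{i+1}} \right) \cdot x^{s + h_k - h_0},
\]
where $h_i = \htt\tup{p_i}$. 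The coefficient here is a product of finitely many linear factors in $s$, hence a nonzero polynomial in $s$; choosing $s \in \NN$ large enough makes every factor positive (in particular nonzero), so the coefficient is nonzero and $\tup{\phi\tup{w}}\tup{x^s} \neq 0$. Thus $\phi\tup{w} \neq 0$.

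\textbf{Part (b).} Suppose $\phi\tup{u} = \lambda \phi\tup{v}$ with $\lambda \in \kk$. First I would argue $\lambda \neq 0$: since $\phi\tup{u} \neq 0$ by part~(a), we cannot have $\lambda = 0$. Next, the main step is a degree comparison. By part~(a), both $\phi\tup{u}$ and $\phi\tup{v}$ are nonzero homogeneous elements of $\W$ with respect to the $\ZZ$-grading, of degrees equal to the final heights of $u$ and $v$ respectively. Since $\lambda \neq 0$, the equation $\phi\tup{u} = \lambda \phi\tup{v}$ forces these two homogeneous elements to have the same degree, so $u$ and $v$ have the same final height. Now apply Proposition~\ref{prop.UD-act} to both $u$ and $v$: writing $\pp$ and $\qq$ for their standard paths (both starting at $\tup{0,0}$, so both with initial height $0$ and equal final heights, say $f$), we get that $\tup{\phi\tup{u}}\tup{x^s}$ and $\tup{\phi\tup{v}}\tup{x^s}$ are each a scalar multiple of the \emph{same} monomial $x^{s+f}$, with scalars given by products of linear factors in $s$. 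The leading coefficients of these two polynomials-in-$s$ are both $1$ (each factor $s + f - h_{i+1}$ is monic in $s$, and the number of SE-steps of $\pp$ equals the number of $D$'s in $u$, which equals the number of $D$'s in $v$ by the equal-final-height and equal-$U$-count conditions — or more directly, both products have the same degree in $s$ since $u,v$ have the same number of $D$'s, which itself follows once we know the final heights agree together with Proposition~\ref{prop.DU-eqlett2}). Comparing leading coefficients in $s$ in the relation $\tup{\phi\tup{u}}\tup{x^s} = \lambda \tup{\phi\tup{v}}\tup{x^s}$ then yields $\lambda = 1$, whence $\phi\tup{u} = \phi\tup{v}$.

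\textbf{Anticipated obstacle.} The delicate point is justifying that the two scalar coefficients (as polynomials in $s$) have the same degree, so that comparing leading terms legitimately extracts $\lambda$; this rests on $u$ and $v$ having the same number of $D$'s. The cleanest route is the grading argument above: equality up to a nonzero scalar of homogeneous elements forces equal degree, giving equal final heights, and then the number of SE-steps of each standard path is pinned down. Alternatively, one can sidestep the leading-coefficient bookkeeping entirely by invoking the grading more forcefully — observing that $\W_0$ and the homogeneous components are $\kk$-subspaces, so $\phi\tup{u} = \lambda\phi\tup{v}$ with both sides nonzero and homogeneous of the same degree, together with part~(a) applied to compute a single explicit value $\tup{\phi\tup{u}}\tup{x^s}$ versus $\lambda\tup{\phi\tup{v}}\tup{x^s}$ at one well-chosen large $s$, forces $\lambda = 1$.
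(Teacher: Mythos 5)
Your argument is essentially sound and runs parallel to the paper's second proof of this lemma (the paper also evaluates on $x^s$ and compares leading coefficients of the resulting polynomials in $s$, though it does so via the $\omega$-twisted formula of Proposition~\ref{prop.DU-act} and Proposition~\ref{prop.DU-eqlett}, and it derives part (a) from part (b) by setting $\lambda=0$ rather than proving (a) directly). Your part (a) is correct as written, and the grading observation at the start of part (b) is a legitimate way to get equal final heights.

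The one genuine flaw is in your justification that the two coefficient polynomials have the same degree. You appeal to an ``equal-$U$-count condition'' that is nowhere established, and then to Proposition~\ref{prop.DU-eqlett2} --- but that proposition assumes $\phi\tup{u}=\phi\tup{v}$, which is precisely what you are trying to prove, so this is circular. Fortunately the circularity is unnecessary: once you have $P(s)=\lambda Q(s)$ for all sufficiently large integers $s$ (where $P$ and $Q$ are the two products of monic linear factors), this holds as an identity of polynomials in $s$, and since $P$ and $Q$ are both monic and $\lambda\neq 0$, comparing the coefficient of $s^{\max\set{\deg P,\,\deg Q}}$ already forces $\deg P=\deg Q$ (otherwise one side of that comparison is $1$ and the other is $0$ or $\lambda\cdot 0$). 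The equality $\#D_u=\#D_v$ is thus a consequence of the identity, not a prerequisite for exploiting it; with that repair, $\lambda=1$ follows at once. You might also note that the paper's first proof takes an entirely different route, expanding $\phi\tup{w}$ in the basis $\left(U^jD^i\right)_{i,j\in\NN}$ via Theorem~\ref{thm.navon} and reading off the leading coefficient $r_0\tup{B_w}=1$; that version avoids all bookkeeping with polynomials in $s$.
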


\begin{proof}[First proof.]
The family $\left(  U^{j}D^{i}\right)  _{i,j\in\NN}$ is a
basis of $\W$ (by \cite[Proposition 2.7.1 (i)]{Etingof}). Hence, each
element $a$ of $\W$ can be uniquely written as a $\kk$-linear
combination $\sum\limits_{i,j\in\NN}a_{i,j}U^{j}D^{i}$ of the elements of this
family. When this $a$ is nonzero, we define the \emph{leading monomial} of $a$
to be the lexicographically highest pair $\left(  i,j\right)  \in
\NN^2$ for which $a_{i,j}\neq0$, and we define the \emph{leading
coefficient} of $a$ to be the coefficient $a_{i,j}$ corresponding to this pair
$\left(  i,j\right)  $. \medskip

\textbf{(a)} Let $w\in\M$. Then, Theorem \ref{thm.navon} yields
\begin{equation}
\phi\left(  w\right)  =\sum_{k=0}^{\min\left\{  m,n\right\}  }r_{k}\left(
B_{w}\right)  U^{m-k}D^{n-k} \label{pf.lem.Wn.scalar1.1}%
\end{equation}
for appropriate $n,m\in\NN$. The elements $U^{m-k}D^{n-k}$ on the right
hand side of this equality are $\kk$-linearly independent (since the
family $\left(  U^{j}D^{i}\right)  _{i,j\in\NN}$ is a basis of
$\W$), and at least one of the coefficients $r_{k}\left(
B_{w}\right)  $ is nonzero (indeed, we have $r_{0}\left(  B_{w}\right)  =1$,
since any board $B$ satisfies $r_{0}\left(  B\right)  =1$). Thus, the entire
right hand side is nonzero. Hence, $\phi\left(  w\right)  \neq0$. This proves
Lemma \ref{lem.Wn.scalar1} \textbf{(a)}. \medskip

\textbf{(b)} Let $u,v\in\M$ and $\lambda\in\kk$ satisfy
$\phi\left(  u\right)  =\lambda\phi\left(  v\right)  $. Recall that for each
$w\in\M$, the element $\phi\left(  w\right)  \in\W$ is
nonzero (by part \textbf{(a)}) and has leading coefficient $1$ (indeed, the
equality (\ref{pf.lem.Wn.scalar1.1}) shows that the leading coefficient of
$\phi\left(  w\right)  $ is $r_{0}\left(  B_{w}\right)  =1$).
\begin{vershort}
Thus, comparing leading coefficients in
$\phi\left(  u\right)  =\lambda\phi\left(  v\right)  $
shows that $\lambda = 1$ and $\phi\left(  u\right)  =\phi\left(  v\right)$.
\end{vershort}
\begin{verlong}
Hence, the
element $\phi\left(  u\right)  $ has leading coefficient $1$. Similarly,
$\phi\left(  v\right)  $ also has leading coefficient $1$. Thus, $\lambda
\phi\left(  v\right)  $ has leading coefficient $\lambda\cdot1=\lambda$. In
other words, $\phi\left(  u\right)  $ has leading coefficient $\lambda$ (since
$\phi\left(  u\right)  =\lambda\phi\left(  v\right)  $). Since we also know
that $\phi\left(  u\right)  $ has leading coefficient $1$, we thus conclude
that $\lambda=1$. Hence, $\phi\left(  u\right)  =\underbrace{\lambda}_{=1}%
\phi\left(  v\right)  =\phi\left(  v\right)  $.
Lemma \ref{lem.Wn.scalar1} \textbf{(b)} is thus proved.
\end{verlong}
\end{proof}

\begin{proof}[Second proof.]
\textbf{(b)} First, we generalize Proposition
\ref{prop.DU-eqlett} by replacing the assumption \textquotedblleft$\phi\left(
\w\left(  \pp\right)  \right)  =\phi\left(
\w\left(  \qq\right)  \right)  $\textquotedblright\ by
\textquotedblleft$\phi\left(  \w\left(  \pp\right)
\right)  =\lambda\phi\left(  \w\left(  \qq\right)
\right)  $\textquotedblright\ and adding the claim \textquotedblleft%
$\lambda=1$\textquotedblright\ to the conclusion. The proof of Proposition
\ref{prop.DU-eqlett} that we gave above still applies to this generalization,
once a few trivial changes are made. In particular, the polynomial identity%
\[
\prod_{p_{i}\text{ is an NE-step of }\pp}\left(  x-h_{i}\right)
=\prod_{q_{i}\text{ is an NE-step of }\qq}\left(  x-g_{i}\right)
\]
must be replaced by%
\[
\prod_{p_{i}\text{ is an NE-step of }\pp}\left(  x-h_{i}\right)
=\lambda\prod_{q_{i}\text{ is an NE-step of }\qq}\left(  x-g_{i}%
\right)  ,
\]
which of course entails not only%
\begin{align*}
&  \left\{  h_{i}\ \mid\ p_{i}\text{ is an NE-step of }\pp\right\}
_{\multiset}\\
&  =\left\{  g_{i}\ \mid\ q_{i}\text{ is an NE-step of }\qq\right\}
_{\multiset}%
\end{align*}
but also $\lambda=1$ (by comparing leading coefficients).

\begin{vershort}
Applying this generalized version of Proposition
\ref{prop.DU-eqlett}
to our situation (setting $\pp$ and $\qq$ to be the
standard paths of $u$ and $v$), we obtain $\lambda = 1$,
and thus $\phi\tup{u} = \phi\tup{v}$.
This proves Lemma \ref{lem.Wn.scalar1} \textbf{(b)}.
\end{vershort}

\begin{verlong}
Now, let us apply this to our situation. Let $u,v\in\M$ and
$\lambda\in\kk$ satisfy $\phi\left(  u\right)  =\lambda\phi\left(
v\right)  $. Let $\pp=\left(  p_{0},p_{1},\ldots,p_{k}\right)  $ be the
diagonal path starting at $\left(  0,0\right)  $ that satisfies
$u=\w\left(  \pp\right)  $. Similarly, let
$\qq=\left(  q_{0},q_{1},\ldots,q_{m}\right)  $ be the diagonal path
starting at $\left(  0,0\right)  $ that satisfies $v=\w\left(
\qq\right)  $. The paths $\pp$ and $\qq$ have the same
initial height (namely, $0$). Moreover, we have $\phi\left(  u\right)
=\lambda\phi\left(  v\right)  $, so that $\phi\left(  \w\left(
\pp\right)  \right)  =\lambda\phi\left(  \w\left(
\qq\right)  \right)  $ (since $u=\w\left(
\pp\right)  $ and $v=\w\left(  \qq\right)  $).
Hence, the generalized version of Proposition \ref{prop.DU-eqlett} that we
have just proposed yields (among other things) that $\lambda=1$. Thus,
$\phi\left(  u\right)  =\underbrace{\lambda}_{=1}\phi\left(  v\right)
=\phi\left(  v\right)  $.
This proves Lemma \ref{lem.Wn.scalar1} \textbf{(b)}.
\end{verlong}
\medskip

\textbf{(a)} Let $w\in\M$. We must prove that $\phi\left(  w\right)
\neq0$. Assume the contrary. Thus, $\phi\left(  w\right)  =0=0\phi\left(
w\right)  $. Hence, part \textbf{(b)} (applied to $u=w$ and $v=w$ and
$\lambda=0$) yields $0=1$ and $\phi\left(  w\right)  =\phi\left(  w\right)  $.
Obviously, $0=1$ is absurd, so we found a contradiction. This proves Lemma
\ref{lem.Wn.scalar1} \textbf{(a)}.
\end{proof}

\begin{verlong}
\begin{proof}
[Proof of Theorem \ref{thm.Wn.reduce}.] The \textquotedblleft
if\textquotedblright\ part is obvious, so let us prove the \textquotedblleft
only if\textquotedblright\ part.

Lemma \ref{lem.Wn.scalar1} \textbf{(a)} shows that the vectors $\phi\left(
u_{i}\right)  $ and $\phi\left(  v_{i}\right)  $ in $\W$ are nonzero
for all $i\in\left\{  1,2,\ldots,n\right\}  $.

Assume that
\[
\phi\left(  u_{1}\right)  \otimes\phi\left(  u_{2}\right)  \otimes
\cdots\otimes\phi\left(  u_{n}\right)  =\phi\left(  v_{1}\right)  \otimes
\phi\left(  v_{2}\right)  \otimes\cdots\otimes\phi\left(  v_{n}\right)  \text{
in }\W^{\otimes n}.
\]
Then, Lemma \ref{lem.tensor.eq} (applied to $V_{i}=\W$ and
$x_{i}=\phi\left(  u_{i}\right)  $ and $y_{i}=\phi\left(  v_{i}\right)  $)
yields that there exist some scalars $\lambda_{1},\lambda_{2},\ldots
,\lambda_{n}\in\kk$ such that $\lambda_{1}\lambda_{2}\cdots\lambda
_{n}=1$ and such that
\[
\text{each }i\in\left\{  1,2,\ldots,n\right\}  \text{ satisfies }\phi\left(
u_{i}\right)  =\lambda_{i}\phi\left(  v_{i}\right)  .
\]
Consider these $\lambda_{1},\lambda_{2},\ldots,\lambda_{n}$. For each
$i\in\left\{  1,2,\ldots,n\right\}  $, we have $\phi\left(  u_{i}\right)
=\lambda_{i}\phi\left(  v_{i}\right)  $ and therefore $\phi\left(
u_{i}\right)  =\phi\left(  v_{i}\right)  $ (by Lemma \ref{lem.Wn.scalar1}
\textbf{(b)}, applied to $u=u_{i}$ and $v=v_{i}$ and $\lambda = \lambda_i$). In other words,
\[
\text{each }i\in\left\{  1,2,\ldots,n\right\}  \text{ satisfies }\phi\left(
u_{i}\right)  =\phi\left(  v_{i}\right)  .
\]
This proves the \textquotedblleft only if\textquotedblright\ part of Theorem
\ref{thm.Wn.reduce}.
\end{proof}
\end{verlong}

\subsection{Characteristic $p$}

We have hitherto assumed that the field $\kk$ has characteristic $0$. If
$\kk$ has characteristic $p\neq0$ instead, things change significantly:
Purely identity-type results such as Proposition \ref{prop.UD-act},
Proposition \ref{prop.DU-act} and Lemma \ref{lem.act-diag} remain valid
(indeed, they hold whenever $\kk$ is merely a commutative ring), but
the action of $\W$ on $\kk\left[  x\right]  $ is no longer
faithful (i.e., Lemma~\ref{lem.weyl.faith} fails), and various other results
that build on the tacit identification of integers with elements of
$\kk$ become false as well (e.g., Proposition \ref{prop.DU-eqlett}).
Lemma~\ref{lem.bal.weyl0} remains true, but the first proof we gave above no longer
works (although it is not hard to derive it from the characteristic-$0$ case,
since it is an identity in the free $\kk$-module $\W$).
Thus, Lemma~\ref{lem.bal.weyl} remains true as well.

The proof of Proposition \ref{prop.DU-eqlett2} also falls flat in
characteristic $p$, but the proposition itself survives. Indeed, it holds for
any nontrivial ring $\kk$, and can be proved by comparing leading terms
in Theorem \ref{thm.navon}.

The most interesting question is when two words $u,v\in\M$ satisfy
$\phi\left(  u\right)  =\phi\left(  v\right)  $ for a field $\kk$ of
characteristic $p\neq0$. The equivalence $\mathcal{S}_{1}\Longleftrightarrow
\mathcal{S}_{5}$ in Theorem \ref{thm.DU-eqs2} no longer holds in this case, as
(e.g.) we have $\phi\left(  U^{p+1}D\right)  =\phi\left(  UDU^{p}\right)  $
(indeed, $U^{p}$ is a central element of $\W$ when
$\charr \kk = p$) but we don't have $U^{p+1} D \balsim UDU^{p}$.
One might try to salvage the
equivalence by loosening the notion of balanced commutations, e.g., by
allowing both $U^{p}$ and $D^{p}$ to be swapped with any (neighboring) factor
of the word; the resulting equivalence relation
$\overset{p-\operatorname*{bal}}{\sim}$ might satisfy the equivalence $\left(
\phi\left(  u\right)  =\phi\left(  v\right)  \right)  \ \Longleftrightarrow
\ \left(  u\overset{p-\operatorname*{bal}}{\sim}v\right)  $, but we don't know
if it does.

\begin{question}
Does it?
\end{question}


The Weyl algebra $\W$ in characteristic $p\neq0$ has some quotients
(unlike in characteristic $0$, where it is famously a simple algebra). Indeed,
both elements $U^{p}$ and $D^{p}$ are known to lie in its center, and thus
generate two-sided ideals $\W U^{p}\W$ and $\W D^{p}\W$ that can be quotiented out. We can thus form the three
quotients%
\begin{align*}
\W^{-} &:= \W/\left(  \W U^{p}\W\right), \ \ \ \ \ \ \ \ \ \ %
\W_{-}  := \W/\left(  \W D^{p}\W\right)  , \\
\W_{-}^{-} &:= \W/\left(  \W U^{p}\W + \W D^{p}\W\right)  .
\end{align*}
The third quotient, $\W_{-}^{-}$, is actually a finite-dimensional
$\kk$-vector space, of dimension $p^{2}$ and with basis $\left(
U^{j}D^{i}\right)  _{i,j\in\left\{  0,1,\ldots,p-1\right\}  }$. It acts
faithfully and densely on the $\kk$-algebra $\kk\left[  x\right]  /\left(
x^{p}\right)  $. The quotient $\W_{-}$ acts faithfully on the full
polynomial ring $\kk\left[  x\right]  $.
The quotients $\W_{-}$ and $\W^{-}$ are isomorphic via the isomorphism sending
$U\mapsto D$ and $D\mapsto-U$.

The question of when two words $u,v\in\M$ give rise to equal
monomials can now be asked not only for $\W$, but also for any of its
quotient algebras $\W^{-}$, $\W_{-}$ and $\W_{-}^{-}$.
For instance, the words $U^{p-1}DUD^{p-1}$ and $U^{p-1}D^{p-1}$ do
not yield equal monomials in $\W$ (by Proposition
\ref{prop.DU-eqlett2}), but yield equal monomials in each of the quotients
$\W^{-}$, $\W_{-}$ and $\W_{-}^{-}$ (since their
difference in $\W$ is $U^{p-1}DUD^{p-1}-U^{p-1}D^{p-1}=U^{p-1}%
\underbrace{\left(  DU-1\right)  }_{=UD}D^{p-1}=U^{p-1}UDD^{p-1}=U^{p}D^{p}$,
which becomes $0$ in each of the quotients). What combinatorial condition is
responsible for this equality? We don't know; there are neither any balanced
commutations that can be applied to them to produce new words, nor any $U^{p}$
or $D^{p}$ factors that can be annihilated. Thus, we ask the following rather
open-ended question (actually three questions in disguise):

\begin{question}
Characterize equalities between monomials in $\W^{-}$, $\W_{-}$ and $\W_{-}^{-}$ combinatorially.
\end{question}

\subsection{Down-up algebras}

We now return to the case when $\kk$ is a field of characteristic $0$.

There are several deformations and other variations of the Weyl algebra
$\W$, and our question about equal monomials can be asked for each of
them. We  here discuss one of the most recent such variations: the
\emph{down-up algebra}, actually a family of algebras depending on three
scalar parameters $\alpha,\beta,\gamma$.

We fix three scalars $\alpha,\beta,\gamma\in\kk$. The \emph{down-up
algebra} $\mathcal{A}\left(  \alpha,\beta,\gamma\right)  $ is defined to be
the $\kk$-algebra with generators $D$ and $U$ and the two relations%
\begin{align*}
D^{2}U  & =\alpha DUD+\beta UD^{2}+\gamma D\ \ \ \ \ \ \ \ \ \ \text{and}\\
DU^{2}  & =\alpha UDU+\beta U^{2}D+\gamma U.
\end{align*}
Clearly, this algebra $\mathcal{A}\left(  \alpha,\beta,\gamma\right)  $ has an
algebra anti-automorphism $\omega$ sending $D$ and $U$ to $U$ and $D$. Also,
it is easy to check that the above two relations of $\mathcal{A}\left(
\alpha,\beta,\gamma\right)  $ are satisfied in the Weyl algebra $\W$
whenever $\alpha+\beta=\gamma-\beta=1$; therefore, the Weyl algebra
$\W$ is a quotient of $\mathcal{A}\left(  \alpha,\beta,\gamma\right)
$ in this case. Down-up algebras originate in \cite[Proposition 3.5]{BenRob98}
and have since found uses in noncommutative algebraic geometry and combinatorics.

We can define a map $\phi : \M \to \mathcal{A}\left(
\alpha,\beta,\gamma\right)  $ in the same way as we defined $\phi
: \M \to \W$, but using the down-up algebra
$\mathcal{A}\left(  \alpha,\beta,\gamma\right)  $ instead of $\W$.
(Thus, $\phi$ is a morphism of multiplicative monoids and sends $D$ and $U$ to
$D$ and $U$.) Surprisingly, we have:

\begin{theorem}
\label{thm.dua.equiv}
Assume that $\alpha+\beta=\gamma-\beta=1$. Then, the
equivalence of the six statements $\mathcal{S}_{1}$, $\mathcal{S}_{3}$,
$\mathcal{S}'_3$, $\mathcal{S}_{4}$, $\mathcal{S}_{5}$ and $\mathcal{S}_6$
in Theorem \ref{thm.DU-eqs2} still holds if we replace $\W$ by
$\mathcal{A} \left(  \alpha,\beta,\gamma\right)  $.
\end{theorem}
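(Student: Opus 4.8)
The plan is to bootstrap off Theorem~\ref{thm.DU-eqs2} by separating its statements into those that are purely combinatorial (and hence make no reference to the ambient algebra) and the single statement $\mathcal{S}_1$ that does. The statements $\mathcal{S}_3$, $\mathcal{S}'_3$, $\mathcal{S}_4$, $\mathcal{S}_5$ and $\mathcal{S}_6$ speak only about the words $u,v$, their standard paths, and the relations $\balsim$, $\flipsim$; none of them mentions $\W$. Therefore the equivalences $\mathcal{S}_3 \Longleftrightarrow \mathcal{S}'_3 \Longleftrightarrow \mathcal{S}_4 \Longleftrightarrow \mathcal{S}_5$ and the implication $\mathcal{S}_5 \Longrightarrow \mathcal{S}_6$, which were established via the purely combinatorial Lemmas~\ref{lem:laurent-w}, \ref{lem.4to5}, \ref{lem:invariant} and \ref{lem.5to6}, remain valid verbatim here. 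So it only remains to splice $\mathcal{S}_1$ (now meaning $\phi\tup{u}=\phi\tup{v}$ in $\mathcal{A}\tup{\alpha,\beta,\gamma}$) into this combinatorial cluster and to close the loop back to $\mathcal{S}_6$.

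One direction is immediate. Since $\alpha+\beta=\gamma-\beta=1$, the Weyl algebra $\W$ is a quotient of $\mathcal{A}\tup{\alpha,\beta,\gamma}$; let $\pi : \mathcal{A}\tup{\alpha,\beta,\gamma}\to\W$ be the corresponding surjection (sending $D,U$ to $D,U$). Then $\pi\circ\phi=\phi$ on words, so $\mathcal{S}_1$ in $\mathcal{A}\tup{\alpha,\beta,\gamma}$ forces $\phi\tup{u}=\phi\tup{v}$ in $\W$, which by Theorem~\ref{thm.DU-eqs2} yields $\mathcal{S}_3$. This gives $\mathcal{S}_1 \Longrightarrow \mathcal{S}_3$. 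For the reverse direction I would prove the down-up analogues of Lemmas~\ref{lem.bal.weyl0} and \ref{lem.bal.weyl}: that the images under $\phi$ of any two balanced words commute in $\mathcal{A}\tup{\alpha,\beta,\gamma}$. Granting this, a single balanced commutation preserves $\phi$ exactly as in Lemma~\ref{lem.bal.weyl}, so $\mathcal{S}_5 \Longrightarrow \mathcal{S}_1$. Moreover, for any balanced word $x$ we have $x \balsim \omega\tup{x}$ by the purely combinatorial argument behind Theorem~\ref{thm.phiomega}, whence $\phi\tup{x}=\phi\tup{\omega\tup{x}}$; consequently a balanced flip preserves $\phi$ as in Lemma~\ref{lem.6to1}, giving $\mathcal{S}_6 \Longrightarrow \mathcal{S}_1$. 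Assembling, $\mathcal{S}_1 \Longrightarrow \mathcal{S}_3 \Longrightarrow (\text{cluster}) \Longrightarrow \mathcal{S}_5 \Longrightarrow \mathcal{S}_1$ together with $\mathcal{S}_5 \Longrightarrow \mathcal{S}_6 \Longrightarrow \mathcal{S}_1$ make all six statements equivalent.

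The crux, and the one genuinely new ingredient, is the commutativity claim: that the degree-$0$ part $\mathcal{A}_0$ of $\mathcal{A}\tup{\alpha,\beta,\gamma}$ (graded by $\deg U = 1$, $\deg D = -1$, under which both defining relations are homogeneous) is commutative. I would follow Dixmier's strategy from the second proof of Lemma~\ref{lem.bal.weyl0}. The elements $e=\phi\tup{UD}$ and $f=\phi\tup{DU}$ lie in $\mathcal{A}_0$ and commute: a direct computation gives $fe=\phi\tup{DU^2D}=\alpha\,\phi\tup{UDUD}+\beta\,\phi\tup{U^2D^2}+\gamma\,\phi\tup{UD}=\phi\tup{UD^2U}=ef$, valid for all parameters. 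Using the Benkart--Roby PBW basis $\set{\phi\tup{U^a\tup{DU}^bD^c}}$, the space $\mathcal{A}_0$ is spanned by the elements $\phi\tup{U^a\tup{DU}^bD^a}$, and the goal is to show each lies in the commutative subalgebra $\kk\ive{e,f}$ generated by $e$ and $f$.

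I expect this straightening to be the main obstacle. The naive manipulations (via $eU=Uf$ and $De=fD$) turn out to be circular, so one must genuinely invoke the relations, and already the base computation $\phi\tup{U^2D^2}=\beta^{-1}\tup{fe-\alpha e^2-\gamma e}$ requires $\beta\neq 0$. I would treat $\beta\neq 0$ by an induction showing that every degree-$0$ basis element $\phi\tup{U^a\tup{DU}^bD^a}$ is a polynomial in $e$ and $f$, and then dispatch the remaining value $\beta=0$ (which the hypotheses permit, forcing only $\alpha=1-\beta$ and $\gamma=1+\beta$) by a polynomial-identity argument: expanded in the fixed PBW basis, the coefficients of $\phi\tup{a}\phi\tup{b}-\phi\tup{b}\phi\tup{a}$ are polynomials in $\beta$ that vanish for all $\beta\neq 0$, hence vanish at $\beta=0$ as well. (A direct check confirms that $\phi\tup{U^2D^2}$ still commutes with both $e$ and $f$ when $\beta=0$, even though it is no longer in $\kk\ive{e,f}$, which is consistent with this route.)
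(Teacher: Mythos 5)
Your proposal is correct and follows essentially the same route as the paper: reduce to splicing $\mathcal{S}_1$ into the purely combinatorial cluster, get $\mathcal{S}_1 \Longrightarrow \mathcal{S}_5$ via the quotient map $\mathcal{A}\tup{\alpha,\beta,\gamma} \to \W$, and get $\mathcal{S}_5 \Longrightarrow \mathcal{S}_1$ from the commutativity of the degree-$0$ graded component $\mathcal{A}_0$. The one place where you labor -- the commutativity of $\mathcal{A}_0$, including your straightening induction and the $\beta = 0$ workaround -- is exactly \cite[Proposition 3.5]{BenRob98}, which the paper simply cites, so that entire crux can be replaced by a reference.
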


\begin{proof}
The statements $\mathcal{S}_{3}$, $\mathcal{S}'_3$, $\mathcal{S}_{4}$,
$\mathcal{S}_{5}$ and $\mathcal{S}_6$
are unchanged from Theorem \ref{thm.DU-eqs2}, so they are
still equivalent. It thus remains to prove that $\mathcal{S}_{1}%
\Longleftrightarrow\mathcal{S}_{5}$.

$\mathcal{S}_{5}\Longrightarrow\mathcal{S}_{1}$: This relies on a version of
Lemma \ref{lem.bal.weyl} for the algebra $\mathcal{A}\left(  \alpha
,\beta,\gamma\right)  $ instead of $\W$.
This version, in turn, relies on a version of Lemma \ref{lem.bal.weyl0} for the algebra $\mathcal{A}\left(  \alpha
,\beta,\gamma\right)  $ instead of $\W$.
But the latter has already been established in \cite{BenRob98}.
In fact, the $\kk$-algebra $\mathcal{A}\left(
\alpha,\beta,\gamma\right)  $ is graded (just like $\W$: the
generators $U$ and $D$ are homogeneous of degrees $1$ and $-1$). Its $0$-th
graded component is commutative, by \cite[Proposition 3.5]{BenRob98}. As in
the second proof of Lemma \ref{lem.bal.weyl0}, this entails the validity of Lemma
\ref{lem.bal.weyl0} for $\mathcal{A}\left(  \alpha
,\beta,\gamma\right)  $, and this in turn yields that Lemma \ref{lem.bal.weyl} holds for $\mathcal{A}\left(  \alpha
,\beta,\gamma\right)  $ as well.
Hence, $\mathcal{S}_{5}$ implies
$\mathcal{S}_{1}$.

$\mathcal{S}_{1}\Longrightarrow\mathcal{S}_{5}$: The condition $\alpha
+\beta=\gamma-\beta=1$ ensures that the Weyl algebra $\W$ is a
quotient of $\mathcal{A}\left(  \alpha,\beta,\gamma\right)  $ (with its
generators $D$ and $U$ being the projections of the generators $D$ and $U$).
Thus, $\phi\left(  u\right)  =\phi\left(  v\right)  $ in $\mathcal{A}\left(
\alpha,\beta,\gamma\right)  $ implies $\phi\left(  u\right)  =\phi\left(
v\right)  $ in $\W$. In other words, our new statement $\mathcal{S}%
_{1}$ implies the old statement $\mathcal{S}_{1}$ from Theorem
\ref{thm.DU-eqs2}. But that old statement implies $\mathcal{S}_{5}$, as we
already know.
Hence, our new $\mathcal{S}_1$ also implies $\mathcal{S}_5$.
\end{proof}

\begin{example}
One of the situations to which Theorem~\ref{thm.dua.equiv} applies is the case when $\alpha = 2$ and $\beta = -1$ and $\gamma = 0$.
In this case, the down-up algebra $\mathcal{A}\left(\alpha,\beta,\gamma\right) = \mathcal{A}\left(2,-1,0\right)$ is the \emph{homogenized Weyl algebra}, since its two defining relations
\begin{align*}
D^{2}U =2 DUD- UD^{2}\ \ \ \ \ \ \ \ \ \ \text{and}\ \ \ \ \ \ \ \ \ \ \ DU^{2} =2 UDU- U^{2}D
\end{align*}
can be rewritten (in terms of commutators $\left[x,y\right] := xy-yx$) as
\[
\left[D,\ DU-UD\right] = 0 \ \ \ \ \ \ \ \ \ \ \text{and}\ \ \ \ \ \ \ \ \ \ \ \left[U,\ DU-UD\right] = 0
\]
and thus mean that the commutator $DU - UD$ is a central element.
This algebra appears in \cite[\S 2.3]{BDSHP10} under the name of $\mathcal{U}\left(\mathcal{L}_{\mathcal{H}}\right)$, with the generators $a$, $a^\dagger$ and $e$ corresponding to our $D$, $U$ and $DU-UD$.
\end{example}

\begin{question}
To what extent can the condition $\alpha+\beta=\gamma-\beta=1$ be lifted in
Theorem \ref{thm.dua.equiv}?
\end{question}

This condition is unnecessary for $\mathcal{S}_{5}\Longrightarrow
\mathcal{S}_{1}$, but needed for $\mathcal{S}_{1}\Longrightarrow
\mathcal{S}_{5}$. For example, $\mathcal{S}_{1}\Longrightarrow\mathcal{S}_{5}$
would fail in the following five cases:
\begin{enumerate}
\item the case $\tup{\alpha, \beta, \gamma} = \tup{0, 1, 0}$ (here, we have $\phi\tup{DU^2} = \phi\tup{U^2D}$);
\item more generally, the case $\alpha = \gamma = 0$ and arbitrary $\beta$ (here we have $\phi\tup{DU^4D} = \phi\tup{U^2D^2U^2}$);
\item the case $\tup{\alpha, \beta} = \tup{0, -1}$ and arbitrary $\gamma$ (here we have $\phi\tup{DU^4} = \phi\tup{U^4D}$);
\item the case $\tup{\alpha, \beta} = \tup{-1, -1}$ and arbitrary $\gamma$ (here we have $\phi\tup{DU^3} = \phi\tup{U^3D}$);
\item the case $\tup{\alpha, \beta} = \tup{1, -1}$ and arbitrary $\gamma$ (here we have $\phi\tup{DU^6} = \phi\tup{U^6D}$).
\end{enumerate}
However, such failures seem to be the
exception, not the rule.
The last three are explained by \cite[Theorem 1.3 (f)]{Zhao99}, and correspond to the only roots of unity that are quadratic over $\QQ$.
There are more exceptions with irrational $\alpha, \beta, \gamma$, for instance $\tup{\alpha, \beta} = \tup{0, i}$ with $i = \sqrt{-1}$, which satisfies $\phi\tup{DU^8} = \phi\tup{U^8D}$.

With some additional work, we can adapt the above proof of Theorem \ref{thm.dua.equiv} to replace the condition ``$\alpha+\beta=\gamma-\beta=1$'' by ``$\alpha + \beta = 1$ and $\left(\gamma = 0\right) \Longleftrightarrow \left(\beta = -1\right)$''.
Indeed, under this condition, we can find a nonzero scalar $\zeta \in \kk$ such that $\gamma = \tup{1+\beta}\zeta$.
Then, there is a $\kk$-algebra morphism from $\mathcal{A}\tup{\alpha, \beta, \gamma}$ to $\W$ that sends $D$ and $U$ to $D$ and $\zeta U$.
This morphism sends equal monomials in $\mathcal{A}\tup{\alpha, \beta, \gamma}$ to proportional monomials in $\W$;
but Lemma~\ref{lem.Wn.scalar1} \textbf{(b)} says that proportional monomials in $\W$ must actually be equal, and so we can argue as in the proof of Theorem \ref{thm.dua.equiv}.
However, the condition $\alpha + \beta = 1$ cannot be lifted in this way.
Instead, we suspect that the linear-recurrence highest weight modules of \cite[Proposition 2.2]{BenRob98} should be used in the general case in lieu of $\kk\ive{x}$ (certainly, this would explain the above exceptions as coming from the periodic linearly recurrent sequences).

\bigskip

\textbf{Acknowledgments.}
We thank Richard P. Stanley for posing a series of problems that became many of the results of this paper.
We further thank J\"orgen Backelin and Joel Brewster Lewis for helpful conversations.


\begin{thebibliography}{99999999}

\bibitem[BCHR11]{BCHR11}Fred Butler, Mahir Can, Jim Haglund, Jeffrey B.
Remmel, \textit{Rook Theory Notes}, 2011.\newline\url{https://mathweb.ucsd.edu/~remmel/files/Book.pdf}

\bibitem[BDSHP10]{BDSHP10}
P. Blasiak, G. H. E. Duchamp, A. I. Solomon, A. Horzela, K. A. Penson, \textit{Combinatorial Algebra for second-quantized Quantum Theory}, Adv. Theor. Math. Phys. \textbf{14} (2011), pp. 1--35. See \href{https://arxiv.org/abs/1001.4964v1}{arXiv:1001.4964v1} for a preprint.

\bibitem[BenRob98]{BenRob98}
\href{https://doi.org/10.1006/jabr.1998.7511}{Georgia Benkart, Tom Roby,
\textit{Down-Up Algebras},
Journal of Algebra \textbf{209}, Issue 1, 1 November 1998, pp. 305--344.}
Addendum in: \href{https://doi.org/10.1006/jabr.1998.7854}{Journal of Algebra \textbf{213}, Issue 1, 1 March 1999, p. 378}.

\bibitem[BHDPS08]{GoF09}
\href{https://doi.org/10.1088/1751-8113/41/41/415204}{
P. Blasiak, A. Horzela, G. H. E. Duchamp, K. A. Penson, A. I. Solomon, 
\textit{Heisenberg-Weyl algebra revisited: Combinatorics of words and paths}, 
J. Phys. A: Math. Theor. \textbf{41} (2008), 415204.}

\bibitem[Blease77]{Ble77}
\href{https://doi.org/10.1088/0022-3719/10/18/012}
{J. Blease, \textit{Pair-connectedness for directed bond percolation on some two-dimensional lattices by series methods}, J. Phys. C: Solid State Phys. 10, 3461, 1977.}

\bibitem[BroHam57]{BroHam57}
\href{https://doi.org/10.1017/S0305004100032680}
{S. R. Broadbent, J. M. Hammersley, \textit{Percolation processes: I. Crystals and mazes}, Mathematical Proceedings of the Cambridge Philosophical Society. 53(3), 629--641, 1957.}

\bibitem[Conrad24]{KConrad-tp2}
Keith Conrad,
\textit{Tensor products II},
25 April 2024.
\url{https://kconrad.math.uconn.edu/blurbs/linmultialg/tensorprod2.pdf} .

\bibitem[CoGrRa23]{CoGrRa23}
\href{https://arxiv.org/abs/2312.02508v1}{Giuseppe Cotardo, Anina Gruica, Alberto Ravagnani, \textit{The Diagonals of a Ferrers Diagram}, arXiv:2312.02508v1.}

\bibitem[Dixmie68]{Dixmie68}
\href{https://doi.org/10.24033/bsmf.1667}{Jacques Dixmier, \textit{Sur les alg\`ebres de Weyl}, Bulletin de la Soci\'et\'e Math\'ematique de France \textbf{96} (1968), pp. 209--242}.

\bibitem[EGJT96]{EGJT96}
\href{https://iopscience.iop.org/article/10.1088/0305-4470/29/8/010/pdf}
{J. W. Essam, A. J. Guttmann, I. Jensen, D. TanlaKishani, \textit{Directed percolation near a wall}, Journal of Physics A: Mathematical and General \textbf{29} (1996), pp. 1619--1628}.

\bibitem[EGHetc11]{Etingof}
\href{http://www-math.mit.edu/~etingof/reprbook.pdf}{Pavel Etingof, Oleg
Golberg, Sebastian Hensel, Tiankai Liu, Alex Schwendner, Dmitry Vaintrob,
Elena Yudovina, \textit{Introduction to Representation Theory}, with
historical interludes by Slava Gerovitch, Student Mathematical Library
\textbf{59}, AMS 2011, updated version 2018}.

\bibitem[Flajol80]{Flajol80}
\href{https://doi.org/10.1016/0012-365X(80)90050-3}{P. Flajolet,
\textit{Combinatorial aspects of continued fractions}, Discrete Mathematics \textbf{32} (1980), Issue 2, pp. 125--161.}

\bibitem[FoaSch70]{FoaSch70}
\href{https://irma.math.unistra.fr/~foata/paper/pub13.pdf}
{D. Foata, M. P.
Sch\"{u}tzenberger, \textit{On the rook polynomials of Ferrers relations},
Combinatorial theory and its applications, II (Proc. Colloq.,
Balatonf\"{u}red, 1969), pp. 413--436, North-Holland, Amsterdam, 1970.}

\bibitem[Gaddis23]{Gaddis23}
\href{https://arxiv.org/abs/2305.01609}{Jason Gaddis,
\textit{The Weyl algebra and its friends: a survey},
arXiv:2305.01609.}

\bibitem[GarRem86]{GarRem86}
\href{https://doi.org/10.1016/0097-3165(86)90083-X}{A.M. Garsia, J.B. Remmel, \textit{Q-counting rook configurations and a formula of Frobenius}, Journal of Combinatorial Theory, Series A \textbf{41} (1986), Issue 2, pp. 246--275.}

\bibitem[GJW75]{GJW75}
\href{https://www.ams.org/journals/proc/1975-052-01/S0002-9939-1975-0429578-4/S0002-9939-1975-0429578-4.pdf}{Jay R. Goldman, J. T. Joichi and Dennis E. White,
\textit{Rook Theory. I. Rook Equivalence of Ferrers Boards},
Proceedings of the American Mathematical Society \textbf{52} (1975), pp. 485--492.} 

\bibitem[Grimme99]{Grimme99}
\href{https://link.springer.com/book/10.1007/978-3-662-03981-6}
{Geoffrey Grimmett, \textit{Percolation}, Second edition. Grundlehren der mathematischen Wissenschaften [Fundamental Principles of Mathematical Sciences] \textbf{321}, Springer-Verlag 1999.}

\bibitem[Haglun98]{Haglun98}
\href{https://doi.org/10.1006/aama.1998.0582}{James Haglund, \textit{q-Rook Polynomials and Matrices over Finite Fields}, Advances in Applied Mathematics \textbf{20}, Issue 4, May 1998, pp. 450--487.}

\bibitem[Lorenz18]{Lorenz18}\href{https://www.ams.org/books/gsm/193/}
{Martin Lorenz,
\textit{A Tour of Representation Theory},
Graduate Studies in Mathematics \textbf{193},
AMS 2018.}

\bibitem[ManSch16]{ManSch16}
\href{https://doi.org/10.1201/b18869}{Toufik Mansour, Matthias Schork, \textit{Commutation Relations, Normal Ordering, and
Stirling Numbers}, CRC Press 2016.}

\bibitem[Milici17]{Milici17}
\href{https://www.math.utah.edu/~milicic/Eprints/dmodules.pdf}{Dragan Milicic, \textit{Lectures on Algebraic Theory of D-Modules}, 2017-03-29,}
\url{https://www.math.utah.edu/~milicic/Eprints/dmodules.pdf}

\bibitem[Navon73]{Navon73}
\href{https://doi.org/10.1007/BF02828687}{A. M. Navon,
\textit{Combinatorics and Fermion Algebra},
Il Nuovo Cimento \textbf{16B} (1973), no. 2, pp. 324--330.}

\bibitem[OEIS]{OEIS}
\href{https://oeis.org}{OEIS Foundation Inc. (2024), The On-Line Encyclopedia of Integer Sequences, Published electronically at \url{https://oeis.org}.} 

\bibitem[Stanle88]{Stan88DiffPos}
\href{https://math.mit.edu/~rstan/pubs/pubfiles/77.pdf}{Richard P. Stanley, \emph{Differential Posets}, J. AMS \textbf{1}(4) (1988), 919--961.}

\bibitem[Stanle06]{Stanley-IHA}
\href{https://math.mit.edu/~rstan/arrangements/arr.html}{Richard P. Stanley, \textit{An Introduction to Hyperplane
Arrangements}, IAS lecture notes, 2006.}

\bibitem[StaAha94]{StaAha94}
\href{https://doi.org/10.1201/9781315274386}
{D. Stauffer, A. Aharony, \textit{Introduction to Percolation Theory: Revised Second Edition}, CRC Press 1994.}

\bibitem[vanOys13]{vanOys13}
\href{https://www.torrossa.com/digital/toc/2013/3939021_TOC.pdf}{Freddy Van Oystaeyen,
\textit{Algebraic structures as seen on the Weyl Algebra},
Editorial Universidad de Almer\'ia, 2013.}
\url{https://www.torrossa.com/digital/toc/2013/3939021_TOC.pdf}

\bibitem[Varvak04]{Varvak}
\href{https://doi.org/10.1016/j.jcta.2005.07.012}{Anna Varvak, \textit{Rook numbers and the normal ordering problem}, Journal of Combinatorial Theory, Series A \textbf{112} (2005), Issue 2, pp. 292--307.}

\bibitem[Zhao99]{Zhao99}
\href{https://doi.org/10.1006/jabr.1998.7695}{Kaiming Zhao, \textit{Centers of Down–Up Algebras},
Journal of Algebra \textbf{214} (1999), Issue 1, pp. 103--121.}
\end{thebibliography}
\end{document}